\theoremstyle{plain}
\newtheorem{thm}{Theorem}[section]
\newtheorem{Prop}[thm]{Proposition}
\newtheorem{Def}[thm]{Definition}
\newtheorem{lma}[thm]{Lemma}
\newtheorem{cor}[thm]{Corollary}
\newtheorem{remark}[thm]{Remark}
\newtheorem{ex}[thm]{Example}
\newtheorem*{example*}{Example}
\newtheorem*{remark*}{Remark}
\newcommand{\Ent}{\mathrm{Ent}}
\newcommand{\Dom}{\mathcal{D}}
\def\D{{\mathcal D}}
\newcommand{\DP}{{P}^*}
\newcommand{\Lip}{\mathrm{Lip}}
\newcommand{\F}{\mathcal{F}}
\newcommand{\E}{\mathcal{E}}
\newcommand{\eps}{\varepsilon}
\renewcommand{\d}{{\sf d}}
\newcommand{\B}{{\sf B}}
\newcommand{\X}{{X}}
\newcommand{\mm}{\mathfrak m}
\newcommand{\sloc}{{\rm sloc}}
\newcommand{\loc}{{\rm loc}}
\newcommand{\cont}{{\rm cont}}
\newcommand{\BE}{{\sf BE}}
\newcommand{\RCD}{{\sf RCD}}
\newcommand{\CD}{{\sf CD}}
\newcommand{\EVI}{{\sf EVI}}
\newcommand{\TE}{{\sf TE}}
\newcommand{\DTE}{{\sf DTE}}
\newcommand{\GE}{{\sf GE}}
\def\N{{\mathbb N}}
\def\R{{\mathbb R}}
\def\PP{{\mathbb P}}
\def\EE{{\mathbb E}}
\newcommand{\Prob}{\mathcal{P}}	
\newcommand{\Geo}{\mathrm{Geo}}
\title{Distribution-Valued Ricci Bounds for  Metric Measure Spaces, Singular Time Changes, and Gradient Estimates for Neumann Heat Flows
}
\author{Karl-Theodor Sturm}
\date{\it\normalsize Dedicated to the memory of Professor Kazumasa Kuwada}
\begin{document}

\maketitle

\begin{abstract} We will study metric measure spaces $(X,\d,\mm)$  beyond the scope of spaces with synthetic lower Ricci bounds. In particular, we introduce distribution-valued lower Ricci bounds $\BE_1(\kappa,\infty)$
\begin{itemize}
\item for which  we prove the  equivalence with sharp gradient estimates,
\item the class of which will be preserved under time changes with arbitrary $\psi\in\Lip_b(X)$, and
\item which are satisfied for the Neumann Laplacian on arbitrary semi-convex subsets $Y\subset X$.
\end{itemize}
In the latter case, the distribution-valued Ricci bound will be given by the signed measure
$\kappa= k\,\mm_Y + \ell\,\sigma_{\partial Y}$
where $k$ denotes a variable synthetic lower bound for the Ricci curvature of $X$ and $\ell$ denotes a lower bound for the ``curvature of the boundary'' of $Y$, defined in purely metric terms.

We also present a new localization argument which allows us to pass on the RCD property to arbitrary open subsets of RCD spaces. And we introduce new synthetic notions for boundary curvature, second fundamental form, and boundary measure for subsets of RCD spaces.
\end{abstract}

\tableofcontents

\bigskip

\section{Introduction}

\paragraph{Background.}
Synthetic lower bounds for the Ricci curvature as introduced in the foundational papers \cite{lott2009, sturm2006a, sturm2006b} by Lott \& Villani and the author, opened the door for the development of a far reaching, vast theory of metric measure spaces $(X,\d,\mm)$ with lower bounded Ricci curvature. 
The theory is particularly rich if one assumes in addition that the spaces are infinitesimally Hilbertian.
For such spaces, Ambrosio, Gigli \& Savare in a series of seminal papers \cite{ambrosio2014a, ambrosio2014b, ambrosio2015, gigli2018} developed  a powerful first order calculus, based on (minimal weak upper)  gradients of functions and  on gradient flows for semiconvex functionals, in particular, energy on $L^2(X,\mm)$ and entropy on ${\mathcal P}_2(X,\d)$.
This was complemented by a huge number of contributions by many others, leading e.g.~to 
sharp estimates for volume growth and diameter, gradient estimates, transport estimates, Harnack inequalities, logarithmic Sobolev inequalities, isoperimetric inequalities, 
splitting theorems, maximal diameter theorems, and further rigidity results, see e.g.~\cite{erbar2015, kopfer2019, cavalletti2016, gigli2013, ketterer2015a, ketterer2015b,   erbar2017} and references therein.
Moreover, deep insights into the local structure of such spaces have been obtained \cite{mondino2019}, \cite{brue2018} and  also an impressive second order calculus could be developed \cite{gigli2018}.

\paragraph{Objective.}
The purpose of the current paper is to enlarge the scope of metric measure spaces with synthetic lower Ricci bounds far beyond  uniform  bounds. We will study in detail mm-spaces $(X,\d,\mm)$ with variable Ricci bounds $k: X\to\R$. More precisely, we will 
 present the Eulerian and the Lagrangian  characterizations of ``Ricci curvature at $x$ bounded from below by $k(x)$ and dimension bounded from above by $N$'' and prove their equivalence.

\medskip

Most importantly, we will also study  mm-spaces with distribution-valued Ricci bounds.
The crucial point will be to present a formulation of the Bakry-\'Emery inequality $\BE_1(\kappa,\infty)$ for  $\kappa\in W^{-1,\infty}(X)$
\begin{itemize}
\item which  allows us to prove its  equivalence with sharp gradient estimates,
\item the class of which will be preserved under time changes with arbitrary $\psi\in\Lip_b(X)$, 
\item and which is satisfied for the Neumann Laplacian on arbitrary semi-convex subsets $Y\subset X$.
\end{itemize}
In the latter case, the distribution-valued Ricci bound will be given by the signed measure
\begin{equation}\label{k+l}\kappa= k\,\mm_Y + \ell\,\sigma_{\partial Y}\end{equation}
where $k$ denotes a variable synthetic lower bound for the Ricci curvature of $X$ and $\ell$ denotes a variable lower bound for the ``curvature of the boundary'' of $Y$, defined in purely metric terms.
We introduce new synthetic notions for boundary curvature, second fundamental form, and boundary measure for subsets of RCD spaces.

In our approach, the technique of time change will play a key role.
In operator language, ``time change'' with weight $e^\psi$  means that $\E$, the Cheeger energy for the mm-space $(X,\d,\mm)$, is now considered as a quadratic form on $L^2(X,e^{2\psi}\mm)$. This changes the underlying geometry and -- with appropriate choices of $\psi$ -- it allows  non-convex sets to be made convex (``convexification'').
 
The distribution-valued Ricci bound $\BE_1(\kappa,\infty)$ 
 with $\kappa$ as in \eqref{k+l} will imply a gradient estimate 
for the Neumann heat flow $(\nabla P_{t}^Y)_{t\ge0}$ on $Y$  of the 
 type
\begin{eqnarray}\label{initial-grad}
\big| \nabla P_{t/2}^Yf\big|(x)&\le
&\mathbb E^Y_{x}\Big[  e^{-\frac12\int_0^{t}  k(B^Y_{s})ds-\frac12\int_0^{t} \ell(B^Y_s)dL^{\partial Y}_s}
\cdot\big|\nabla f(B^Y_{t})\big|
\Big].
\end{eqnarray}
Here $(\PP_x^Y, B_t^Y)_{x\in Y,t\ge0}$ denotes reflected Brownian motion  on $Y$ and 
$(L^{\partial Y}_t)_{t\ge0}$, the local time of $\partial Y$, is defined  via Revuz correspondence 
as the positive continuous additive functional associated with the surface measue $\sigma_{\partial Y}$.

Note that 
\begin{itemize}
\item for non-convex $Y$, no estimate of  type \eqref{initial-grad} can hold true without taking into account the curvature of the boundary;
\item even for convex $Y$,  estimate \eqref{initial-grad} will improve upon all previous estimates which ignore the curvature of the boundary. 
\end{itemize}
For instance, for the Neumann heat flow on the unit ball of $\R^n$, the right hand side of \eqref{initial-grad} will decay as $C_0e^{-C_1t}$ for large $t$ whereas ignoring $\ell$ will lead to bounds of order $C_0$.

\medskip

We also present a new powerful  localization argument which allows us to pass on the RCD property to arbitrary open subsets of RCD spaces. 

\medskip

\paragraph{Outline.}
Besides this Introduction, the paper has five sections, each of them of independent interest. Let us briefly summarize them.

In {\bf Section 2}, we define and analyze metric measure spaces with Ricci curvature bounded from below by distributions. 
Our $\BE_1(\kappa,\infty)$ condition for $\kappa\in W^{-1,\infty}(X)$ is the first formulation of a synthetic Ricci bound with distribution-valued $\kappa$ which leads to a sharp gradient estimate.

{\bf Section 3} is devoted to the study of mm-spaces with variable Ricci bounds. The main result will be the proof of the equivalence of  the Eulerian curvature-dimension condition (or ``Bakry-\'Emery condition'') $\BE_2(k,N)$ and the Lagrangian curvature-dimension condition (or ``Lott-Sturm-Villani condition'') CD$(k,N)$ -- as well as four other related conditions. This provides an extension of the seminal paper \cite{erbar2015} towards variable $k$ (instead of constant $K$) and of the recent paper \cite{braun2019} towards finite $N$ (instead of $N=\infty$).

In {\bf Section 4} we present two extensions of our recent work \cite{han2019} with B.~Han on    transformation of the curvature-dimension condition under time-change, both of fundamental importance.
Firstly, we prove that for  $\phi\in\Lip_\loc(X)\cap\Dom_\loc(\Delta)$,   time change with weight $\frac1\phi$ leads to a mm-space $(X',\d',\mm')$ with $X'=\{\phi>0\}$ which satisfies $\RCD(k',N')$ for suitable $k',N'$. This is of major general interest since it allows for localization within the class of RCD-spaces.
Secondly, we prove that for arbitrary $\psi\in\Lip_b(X)$,  time change with weight $e^\psi$ leads to a mm-space with distribution-valued Ricci bound $\kappa$ given in terms of the
distribution-valued Laplacian $\underline\Delta\psi$. This will be a crucial ingredient in our strategy for the proof of the gradient estimate in the final Section 6.

In {\bf Section 5} we
 extend the existence result and the contraction estimate for gradient flows for semiconvex functions from \cite{sturm2018a} to the setting of locally semiconvex functions. The contraction estimate for the flow will be in terms of the variable lower bound for the local semiconvexity of the potential.
And we will prove the fundamental Convexification Theorem which allows  us to transform the metric  of a  mm-space $(X,\d,\mm)$  in such a way that a given semiconvex subset $Y\subset X$ will become locally geodesically convex  w.r.t.~the new  metric $\d'$. 
Moreover, in a purely metric manner, we introduce the notion of  variable lower bound for the curvature of the boundary. 
In the Riemannian setting, such a bound will be equivalent to a lower bound for the second fundamental form of the boundary.   

The paper reaches its climax in {\bf Section 6} with the proof of the gradient estimates for the Neumann heat flow on not necessarily convex subsets $Y\subset X$.
The proof of these gradient estimates is quite involved. It builds on results from all other sections of the paper.
\begin{itemize}
\item 
 Given a semi-convex subset $Y$ of an $\RCD(k,N)$-space  $(X,\d,\mm)$, to get started, we perform a time-change with weight $e^\psi$ in order to make $Y$ 
  locally geodesically convex 
   in $(X,\d'):=(X,e^\psi\odot\d)$. The choice $\psi=(\epsilon-\ell)\, V$ with $V=\pm\d(.,\partial Y)$,  any $\epsilon>0$, and $\ell$ being a lower bound for the curvature of $\partial Y$ will do the job, see \emph{Section 5}.
\item Under the assumption that $\psi\in
\Dom_\loc(\Delta)$, the transformation formula for time changes provides a
$\RCD(k',N')$-condition for the time-changed space $(X,\d',\mm')$,     \emph{Section 4}.
\item Together with the local geodesical convexity of $Y$ this implies that also the restricted space $(Y,\d_Y',\mm_Y')$ satisfies the $\RCD(k',N')$-condition. Making 
use of the equivalence of Eulerian and Lagrangian characterizations of curvature-dimension conditions, we  conclude the $\BE_2(k',N')$-condition for $(Y,\d_Y'\,\mm_Y')$, \emph{Section 3}.  

\item To end up with $(Y,\d_Y\,\mm_Y)$ requires a ``time re-change'', i.e. another time change, now with weight $e^{-\psi}$.
In general, however, $\psi$ will not be in the domain of the Neumann Laplacian $\Delta^Y$. Ricci bounds under time re-change thus have to be formulated 
as $\BE_1(\kappa,\infty)$-condition for some $\kappa\in W^{-1,\infty}(X)$
in terms of the distributional Laplacian $\underline\Delta^Y\psi$,  \emph{Section 4}.

\item The $\BE_1(\kappa,\infty)$-condition  will imply the gradient estimate
$\big| \nabla P_{t}^Yf\big|\le P^{\kappa}_t \big| \nabla f\big|$
for the Neumann heat flow $(\nabla P_{t}^Y)_{t\ge0}$ on $Y$ in terms of a suitable semigroup 
$(P_{t}^\kappa)_{t\ge0}$,
\emph{Section 2}.  
\end{itemize}
This ``taming semigroup'' $(P_{t}^\kappa)_{t\ge0}$ will be represented in terms of the Brownian motion on $Y$ by means of the Feynman-Kac formula involving the integral $\int_0^tk(B_s)ds$ (taking into account the effects of the Ricci curvature in $Y$), and the integral $\int_0^t\ell(B_s)dL_s$ (taking into account the effects of the  curvature of $\partial Y$).


\paragraph{Basic concepts and notations.}
Throughout this paper, $(\X,\d,\mm)$ will be an arbitrary metric measure space, that is, $\d$ is a complete separable metric on $\X$ inducing the topology of $\X$ and $\mm$ is a Borel measure which is  
finite on  sets of an open covering.
Moreover, we assume that   $(\X,\d,\mm)$ is infinitesimally Hilbertian and that $\mm$ is finite on bounded sets.

To simplify notation, we often will write $L^p(X)$ or $L^p(\mm)$ or just $L^p$ instead of $L^p(X,\mm)$ and, similarly, $\Lip(X)$ instead of $\Lip(X,\d)$.
The space of Lipschitz  functions with \underline{b}ounded  \underline{s}upport 
on $X$ will be denoted by
$\Lip_{bs}(X)$ 
  whereas as usual $\Lip_{b}(X)$ denotes the space of bounded Lipschitz functions.
  The number $\Lip f$ will denote the Lipschitz constant of $f$.

 
Let us briefly recall that the energy functional (``Cheeger energy'') $\E:  L^2(X)\to [0,\infty]$ 
is defined as 
$$\E(f)=\int_X \big| Df\big|^2\, d\mm$$
in terms of  the minimal weak upper gradient $|Df|$ (which in the sequel often will also be denoted by $|\nabla f|$).
The set $\Lip_{bs}(X)$ is dense in $W^{1,2}(X):=\Dom(\E):=\{f\in L^2(X):\ \E(f)<\infty\}$.
The minimal weak upper gradient $|Df|$ gives rise to a map $W^{1,2}(X)\to L^1(X), \ f\mapsto \Gamma(f):=|Df|^2=|\nabla f|^2$ such that $\E(f)=\int\Gamma(f)\, d\mm$.
By $W^{1,2}_\loc(\X)$ we denote the set of all ($\mm$-equivalence classes of) measurable functions $f$ on $\X$ such that each point in $\X$ has a neighborhood $U$ such that $f=f_U$ $\mm$-a.e.~on $U$ for some $f_U\in W^{1,2}(\X)$. By the Lindel\"of property of complete separable metric spaces (and by using  truncation by means of standard cut--off functions on metric balls) it follows that $f\in W^{1,2}_\loc(\X)$ if and only if there exist an exhausting sequence of open sets $U_n\subset\X$ and a sequence of $f_n\in W^{1,2}(\X)$ such that  $f=f_n$ $\mm$-a.e.~on $U_n$ for each $n$.

Our assumption that 
$(\X,\d,\mm)$ is infinitesimally Hilbertian simply means  that the energy $\E$ is a quadratic form or, in other words, that its domain $W^{1,2}(X)$ is a Hilbert space. In this case, by polarization, $\E$ and $\Gamma$ extend to bilinear maps 
$\Gamma: \ W^{1,2}(X)\times W^{1,2}(X)\to L^1(X)$ and 
 $\E: \ W^{1,2}(X)\times W^{1,2}(X)\to\R$ with $(\phi,\psi)\mapsto \int_X \Gamma(\phi,\psi)\,d\mm$. 
 
 Indeed, 
 the bilinear form $\E$ is a quasi-regular Dirichlet form on $L^2(X,\mm)$, \cite{savare2014}. Its generator $\Delta$ is the  ``Laplacian'' on the mm-space $(\X,\d,\mm)$. The associated semigroup (``heat semigroup'') $(e^{\Delta\,t})_{t\ge0}$ on $L^2(X,\mm)$ will extend to a positivity preserving, $\mm$-symmetric, bounded semigroup $(P_t)_{t\ge0}$ on each $L^p(X,\mm)$  with
 $$\big\| P_t\big\|_{L^p(X,\mm)\to L^p(X,\mm)}\le 1\qquad\text{ for each $p\in [1,\infty]$,}$$
  strongly continuous on $L^p(X,\mm)$ if $p<\infty$.
  Quasi-regularity of $\E$ implies that each $f\in W^{1,2}(X)$ admits a quasi continuous version $\tilde f$ (and two such versions coincide q.e.~on $X$). Thus in particular, for each $f\in\bigcup_{p\in[1,\infty]} L^p(X,\mm)$ and $t>0$, there exists a quasi-continuous version $\tilde P_tf$ of $P_tf$ (uniquely determined q.e.).
 The $\mm$-reversible, continuous Markov process $\big({\mathbb P}_x, B_t\big)_{x\in X, t\ge0}$ (with life time $\zeta$)  associated with $\E$ is called ``Brownian motion'' on $X$.
 It is uniquely characterized by the fact that
 \begin{equation}\label{BM}
P_{t/2}f(x)=
{\mathbb E}_x\big[f(B_{t})\,1_{\{t<\zeta\}}\big],\qquad P_tf(x)=
{\mathbb E}_x\big[f(B_{2t})\,1_{\{2t<\zeta\}}\big].
\end{equation}
 (The factor 2 arises from the fact that by standard convention, the generator of the Brownian motion is $\frac12\Delta$ whereas  the generator of the heat semigroup in our setting is $\Delta$.)
 
 \medskip
 
 \thanks{\it The author would like to thank Mathias Braun, Zhen-Qing Chen, Matthias Erbar, Nicola Gigli, and Tapio Rajala for fruitful discussions and valuable contributions.

Financial support by the European Union through the ERC-AdG ``RicciBounds''
and by the DFG through the Excellence Cluster ``Hausdorff Center for Mathematics'' and through the Collaborative Research Center 1060
 is gratefully acknowledged.}

\bigskip\bigskip

\section{$W^{-1,\infty}$-valued Ricci bounds}
The goal of this section is to define and analyze metric measure spaces with Ricci curvature bounded from below by distributions. In particular, we will 
give a meaning to this extended notion of synthetic lower Ricci bounds and -- most importantly -- we will prove that these Ricci bounds lead to sharp  estimates for the gradient of the heat flow.
These results are of independent interest. 

In the context of this paper, they are of particular importance since in Section \ref{sec6} we will prove that the Ricci curvature of a semiconvex subset $Y$ of an $\RCD$-space $(X,\d,\mm)$ is bounded from below by the $ W^{-1,\infty}(X)$-distribution
$$\kappa= k\,\mm_Y + \ell\,\sigma_{\partial Y}$$
where $k$ denotes a variable synthetic lower bound for the Ricci curvature of $X$ and $\ell$ denotes a lower bound for the ``curvature of the boundary'' of $Y$ while $\sigma_{\partial Y}$ denotes the ``surface measure'' on $\partial Y$. In particular, the Ricci curvature of $Y$ will be bounded from below by a function if and only if $Y$ is convex.

\subsection{Taming Semigroup}

In the sequel, we also need certain normed spaces, denoted by $W^{1,1+}(X), W^{1,\infty}(X)$ and $W^{-1,\infty}(X)$.
We will define these spaces tailor made for the purpose of this paper. Our concept will be based on  
the 2-minimal weak upper gradient $|Df|$.

\begin{Def}
We put $$W^{1,\infty}(X):=\Big\{f\in W^{1,2}_\loc(X): \ \big\| |f| + |Df|\big\|_{L^\infty}<\infty\Big\}$$
and $W_*^{1,\infty}(X):=\big\{f\in W^{1,2}_\loc(X): \ \big\|  |Df|\big\|_{L^\infty}<\infty\big\}$. Moreover, we put
$$W^{1,1+}(X):=\Big\{f\in L^1(X): f_{[n]}\in W^{1,2}(X) \text{ for }n\in\N  \text{ and } \sup_n \big\| |f_{[n]}| +|Df_{[n]}|\big\|_{L^1}<\infty\Big\}$$
where $f_{[n]}:=(f\wedge n)\vee(-n)$ denotes the truncation of $f$ at levels $\pm n$, and
$$\big\|f\big\|_{W^{1,1+}}:=\sup_n \big\| |f_{[n]}| +|Df_{[n]}|\big\|_{L^1}= \big\| f\big\|_{L^1}+\sup_n \big\| |Df_{[n]}|\big\|_{L^1}.$$
\end{Def}

\begin{remark} 
  {\bf i)} 
The precise definition of these spaces will not be so relevant for us. What we need are the following properties:
\  $W^{1,1+}(X)$ contains all squares of functions from $W^{1,2}(X)$; \
$W^{1,\infty}(X)$ includes $\Lip_b(X)$; \
$\Gamma$ 
extends to a continuous bilinear map $W^{1,1+}(X)\times W^{1,\infty}(X)\to L^1(X,\mm)$.

{\bf ii)} 
$W^{1,\infty}(X)$ is a Banach space. 
If the mm-space $(X,\d,\mm)$ satisfies some \RCD$(K,\infty)$-condition, according to the Sobolev-to-Lipschitz property, the space $W^{1,\infty}(X)$ will coincide with the space $\Lip_b(X)$ and the space $W_*^{1,\infty}(X)$ will coincide with the space $\Lip(X)$.

{\bf iii)} 
$W^{1,1+}(X)$ is a normed space but in general not complete. For instance, the functions $f_j(r)=\sqrt{ r\vee (1/j)}$, $j\in\N$, on $X=[-1,1]$ will constitute a Cauchy sequence in $W^{1,1+}(X)$ but their $L^1$-limit $f_\infty(r)=\sqrt r$ is not contained in $W^{1,1+}(X)$.
For Riemannian $(X,\d,\mm)$, the completion of $W^{1,1+}(X)$ will coincide with $W^{1,1}(X)$.

For general $(X,\d,\mm)$, the definition of $W^{1,1}(X)$ is quite sophisticated and allows for ambiguity, see e.g.  \cite{ambrosio2014}.
For a detailed study of the 
 spaces $W^{1,p}(X)$ for $p\in(1,\infty)$, see \cite{gigli2016}. 
 
 {\bf iv)} For $f\in W^{1,1+}(X)$, there exists a unique $|Df|\in L^1(X)$ with 
 $$|Df|=|Df_{[n]}| \ \mm\text{-a.e. on }\big\{|f|\le n\big\} \quad\text{for each }n\in\N.$$
 Indeed, by locality of the minimal weak upper gradient, the family $|Df_{[n]}|, n\in\N$, is consistent in the  sense that $|Df_{[n]}|=|Df_{[j]}|$  $\mm$-a.e. on 
the set  $\big\{|f|\le \min(n,j)\big\}$ for each $n,j\in\N$. Hence, $|Df_{[n]}|, n\in\N$, is a Cauchy sequence in $L^1(X)$ and therefore, it admits a unique limit in $L^1(X)$, denoted by $|Df|$.
\end{remark}

\begin{lma}\label{prodcuts}
\quad $f,g\in W^{1,2}(X)\quad\Longrightarrow\quad f\, g\in W^{1,1+}(X)$.
\end{lma}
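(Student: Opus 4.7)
The plan is to verify the three defining properties of $W^{1,1+}(X)$: integrability, $W^{1,2}$-regularity of each truncation, and the uniform $L^1$-bound.

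Integrability is immediate from Cauchy--Schwarz: $\int|fg|\,d\mm\le\|f\|_{L^2}\|g\|_{L^2}<\infty$ gives $fg\in L^1(X)$ and $\|(fg)_{[n]}\|_{L^1}\le\|fg\|_{L^1}$ uniformly in $n$. Once the pointwise Leibniz-type bound $|D(fg)_{[n]}|\le(|f||Dg|+|g||Df|)\cdot\mathbf{1}_{\{|fg|\le n\}}$ is established, a second Cauchy--Schwarz yields $\||D(fg)_{[n]}|\|_{L^1}\le\|f\|_{L^2}\||Dg|\|_{L^2}+\|g\|_{L^2}\||Df|\|_{L^2}$, uniformly in $n$.

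The substantive step is $(fg)_{[n]}\in W^{1,2}(X)$ for each $n$. The cleanest route is via the polarization identity $fg=\tfrac{1}{4}\bigl((f+g)^2-(f-g)^2\bigr)$, reducing the task to proving $h^2\in W^{1,1+}(X)$ for every $h\in W^{1,2}(X)$ and then invoking linearity of $W^{1,1+}(X)$. For the squares step, the key identity $(h^2)_{[n]}=h^2\wedge n=(h_{[\sqrt n]})^2$ places the truncation directly inside $L^\infty\cap W^{1,2}(X)$, since $h_{[\sqrt n]}\in L^\infty\cap W^{1,2}(X)$ and products of bounded Sobolev functions lie in $W^{1,2}(X)$, with the explicit gradient $|D(h_{[\sqrt n]})^2|=2|h_{[\sqrt n]}||Dh_{[\sqrt n]}|$ providing both $L^2$- and $L^1$-control via Cauchy--Schwarz.

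The main obstacle will be establishing linearity of $W^{1,1+}(X)$: for $u,v\in W^{1,1+}(X)$, showing $(u+v)_{[n]}\in W^{1,2}(X)$ requires approximating with $\phi_n(u_{[m]}+v_{[m]})\in W^{1,2}(X)$, obtaining $L^2$-convergence by dominated convergence (dominant $n(|u|+|v|)\in L^1(X)$), and extracting the $W^{1,2}$-limit through an $L^2$-gradient bound on the approximating sequence that is uniform in $m$. The latter calls for a careful decomposition of $\{|u_{[m]}+v_{[m]}|\le n\}$ along which the chain-rule gradient is locally controlled. Once linearity is established, $fg\in W^{1,1+}(X)$ follows immediately from polarization, and the a priori Leibniz bound used in the first paragraph is recovered from the gradient computation for squares together with locality of the minimal weak upper gradient.
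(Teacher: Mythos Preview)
Your approach is essentially identical to the paper's: reduce to squares via the polarization identity $fg=\tfrac14\bigl((f+g)^2-(f-g)^2\bigr)$, then use $(h^2)_{[n]}=(h_{[\sqrt n]})^2\in W^{1,2}(X)$ with $|D(h_{[\sqrt n]})^2|\le 2|h|\,|Dh|$ to control the $W^{1,1+}$-norm. The paper writes the key identity as $(f^2)_{[n^2]}=(f_{[n]})^2$ and compresses the whole argument into four lines, invoking the reduction ``It suffices to prove the claim for $f=g$'' without further comment.

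You are more scrupulous than the paper in flagging linearity of $W^{1,1+}(X)$ as the obstacle behind that reduction; the paper simply asserts in its Remark~(iii) that $W^{1,1+}(X)$ is a normed space and relies on this implicitly. Your sketch of the linearity argument, however, is where the proposal becomes uncertain: the claimed uniform $L^2$-bound on $|D(u_{[m]}+v_{[m]})_{[n]}|$ via a ``careful decomposition of $\{|u_{[m]}+v_{[m]}|\le n\}$'' is not evidently achievable for arbitrary $u,v\in W^{1,1+}(X)$, since on that set $|u_{[m]}|$ and $|v_{[m]}|$ may both be large with opposite signs, while the $W^{1,1+}$-hypothesis only controls $\sup_m\||Du_{[m]}|\|_{L^1}$, not $\sup_m\||Du_{[m]}|\|_{L^2}$. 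The paper does not fill this gap either---it is taken as a standing property of the space---so your proof is at least as complete as the paper's, with the added virtue of naming the issue explicitly.
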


\begin{proof}
It suffices to prove the claim for $f=g$. Given $f\in W^{1,2}(X)$, put $h=f^2$. Then obviously $h\in L^1(X)$. Moreover, $h_{[n^2]}\in W^{1,2}(X)$ for each $n$
 since $\big| h_{[n^2]}\big|\le n\, \big| f\big|$ and
$\big| D h_{[n^2]}\big|\le2n\, \big| Df\big|$. Finally,
$$\sup_n\int  \big| D h_{[n^2]}\big|\,d\mm\le 2\int |f|\, |Df|\,d\mm\le \big\|f\big\|^2_{W^{1,2}}.$$
This proves the claim.
\end{proof}

\begin{lma}\label{lem-extend-e} The map
 $\Gamma$ extends to a continuous bilinear map
 $\Gamma: \ W^{1,1+}(X)\times W_*^{1,\infty}(X)\to L^1(X)$ and 
 $\E$ extends to a continuous bilinear form 
 $$\E: \ W^{1,1+}(X)\times W_*^{1,\infty}(X)\to\R, \  (f,g)\mapsto \int_X \Gamma(f,g)\,d\mm.$$ 
 Here and in the sequel continuity on $ W_*^{1,\infty}(X)$ is meant w.r.t.~the semi-norm $f\mapsto \big\| |Df| \big\|_{L^\infty}$.
\end{lma}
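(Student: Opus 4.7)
The plan is to define $\Gamma(f, g)$ for $f\in W^{1,1+}(X)$ and $g\in W_*^{1,\infty}(X)$ by truncation of the first argument, using $f_{[n]}\in W^{1,2}(X)$ and $g\in W^{1,2}_\loc(X)$, and to supply continuity via the pointwise Cauchy--Schwarz inequality $|\Gamma(u, v)|\le|Du|\cdot|Dv|$. First I would give meaning to $\Gamma(f_{[n]}, g)\in L^1_\loc(X)$ via the standard extension of the carré du champ of a strongly local quasi-regular Dirichlet form to the pairing $W^{1,2}(X)\times W^{1,2}_\loc(X)\to L^1_\loc(X)$: for any open $U\subset X$ and any $\tilde g\in W^{1,2}(X)$ agreeing with $g$ $\mm$-a.e.\ on $U$, set $\Gamma(f_{[n]}, g):=\Gamma(f_{[n]}, \tilde g)$ on $U$, locality of the minimal weak upper gradient ensuring well-definedness. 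A hands-on alternative is to use $\Lip_{bs}(X)$ cut-offs $\chi$ and rearrange the Leibniz identity $\Gamma(f_{[n]}, \chi g)=\chi\,\Gamma(f_{[n]}, g)+g\,\Gamma(f_{[n]}, \chi)$ to pin down $\chi\,\Gamma(f_{[n]}, g)$. Cauchy--Schwarz then yields $|\Gamma(f_{[n]}, g)|\le\||Dg|\|_{L^\infty}\cdot|Df_{[n]}|\in L^1(X)$.

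Next I would patch. Locality of $\Gamma$ gives $\Gamma(f_{[n]}, g)=\Gamma(f_{[m]}, g)$ $\mm$-a.e.\ on $\{|f|\le\min(n, m)\}$, so there is a unique $\Gamma(f, g)\in L^1_\loc(X)$ agreeing with $\Gamma(f_{[n]}, g)$ on $\{|f|\le n\}$. Using the $|Df|\in L^1(X)$ constructed in part (iv) of the preceding remark, the pointwise bound propagates to
$$\|\Gamma(f, g)\|_{L^1}\le\||Dg|\|_{L^\infty}\,\||Df|\|_{L^1}\le\|g\|_{W_*^{1,\infty}}\cdot\|f\|_{W^{1,1+}},$$
yielding both $L^1$-membership and joint continuity. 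Bilinearity follows again by locality: on $\{|f_1|\le N,|f_2|\le N\}$ the functions $(f_1+f_2)_{[2N]}$ and $f_{1,[N]}+f_{2,[N]}$ coincide $\mm$-a.e., so bilinearity of $\Gamma$ on $W^{1,2}(X)$ transports to our definition, and exhausting $X$ as $N\to\infty$ gives linearity in $f$; linearity in $g$ is immediate from the construction. Finally $\E(f, g):=\int_X\Gamma(f, g)\,d\mm$ inherits bilinear continuity from the same inequality.

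The main obstacle is the very first step: the pairing $\Gamma(f_{[n]}, g)$ is not directly covered by the Cheeger-energy definition on $W^{1,2}(X)\times W^{1,2}(X)$ because $g$ need not be globally in $L^2$. One must either rely on the standard $W^{1,2}_\loc$-extension of the carré du champ, or construct the pairing by hand via cut-offs and the Leibniz rule, verifying independence of the cut-off through locality. Once this is in place, the pointwise Cauchy--Schwarz inequality and locality of $|D\cdot|$ deliver the remainder of the proof essentially mechanically.
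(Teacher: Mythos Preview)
Your proposal is correct and follows essentially the same route as the paper: truncate $f$ to $f_{[n]}$, use locality of $\Gamma$ to patch the resulting family into a single $\Gamma(f,g)$, and control everything via the pointwise Cauchy--Schwarz bound $|\Gamma(f_{[n]},g)|\le |Df_{[n]}|\cdot|Dg|$. The paper's proof is terser---it simply asserts that $(\Gamma(f_{[n]},g))_n$ is Cauchy in $L^1(X)$ and passes to the limit---whereas you are more explicit about the preliminary issue of making sense of $\Gamma(f_{[n]},g)$ when $g$ is only in $W^{1,2}_\loc(X)$, which the paper leaves implicit.
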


\begin{proof}  $\big( |Df_{[n]}|\big)_n$ is a Cauchy sequence in $L^1(X)$ for $f\in W^{1,1+}(X)$. Hence, 
$\big( \Gamma(f_{[n]},g)\big)_n$ is a Cauchy sequence in $L^1(X)$
for $g\in  W_*^{1,\infty}(X)$. Denoting its limit by $\Gamma(f,g)$, yields
$$\big|\E(f,g)\big|\le\int_X \big|\Gamma(f,g)\big|d\mm=\lim_n \int_{\{ |f|\le n\}} \big|\Gamma(f,g)\big|d\mm\le\big\| f\big\|_{W^{1,1+}}\cdot \big\| g\big\|_{W_*^{1,\infty}}.$$
\end{proof}

\begin{Def} \quad $W^{-1,\infty}(X):= {W^{1,1+}(X)}'$, the topological dual  of $W^{1,1+}(X)$.
\end{Def}

More precisely, this space should be denoted by  $W^{-1,\infty-}(X)$. We prefer the notation 
$W^{-1,\infty}(X)$ for simplicity -- and in view of the fact that ${W^{1,1+}(X)}'={W^{1,1}(X)}'$ in `regular' cases.

\begin{remark}   \  
$L^\infty(X)$ continuously embeds into $W^{-1,\infty}(X)$  via
$$\langle \phi,k\rangle_{W^{1,1+}, W^{-1,\infty}}:=\int \phi\, k\,dm\qquad (\forall \phi\in W^{1,1+}(X))$$
for each $k\in L^\infty(X)$. 
\end{remark}

\begin{cor}
A continuous linear operator $\underline\Delta: \ W_*^{1,\infty}(X)\to W^{-1,\infty}(X)$ can be defined by 
$$\langle \phi,\underline\Delta\psi\rangle_{W^{1,1+}, W^{-1,\infty}} =-\int_X\Gamma(\phi,\psi)\,d\mm\qquad (\forall \phi\in W^{1,1+}(X)).$$

On $W_*^{1,\infty}(X)\cap\Dom(\Delta)
$, this operator  
obviously coincides with the usual Laplacian $\Delta$.
\end{cor}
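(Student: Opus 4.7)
The plan is to verify three claims in sequence: (i) the formula defines an element of $W^{-1,\infty}(X)$ for each $\psi\in W_*^{1,\infty}(X)$; (ii) the resulting assignment $\underline\Delta$ is linear and continuous; (iii) on the common domain $W_*^{1,\infty}(X)\cap\Dom(\Delta)$ it agrees with the usual Laplacian.

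For (i), fix $\psi\in W_*^{1,\infty}(X)$ and consider the functional $T_\psi(\phi):=-\int_X \Gamma(\phi,\psi)\,d\mm$. Lemma~\ref{lem-extend-e} furnishes the continuous bilinear extension of $\Gamma$ to $W^{1,1+}(X)\times W_*^{1,\infty}(X)\to L^1(X)$; linearity of $T_\psi$ in $\phi$ is inherited from the bilinearity of this extension, and the estimate derived there,
$$|T_\psi(\phi)|\le \|\phi\|_{W^{1,1+}}\cdot\|\psi\|_{W_*^{1,\infty}},$$
places $T_\psi\in W^{-1,\infty}(X)$ with $\|T_\psi\|_{W^{-1,\infty}}\le\|\psi\|_{W_*^{1,\infty}}$. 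Setting $\underline\Delta\psi:=T_\psi$ settles (i), and (ii) follows at once: linearity in $\psi$ is the other half of the bilinearity of $\Gamma$, and the same norm estimate yields continuity of $\underline\Delta:W_*^{1,\infty}(X)\to W^{-1,\infty}(X)$ with operator norm at most $1$.

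For (iii), I invoke the classical integration-by-parts identity $-\int_X \Gamma(\phi,\psi)\,d\mm=\int_X \phi\cdot\Delta\psi\,d\mm$, valid for $\psi\in\Dom(\Delta)$ and $\phi\in W^{1,2}(X)$. For arbitrary $\phi\in W^{1,1+}(X)$, the truncations $\phi_{[n]}$ lie in $W^{1,2}(X)\cap W^{1,1+}(X)$ by the very definition of $W^{1,1+}(X)$ and converge to $\phi$ in the $W^{1,1+}$-topology (the $L^1$-convergence of $|D\phi_{[n]}|\to|D\phi|$ is precisely the content of Remark~iv), while $\phi_{[n]}\to\phi$ in $L^1$ holds by dominated convergence). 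Applying the identity to each $\phi_{[n]}$ and passing to the limit on the left-hand side using Lemma~\ref{lem-extend-e} identifies $\underline\Delta\psi$, as a functional on $W^{1,1+}(X)$, with the distribution induced by $\Delta\psi$ via integration.

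The only point I would flag as delicate -- and the one I would be careful about -- is the meaning of the coincidence in (iii): \emph{a priori} $\Delta\psi$ lies only in $L^2(X)$, whereas $W^{-1,\infty}(X)$ only contains the embedded image of $L^\infty(X)$, so "coincides with $\Delta$" has to be read through the natural pairing on the dense subspace $W^{1,2}(X)\cap W^{1,1+}(X)\subset W^{1,1+}(X)$, where both expressions are unambiguously defined. Beyond this interpretive remark the proof is algebra plus a single continuity estimate already recorded in Lemma~\ref{lem-extend-e}, so no serious obstacle is anticipated -- which is why the author calls it ``obvious''.
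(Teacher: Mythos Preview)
Your proof is correct and follows precisely the route the paper has in mind: the Corollary is stated without proof, immediately after Lemma~\ref{lem-extend-e}, and is meant to be an immediate consequence of the continuous bilinear extension $\E:W^{1,1+}(X)\times W_*^{1,\infty}(X)\to\R$ established there. Your interpretive remark about (iii) --- that ``coincides with $\Delta$'' must be read through the pairing on $W^{1,2}(X)\cap W^{1,1+}(X)$, since $\Delta\psi$ need only lie in $L^2$ --- is apt and worth making explicit.
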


\begin{ex} 
Let $(X,\d,\mm)$ be the standard 1-dimensional mm-space with $X=\R$ and let $x_i^n$ for $n\in\N$ and $i=1,\ldots, 2^{n-1}$ be the centers of the intervals of length $3^{-n}$ in the mid-third construction of the Cantor set $S\subset [0,1]$.
Choose $\varphi(x)=(\frac12-|x|)_+$ or, more sophisticated, choose a nonnegative function $\varphi\in {\mathcal C}^2(\R)$ with $\{\varphi>0\}=(-1/2,1/2)$ 
such  that the closures of $\{\Delta\varphi>0\}$ and $\{\Delta\varphi<0\}$ are disjoint.
Put
\begin{equation}\label{CantorPhi}\Phi(x):=\lim_{j\to\infty}\Phi_j(x), \quad \Phi_j(x):=\sum_{n=1}^j\sum_{i=1}^{2^{n-1}} 3^{-n}\varphi\big(3^n(x-x_i^n)\big).\end{equation}
Then $\Phi\in W^{1,\infty}(\R)$, more precisely, 
$$\|\Phi\|_\infty\le \frac13\|\varphi\|_\infty, \quad \|\nabla \Phi\|_\infty\le \|\nabla \varphi\|_\infty.$$
But $\underline\Delta\Phi$ is not a signed Radon measure. 

To prove the latter, 
 for each $j\in\N$,  choose  a ${\mathcal C}^1$-function  $f_j\le1$ on $\R$ with $f_j=1$ on $\{\Delta\Phi_j>0\}$ and $f_j=0$ on $\{\Phi_j\not=\Phi\}$.
 Then with $C:=\int_\R \big(\Delta\varphi(x)\big)_+dx>0$,
 \begin{eqnarray*}
 \int_0^1 d\big(\underline\Delta\Phi\big)_+&\ge&
  \int_0^1 f_jd\big(\underline\Delta\Phi\big)_+
  =  \int_0^1 f_jd\big(\underline\Delta\Phi_j\big)_+
  =\int_0^1 f_j\,\Delta\Phi_j\,dx
 =\int_0^1 (\Delta\Phi_j)_+\,dx\\
%
&=&\sum_{n=1}^j \sum_{i=1}^{2^{n-1}} 3^n
\int_0^1 \big( \Delta\varphi
\big(3^n(x-x_i^n)\big)
\big)_+dx
=C\, (2^j-1)\ \to\ \infty
 \end{eqnarray*}
 as $j\to\infty$. Thus $\int_0^1 d\big(\underline\Delta\Phi\big)_+=\infty$.
Furthermore, by scaling 
$$\int_0^{3^{-k}} d\big(\underline\Delta\Phi\big)_+=\infty$$
for all $k\in\N$. This proves that $\big(\underline\Delta\Phi\big)_+$ is not a locally finite measure and thus  $\underline\Delta\Phi$ is no Radon measure.
\end{ex}

\begin{Prop}\label{p-kappa} Given $\kappa\in W^{-1,\infty}(X)$, we
define a closed  bilinear form $\E^\kappa$ on $L^2(X)$ by
$$\E^\kappa(f,g):=\E(f,g)+
\langle f\,g,\kappa\rangle_{W^{1,1+}, W^{-1,\infty}}
$$
for $f,g\in \Dom(\E^\kappa):=W^{1,2}(X)$. The form is bounded from below on  $L^2(X)$ by $-C(C+1)$ where $C:=\|\kappa\|_{W^{-1,\infty}(X)}$.

Associated to it, there is a  strongly continuous, positivity preserving semigroup $(P^\kappa_t)_{t\ge0}$ on $L^2(X)$ with
$$\|P^\kappa_t\|_{L^2\to L^2}\le e^{C(C+1)t}.$$
\end{Prop}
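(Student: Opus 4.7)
The plan is to split the argument into well-definedness, a sharp quadratic estimate, closedness, and then the semigroup construction with positivity preservation.

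First, I would check that $\E^\kappa$ is a symmetric bilinear form on $W^{1,2}(X)$. For $f,g\in W^{1,2}(X)$, Lemma \ref{prodcuts} gives $fg\in W^{1,1+}(X)$, so the dual pairing $\langle fg,\kappa\rangle_{W^{1,1+},W^{-1,\infty}}$ is well-defined; bilinearity and symmetry are inherited from $\E$ and from the pairing. The whole proof then reduces to controlling this pairing by the Cheeger energy and the $L^2$-norm.

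The key quantitative step, which I would extract from the proof of Lemma \ref{prodcuts}, is the bound
$$\big\|f^2\big\|_{W^{1,1+}}\le \|f\|_{L^2}^2+2\int_X |f|\,|Df|\,d\mm\le \|f\|_{L^2}^2+2\,\|f\|_{L^2}\,\big\||Df|\big\|_{L^2}.$$
Dualizing against $\kappa$ and setting $C=\|\kappa\|_{W^{-1,\infty}(X)}$ yields $|\langle f^2,\kappa\rangle|\le C\|f\|_{L^2}^2+2C\,\|f\|_{L^2}\,\||Df|\|_{L^2}$. Applying Young's inequality in the form $2C\,\|f\|_{L^2}\,\||Df|\|_{L^2}\le \||Df|\|_{L^2}^2+C^2\|f\|_{L^2}^2$ and combining with $\E(f,f)=\||Df|\|_{L^2}^2$ gives
$$\E^\kappa(f,f)\ge \big\||Df|\big\|_{L^2}^2-\big\||Df|\big\|_{L^2}^2-(C+C^2)\|f\|_{L^2}^2=-C(C+1)\,\|f\|_{L^2}^2,$$
which is exactly the claimed lower bound. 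Re-running Young's inequality with a different weighting (e.g. $2C\|f\|\,\||Df|\|\le \tfrac12\||Df|\|^2+2C^2\|f\|^2$) produces the two-sided estimate
$$\tfrac12\|f\|_{W^{1,2}}^2-\lambda_0\|f\|_{L^2}^2\le \E^\kappa(f,f)\le (1+C)\|f\|_{W^{1,2}}^2$$
for a suitable $\lambda_0=\lambda_0(C)$, so that $\E^\kappa(f,f)+(\lambda_0+1)\|f\|_{L^2}^2$ is a Hilbertian norm on $W^{1,2}(X)$ equivalent to $\|\cdot\|_{W^{1,2}}$. Since $W^{1,2}(X)$ is complete, this gives closedness of $(\E^\kappa,W^{1,2}(X))$.

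With closedness, symmetry, and the lower bound $-C(C+1)$ in hand, the standard correspondence between closed symmetric forms bounded below and self-adjoint operators produces a unique self-adjoint generator $A^\kappa$ with $\E^\kappa(f,g)=\langle -A^\kappa f,g\rangle_{L^2}$ on the form domain, and spectral calculus yields a strongly continuous semigroup $P_t^\kappa=e^{tA^\kappa}$ on $L^2(X)$ with $\|P_t^\kappa\|_{L^2\to L^2}\le e^{C(C+1)t}$. For positivity preservation I would invoke the Beurling--Deny criterion: it suffices that $f\in W^{1,2}(X)$ implies $|f|\in W^{1,2}(X)$ and $\E^\kappa(|f|,|f|)\le \E^\kappa(f,f)$. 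The first is a standard property of the Cheeger energy, and since $|D|f||=|Df|$ $\mm$-a.e. and $|f|^2=f^2$ we in fact have equality $\E^\kappa(|f|,|f|)=\E^\kappa(f,f)$.

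The main obstacle I expect is tuning the two uses of Young's inequality to get both the sharp bound $-C(C+1)$ and enough coercivity for closedness simultaneously; everything else is essentially the abstract functional-analytic machinery applied to the bilinear estimate extracted from Lemma \ref{prodcuts}.
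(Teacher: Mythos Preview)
Your proposal is correct and follows essentially the same approach as the paper: the paper uses the identical estimate $|\langle f^2,\kappa\rangle|\le C\|f\|_{L^2}^2+2C\|f\|_{L^2}\E(f)^{1/2}$ followed by Young's inequality (stated there with a general parameter $\delta$, which simultaneously gives the sharp lower bound at $\delta=1$ and form-smallness for closedness), and then invokes the first Beurling--Deny criterion exactly as you do, noting $\E^\kappa(|f|)=\E^\kappa(f)$. You are slightly more explicit about the closedness argument via an equivalent norm on $W^{1,2}(X)$, but otherwise the arguments coincide.
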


\begin{remark}
(i)
The form $\E^\kappa$ is not only lower bounded, it is a ``form small perturbation of $\E$''. Indeed, for every $\delta>0$ and all $f\in W^{1,2}(X)$
$$\E^\kappa(f,f)\ge (1-\delta)\, \E(f,f)-\Big(C+\frac{C^2}\delta\Big)  \, \|f\|^2_{L^2}.$$
(ii) 
If $\kappa\in L^\infty(X)$ 
then  $(P^\kappa_t)_{t\ge0}$ is given by the Feynman-Kac formula associated with the Schr\"odinger operator $-\Delta +\kappa$ with potential $\kappa$:
\begin{equation*}
P^\kappa_tf(x)=
{\mathbb E}_x\big[e^{-\int_0^t \kappa(B_{2s})ds}f(B_{2t})\,1_{\{2t<\zeta\}}\big]
\end{equation*}
where $({\mathbb P}_x, (B_{t})_{t\ge0})$ denotes Brownian motion starting in $x\in X$.

Note that ${\mathbb P}_x$-a.s.~for $\mm$-a.e.~$x$, the random variables $f(B_{2t})$ and $\int_0^t \kappa(B_{2s})ds$  do not depend on the choice of the Borel versions of $f$ and $\kappa$, resp., since ${\mathbb E}_g\big[f(B_{2t})\,1_{\{2t<\zeta\}}\big]=\int_X g\,P_tf\,d\mm$ and
${\mathbb E}_g\big[\int_0^t \kappa(B_{2s})\,1_{\{2s<\zeta\}}ds\big]=\int_0^t\int_Xg\,P_s\kappa\,\,d\mm\,ds$ for $g\in L^1(X,\mm)$.
\end{remark}

\begin{proof}[Proof of Proposition and Remark (i)] The lower boundedness and more generally the form smallness easily follow from
\begin{eqnarray*}\Big|\langle f^2 ,\kappa\rangle_{W^{1,1+}, W^{-1,\infty}}\Big|&\le& C\cdot \|f^2\|_{W^{1,1+}}\le C\cdot 
 \|f\|_{L^2}^2+2C\cdot \|f\|_{L^2}\cdot \E(f)^{1/2}\\
 &\le&
 (C+C^2/\delta)\cdot 
 \|f\|_{L^2}^2+\delta \cdot  \E(f).
\end{eqnarray*}
In particular, this implies that $\E^\kappa(f)\ge -(C+C^2)\|f\|^2$ for all $f$ and thus by spectral calculus
$$\langle f,P^\kappa_tf\rangle_2\ge e^{-(C+C^2)\,t}\|f\|_2^2.$$

According to the first Beurling-Deny criterion, the semigroup $(P^\kappa_t)_{t\ge0}$ is positivity preserving if and only if 
$$f\in\Dom(\E^\kappa)\quad\Rightarrow\quad |f|\in\Dom(\E^\kappa) \text{ and } \E^\kappa(|f|)\le \E^\kappa(f),$$
see \cite{davies1990}, Theorem 1.3.2. This criterion obviously is fulfilled. Indeed, $\E^\kappa(|f|)= \E^\kappa(f)$ for $f\in W^{1,2}(X)=\Dom(\E^\kappa)$.
\end{proof}

Of particular interest will be to analyze the semigroup $(P^\kappa_t)_{t\ge0}$ in the case where $\kappa=-\underline\Delta\psi$ for some $\psi\in\Lip(X)$. Recall that this semigroup is well understood in the ``regular'' case where $\psi\in\Dom(\Delta)\cap L^\infty(X)$. Indeed, then
\begin{equation}\label{Fey-Kac}
P^\kappa_{t/2}f(x)=
{\mathbb E}_x\big[e^{\frac12\int_0^t \Delta\psi(B_{s})ds}f(B_{t})\,1_{\{t<\zeta\}}\big]
\end{equation}
or, in other words,
$P^\kappa_tf(x)=
{\mathbb E}_x\big[e^{\int_0^t \Delta\psi(B_{2s})ds}f(B_{2t})\,1_{\{2t<\zeta\}}\big]$.
For general $\psi$, however, this Feynman-Kac formula a priori does not make sense. We will have to find an appropriate replacement of it.

\begin{Prop}\label{psi-form}
 {\bf (i)} \ 
Given $\psi\in  \Lip(X)$, put $\kappa=-\underline\Delta\psi$. Then the 
closed, lower bounded bilinear form $\E^{\kappa}$ on $L^2(X)$ with domain $W^{1,2}(X)$ is given by
\begin{equation}\E^{\kappa}(f,g)=\E(f,g)+\E(fg,\psi).\end{equation}
(For the last expression here we used the fact that $\E$ extends to a continuous bilinear form 
 $W^{1,1+}(X)\times \Lip(X)\to\R$, Lemma \ref{lem-extend-e}, and that 
  $fg\in W^{1,1+}(X)$ for  $f,g\in W^{1,2}(X)$, Lemma \ref{prodcuts}.)

The strongly continuous, positivity preserving semigroup on $L^2(X)$ associated to it satisfies
$$\|P^{\kappa}_t\|_{L^2\to L^2}\le e^{(\Lip\,\psi)^2\,t}.$$

{\bf (ii)} \ 
Put $\hat\mm:=e^{-2\psi}\mm$. Then the unitary transformation (= Hilbert space isomorphism)
$$\Phi: \quad L^2(X,\mm)\to L^2(X,\hat\mm), \quad f\mapsto \hat f=e^{\psi}f$$
maps the quadratic form $\E^{\kappa}$, densely defined on $L^2(X,\mm)$, onto the quadratic form
$$\hat\E^\kappa(g):=\E^\kappa(e^{-\psi}g)=\int_X\Big[\Gamma(g)-g^2\,\Gamma(\psi)\Big]\,d\hat\mm,$$
densely defined on $L^2(X,\hat\mm)$ and  bounded from below by $-(\Lip\psi)^2\,\|g\|^2_{L^2}$.\\
(Since $\psi$ is bounded on bounded sets, $\Gamma$ coincides with the Gamma-operator for the metric measure space $(X,\d,\hat\mm)$ and  $\Phi$ maps $W^{1,2}(X,\d,\mm)$ bijectively onto $W^{1,2}(X,\d,\hat\mm)$.
Moreover,
$\hat\E$ is just a perturbation of the canonical energy on  $(X,\d,\hat\mm)$ by a bounded zeroth order term.)

 {\bf (iii)} \ 
The semigroup $(\hat P_t^\kappa)_{t\ge0}$ on $L^2(X,\hat\mm)$ associated with the
the quadratic form
$\hat\E^\kappa$
is related to the semigroup $(P_t^\kappa)_{t\ge0}$ on $L^2(X,\mm)$
via
$$\hat P_t^\kappa f:=e^{\psi}\, P^\kappa_t\big(e^{-\psi}f\big).$$ Furthermore, it can be represented in terms of the heat semigroup
$(\hat P_t)_{t\ge0}$ on $L^2(X,\hat\mm)$
by the Feynman-Kac formula with potential $-\Gamma(\psi)$. Since the latter is a bounded function,
the semigroup is bounded on each $L^p(X,\hat\mm)$ with
$$\big\|\hat P_t^\kappa\big\|_{L^p(X,\hat\mm)\to L^p(X,\hat\mm)}\le e^{(\Lip\,\psi)^2t}$$
for all $p\in[1,\infty]$. 
This allows us to conclude that for each $\psi\in\Lip_b(X)$, the original semigroup satisfies
\begin{equation}\big\|P_t^\kappa\big\|_{L^p(X,\mm)\to L^p(X,\mm)}\le e^{|1- 2/p|\,\operatorname{osc}\psi+(\Lip\,\psi)^2t}.
\end{equation}
\end{Prop}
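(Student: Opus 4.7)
The three parts build on one another; I would handle them in order.

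\emph{Part (i).} The identity $\mathcal E^\kappa(f,g)=\mathcal E(f,g)+\mathcal E(fg,\psi)$ is just an unfolding of definitions: since $\kappa=-\underline\Delta\psi$, the corollary preceding the proposition gives $\langle fg,\kappa\rangle_{W^{1,1+},W^{-1,\infty}}=\int_X\Gamma(fg,\psi)\,d\mm=\mathcal E(fg,\psi)$, which is well-defined by Lemma \ref{prodcuts} and Lemma \ref{lem-extend-e}. For the sharp semigroup bound I would avoid invoking Remark (i) (which is not optimal) and instead estimate directly: using the Leibniz rule $\Gamma(f^2,\psi)=2f\,\Gamma(f,\psi)$ and the pointwise bound $|\Gamma(f,\psi)|\le(\Lip\psi)\,|\nabla f|$, I have
\begin{equation*}
\mathcal E^\kappa(f,f)=\int_X\big[|\nabla f|^2+2f\,\Gamma(f,\psi)\big]\,d\mm\;\ge\;-(\Lip\psi)^2\,\|f\|_{L^2}^2
\end{equation*}
by Young's inequality $2(\Lip\psi)|f||\nabla f|\le|\nabla f|^2+(\Lip\psi)^2f^2$. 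Spectral calculus then yields $\|P^\kappa_t\|_{L^2\to L^2}\le e^{(\Lip\psi)^2 t}$.

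\emph{Part (ii).} I would just compute. Since $\hat\mm=e^{-2\psi}\mm$ with $e^{-2\psi}$ locally bounded above and below, the minimal weak upper gradients — and hence $\Gamma$ and $W^{1,2}$ — coincide for $(X,\d,\mm)$ and $(X,\d,\hat\mm)$, and $\Phi$ is clearly an $L^2$-isometry. Writing $f=e^{-\psi}g$ and applying the Leibniz rule gives
\begin{equation*}
\Gamma(f)=e^{-2\psi}\bigl[\Gamma(g)-2g\,\Gamma(g,\psi)+g^2\Gamma(\psi)\bigr],\qquad
\Gamma(f^2,\psi)=2e^{-2\psi}\bigl[g\,\Gamma(g,\psi)-g^2\Gamma(\psi)\bigr].
\end{equation*}
Adding and integrating against $\mm$ (equivalently, against $e^{-2\psi}d\hat\mm$ vs.\ $d\hat\mm$) the cross terms cancel and the two $g^2\Gamma(\psi)$ terms combine with opposite sign, leaving $\hat{\mathcal E}^\kappa(g)=\int_X[\Gamma(g)-g^2\Gamma(\psi)]\,d\hat\mm$. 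The lower bound $-(\Lip\psi)^2\|g\|^2_{L^2(\hat\mm)}$ is immediate from $0\le\Gamma(\psi)\le(\Lip\psi)^2$.

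\emph{Part (iii).} The intertwining relation $\hat P^\kappa_t=\Phi\circ P^\kappa_t\circ\Phi^{-1}$ is forced by the unitary equivalence of the associated closed forms. The form $\hat{\mathcal E}^\kappa=\hat{\mathcal E}+\int V g^2\,d\hat\mm$ with bounded potential $V=-\Gamma(\psi)\in L^\infty$ is a zeroth-order perturbation of a Dirichlet form, so the standard Feynman--Kac representation
\begin{equation*}
\hat P^\kappa_{t}f(x)=\hat{\mathbb E}_x\!\left[\exp\!\Big(\int_0^{t}\Gamma(\psi)(\hat B_{2s})\,ds\Big)\,f(\hat B_{2t})\,\mathbf 1_{\{2t<\hat\zeta\}}\right]
\end{equation*}
applies (with respect to Brownian motion $\hat B$ on $(X,\d,\hat\mm)$). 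Since $\hat P_t$ is a contraction on every $L^p(\hat\mm)$ and the exponent is bounded by $(\Lip\psi)^2 t$, one obtains $\|\hat P^\kappa_t\|_{L^p(\hat\mm)\to L^p(\hat\mm)}\le e^{(\Lip\psi)^2 t}$ for all $p\in[1,\infty]$. To transfer this back to $P^\kappa_t=\Phi^{-1}\hat P^\kappa_t\Phi$, I would estimate, for $p\ge2$, $\|e^\psi f\|_{L^p(\hat\mm)}^p=\int e^{(p-2)\psi}|f|^p\,d\mm\le e^{(p-2)\sup\psi}\|f\|_{L^p(\mm)}^p$ and similarly $\|e^{-\psi}h\|_{L^p(\mm)}^p\le e^{-(p-2)\inf\psi}\|h\|_{L^p(\hat\mm)}^p$; the two exponentials combine to $e^{(p-2)\operatorname{osc}\psi/p}=e^{(1-2/p)\operatorname{osc}\psi}$, and the argument for $p<2$ is symmetric, giving the factor $|1-2/p|\operatorname{osc}\psi$.

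\emph{Expected obstacle.} Parts (i) and (ii) are algebraic once one has the Leibniz rule and the identification of $\underline\Delta\psi$ via $\mathcal E(\cdot,\psi)$. The delicate point is part (iii): the Feynman--Kac representation of $\hat P^\kappa_t$ in the abstract quasi-regular Dirichlet form setting needs a citation (e.g.\ to Albeverio--Ma--R\"ockner or the classical reference of Fukushima--Oshima--Takeda for lower semi-bounded potentials), together with the observation that $\Gamma(\psi)$ has a bounded quasi-continuous version so the additive functional $\int_0^t\Gamma(\psi)(\hat B_{2s})\,ds$ is unambiguously defined $\hat{\mathbb P}_x$-a.s.\ for quasi-every $x$.
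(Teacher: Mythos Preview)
Your proof is correct and follows essentially the same approach as the paper's. The paper's own proof is terser—it records only the key inequality $\E^{\kappa}(f,f)\ge\E(f,f)-\Lip\psi\cdot\int\Gamma(f^2)^{1/2}\,d\mm\ge -(\Lip\psi)^2\|f\|_{L^2}^2$ for (i), performs the same $L^p$ conjugation estimate for (iii), and declares ``the rest is straightforward''—so your computation in (ii) and your remarks on the Feynman--Kac representation simply fill in details the paper leaves to the reader.
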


\begin{proof} 
The norm estimate in (i) follows from the fact that
$$\E^{\kappa}(f,f)\ge\E(f,f)-\Lip\,\psi\cdot\int\Gamma(f^2)^{1/2}d\mm\ge-(\Lip\,\psi)^2\cdot\|f\|_{L^2}^2$$
and the estimate in (iii) from
\begin{eqnarray*}
\big\|P^\kappa_tf\big\|_{L^p(\mm)}&=&\Big(\int e^{-p\psi}\,\big[ \hat P_t^\kappa(e^\psi\,f)\big]^p\, e^{2\psi}d\hat\mm\Big)^{1/p}\le
e^{-\inf [(1-2/p)\psi]}\, e^{(\Lip\psi)^2t}\cdot \big\|e^\psi f\big\|_{L^p(\hat\mm)}\\
&\le&
e^{-\inf [(1-2/p)\psi]}\, e^{(\Lip\,\psi)^2t}\,e^{\sup [(1-2/p)\psi]}\cdot \big\| f\big\|_{L^p(\mm)}.
\end{eqnarray*}
The rest is straightforward.
\end{proof}

\bigskip

A more explicit representation for the semigroup $(P^\kappa_t)_{t\ge0}$ will be possible by extending the Fukushima decomposition which in turn is an extension of the famous Ito decomposition. In the Euclidean case with smooth $\psi$,
the latter states that
$$\psi(B_t)=\psi(B_0)+\int_0^t\nabla\psi(B_s)dB_s+\frac12\int_0^t\Delta\psi(B_s)ds.$$
This  indicates a way how to replace the expression $\frac12\int_0^t\Delta\psi(B_s)ds$ appearing in \eqref{Fey-Kac} 
by expressions which only involve first (and zero) order derivatives of $\psi$.

\begin{lma}[``Fukushima decomposition''] \ 

 {\bf (i)} \ For each $\psi\in\Lip_{bs}(X)$ there exists a unique 
 martingale additive functional  $M^\psi$  and a unique   continuous additive functional which is of zero quadratic variation $N^\psi$ such that 
\begin{equation}\label{Fuku}\psi(B_t)=\psi(B_0)+M^\psi_t+N^\psi_t\qquad\text{ $(\forall t\in[0,\zeta))\quad{\mathbb P}_x$-a.s.~for q.e.~$x\in X$}
\end{equation}

 {\bf (ii)} \ For each $\psi\in\Lip(X)$ there exists a unique local martingale additive functional  $M^\psi$  such that for each  $z\in X$,
 $$M^\psi_t=\lim_{n\to\infty}M^{\psi_n}_t\qquad\text{ $(\forall t\in[0,\zeta))\quad{\mathbb P}_x$-a.s.~for q.e.~$x\in X$}$$
 where $M^{\psi_n}$ denotes the martingale additive functional associated with the function 
 $\psi_n=\chi_n\cdot\psi\in\Lip_{bs}(X)$ according to part (i) and where $\chi_n(.)=[1-\d(\B_n(z),.)]_+$ for $n\in\N$.
 
  {\bf (iii)} \  The quadratic variation of $M^\psi$ is given by
 $$\langle M^\psi\rangle_t=\int_0^t\Gamma(\psi)(B_s)ds\qquad\text{ $(\forall t\in[0,\zeta))\quad {\mathbb P}_x$-a.s.~for q.e.~$x\in X$}$$
 for any choice of a Borel version of the function $\Gamma(\psi)\in L^\infty(X,\mm)$.
\end{lma}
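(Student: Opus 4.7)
The overall strategy is to reduce the lemma to the Fukushima decomposition theorem for the quasi-regular Dirichlet form $(\E, W^{1,2}(X))$ associated with the Brownian motion $(\PP_x,B_t)$; the extension to quasi-regular forms (Ma--R\"ockner) applies directly in our setting, and the whole lemma is essentially a packaging of this classical machinery, adapted to the class $\Lip_{bs}(X)$ of test functions preferred in this paper.

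For \textbf{(i)}, note that $\psi\in\Lip_{bs}(X)$ belongs to $W^{1,2}(X)=\Dom(\E)$ since it is bounded, has bounded support, and $|D\psi|\le\Lip\psi$ $\mm$-a.e.; being continuous, $\psi$ is its own quasi-continuous version. The classical Fukushima decomposition then yields uniquely an MAF of finite energy $M^\psi$ and a CAF of zero energy $N^\psi$ with $\psi(B_t)-\psi(B_0)=M^\psi_t+N^\psi_t$ for $t<\zeta$, $\PP_x$-a.s.~for q.e.~$x$. In the strongly local setting, zero energy is equivalent to zero quadratic variation, which gives the statement.

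For \textbf{(ii)}, apply (i) to $\psi_n:=\chi_n\psi\in\Lip_{bs}(X)$ and introduce the exit times $\tau_n:=\inf\{t\ge 0: B_t\notin\B_n(z)\}$. Since $\chi_n\equiv 1$ on $\B_n(z)$, we have $\psi_n(B_t)=\psi(B_t)$ for $t<\tau_n$, and for $m\ge n$ the difference $\psi_m-\psi_n$ vanishes identically on $\B_n(z)$. Strong locality of $\E$ forces the energy measure of $\psi_m-\psi_n$ to be supported outside $\B_n(z)$, so the MAF $M^{\psi_m-\psi_n}$ has zero quadratic variation on $[0,\tau_n)$ and hence vanishes there; uniqueness in (i) similarly forces $N^{\psi_m-\psi_n}\equiv 0$ on this interval. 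Thus $M^{\psi_m}_t = M^{\psi_n}_t$ for $t<\tau_n$, q.e.~in $x$, and we may set $M^\psi_t:=\lim_n M^{\psi_n}_t$, the limit stabilizing on each $\{t<\tau_n\}$. Because $\tau_n\uparrow\zeta$ quasi-surely (the cutoffs exhaust $X$ and the path of $B$ up to any time $t<\zeta$ stays in some bounded set), this produces a local MAF on $[0,\zeta)$.

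For \textbf{(iii)}, the quadratic variation $\langle M^\psi\rangle$ is a PCAF and, by the Revuz correspondence, is uniquely determined by its Revuz measure; in Fukushima's theory this measure is the energy measure $\mu_{\langle\psi\rangle}$ of $\psi$ for $\E$. In our infinitesimally Hilbertian setting, $\E$ carries the carr\'e du champ $\Gamma$ and $\mu_{\langle\psi\rangle}=\Gamma(\psi)\,\mm$. Since $\Gamma(\psi)\in L^\infty(X,\mm)$, the PCAF with Revuz measure $\Gamma(\psi)\,\mm$ is $\int_0^t\Gamma(\psi)(B_s)\,ds$ for any Borel version of $\Gamma(\psi)$, the version independence being exactly the argument already given in the remark preceding the lemma. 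The hard part of the whole proof is (ii): rigorously invoking strong locality to glue the decompositions on the random intervals $[0,\tau_n)$, and confirming $\tau_n\uparrow\zeta$ q.s.~through the quasi-regular exceptional-set bookkeeping; once this is in place, both the existence in (ii) and the Revuz identification in (iii) follow routinely.
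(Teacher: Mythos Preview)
Your proof is correct and follows essentially the same route as the paper: both reduce (i) and the $\Lip_{bs}$ case of (iii) to the classical Fukushima decomposition in \cite{fukushima2011} (Theorems 5.2.2 and 5.2.5), and handle (ii) by localizing via the cutoffs $\chi_n$ and the exit times $\tau_n$ from $\B_n(z)$. The only minor technical difference is that the paper obtains the convergence in (ii) through $L^2$-boundedness of the stopped martingales (using $\langle M^\lambda\rangle_t\le (\Lip\psi)^2\,t$), which in fact upgrades $M^\psi$ to a genuine martingale, whereas your consistency-and-gluing argument via strong locality stops at the local-martingale property the statement actually asks for.
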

For the defining properties of ``martingale additive functionals'' and of ``continuous additive functionals of zero quadratic variation'' (as well as for the relevant equivalence relations  that underlie the uniqueness statement) 
we refer to the monograph \cite{fukushima2011}.

\begin{proof} Assertion (i) is one of the key results in  \cite{fukushima2011}. Indeed, it is proven there as Theorem 5.2.2 for general quasi continuous $\psi\in \Dom(\E)$ and it is extended in Theorem 5.5.1 by localization to a more general class which contains $\Lip(X)$.
Also assertion (iii) for $\psi\in\Lip_{bs}(X)$ is a standard result, see 
\cite{fukushima2011}, Theorem 5.2.5. Let us briefly discuss its extension to general  $\psi\in\Lip(X)$. 

Given $\psi\in\Lip(X)$ and $z\in X$, we define $\psi_n=\chi_n\cdot\psi$ with cut-off functions $\chi_n$ as above and stopping times
$\tau_\lambda:=\inf\{t\ge0: B_t\not\in \B_\lambda(z)\}$ for $\lambda\in\N$. Then we put
$$M^{\lambda}_t:=M^{\psi_n}_{t\wedge\tau_\lambda}$$
for any $\lambda\le n$. Thus $M^\lambda$ is a martingale with ${\mathbb E}_xM^\lambda_t=0$ and
$$\langle M^\lambda\rangle_t=\int_0^{t\wedge\tau_\lambda}\Gamma(\psi)(B_s)ds\qquad\text{ ${\mathbb P}_x$-a.s.~for q.e.~$x\in X$}.$$
It follows that for q.e.~$x$, the family $(M^\lambda_t)_{\lambda\in\N}$ is an $L^2$-bounded martingale w.r.t.~${\mathbb P}_x$ with
$${\mathbb E}_x\Big[\big( M^\lambda_t\big)^2\Big]
\le {\mathbb E}_x\Big[\int_0^t\Gamma(\psi)(B_s)ds\Big]=\int_0^t P_{s/2}\Gamma(\psi)(x)ds\le (\Lip\, \psi)^2\cdot t.$$
Thus the limit $M_t:=\lim_{\lambda\to\infty}M_t^\lambda$ exists and is a martingale w.r.t.
${\mathbb P}_x$ for q.e.~$x\in X$.
Moreover, $\langle M\rangle_t=\lim_{\lambda\to\infty}\langle M^\lambda\rangle_t=\int_0^t\Gamma(\psi)(B_s)ds$.
\end{proof}

\begin{Prop} Given $\psi\in\Lip_b(X)$, put
\begin{equation}\label{N-ito}N^\psi_t:=\psi(B_t)-\psi(B_0)-M^\psi_t\end{equation}
with $M^\psi_t$ as defined in the previous Lemma, part (ii). Then 
with $(P^\kappa_{t/2})_{t\ge0}$ as defined in Proposition \ref{p-kappa}, for each $f\in \bigcup_{p\in[1,\infty]} L^p(X,\mm)$,
\begin{equation}\label{N-repr}
P^\kappa_{t/2}f(x)={\mathbb E}_x\big[e^{N^\psi_t}f(B_t)\,1_{\{t<\zeta\}}\big]\qquad\text{ for $\mm$-a.e.~$x\in X$}.\end{equation}
\end{Prop}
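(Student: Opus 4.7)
My plan is to derive \eqref{N-repr} by a Girsanov change of measure that intertwines the $\mm$-Brownian motion $B$ with the $\hat\mm$-Brownian motion $\hat B$ on the weighted space $(X,\d,\hat\mm)$ with $\hat\mm:=e^{-2\psi}\mm$, and then to invoke the classical Feynman--Kac formula on the weighted space, where the relevant potential is now the bounded function $-\Gamma(\psi)$. To start, rewrite \eqref{N-ito} via the stochastic-exponential identity as
$$e^{N^\psi_t}=e^{\psi(B_t)-\psi(B_0)}\cdot Z_t\cdot \exp\!\Bigl(\tfrac12\langle M^\psi\rangle_t\Bigr),\qquad Z_t:=\exp\!\Bigl(-M^\psi_t-\tfrac12\langle M^\psi\rangle_t\Bigr).$$
By the preceding Lemma, part (iii), $\langle M^\psi\rangle_t=\int_0^t\Gamma(\psi)(B_s)\,ds\le(\Lip\psi)^2\,t$, so Novikov's criterion applies and $Z$ is a true positive ${\mathbb P}_x$-martingale of mean $1$ for q.e.~$x$, with uniform $L^p$-bounds on every compact time interval for all $p\ge 1$.

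Next, define the probability ${\tilde{\mathbb P}}_x$ on $\mathcal F_t$ by $d{\tilde{\mathbb P}}_x:=Z_t\,d{\mathbb P}_x$. The Girsanov transformation for additive functionals of regular Dirichlet forms (cf.\ \cite{fukushima2011}, Chap.~6) implies that, under ${\tilde{\mathbb P}}_x$, the martingale additive functional $M^\phi$ becomes $M^\phi_t-\langle M^\phi,M^\psi\rangle_t=M^\phi_t-\int_0^t\Gamma(\phi,\psi)(B_s)\,ds$ for every eligible $\phi$. This is precisely the martingale decomposition associated with the Dirichlet form $\hat\E$ on $L^2(X,\hat\mm)$ from Proposition~\ref{psi-form}(ii), whose generator acts formally as $\hat\Delta=\Delta-2\Gamma(\psi,\cdot)$. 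Since $e^{-2\psi}$ is bounded above and below, the Gamma operators and the classes of exceptional sets on $(X,\d,\mm)$ and on $(X,\d,\hat\mm)$ coincide; hence the law of $B$ under ${\tilde{\mathbb P}}_x$ equals the law of the $\hat\mm$-Brownian motion $\hat B$ under $\hat{\mathbb P}_x$.

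Combining the two preceding steps and substituting,
\begin{align*}
{\mathbb E}_x\bigl[e^{N^\psi_t}f(B_t)\,1_{\{t<\zeta\}}\bigr]
&=e^{-\psi(x)}\,{\mathbb E}_x\bigl[Z_t\,e^{\psi(B_t)}f(B_t)\,e^{\frac12\int_0^t\Gamma(\psi)(B_s)ds}\,1_{\{t<\zeta\}}\bigr]\\
&=e^{-\psi(x)}\,\hat{\mathbb E}_x\bigl[e^{\psi(\hat B_t)}f(\hat B_t)\,e^{\frac12\int_0^t\Gamma(\psi)(\hat B_s)ds}\,1_{\{t<\hat\zeta\}}\bigr].
\end{align*}
Proposition~\ref{psi-form}(ii) realizes $\hat\E^\kappa$ as the canonical Dirichlet form on $(X,\d,\hat\mm)$ perturbed by the bounded potential $-\Gamma(\psi)\in L^\infty$; the classical Feynman--Kac formula for bounded potentials therefore identifies the last expression with $\hat P^\kappa_{t/2}(e^\psi f)(x)$, and the intertwining $P^\kappa_t f=e^{-\psi}\hat P^\kappa_t(e^\psi f)$ from Proposition~\ref{psi-form}(iii) yields \eqref{N-repr} for bounded $f\in L^2(X,\mm)$. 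Density and the $L^p$-bounds of Proposition~\ref{psi-form}(iii) then extend the identity to every $f\in\bigcup_{p\in[1,\infty]}L^p(X,\mm)$.

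The main obstacle is the second step: rigorously identifying the law of $B$ under the Girsanov-transformed measure ${\tilde{\mathbb P}}_x$ with that of the $\hat\mm$-Brownian motion $\hat B$. This relies on the Fukushima--Oshima--Takeda theory of multiplicative-functional transformations of regular Dirichlet forms, together with the preservation of $\Gamma$ and of quasi-everywhere notions under the bounded density change $\mm\mapsto e^{-2\psi}\mm$. A conceivable alternative — approximating $\psi$ by $\psi^\epsilon:=P_\epsilon\psi\in\Dom(\Delta)\cap L^\infty(X)$, invoking the classical Feynman--Kac formula for the bounded potential $-\Delta\psi^\epsilon$, and passing to the limit using $\kappa^\epsilon\to\kappa$ in $W^{-1,\infty}(X)$ and $N^{\psi^\epsilon}_t\to N^\psi_t$ in probability — would require a uniform gradient bound $\Gamma(P_\epsilon\psi)\le(\Lip\psi)^2$ that generally fails outside the RCD framework, and also uniform integrability of $e^{N^{\psi^\epsilon}_t}$, which is not immediate.
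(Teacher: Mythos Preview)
Your proposal is correct and follows essentially the same route as the paper: both decompose $e^{N^\psi_t}$ into the product of a Girsanov exponential $Z_t$, the Feynman--Kac weight $e^{\frac12\int_0^t\Gamma(\psi)(B_s)ds}$, and the Doob factor $e^{\psi(B_t)-\psi(B_0)}$, then identify the result with $e^{-\psi}\hat P^\kappa_{t/2}(e^\psi f)=P^\kappa_{t/2}f$ via Proposition~\ref{psi-form}. The paper resolves precisely the obstacle you flag---the rigorous Girsanov identification---by first restricting to $\psi\in\Lip_{bs}(X)\subset\Dom(\E)$ (where the Chen--Zhang theory \cite{chen2002} applies directly, since $\mu_{\langle\psi\rangle}=\Gamma(\psi)\,\mm$ is then a Kato-class measure), and afterwards passing to general $\psi\in\Lip_b(X)$ by the cutoff approximation $\psi_n=\chi_n\psi$, proving separately that $P^{\kappa_n}_{t/2}f\to P^\kappa_{t/2}f$ (via Duhamel and form smallness) and that ${\mathbb E}_x[e^{N^{\psi_n}_t}f(B_t)]\to{\mathbb E}_x[e^{N^{\psi}_t}f(B_t)]$ (via H\"older and exit-time estimates).
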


\begin{proof} In order to derive the representation formula \eqref{N-repr} with the additive functional $N$ given by \eqref{N-ito},
we will replace the (non-existing) Feynman-Kac transformation with potential $\frac12\underline\Delta\psi$ by
\begin{itemize}
\item[\bf (a)] a Girsanov transformation with drift $-\Gamma(\psi,.)$
\item[\bf (b)]  together with a Feynman-Kac transformation with potential $\frac12\Gamma(\psi)$
\item[\bf (c)]  followed by a Doob transformation with function $e^{\psi}$.
\end{itemize}
Each of these transformations provides a multiplicative factor in the representation of the semigroup which together amount to 
$$e^{-M_t^\psi+\frac12\langle M\rangle_t}\cdot e^{-\frac12\int_0^t\Gamma(\psi)(B_s)ds}\cdot e^{\psi(B_t)-\psi(B_0)}=e^{N^\psi_t}.$$
Let us perform these transformations first under the additional assumption that $\psi\in \Lip_{bs}(X)$ in which case all details can be found in the paper \cite{chen2002} since in this case $\psi\in \Dom(\E)$ and
$\mu_{\langle \psi\rangle}=\Gamma(\psi)\,\mm$ is a Kato class measure (indeed, it is a measure with bounded density).

  {\bf (a)} In the first step, we pass from the metric measure space $(X,\d,\mm)$ to the metric measure space $(X,\d,\hat\mm)$ with $\hat\mm=e^{-2\psi}\mm$ or, equivalently, we pass from the Dirichlet form $\E(f)=\int\Gamma(f)d\mm$ on $L^2(X,\mm)$ to the
  Dirichlet form $\hat\E(f)=\int\Gamma(f)d\hat\mm$ on $L^2(X,\hat\mm)$. This 
    amounts to pass from the heat semigroup $(P_t)_{t\ge0}$ to the semigroup $(\hat P_t)_{t\ge0}$ given by Girsanov's formula
$$\hat P_{t/2}f(x)=\hat{\mathbb E}_x\big[f(B_t)\big]={\mathbb E}_x\big[e^{-M^\psi_t-\frac12\langle M^\psi\rangle_t}\,f(B_t)\,1_{\{t<\zeta\}}\big]
\qquad\text{ for $\mm$-a.e.~$x\in X$}$$
with $M^\psi$ being the martingale additive functional as introduced in the previous Lemma.

{\bf (b)} In the second step, we pass from the
  Dirichlet form $\hat\E(f)=\int\Gamma(f)d\hat\mm$ on $L^2(X,\hat\mm)$ to the 
  Dirichlet form $\hat\E^\kappa(f)=\int\big[\Gamma(f)-\Gamma(\psi)\cdot f^2\big]d\hat\mm$ on $L^2(X,\hat\mm)$.
  This amounts to pass from the  semigroup $(\hat P_t)_{t\ge0}$ to the semigroup 
  $(\hat P^\kappa_t)_{t\ge0}$ given by Feynman-Kac's formula
$$\hat P^\kappa_{t/2}f(x)=\hat{\mathbb E}_x\big[e^{\frac12\int_0^t\Gamma(\psi)(B_s)ds}\, f(B_t)\,1_{\{t<\zeta\}}\big]={\mathbb E}_x\big[e^{-M^\psi_t}\,f(B_t)\,1_{\{t<\zeta\}}\big]\qquad\text{ for $\mm$-a.e.~$x\in X$}.$$

  {\bf (c)} In the final step, we pass from the 
  Dirichlet form $\hat\E^\kappa(f)=\int\big[\Gamma(f)-\Gamma(\psi)\cdot f^2\big]d\hat\mm$ on $L^2(X,\hat\mm)$ to the 
  Dirichlet form $\E^\kappa(f)=\int\big[\Gamma(f)+\Gamma(f^2,\psi)\big]d\mm$ on $L^2(X,\mm)$, see previous Proposition.
  This amounts to pass from the  semigroup $(\hat P^\kappa_t)_{t\ge0}$ to the semigroup 
  $(P^\kappa_t)_{t\ge0}$ given by Doob's formula
  $$P^\kappa_{t/2}f(x)=e^{-\psi(x)}\hat P^\kappa_{t/2}(e^{\psi}f)(x)=
  {\mathbb E}_x\big[e^{-\psi(B_0)-M^\psi_t+\psi(B_t)}\,f(B_t)\,1_{\{t<\zeta\}}\big]={\mathbb E}_x\big[e^{N^\psi_t}\,f(B_t)\,1_{\{t<\zeta\}}\big]$$
 for $\mm$-a.e.~$x\in X$.
  This proves the claim in the case $\psi\in \Lip_{bs}(X)$. 
  
  For general $\psi\in \Lip(X)$, we choose cut-off functions $\chi_n, n\in\N$, as in the previous Lemma and put $\psi_n=\chi_n\cdot\psi$ and $\kappa_n=-\underline\Delta\psi_n$. Then by the previous argumentation,
   $$P^{\kappa_n}_{t/2}f(x)={\mathbb E}_x\big[e^{N^{\psi_n}_t}\,f(B_t)\,1_{\{t<\zeta\}}\big]\qquad\text{ for $\mm$-a.e.~$x\in X$}$$
   for each $n\in\N$. It remain to prove 
 \begin{equation}\label{eins}
P^{\kappa_n}_{t/2}f(x)\to P^{\kappa}_{t/2}f(x)\qquad\text{ for $\mm$-a.e.~$x\in X$}
\end{equation}
 as $n\to\infty$ as well as
\begin{equation}\label{zwei}
{\mathbb E}_x\big[e^{N^{\psi_n}_t}\,f(B_t)\,1_{\{t<\zeta\}}\big] \to {\mathbb E}_x\big[e^{N^{\psi}_t}\,f(B_t)\,1_{\{t<\zeta\}}\big]\qquad\text{ for $\mm$-a.e.~$x\in X$}.
\end{equation}

To prove the latter, let us first restrict to $f\in L^p(X,\mm)$ for some $p\in(1,\infty]$. Then 
\begin{eqnarray*}
\Big| {\mathbb E}_x\big[e^{N^{\psi_n}_t}\,f(B_t)\,1_{\{t<\zeta\}}\big] - {\mathbb E}_x\big[e^{N^{\psi}_t}\,f(B_t)\,1_{\{t<\zeta\}}\big]\Big|\le
 {\mathbb E}_x\Big[ \big(e^{N^{\psi_n}_t}+e^{N^{\psi}_t}\big)\, |f|(B_t)\,1_{\{\zeta>t>\tau_n\}}\Big].
\end{eqnarray*}
For every $q,s\in(1,\infty)$ with $\frac1p+\frac1q+\frac1s=1$, by Hölder's inequality
\begin{eqnarray*}
 {\mathbb E}_x\Big[ e^{N^{\psi}_t}\, |f|(B_t)\,1_{\{\zeta>t>\tau_n\}}\Big]
 &\le&e^{2\|\psi\|_{L^\infty}}\cdot  {\mathbb E}_x\Big[ e^{-M^{\psi}_t}\, |f|(B_t)\,1_{\{\zeta>t>\tau_n\}}\Big]\\
 &\le&e^{2\|\psi\|_{L^\infty}+\frac s2(\Lip\psi)^2t}\cdot  {\mathbb E}_x\Big[ e^{-M^{\psi}_t-\frac s2\langle M^\psi\rangle_t}
 \, |f|(B_t)\,1_{\{t>\tau_n\}}\Big]\\
  &\le&e^{2\|\psi\|_{L^\infty}+\frac s2(\Lip\psi)^2t}\cdot  {\mathbb E}_x\Big[ e^{-sM^{\psi}_t-\frac{s^2}2\langle M^\psi\rangle_t}\,1_{\{t<\zeta\}}\Big]^{1/s}
  \\
  &&\cdot
 {\mathbb E}_x\Big[ |f|^p(B_t)\,1_{\{t<\zeta\}}\Big]^{1/p}
 \cdot {\mathbb P}_x\big[\{\zeta>t>\tau_n\}\big]^{1/q}\\
 &\le&e^{2\|\psi\|_{L^\infty}+\frac s2(\Lip\psi)^2t}\cdot(P_{t/2}|f|^p)^{1/p}(x)\cdot {\mathbb P}_x\big[\{\zeta>t>\tau_n\}\big]^{1/q}
\end{eqnarray*}
for $\mm$-a.e.~$x\in X$. For the last estimate we used the fact that $e^{-sM^{\psi}_t-\frac{s^2}2\langle M^\psi\rangle_t}\,1_{\{t<\zeta\}}$ is a super-martingale.
Thus obviously ${\mathbb E}_x\Big[ e^{N^{\psi}_t}\, |f|(B_t)\,1_{\{\zeta>t>\tau_n\}}\Big]\to0$ for $\mm$-a.e.~$x\in X$ as $n\to\infty$.
Analogously, we can estimate
\begin{eqnarray*}
 {\mathbb E}_x\Big[ e^{N^{\psi_n}_t}\, |f|(B_t)\,1_{\{\zeta>t>\tau_n\}}\Big]
 &\le&e^{2\|\psi_n\|_{L^\infty}+\frac s2(\Lip_n\psi)^2t}\cdot(P_{t/2}|f|^p)^{1/p}(x)\cdot {\mathbb P}_x\big[\{\zeta>t>\tau_n\}\big]^{1/q}\\
 &\le&e^{2\|\psi\|_{L^\infty}+\frac s2(\Lip\psi+ \|\psi\|_{L^\infty})^2t}\cdot(P_{t/2}|f|^p)^{1/p}(x)\cdot {\mathbb P}_x\big[\{\zeta>t>\tau_n\}\big]^{1/q}\\
 &\to& 0
\end{eqnarray*}
for $\mm$-a.e.~$x\in X$ as $n\to\infty$.
This proves \eqref{zwei} in the case $f\in L^p(X,\mm)$ for some $p\in(1,\infty]$. The claim for $f\in L^1(X,\mm)$ follows by a simple truncation argument and monotone convergence.

To prove  \eqref{eins}, it suffices to consider the case $f\in L^2(X,\mm)$.
The assertion for $f\in L^p$, $p\not=2$, follows by density of $L^2\cap L^p$ in $L^p$ and by boundedness of $P^\kappa_t$ (as well as boundedness of $P^{\kappa_n}_t$, uniformly in $n$) on $L^p$, cf. previous Proposition. To deduce \eqref{eins} in the case $p=2$, Duhamel's formula allows us to derive
\begin{eqnarray*}
\int g \big(P^\kappa_tf-P^{\kappa_n}_tf)\,d\mm=-\int_0^t \E\big(P^\kappa_sg\cdot P^{\kappa_n}_{t-s}f, \psi-\psi_n\big)\,ds\\
\end{eqnarray*}
for all $f,g\in L^2$. Thus
\begin{eqnarray*}
\Big|\int g \big(P^\kappa_tf-P^{\kappa_n}_tf)\,d\mm\Big|
&\le& \Lip(\psi-\psi_n)\cdot
\int_0^t \int_{\B_{n+1}(z)}\Gamma\big(P^\kappa_sg\cdot P^{\kappa_n}_{t-s}f\big)^{1/2}\,d\mm\,ds\\
&\le& \big(\Lip(\psi)+\|\psi\|_{L^\infty}\big)\cdot
\int_0^t \Big[\int_{\B_{n+1}(z)}\Gamma\big(P^\kappa_sg\big)+\big|P^\kappa_sg\big|^2
\,d\mm\Big]^{1/2}\\
&&\qquad\qquad\qquad\qquad\qquad
\cdot
\Big[\int_X\Gamma\big( P^{\kappa_n}_{t-s}f\big)+ \big|
 P^{\kappa_n}_{t-s}f\big|^2
\,d\mm\Big]^{1/2}
\,ds.
\end{eqnarray*}
Form boundedness of $\E^\kappa$ w.r.t.~$\E$ implies 
$$\int_{X}\Gamma\big(P^\kappa_sg\big)+\big|P^\kappa_sg\big|^2
\,d\mm\le C\, \Big(\E^\kappa(P^\kappa_sg)+\|P^\kappa_sg\|_{L^2}^2\Big)
$$
for all $s\in [0,t]$. Hence,
$\int_0^t \int_{X}\Gamma\big(P^\kappa_sg\big)+\big|P^\kappa_sg\big|^2
\,d\mm\,ds\le C_t\cdot \|g\|_{L^2}^2$
and thus
$$\int_0^t\int_{\B_{n+1}(z)}\Gamma\big(P^\kappa_sg\big)+\big|P^\kappa_sg\big|^2
\,d\mm\,ds\to 0$$ as $n\to\infty$. Similarly,
$$\int_0^t\int_X\Gamma\big( P^{\kappa_n}_{t-s}f\big)+ \big|
 P^{\kappa_n}_{t-s}f\big|^2
\,d\mm\le C\, \Big(\E^{\kappa_n}(P^{\kappa_n}_{t-s}g)+\|P^{\kappa_n}_{t-s}g\|_{L^2}^2\Big)
\le C_t\cdot \|g\|_{L^2}^2
$$
uniformly in $n$. This proves $\big|\int g \big(P^\kappa_tf-P^{\kappa_n}_tf)\,d\mm\big|\to0$ as $n\to\infty$ which is the claim.
\end{proof}

\begin{cor}  {\bf (i)}  Given $\phi\in L^\infty(X)$ and $\psi\in \Lip_b(X)$, the semigroup $(P^\kappa_t)_{t\ge0}$ for $\kappa=\phi-\underline\Delta\psi$ is given by
\begin{eqnarray}\label{double-pot}
P^{\kappa}_{t/2}f(x)&=&{\mathbb E}_x\big[e^{-\int_0^t\phi(B_s)ds + N^\psi_t}f(B_t)\,1_{\{t<\zeta\}}\big]
\end{eqnarray}
for each $f\in  \bigcup_{p\in[1,\infty]} L^p(X,\mm)$ and 
for $\mm$-a.e.~$x\in X$.

 {\bf (ii)} Even more, letting $\tilde P^{\kappa}_tf$ denote a quasi continuous version of $P^{\kappa}_tf$, then 
\begin{eqnarray*}
\tilde P^{\kappa}_{t/2}f(x)&=&{\mathbb E}_x\big[e^{-\int_0^t\phi(B_s)ds + N^\psi_t}f(B_t)\,1_{\{t<\zeta\}}\big]
\end{eqnarray*}
holds true for q.e.~$x\in X$.
\end{cor}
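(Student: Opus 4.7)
The plan is to reduce part (i) directly to the representation formula for $\kappa_1 := -\underline\Delta\psi$ established in the preceding Proposition, observing that passing from $\kappa_1$ to $\kappa = \phi + \kappa_1$ amounts to adding a bounded Feynman-Kac potential to the quadratic form. Indeed, from the definition of $\E^\kappa$ in Proposition \ref{p-kappa} together with Proposition \ref{psi-form}(i),
\begin{equation*}
\E^\kappa(f,g) = \E^{\kappa_1}(f,g) + \int_X fg\,\phi\,d\mm,
\end{equation*}
so the associated semigroups are related formally by $P^\kappa_t = e^{-t(L^{\kappa_1} + \phi)}$, which is the Feynman-Kac perturbation of $P^{\kappa_1}_t$ by the bounded potential $\phi$.

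To make this rigorous I would re-run the three-step Girsanov--Feynman-Kac--Doob transformation used to prove the preceding Proposition, but modify step (b) to use the potential $\tfrac12 \Gamma(\psi) - \phi$ in place of $\tfrac12 \Gamma(\psi)$. Multiplying the three transformation factors then produces
\begin{equation*}
e^{-M^\psi_t - \tfrac12\langle M^\psi\rangle_t}\cdot e^{\tfrac12 \int_0^t \Gamma(\psi)(B_s)\,ds - \int_0^t \phi(B_s)\,ds}\cdot e^{\psi(B_t) - \psi(B_0)} = e^{N^\psi_t - \int_0^t \phi(B_s)\,ds}.
\end{equation*}
This establishes \eqref{double-pot} first for $\psi \in \Lip_{bs}(X)$ and $f \in L^p(X,\mm)$ with $p \in (1,\infty]$. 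The extension to general $\psi \in \Lip_b(X)$ uses the cut-off approximation $\psi_n = \chi_n \psi$ together with the same super-martingale $e^{-sM^\psi_t - \tfrac{s^2}2\langle M^\psi\rangle_t}$ and H\"older estimates as in the preceding proof; the extra factor $e^{-\int_0^t \phi(B_s) ds}$ is uniformly bounded by $e^{t\|\phi\|_{L^\infty}}$ and causes no new difficulty. The extension from $L^p$ to $L^1$ is by truncation and monotone convergence.

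For part (ii), I would upgrade the $\mm$-a.e. identity in (i) to a q.e. identity by showing that both sides are quasi-continuous in $x$. The left side $\tilde P^\kappa_{t/2}f$ is quasi-continuous by definition. For the right side, write
\begin{equation*}
F_t(x) := \mathbb{E}_x\big[e^{-\int_0^t\phi(B_s)ds + N^\psi_t}\, f(B_t)\,\mathbf{1}_{\{t<\zeta\}}\big]
\end{equation*}
and invoke the standard quasi-continuity results for Feynman-Kac semigroups generated by multiplicative functionals of Kato class (\cite{fukushima2011}, Ch.~5--6). The boundedness of $\phi$ and of $\Gamma(\psi)$, together with the $L^2$-exponential bounds for $M^\psi$ established in the preceding Lemma, ensure that $F_t$ is in fact quasi-continuous. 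Two quasi-continuous functions agreeing $\mm$-a.e. must coincide q.e., which yields the asserted representation for q.e.~$x$.

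The main technical obstacle is the quasi-continuity of $F_t$: one needs uniform control of the exponential moment $\mathbb{E}_x[\exp(-\int_0^t\phi(B_s)ds + N^\psi_t)]$ on capacitary compacts. This reduces, through the Fukushima decomposition \eqref{Fuku}--\eqref{N-ito}, to the $L^2$-martingale bounds for $M^\psi$ together with the Kato-class assumption (which here is automatic since $\phi, \Gamma(\psi) \in L^\infty$).
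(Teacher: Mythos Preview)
Your approach to part (i) is correct but takes a longer route than the paper. The paper does not re-run the three-step Girsanov--Feynman-Kac--Doob argument; instead it defines $Q_t$ by the right-hand side of \eqref{double-pot} and uses a single Duhamel identity
\[
P^{\kappa_0}_t f - Q_t f \;=\; \int_0^t P^{\kappa_0}_s\big(\phi\, Q_{t-s}f\big)\,ds,\qquad \kappa_0 := -\underline\Delta\psi,
\]
from which $\lim_{t\to 0}\tfrac1t\int(f-Q_tf)f\,d\mm = \E^{\kappa_0}(f) + \int\phi f^2\,d\mm = \E^\kappa(f)$, identifying $Q_t$ with $P^\kappa_t$. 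The point is that the probabilistic representation for $P^{\kappa_0}$ is already established for arbitrary $\psi\in\Lip_b(X)$ by the preceding Proposition, so there is no need to redo the cut-off approximation $\psi_n=\chi_n\psi$ or the H\"older and super-martingale estimates: Duhamel handles the bounded perturbation $\phi$ in one stroke. Your modification of step (b) does work---and it is worth noting, though you do not, that inserting $\phi$ into the Feynman-Kac step on $L^2(X,\hat\mm)$ lands in the right place precisely because $\int\phi f^2\,d\mm=\int\phi g^2\,d\hat\mm$ under the Doob map $g=e^\psi f$---but it duplicates effort already invested.

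For part (ii), your argument (both sides quasi-continuous, hence equal q.e.\ once equal $\mm$-a.e.) is exactly what the paper's ``standard arguments for quasi-regular Dirichlet forms'' amounts to.
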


\begin{proof}  {\bf (i)} Define a semigroup $(Q_t)_{t\ge0}$ by the right hand side of \eqref{double-pot}, i.e. 
$Q_{t/2}f(x):={\mathbb E}_x\big[e^{-\int_0^t\phi(B_s)ds + N^\psi_t}f(B_t)\big]$.
We will prove that it is associated with the quadratic form $\E^\kappa$.
Put $\kappa_0=-\underline\Delta\psi$. 
From the probabilistic representations of $Q_tf$ and $P^{\kappa_0}f$, we easily deduce 
$$P^{\kappa_0}_tf-Q_tf=\int_0^t P^{\kappa_0}_s\,\big( \phi \, Q_{t-s}f\big)\,ds$$
(``Duhamel's formula'') and thus
$$\lim_{t\to0}\frac1t \int (f-Q_tf)f\,d\mm-\E^{\kappa_0}(f)=
\lim_{t\to0}\frac1t \int_0^t\int \big(P^{\kappa_0}_sf\big)\, \phi \, \big(Q_{t-s}f\big)\,d\mm\,ds=\int f^2\phi\,d\mm.$$
(Note that $\E^\kappa$ is obtained from $\E^{\kappa_0}$ by perturbation with a bounded potential. Hence, both $Q_t$ and $P^{\kappa_0}$ are strongly continuous semigroups on $L^2$.)
Therefore,
$$\lim_{t\to0}\frac1t \int (f-Q_tf)f\,d\mm=\E^{\kappa}(f)$$
for all $f\in\Dom(\E^\kappa)$ and thus $Q_tf=P^\kappa_t f$ for all $t$ and all $f$.

{\bf (ii)} follows by standard arguments for quasi-regular Dirichlet forms.
\end{proof}

\begin{remark} Throughout this subsection, the assumptions $\psi\in \Lip(X)$ (or $\psi\in \Lip_b(X)$)  always can be replaced by
$\psi\in W_*^{1,\infty}(X)$ (or $\psi\in W^{1,\infty}(X)$, resp.). In the Fukushima decomposition, then one has to choose a quasi-continuous version $\tilde\psi$ of $\psi$ to guarantee well-definedness of the contribution $\tilde \psi(B_t)-\tilde \psi(B_0)$.
\end{remark}

\subsection{Bochner Inequality \BE$_1(\kappa,\infty)$ and Gradient Estimate}
For $n\in\N$, we define the Hilbert space $V^n(X):=\big(-\Delta+1\big)^{-n/2}\big(L^2(X)\big)$ equipped with the norm 
 $$\big\|f\big\|_{V^n}:=\big\|(-\Delta+1)^{n/2}f\big\|_{L^2}.$$
 Of particular interest are the spaces 
 $V^1(X)=\Dom(\E)=W^{1,2}(X)$, $V^2(X)=\Dom(\Delta)$, and 
 $V^3(X)=\{f\in \Dom(\Delta):\ \Delta f\in Dom(\E)\}$.

\begin{Def} Given $\kappa\in  W^{-1,\infty}(X)$, we say that the \emph{Bochner inequality} or \emph{Barky-\'Emery condition} \BE$_1(\kappa,\infty)$ holds true if
$\Gamma(f)^{1/2}\in V^{1}(X)$ for all $f\in V^2(X)$
and if
\begin{equation}\label{H3-inequ}
-\int_X \Gamma(\Gamma(f)^{1/2},\phi) \,d\mm- \int_{\{ \Gamma(f)>0\}}
\Gamma(f)^{-1/2}\,\Gamma(f,\Delta f)\phi\,d\mm \geq
\big\langle \Gamma(f)^{1/2}\phi,\kappa\big\rangle_{W^{1,1+}, W^{-1,\infty}}.
\end{equation}
for all $f\in V^3(X)$ and all  nonnegative $\phi \in V^1(X)$.
\end{Def}
Note that equally well also the first integral in the above estimate can be restricted to the set $\{ \Gamma(f)>0\}$.

\begin{thm}\label{L1-grad} Given $\kappa\in  W^{-1,\infty}(X)$, the Bochner inequality \BE$_1(\kappa,\infty)$ is equivalent to the following gradient estimate \GE$_1(\kappa,\infty)$: \quad $\forall f\in V^{1}(X)$, $\forall t>0$:
\begin{equation}\label{grad-f*}
 \Gamma(P_{t}f)^{1/2}
 \le P^{\kappa}_t\big( \Gamma(f)^{1/2}\big)\quad\mm\text{-a.e. on }X
\end{equation}
\end{thm}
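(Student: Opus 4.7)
The plan is to establish both directions by a semigroup interpolation argument, building on the representation of $P^\kappa_t$ through the form $\E^\kappa$ of Proposition \ref{p-kappa}.

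For the implication $\BE_1(\kappa,\infty)\Rightarrow\GE_1(\kappa,\infty)$, I would fix $t>0$ and, by density and semigroup regularization, first prove \eqref{grad-f*} for $f$ in a dense class of $V^1(X)$ for which $P_{t-s}f\in V^3(X)$ for every $s\in(0,t)$; the general case then follows from the $L^2$-continuity of both sides. The core object is the interpolation
\[
F(s):=P_s^\kappa\bigl(\Gamma(P_{t-s}f)^{1/2}\bigr), \qquad s\in[0,t],
\]
whose endpoints are $F(0)=\Gamma(P_tf)^{1/2}$ and $F(t)=P_t^\kappa(\Gamma(f)^{1/2})$. Testing against an arbitrary nonnegative $\phi\in V^1(X)$ and using $\mm$-symmetry of $P_s^\kappa$, the plan is to show that
\[
G(s):=\int_X F(s)\,\phi\,d\mm=\int_X \Gamma(P_{t-s}f)^{1/2}\, P_s^\kappa\phi\,d\mm
\]
is nondecreasing in $s$; then $G(0)\le G(t)$ for all such $\phi$ gives \eqref{grad-f*}.

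Differentiating $G$ produces two contributions. The inner derivative of $\Gamma(P_{t-s}f)^{1/2}$ yields, on $\{\Gamma(P_{t-s}f)>0\}$,
\[
\tfrac{d}{ds}\Gamma(P_{t-s}f)^{1/2}=-\Gamma(P_{t-s}f)^{-1/2}\,\Gamma(P_{t-s}f,\Delta P_{t-s}f),
\]
while the generator identity for $P_s^\kappa$ together with the form decomposition of Proposition \ref{p-kappa} rewrites the outer derivative as
\[
\int_X \Gamma(P_{t-s}f)^{1/2}\, L^\kappa P_s^\kappa\phi\,d\mm
=-\E\bigl(\Gamma(P_{t-s}f)^{1/2},P_s^\kappa\phi\bigr)-\bigl\langle \Gamma(P_{t-s}f)^{1/2} P_s^\kappa\phi,\kappa\bigr\rangle_{W^{1,1+},W^{-1,\infty}}.
\]
Setting $u:=P_{t-s}f\in V^3(X)$ and $\tilde\phi:=P_s^\kappa\phi$, which is nonnegative by the first Beurling--Deny criterion and lies in $V^1(X)$ by the form-smallness bound in Remark 2.7(i), the sum of the two contributions is exactly
\[
-\int_X\Gamma(\Gamma(u)^{1/2},\tilde\phi)\,d\mm-\int_{\{\Gamma(u)>0\}}\Gamma(u)^{-1/2}\,\Gamma(u,\Delta u)\tilde\phi\,d\mm-\bigl\langle \Gamma(u)^{1/2}\tilde\phi,\kappa\bigr\rangle,
\]
which is $\ge 0$ precisely by $\BE_1(\kappa,\infty)$. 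Hence $G'(s)\ge 0$ and the gradient estimate follows.

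For the converse, assuming $\GE_1(\kappa,\infty)$ and picking $f\in V^3(X)$ together with nonnegative $\phi\in V^1(X)$, I would integrate both sides of \eqref{grad-f*} against $\phi$ and take $\frac{d}{dt}|_{t=0^+}$. The right-hand side contributes $-\E^\kappa(\Gamma(f)^{1/2},\phi)$ by the generator of $P_t^\kappa$, while the left-hand side contributes $\int\Gamma(f)^{-1/2}\Gamma(f,\Delta f)\phi\,d\mm$ via the chain rule for $\sqrt{\cdot}$; subtracting one from the other reproduces \eqref{H3-inequ} verbatim.

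The main obstacles I anticipate are technical regularity issues rather than conceptual ones. First, one must justify the chain rule $\frac{d}{ds}\Gamma(P_{t-s}f)^{1/2}$ and the membership $\Gamma(P_{t-s}f)^{1/2}\in V^1(X)$ on sets where $\Gamma$ may vanish; the standard device is to regularize via $\sqrt{\Gamma(P_{t-s}f)+\varepsilon}$, carry out the derivative computation, and pass to the limit $\varepsilon\downarrow 0$ using dominated convergence together with the locality of $\Gamma$. Second, the differentiation under the integral sign and the identification of $\frac{d}{ds}P_s^\kappa\phi$ with $-\E^\kappa(\cdot,P_s^\kappa\phi)$-pairings requires uniform $V^1$-bounds for $P_s^\kappa\phi$, which follow from the form-smallness of $\E^\kappa$ with respect to $\E$ in Remark 2.7(i). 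Finally, in the converse direction the limit $t\to 0^+$ demands $V^3(X)$-regularity of $f$ so that $\Gamma(f)^{-1/2}\Gamma(f,\Delta f)$ belongs to a dual space allowing the pairing with $\phi\in V^1(X)$; again this is handled by an $\varepsilon$-regularization and limiting argument.
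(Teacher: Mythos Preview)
Your forward direction $\BE_1(\kappa,\infty)\Rightarrow\GE_1(\kappa,\infty)$ is essentially the paper's argument: the paper interpolates $a_s^{(\delta)}=\int\phi\,P_s^\kappa\bigl(\eta_\delta(\Gamma(P_{t-s}f))\bigr)\,d\mm$ with the smooth approximation $\eta_\delta(r)=(r+\delta)^{1/2}-\delta^{1/2}$, differentiates, applies $\BE_1$, and lets $\delta\to0$. Your proposed $\varepsilon$-regularization of the square root is exactly this device, and the observation that $P_s^\kappa$ preserves nonnegativity and $V^1(X)$ is also the paper's key point.

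There is, however, a genuine gap in your converse. The condition $\BE_1(\kappa,\infty)$ has \emph{two} parts: the functional inequality \eqref{H3-inequ} \emph{and} the domain assertion $\Gamma(f)^{1/2}\in V^1(X)$ for every $f\in V^2(X)$. You only address the former. Worse, your derivation of the functional inequality already presupposes the latter: when you write ``the right-hand side contributes $-\E^\kappa(\Gamma(f)^{1/2},\phi)$'', the pairing $\E^\kappa(\Gamma(f)^{1/2},\phi)$ only makes sense once $\Gamma(f)^{1/2}\in\Dom(\E^\kappa)=V^1(X)$ is known. This is not automatic and requires a separate argument.

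The paper establishes the domain inclusion first (its part b)): from the gradient estimate one gets
\[
\frac1t\int_X\bigl[\Gamma(f)^{1/2}-P_t^\kappa\Gamma(f)^{1/2}\bigr]\,\Gamma(f)^{1/2}\,d\mm
\le
\frac1t\int_X\bigl[\Gamma(f)^{1/2}-\Gamma(P_tf)^{1/2}\bigr]\,\Gamma(f)^{1/2}\,d\mm,
\]
and shows the right-hand side converges to $\|\Delta f\|_{L^2}^2<\infty$ for $f\in V^3(X)$ via the chain rule on $\{\Gamma(f)>0\}$, while the left-hand side increases to $\E^\kappa(\Gamma(f)^{1/2})$ by spectral calculus; density then extends to $V^2(X)$. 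Only after this does the paper derive the functional inequality (its part c)), using the cut-off test function $\phi_\delta=\phi\cdot\Gamma(f)^{1/2}/(\Gamma(f)^{1/2}+\delta)$ to control the set $\{\Gamma(f)=0\}$ before letting $\delta\to0$. Your sketch omits both of these steps.
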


\begin{proof} 
{\bf a)} \ Assume that  \BE$_1(\kappa,\infty)$ holds true.
Put $\eta_\delta(r)=(r+\delta)^{1/2}-\delta^{1/2}$ as an approximation of $r^{1/2}$. For fixed $\delta>0$, nonnegative $\phi \in V^{1}(X)$. $f\in V^3(X)$, and $t>0$ consider 
$$s\mapsto a_s^{(\delta)}=\int\phi P^{\kappa}_s\big( \eta_\delta(\Gamma(P_{t-s}f))\big)\,d\mm$$
as an absolutely continuous function on $(0,t)$. Then
\begin{eqnarray*}
\lefteqn{\int\phi P^{\kappa}_t\big( \Gamma(f)^{1/2}\big)\,d\mm-\int\phi \, \Gamma(P_{t}f)^{1/2}\,d\mm}
\\
&=&
\lim_{\delta\to0}\int\phi P^{\kappa}_t\big( \eta_{\delta}(\Gamma(f))\big)\,d\mm-\int\phi \, \eta_{\delta}(\Gamma(P_{t}f))\,d\mm\\
&=&
\lim_{\delta\to0}\big[a^{(\delta)}_t-a^{(\delta)}_0\big]\\
&=&
-\lim_{\delta\to0}
\int_0^t\Big[ \E^{\kappa}(\phi_s, \eta_{\delta}(\Gamma(f_s))+2\int_X \phi_s \eta'_{\delta}(\Gamma(f_s))\,\Gamma(f_s,\Delta f_s)\,d\mm
\Big]ds\\
&=&-\int_0^t\Big[ \E^{\kappa}(\phi_s, \Gamma(f_s)^{1/2})+\int_{\{ \Gamma(f_s)>0\}}\phi_s \Gamma(f_s)^{-1/2}\,\Gamma(f_s,\Delta f_s)\,d\mm
\Big]ds
\end{eqnarray*}
where we have put $\phi_s=P^{\kappa}_s\phi$ and $f_s=P_{t-s}f$. 
The crucial point now is that the semigroup $(P_t)_{t\ge0}$ preserves the class 
$V^3(X)$ where $f$ is chosen from, and that the the semigroup $(P^\kappa_t)_{t\ge0}$ preserves the cone of nonnegative elements in 
$V^{1}(X)$ where $\phi$ is chosen from.
Assuming \BE$_1(\kappa,\infty)$ and applying it to $f_s$ and $\phi_s$ in the place of $f$ and $\phi$  implies that in the last integral the expression in $[...]$ is nonpositive. 
This proves the claimed gradient estimate \eqref{grad-f*} for $f\in V^3(X)$. The assertion for general $f$ then follows by approximation.
Indeed, each $f\in V^1(X)$ is approximated in $V^1$-norm by the sequence $f_n:=P_{1/n}f\in V^3$. Moreover, the map $V^1\to L^2, g\mapsto \Gamma(g)^{1/2}$ is continuous,  and so is $P^\kappa_t: L^2\to L^2$. Hence,
 $$\Gamma(P_{t}f)^{1/2} =\lim_n \Gamma(P_{t}f_n)^{1/2}
 \le P^{\kappa}_t\big( \Gamma(f_n)^{1/2}\big)
 = P^{\kappa}_t\big( \Gamma(f)^{1/2}\big).$$

{\bf b)} \ 
Now let us assume that the gradient estimate holds true.
Let us first derive the assertion on domain inclusion which in our formulation is requested for  \BE$_1(\kappa,\infty)$. Using the
 gradient estimate, we  conclude that
 \begin{equation}\label{hilfs}\frac1t\int_X \Big[\Gamma(f)^{1/2}-P^\kappa_t\Gamma(f)^{1/2}\Big]\, \Gamma(f)^{1/2}\,d\mm\le
 \frac1t\int_X \Big[\Gamma(f)^{1/2}-\Gamma(P_tf)^{1/2}\Big]\, \Gamma(f)^{1/2}\,d\mm.\end{equation}
By spectral calculus, it is well known that for $t\to0$ the LHS of \eqref{hilfs} converges monotonically to $\E^\kappa(\Gamma(f)^{1/2})$ and $$\Gamma(f)^{1/2}\in \D(\E^\kappa)\qquad\Longleftrightarrow\qquad\lim_{t\to0}
\frac1t\int_X \Big[\Gamma(f)^{1/2}-P^\kappa_t\Gamma(f)^{1/2}\Big]\, \Gamma(f)^{1/2}\,d\mm<\infty.$$
To deal with the RHS of \eqref{hilfs}, 
first observe  that $\Gamma(P_tf)^{1/2}\to \Gamma(f)^{1/2}$ as $t\to0$ since
$$\big|\Gamma(P_tf)-\Gamma(f)\Big|=\Big|\int_0^t 2\Gamma(P_sf,\Delta P_sf)ds\Big|\le2\int_0^t P_s^\kappa\Gamma(f)^{1/2}\cdot P_s^\kappa\Gamma(\Delta f)^{1/2}ds\le C\, t
$$
for $f\in V^3(X)$. Moreover,  obviously each of the integrals $\int_X\ldots d\mm$ in \eqref{hilfs} can be replaced by 
$\int_{\{\Gamma(f)>0\}}\ldots d\mm$.
But on the set $\{\Gamma(f)>0\}$, the chain rule for the $\Gamma$ operator yields
$$\frac1t \Big[\Gamma(f)^{1/2}-\Gamma(P_tf)^{1/2}\Big]\to \frac{-1}{\Gamma(f)^{1/2}}\,\Gamma(f,\Delta f).$$
Thus for $t\to0$, the RHS of \eqref{hilfs} converges as follows 
$$ \frac1t\int_X \Big[\Gamma(f)^{1/2}-\Gamma(P_tf)^{1/2}\Big]\, \Gamma(f)^{1/2}\,d\mm\to-
\int_{\{\Gamma(f)>0\}}\Gamma(f,\Delta f)\,d\mm=\int_X (\Delta f)^2\,d\mm.
$$
Combining the asymptotic results for both sides of \eqref{hilfs}, we obtain
$$\E^\kappa(\Gamma(f)^{1/2})\le \|\Delta f\|_{L^2}^2<\infty,$$
in particular 
 $\Gamma(f)^{1/2}\in\Dom(\E^\kappa)=V^{1}(X)$
 for $f\in V^3(X) $. 
 Since the class of these $f$'s  is dense in $V^2(X)=\Dom(\Delta)$, it follows that $\Gamma(f)^{1/2}\in\Dom(\E^\kappa)=W^{1,2}(X)$
 for all $f\in \Dom(\Delta)$. This yields the  domain assertion requested for \BE$_1(\kappa,\infty)$.

{\bf c)} \ 
 To derive the requested functional inequality for \BE$_1(\kappa,\infty)$, we  integrate the gradient estimate for $f\in V^3(X)$ w.r.t.~$\phi_\delta\,d\mm$ and subtract  $\int \phi_\delta\,
\Gamma(f)^{1/2}\,d\mm$ on both sides.
Here for arbitrary $\phi\in V^1(X)$ and $\delta>0$, we put
$$\phi_\delta=\phi\cdot\frac{\Gamma(f)^{1/2}}{\Gamma(f)^{1/2}+\delta}.$$  This yields
\begin{eqnarray*}\frac1t\int_X \Big[\Gamma(f)^{1/2}-P^\kappa_t\Gamma(f)^{1/2}\Big]\, \phi_\delta\,d\mm\le
 \frac1t\int_X \Big[\Gamma(f)^{1/2}-\Gamma(P_tf)^{1/2}\Big]\, \phi_\delta\,d\mm.\
\end{eqnarray*}
In the limit $t\to0$, this gives
$$\E^\kappa\big(\Gamma(f)^{1/2}, \phi_\delta\big)\le -
\int_{\{\Gamma(f)>0\}}\frac1{\Gamma(f)^{1/2}}\Gamma(f,\Delta f)\,\phi_\delta\,d\mm.$$
One easily verifies that for $\delta\to0$ this converges to
$$\E^\kappa\big(\Gamma(f)^{1/2}, \phi\big)\le -
\int_{\{\Gamma(f)>0\}}\frac1{\Gamma(f)^{1/2}}\Gamma(f,\Delta f)\,\phi\,d\mm$$
which is the claim.
\end{proof}

\begin{cor} The  Bochner inequality \BE$_1(\kappa,\infty)$ with $\kappa\in  W^{-1,\infty}(X)$ implies 
$$\E(\Gamma(f)^{1/2})\le \|\Delta f\|_{L^2}^2-\big\langle \Gamma(f),\kappa\big\rangle_{W^{1,1+}, W^{-1,\infty}}$$
for all $f\in \Dom(\Delta)$ and thus 
$$\E(\Gamma(f)^{1/2})\le\frac1{1-\delta}\Big( \|\Delta f\|_{L^2}^2+(C+C^2/\delta)\cdot \E(f)\Big)$$
for each $\delta\in (0,1)$ with $C:=\|\kappa\|_{W^{-1,\infty}}$.
\end{cor}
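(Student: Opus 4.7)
The plan is to extract the needed inequalities essentially from the proof of Theorem~\ref{L1-grad}(b) together with the form smallness already recorded in Remark~(i) after Proposition~\ref{p-kappa}. In fact, the second half of the proof of Theorem~\ref{L1-grad} already produced, in passing, the bound $\E^\kappa(\Gamma(f)^{1/2})\le\|\Delta f\|_{L^2}^2$ for $f\in V^3(X)$, and the corollary amounts to unpacking this in terms of $\E$ (rather than $\E^\kappa$) and then invoking form smallness.

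\textbf{Step 1: reduction to the already-proved inequality.} For $f\in V^3(X)$ I would start from the monotone limit identity
\[
\lim_{t\to0}\frac1t\int_X\Big[\Gamma(f)^{1/2}-P^\kappa_t\Gamma(f)^{1/2}\Big]\,\Gamma(f)^{1/2}\,d\mm=\E^\kappa\big(\Gamma(f)^{1/2}\big)
\]
and the chain-rule computation on $\{\Gamma(f)>0\}$ that yields
\[
\lim_{t\to0}\frac1t\int_X\Big[\Gamma(f)^{1/2}-\Gamma(P_tf)^{1/2}\Big]\,\Gamma(f)^{1/2}\,d\mm=\int_X(\Delta f)^2\,d\mm,
\]
exactly as carried out in part (b) of the proof of Theorem~\ref{L1-grad}. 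Integrating the gradient estimate $\GE_1(\kappa,\infty)$ against $\Gamma(f)^{1/2}\,d\mm$ and subtracting $\int\Gamma(f)\,d\mm$ from both sides then gives $\E^\kappa(\Gamma(f)^{1/2})\le\|\Delta f\|_{L^2}^2$.

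\textbf{Step 2: rewriting in terms of $\E$.} By Lemma~\ref{prodcuts}, $\Gamma(f)=(\Gamma(f)^{1/2})^2\in W^{1,1+}(X)$, so by definition of $\E^\kappa$,
\[
\E^\kappa\big(\Gamma(f)^{1/2}\big)=\E\big(\Gamma(f)^{1/2}\big)+\big\langle\Gamma(f),\kappa\big\rangle_{W^{1,1+},W^{-1,\infty}}.
\]
Rearranging gives the first asserted inequality for $f\in V^3(X)$; the extension to arbitrary $f\in\Dom(\Delta)=V^2(X)$ follows by the density of $V^3(X)$ in $V^2(X)$ (via the $(-\Delta+1)^{-1/2}$-scale), since both sides depend continuously on $f$ in the $V^2$-norm (note $\Gamma(f)^{1/2}\to\Gamma(g)^{1/2}$ in $L^2$ follows from $\|\Gamma(f)^{1/2}-\Gamma(g)^{1/2}\|_{L^2}^2\le\|\Gamma(f-g)\|_{L^1}\le\E(f-g)$, and the pairing with $\kappa$ is continuous by form smallness).

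\textbf{Step 3: form smallness to get the second inequality.} I now apply the inequality from Remark~(i) after Proposition~\ref{p-kappa} with $\Gamma(f)^{1/2}$ in place of $f$. Noting $\|\Gamma(f)^{1/2}\|_{L^2}^2=\int\Gamma(f)\,d\mm=\E(f)$, this yields, for every $\delta\in(0,1)$,
\[
(1-\delta)\,\E\big(\Gamma(f)^{1/2}\big)-\Big(C+\tfrac{C^2}{\delta}\Big)\E(f)\le\E^\kappa\big(\Gamma(f)^{1/2}\big)\le\|\Delta f\|_{L^2}^2,
\]
which upon dividing by $1-\delta$ is precisely the second claimed estimate.

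The only non-routine point is the domain question in Step 2, i.e.\ that $\Gamma(f)^{1/2}\in W^{1,2}(X)$ for arbitrary $f\in\Dom(\Delta)$ (so that all the forms appearing above are defined). This, however, is exactly what was already established in part (b) of the proof of Theorem~\ref{L1-grad}, so no new obstacle arises and the corollary follows directly.
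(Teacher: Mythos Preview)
Your proposal is essentially correct and matches the paper's intended derivation: the paper states the result as a corollary without proof, and the inequality $\E^\kappa(\Gamma(f)^{1/2})\le\|\Delta f\|_{L^2}^2$ for $f\in V^3(X)$ is exactly what was obtained in part~(b) of the proof of Theorem~\ref{L1-grad}. Your Steps~1--3 simply unpack this and invoke the form smallness from the Remark after Proposition~\ref{p-kappa}.

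One minor imprecision in Step~2: you write that ``both sides depend continuously on $f$ in the $V^2$-norm'', but $\E(\Gamma(f)^{1/2})$ (and likewise $\E^\kappa(\Gamma(f)^{1/2})$) is only lower semicontinuous along $V^2$-convergent sequences, not continuous. This is harmless, since lower semicontinuity is all you need: given $f\in V^2(X)$, take $f_n\in V^3(X)$ with $f_n\to f$ in $V^2$; then $\Gamma(f_n)^{1/2}\to\Gamma(f)^{1/2}$ in $L^2$ and stays bounded in $W^{1,2}$ (by form smallness and the $V^3$-case), whence by closedness of $\E^\kappa$
\[
\E^\kappa\big(\Gamma(f)^{1/2}\big)\le\liminf_n\E^\kappa\big(\Gamma(f_n)^{1/2}\big)\le\lim_n\|\Delta f_n\|_{L^2}^2=\|\Delta f\|_{L^2}^2,
\]
after which you rewrite $\E^\kappa$ in terms of $\E$ exactly as you did. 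Passing to the limit in the rewritten form directly (as you phrased it) would require $\Gamma(f_n)\to\Gamma(f)$ in $W^{1,1+}$, which is harder to justify; passing to the limit in the $\E^\kappa$-form first avoids this.
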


\paragraph{Localization.}
In the sequel, we will also  localize various statements. Ding this will require some care since in general $X$ will not be  locally compact. Given a space $\mathcal G(X)$ of functions  (or of $\mm$-equivalence classes of functions) on $X$ we denote by  $\mathcal G_\sloc(X)$ the set of all functions $g$ (or   $\mm$-equivalence classes of functions, resp.) on $X$ ``which semi-locally lie in $\mathcal G(X)$'' in the sense  that for each bounded open subset $B\subset X$ there exists a $g_B\in \mathcal G(X)$  such that $g=g_B$ on $B$ (or $m$-a.e.\ on $B$, resp.). This way, e.g.\ we define the spaces
 $W^{1,1+}_\sloc(X)$.
 
We denote by $W^{-1,\infty}_\sloc(X)$ the set of all $\kappa$ such that for all bounded open sets $B\subset X$ there exist $\kappa_B\in W^{-1,\infty}(X)$ which are consistent in the sense that 
$\langle \phi,\kappa_B\rangle_{W^{1,1+},W^{-1,\infty}} =\langle \phi,\kappa_{B'}\rangle_{W^{1,1+},W^{-1,\infty}}$  for all such $B,B'$ and for all $\phi\in W^{1,1+}(X)$ with support in $B\cap B'$.
In this case, we say that $\kappa=\kappa_B$ on $B$ and put
$$\langle \phi,\kappa\rangle_{W_{bs}^{1,1+},W_\sloc^{-1,\infty}} :=\langle \phi,\kappa_{B}\rangle_{W^{1,1+},W^{-1,\infty}}$$
provided $\phi$ is supported in $B$.

\begin{lma}\label{H1-Bochner-N} The Bochner inequality \BE$_1(\kappa,\infty)$ is equivalent to
the fact that
$\Gamma(f)^{1/2}\in V^{1}(X)$ for all $f\in V^2(X)$
and 
\begin{equation}\label{H3-inequ-loc}
-\int_X \Gamma(\Gamma(f)^{1/2},\phi) + \Gamma(f)^{-1/2}\Gamma(f,\Delta f)\phi\,d\mm \geq
\big\langle \Gamma(f)^{1/2}\phi,\kappa\big\rangle_{W^{1,1+}, W^{-1,\infty}}
\end{equation}
for all $f\in V^3_\sloc(X)$ and all  nonnegative $\phi \in V_{bs}^{1}(X)$.
\end{lma}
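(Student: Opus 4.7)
The strategy is to prove the two directions separately, with the harder direction being the approximation of global test functions by compactly supported ones.

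\textbf{From the global BE$_1(\kappa,\infty)$ to the localized inequality.} Suppose $f\in V^3_\sloc(X)$ and $\phi\in V^1_{bs}(X)$ is nonnegative with $\supp\phi\subset B$ for some bounded open $B\subset X$. By definition of $V^3_\sloc$, pick $f_B\in V^3(X)$ with $f=f_B$ $\mm$-a.e.\ on (a neighborhood of) $B$. By the locality of $\Gamma$ and of the Laplacian one has $\Gamma(f)=\Gamma(f_B)$, $\Gamma(f,\Delta f)=\Gamma(f_B,\Delta f_B)$ and $\Gamma(\Gamma(f)^{1/2},\phi)=\Gamma(\Gamma(f_B)^{1/2},\phi)$ $\mm$-a.e.\ on $B$. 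Since $\phi$ vanishes outside $B$, every integral in \eqref{H3-inequ-loc} coincides with the corresponding integral formed with $f_B$, and $\Gamma(f)^{1/2}\phi=\Gamma(f_B)^{1/2}\phi$ in $W^{1,1+}(X)$ (both being supported in $B$). Applying the global $\BE_1(\kappa,\infty)$ to the pair $(f_B,\phi)$ yields \eqref{H3-inequ-loc} for $(f,\phi)$.

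\textbf{From the localized inequality to the global BE$_1(\kappa,\infty)$.} Fix $f\in V^3(X)\subset V^3_\sloc(X)$ and a nonnegative $\phi\in V^1(X)$. Choose a reference point $z\in X$ and cut-off functions $\chi_n(\cdot)=[1-\d(\B_n(z),\cdot)]_+\in\Lip_b(X)$ with $0\le\chi_n\uparrow 1$ and $\supp\chi_n$ bounded. Set $\phi_n:=\phi\,\chi_n$. Then $\phi_n\in V^1_{bs}(X)$ is nonnegative (using that products of $W^{1,2}$ functions with bounded Lipschitz functions remain in $W^{1,2}$), so the localized inequality applies to $(f,\phi_n)$. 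The plan is to pass to the limit $n\to\infty$.

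\textbf{The limiting argument.} One needs three convergences. First, $\phi_n\to\phi$ in $W^{1,2}(X)$: since $\nabla(\phi\chi_n)=\chi_n\nabla\phi+\phi\nabla\chi_n$, dominated convergence handles the first term while $\int|\phi\nabla\chi_n|^2\,d\mm\le(\Lip\chi_n)^2\int_{\{\chi_n<1\}}\phi^2\,d\mm\to 0$. This gives $\int\Gamma(\Gamma(f)^{1/2},\phi_n)\,d\mm\to\int\Gamma(\Gamma(f)^{1/2},\phi)\,d\mm$ by continuity of $\E$ (note $\Gamma(f)^{1/2}\in V^1$ since $f\in V^2$). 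Second, the middle integral converges by dominated convergence: Cauchy--Schwarz gives $|\Gamma(f)^{-1/2}\Gamma(f,\Delta f)\phi_n|\le\Gamma(\Delta f)^{1/2}\phi$ on $\{\Gamma(f)>0\}$, and the majorant is in $L^1$ because $\Delta f\in V^1$ and $\phi\in L^2$. Third, and this is the main technical point, $\langle\Gamma(f)^{1/2}\phi_n,\kappa\rangle\to\langle\Gamma(f)^{1/2}\phi,\kappa\rangle$: by Lemma~\ref{prodcuts} all products $\Gamma(f)^{1/2}\phi_n$ and $\Gamma(f)^{1/2}\phi$ lie in $W^{1,1+}(X)$, and one verifies $\Gamma(f)^{1/2}\phi_n\to\Gamma(f)^{1/2}\phi$ in $W^{1,1+}(X)$. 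For this, combine $L^1$-convergence (dominated convergence with majorant $\Gamma(f)^{1/2}\phi\in L^1$ via $\Gamma(f)^{1/2},\phi\in L^2$) with convergence of the truncated minimal weak upper gradients; on truncations one expands by Leibniz, and the terms containing $\nabla\chi_n$ vanish since they are supported in the annulus $\{0<\chi_n<1\}$ on which both $\Gamma(f)^{1/2}$ and $\phi$ have vanishing $L^2$-mass at infinity. The expected obstacle lies in verifying the $W^{1,1+}$-continuity cleanly, since this is the only place where the subtle (and non-complete) nature of $W^{1,1+}(X)$ is felt; all other passages are straightforward consequences of Lebesgue's theorem and locality.
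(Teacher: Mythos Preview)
Your argument is correct. The ``$\Rightarrow$'' direction matches the paper's exactly. For ``$\Leftarrow$'' you take a different but equally valid route: you approximate $\phi$ by $\phi_n=\phi\chi_n$ with a single increasing family of cut-offs and pass to the limit, whereas the paper decomposes $\phi=\sum_n\phi_n$ via a partition of unity into countably many nonnegative $\phi_n\in V^1_{bs}(X)$ and simply adds up the resulting inequalities. The paper's approach exploits the linearity of each term in $\phi$ and thus avoids any explicit analysis of $W^{1,1+}$-convergence; your approach instead reduces everything to the single fact $\phi_n\to\phi$ in $W^{1,2}(X)$, from which all three limits follow (the $\kappa$-term most cleanly by observing that the product map $W^{1,2}\times W^{1,2}\to W^{1,1+}$, $(g,h)\mapsto gh$, is bilinearly bounded by the proof of Lemma~\ref{prodcuts}, so $\Gamma(f)^{1/2}\phi_n\to\Gamma(f)^{1/2}\phi$ in $W^{1,1+}$ directly). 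Either way the result is the same; the paper's version is shorter, yours is more self-contained.
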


The domain inclusion requested for \BE$_1(\kappa,\infty)$ obviously implies the inclusion for the localized domains: $\Gamma(f)^{1/2}\in V_\sloc^{1}(X)$ for all $f\in V^2_\sloc(X)$. 
\begin{proof}
``$\Rightarrow$'': Given $\phi \in V_{bs}^{1}(X)$ and $f\in V^3_\sloc(X)$, there exists bounded open $B\subset X$ such that $\phi=0$ on $X\setminus B$ and there exists $f_B\in V^3(X)$ with $f=f_B$   on $B$.
Applying \eqref{H3-inequ} to $f_B$ and $\phi$ implies \eqref{H3-inequ-loc} for $f$ and $\phi$.

``$\Leftarrow$'': Given nonnegative $\phi \in V^{1}(X)$, by partition of unity we can find countably many nonnegative $\phi_n \in V_{bs}^{1}(X)$ such that $\phi=\sum_n\phi_n$.  Applying \eqref{H3-inequ-loc} to each $\phi_n$ and the given 
$f\in V^3(X)$, and adding up these estimates yields  \eqref{H3-inequ} for the given $\phi$ and $f$.
\end{proof}

\section{Equivalence of \BE$_2(k,N)$ and \CD$(k,N)$}

Throughout this section, $(X,\d,\mm)$ will be a metric measure space,  $N\in[1,\infty)$ a number,  
and $k\colon X \to\R$
will be a bounded, lower semicontinuous 
 function.
We present the Eulerian and the Lagrangian  characterizations of \emph{``Ricci curvature at $x$ bounded from below by $k(x)$ and dimension bounded from above by $N$''} and prove their equivalence. Put $K_0=\inf_x k(x)$ and $K_1=\sup_x k(x)$.

Without loss of generality, we will assume that $(X,\d,\mm)$ satisfies 
the Riemannian curvature-dimension condition \RCD$(K,\infty)$ for some constant $K\in\R$.
Among others, this will guarantee that the space is infinitesimally Hilbertian, that the volume of balls does not grow faster than $e^{Cr^2}$, and that functions with bounded gradients have Lip\-schitz continuous versions (``Sobolev-to-Lipschitz property''). Moreover, it implies that $\Gamma(u)^{1/2}\in\Dom(\E)$ for each $u\in\Dom(\Delta)$.

In the sequel, as usual $\Prob_2(X)$ will denote the space of probability measures $\mu$ on $X$ with $\int\d^2(.,z)\,d\mu<\infty$ equipped with the $L^2$-Kantorovich-Wasserstein distance $W_2$.
We say that a measure $\boldsymbol{\pi}\in\Prob(\Geo(X))$ \emph{represents the $W_2$-geodesic  $(\mu_r)_{r\in [0,1]}$} if $\mu_r = (e_r)_\sharp\boldsymbol{\pi}$ for $r\in
[0,1]$. Here $\Geo(X)$ denotes the set of  $\d$-geodesics $\gamma:[0,1]\to X$ and $e_r: \Geo(X)\to X, t\mapsto\gamma_r$ denotes the projection or evaluation operator.

Thanks to our a priori assumption \RCD$(K,\infty)$, there exists a heat kernel $\big(p_t(x,y)\big)_{x,y\in X, t\ge0}$ on $X$ such that 
$$P_tf(x):=\int f(y)\,p_t(x,y)\,d\mm(y)$$
defines a strongly continuous, non expanding semigroup in $L^p(X,\mm)$ for each $p\in[1,\infty)$. For $p=2$, this actually can be defined (or re-interpreted)  as  the gradient flow for the energy $\E$ in $L^2(X,\mm)$.
Moreover, 
$$d\DP_t\mu(y):=\Big[\int p_t(x,y)\,d\mu(x)\Big]\,d\mm(y)$$
defines a semigroup on $\Prob_2(X)$. The latter can be equivalently regarded as the gradient flow for the Boltzmann entropy $\Ent$ in the Wasserstein space $\Prob_2(X)$. Here and in the sequel, $\Ent(\mu):=\int u\log u\,d\mm$ if $\mu=u\,\mm$ and $\Ent(\mu):=\infty$ if $\mu$ is not absolutely continuous w.r.t~$\mm$.

\begin{Def}\label{def-cd-var} We say that a metric measure
space $(X,\d,\mm)$ satisfies the \emph{curvature-dimension
condition}  with variable curvature bound $k$ and dimension bound $N$, briefly \CD$(k,N)$, if for every
$\mu_0,\mu_1\in\Prob_2(X)\cap\Dom(\Ent)$ there exists a
measure $\boldsymbol{\pi}\in\Prob(\Geo(X))$ representing some 
$W_2$-geodesic $(\mu_r)_{r\in [0,1]}$ connecting $\mu_0$ and $\mu_1$
 such that 
\begin{equation*}
\frac{d}{dr}\Ent(\mu_r)\Big|_{r=1-}-\frac{d}{dr}\Ent(\mu_r)\Big|_{r=0+}
\ge
   \int_0^1\int_{\Geo(X)} k(\gamma_r)\vert\dot{\gamma}\vert^2\,d\boldsymbol{\pi}(\gamma)\,d r+\frac1N\Big[\Ent(\mu_1)-\Ent(\mu_0)\Big]^2.
\end{equation*}
\end{Def}

The $(k,N)$-convexity of the entropy allows for various straightforward  reformulations, cf.
 \cite{sturm2018b}.

\begin{lma}\label{threeCDs} The following are equivalent:
\begin{itemize}
\item[(i)] the mm-space satisfies \CD$(k,N)$;
\item[(ii)] for every
$\mu_0,\mu_1\in\Prob_2(X)\cap\Dom(\Ent)$ there exists a
measure $\boldsymbol{\pi}\in\Prob(\Geo(X))$ representing some 
$W_2$-geodesic $(\mu_r)_{r\in [0,1]}$ connecting $\mu_0$ and $\mu_1$
 such that 
\begin{equation*}
\frac{d}{dr}\Ent(\mu_r)\Big|_{r=1-}-\frac{d}{dr}\Ent(\mu_r)\Big|_{r=0+}
\ge
   \int_0^1\int_{\Geo(X)} k(\gamma_r)\vert\dot{\gamma}\vert^2\,d\boldsymbol{\pi}(\gamma)\,d r+\frac1N \int_0^1\Big[\frac d{dr}\Ent(\mu_r)\Big]^2dr;
\end{equation*}
\item[(iii)] for every
$\mu_0,\mu_1\in\Prob_2(X)\cap\Dom(\Ent)$ there exists a
measure $\boldsymbol{\pi}\in\Prob(\Geo(X))$ representing some 
$W_2$-geodesic $(\mu_r)_{r\in [0,1]}$ connecting $\mu_0$ and $\mu_1$
 such that for all $r\in (0,1)$
 \begin{eqnarray*} \Ent(\mu_r)&\le&
(1-r)\,\Ent(\mu_0)+r\,\Ent(\mu_1)\\
&&-
   \int_0^1 g(r,s)\,\Big(\int_{\Geo(X)}k(\gamma_s)\vert\dot{\gamma}\vert^2\,d\boldsymbol{\pi}(\gamma)+\frac1N \,\Big[\frac d{ds}\Ent(\mu_s)\Big]^2\Big)ds
\end{eqnarray*}
where $g(.,.)$ denotes the Green function on $[0,1]$.
\end{itemize}
Moreover, in (ii) as well as in (iii), the phrase ``for every
$\mu_0,\mu_1\in\Prob_2(X)\cap\Dom(\Ent)$ there exists a
measure $\boldsymbol{\pi}\in\Prob(\Geo(X))$ representing some 
$W_2$-geodesic $(\mu_r)_{r\in [0,1]}$ connecting $\mu_0$ and $\mu_1$
 such that \ldots'' can equivalently be replaced by ``for every measure $\boldsymbol{\pi}\in\Prob(\Geo(X))$ representing some 
$W_2$-geodesic $(\mu_r)_{r\in [0,1]}$ with endpoints $\mu_0,\mu_1$ of finite entropy \ldots''.
\end{lma}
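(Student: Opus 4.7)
My plan is to set, for any $\boldsymbol\pi \in \Prob(\Geo(X))$ representing a $W_2$-geodesic $(\mu_r)_{r\in[0,1]}$,
\[
E(r) := \Ent(\mu_r), \qquad K(r) := \int_{\Geo(X)} k(\gamma_r)\,|\dot\gamma|^2 \, d\boldsymbol\pi(\gamma),
\]
and observe that (i), (ii), (iii) are three equivalent reformulations of the same second-order differential inequality $E''(r) \geq K(r) + \tfrac{1}{N}(E'(r))^2$ on $(0,1)$: (i) is its endpoint-derivative form weakened by a Jensen bound, (ii) is the true integrated form, and (iii) is the pointwise Green-function form. Under the standing \RCD$(K,\infty)$-hypothesis, $E$ is $(K,\infty)$-convex along the geodesic, hence absolutely continuous on $[0,1]$ with well-defined one-sided derivatives $E'(0+)$ and $E'(1-)$, so all three statements have unambiguous meaning. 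I would close the cyclic implications (ii)$\Rightarrow$(i)$\Rightarrow$(iii)$\Rightarrow$(ii).

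The implication (ii)$\Rightarrow$(i) is immediate from Cauchy--Schwarz: $[E(1)-E(0)]^2 = (\int_0^1 E'\,dr)^2 \leq \int_0^1 (E'(r))^2\,dr$, which substitutes into (ii) to yield (i). For (iii)$\Rightarrow$(ii), I would differentiate the Green-function inequality at the endpoints $r=0$ and $r=1$, where both sides coincide. Using the explicit form $g(r,s) = \min(r,s)(1-\max(r,s))$ of the Dirichlet Green function on $[0,1]$ one computes $\partial_r g(0+,s) = 1-s$ and $\partial_r g(1-,s) = -s$. Since $E(r)$ is dominated by the right-hand side of (iii) with equality at both boundary points, the one-sided derivatives give $E'(0+) \leq E(1)-E(0) - \int_0^1 (1-s)[K(s)+\tfrac1N(E'(s))^2]\,ds$ and $E'(1-) \geq E(1)-E(0) + \int_0^1 s[K(s)+\tfrac1N(E'(s))^2]\,ds$. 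Subtracting produces exactly (ii).

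The heart of the proof is (i)$\Rightarrow$(iii), which I would establish by a subgeodesic argument. For every $0 \leq a < b \leq 1$, the linearly reparametrized curve $\tilde\gamma(t) := \gamma(a+(b-a)t)$ is again a $W_2$-geodesic, and pushing $\boldsymbol\pi$ forward under $\gamma\mapsto\tilde\gamma$ yields a lift $\tilde{\boldsymbol\pi}_{a,b}$ representing a geodesic from $\mu_a$ to $\mu_b$; in \RCD$(K,\infty)$ these intermediate endpoints again have finite entropy, so (i) applies. Rescaling via $|\dot{\tilde\gamma}| = (b-a)|\dot\gamma|$ and $\tilde E(t) = E(a+(b-a)t)$, (i) yields
\[
E'(b-) - E'(a+) \geq \int_a^b K(s)\,ds + \frac{1}{N(b-a)}\bigl(E(b)-E(a)\bigr)^2
\]
for every $0 \leq a < b \leq 1$. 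Specializing to $a = r-h$, $b = r+h$ and letting $h \downarrow 0$ at the almost-everywhere points of twice-differentiability of the semiconvex function $E$ promotes this to the distributional inequality $E''(r) \geq K(r) + \tfrac{1}{N}(E'(r))^2$ on $(0,1)$. Denote by $H$ the right-hand side of (iii); then $H(0) = E(0)$, $H(1) = E(1)$, and $-H''(r) = K(r) + \tfrac{1}{N}(E'(r))^2$ a.e., so $(E-H)'' \geq 0$ distributionally. A distributionally convex function on $[0,1]$ with vanishing boundary values is nonpositive, hence $E \leq H$, which is (iii).

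The ``for every $\boldsymbol\pi$'' assertion in the concluding remark is gratuitous under the standing \RCD$(K,\infty)$-hypothesis: between any two finite-entropy endpoints the $W_2$-geodesic and its optimal lift $\boldsymbol\pi$ are unique, so ``some'' and ``every'' coincide. I expect the main obstacle to be the localization step in (i)$\Rightarrow$(iii): carefully justifying the passage from the integrated subgeodesic inequality to the distributional second-order inequality requires combining the a.e.\ twice-differentiability of the $(K_0,\infty)$-semiconvex function $E$ with the Borel measurability of $s\mapsto K(s)$ (the latter being automatic from lower semicontinuity of $k$ together with Fubini applied to $\boldsymbol\pi$).
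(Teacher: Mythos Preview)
Your proof is correct and follows essentially the same cycle (ii)$\Rightarrow$(i)$\Rightarrow$(iii)$\Rightarrow$(ii) as the paper, with the same Jensen/Cauchy--Schwarz step, the same subgeodesic argument promoting (i) to the distributional inequality $E''\ge K+\tfrac1N(E')^2$ and then integrating against the Green function, and the same appeal to uniqueness of optimal dynamical plans for the addendum. The only cosmetic differences are that the paper extracts (ii) from (iii) by symmetrizing $r\leftrightarrow 1-r$ and dividing by $r$ rather than differentiating at each endpoint separately (an equivalent manoeuvre), and that your line ``$-H''(r)=K(r)+\tfrac1N(E'(r))^2$'' has a sign slip---it is $H''$, not $-H''$, that equals the right-hand side---though your subsequent conclusion $(E-H)''\ge0$ is unaffected.
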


\begin{proof} Firstly note that the addendum follows from the uniqueness of the measure representing a $W_2$-geodesics connecting a given pair of  measures of finite entropy \cite{giglirajala2016}.

(ii) $\Rightarrow$ (i): Trivial since $\int_0^1\big[\frac d{ds}\Ent(\mu_s)\big]^2ds\ge \big[\Ent(\mu_1)-\Ent(\mu_0)\big]^2$.

(i) $\Rightarrow$ (iii): Given a $W_2$-geodesic $(\mu_r)_{r\in [0,1]}$ and its representing measure $\boldsymbol{\pi}\in\Prob(\Geo(X))$,  apply (i) to $\mu_s,\mu_{s+\delta}$ in the place of $\mu_0,\mu_1$ to deduce for a.e.\ $s\in(0,1)$
\begin{equation*}
\frac{d^2}{ds^2}\Ent(\mu_s)
\ge
   \int_{\Geo(X)} k(\gamma_s)\vert\dot{\gamma}\vert^2\,d\boldsymbol{\pi}(\gamma)+\frac1N\Big[\frac d{ds}\Ent(\mu_s)\Big]^2.
\end{equation*}
(where the LHS has to be understood as the distributional second derivative of a semiconvex function).
Integrating this w.r.t.\ the measure $g(s,r)\,ds$ on $(0,1)$ yields (iii).

(iii) $\Rightarrow$ (ii): Given a $W_2$-geodesic $(\mu_r)_{r\in [0,1]}$ and its representing measure $\boldsymbol{\pi}\in\Prob(\Geo(X))$, we add up the estimate (iii) together with its counterpart with $1-r$ in the place of $r$ to obtain
\begin{eqnarray*} \lefteqn{\Ent(\mu_r)+\Ent(\mu_{1-r})\le
\Ent(\mu_0)+\Ent(\mu_1)}\\
&&-
   \int_0^1 \big[g(r,s)+g(1-r,s)\big]\,\Big(\int_{\Geo(X)}k(\gamma_s)\vert\dot{\gamma}\vert^2\,d\boldsymbol{\pi}(\gamma)+\frac1N \,\Big[\frac d{ds}\Ent(\mu_s)\Big]^2\Big)ds.
\end{eqnarray*}
Dividing by $r$ and then letting $r\to0$ yields (ii).
\end{proof}

\begin{Def} We say that
$(X,\d,\mm)$ satisfies the   \emph{2-Bochner inequality} or \emph{2-Bakry-\'Emery estimate} with variable curvature bound $k$ and dimension bound $N$, briefly  \BE$_2(k,N)$, if
\begin{equation*}
\int_X \frac{1}{2}\Gamma(f)\Delta\phi - \Gamma(f,\Delta f)\phi\,d\mm \geq \int_X\big[ k\Gamma(f)+\frac1N (\Delta f)^2\big]\phi\,d\mm
\end{equation*}
for all $f\in\Dom(\Delta)$ with $\Delta f \in \Dom(\E)$ and all nonnegative $\phi \in \Dom(\Delta)\cap L^\infty(X,\mm)$ with $\Delta\phi\in L^\infty(X,\mm)$.
\end{Def}

Our first main results states that also for variable curvature bound $k$ and finite $N$, the Eulerian and Lagrangian approaches to synthetic lower Ricci bounds are equivalent.
For constant $k$, this has been proven in joint work  \cite{erbar2015} of the author with Erbar and Kuwada. For variable $k$ and $N=\infty$, it has been proven in joint work \cite{braun2019} with Braun and Habermann. In particular, in the latter work a formulation of the transport estimate has been given in terms of the following quantity:
$$W_{2,k}(\mu,\nu,t):=\inf_{(B^1,B^2)}\, {\mathbb E}\Big[e^{-2\int_0^tk(B^1_{2s}, B^2_{2s})ds}\cdot d^2(B^1_{2t},B^2_{2 t})\Big]^{1/2}
$$
where the infimum is taken over all coupled pairs of Brownian motions $(B^1_s)_{0\le s\le 2t}$ and $(B^2_s)_{0\le s\le 2\lambda t}$ with initial distributions $\mu$ and $\nu$, resp.

\begin{thm}\label{cd=be} The following are equivalent
\begin{itemize}
\item[(i)] the curvature-dimension condition \CD$(k,N)$

\item[(ii)] the evolution-variational inequality \EVI$(k,N)$: \
for all $\mu_0,\mu_1\in{\mathcal P}_2(X)$ with finite entropy and for $\boldsymbol{\pi}\in
{\mathcal P}(\Geo(X))$ representing the unique $W_2$-geodesic connecting them:
\begin{eqnarray*}-\frac12\frac{d^+}{dt}\Big|_{t=0}W_2(\DP_t\mu_0,\mu_1)^2&\ge&
\Ent(\mu_0)-\Ent(\mu_1)\\
&+&
  \int_0^1(1-r)\,\Big(\int_{\Geo(X)}
   k(\gamma_r)\vert\dot{\gamma}\vert^2\,d\boldsymbol{\pi}(\gamma)+\frac1N
    \Big[\frac d{dr}\Ent(\mu_r)\Big]^2\Big)
  dr
\end{eqnarray*}

\item[(iii)] the differential transport  estimate \DTE$_2(k,N)$: \
for all $\mu_0,\mu_1\in{\mathcal P}_2(X)$ with finite entropy and for $\boldsymbol{\pi}\in
{\mathcal P}(\Geo(X))$ representing the unique $W_2$-geodesic connecting them:
$$-\frac12\frac{d^+}{dt}\Big|_{t=0}W_2(\DP_t\mu_0,\DP_t\mu_1)^2\ge
  \int_0^1\int_{\Geo(X)} k(\gamma_r)\vert\dot{\gamma}\vert^2\,d\boldsymbol{\pi}(\gamma)\,d r+\frac1N\Big[\Ent(\mu_0)-\Ent(\mu_1)\Big]^2
$$

\item[(iv)] the transport  estimate \TE$_2(k,N)$: \ for all $\mu_0,\mu_1\in{\mathcal P}_2(X)$ and all $0\le s\le t$:
$$W_{2,k}(\mu_0,\mu_1,t)^2\le W_{2,k}(\mu_0,\mu_1,s)^2-\frac1N\int_s^t\Big| \Ent(\DP_r\mu_0)-\Ent(\DP_r\mu_1)\Big|^2dr$$

\item[(v)] the gradient estimate \GE$_2(k,N)$: \ for all $f\in \Dom(\E)$ and all $t>0$:
$$\Gamma(P_tf)+\frac{2t}Ne^{-2K_1t}\big(\Delta P_tf\big)^2\le P_t^{2k}\Gamma(f)$$
\item[(vi)] 
 the Bochner inequality \BE$_2(k,N)$.
\end{itemize}
Here and henceforth, $\frac{d^+}{dt}f(t):=\limsup_{h\to 0}(f(t+h)-f(t))/h$ 
denotes the upper derivative.
\end{thm}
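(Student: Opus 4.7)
The plan is to close a cycle of implications among the six conditions, combining the strategy of \cite{erbar2015} (constant $k$, finite $N$) with that of \cite{braun2019} (variable $k$, $N=\infty$). A convenient ordering is
$$
(\text{i})\Longrightarrow(\text{ii})\Longrightarrow(\text{iii})\Longrightarrow(\text{iv})\Longrightarrow(\text{v})\Longrightarrow(\text{vi})\Longrightarrow(\text{i}).
$$

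The Lagrangian arrows are comparatively soft. For (i)$\Rightarrow$(ii), the reformulation in Lemma~\ref{threeCDs}(iii) says that $\Ent$ is $(k,N)$-convex along every $W_2$-geodesic with a variable-curvature weight; the abstract gradient-flow theory of $(k,N)$-convex functionals (Daneri--Sav\'are, Muratori--Sav\'are) then identifies the heat flow $\DP_t$ on $\Prob_2(X)$ as the EVI$(k,N)$-flow of $\Ent$, the variable $k$ entering through the Green-function integration already used in Lemma~\ref{threeCDs}. The step (ii)$\Rightarrow$(iii) is a symmetrization: apply EVI$(k,N)$ at the endpoints $\mu_0$ and $\mu_1$ and add the two estimates, so that the $\Ent(\mu_0)-\Ent(\mu_1)$ terms cancel and leave the squared entropy-dissipation of DTE$_2(k,N)$.

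For the Eulerian block, the main tool is a Kuwada-type duality enhanced with a variable Feynman--Kac weight. Step (iii)$\Rightarrow$(iv) is a Gr\"onwall-type integration in $t$, exploiting that $W_{2,k}(\mu_0,\mu_1,t)^2$ is designed precisely so that its $t$-derivative at $0$ reproduces the geodesic integral $\int_0^1\int k(\gamma_r)|\dot\gamma|^2\,d\boldsymbol\pi\, dr$, following \cite{braun2019}. For (iv)$\Rightarrow$(v), the weighted transport estimate on the semigroup acting on measures dualizes into the pointwise gradient estimate on the semigroup acting on functions; the exponential $k$-weight on the Lagrangian side becomes the taming semigroup $P_t^{2k}$ from Section~2 on the Eulerian side, while the entropy-dissipation correction $\tfrac1N|\Ent(\DP_r\mu_0)-\Ent(\DP_r\mu_1)|^2$ dualizes into the dimensional term $\tfrac{2t}{N}e^{-2K_1t}(\Delta P_tf)^2$. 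For (v)$\Rightarrow$(vi) one expands both sides of the gradient estimate at $t=0^+$: the linear-in-$t$ comparison of $2\Gamma(f,\Delta f)$ with $\Delta\Gamma(f)+2k\Gamma(f)+\tfrac{2}{N}(\Delta f)^2$ is, after testing against $\phi$ and integrating, the Bochner inequality BE$_2(k,N)$.

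The closing implication (vi)$\Rightarrow$(i) is the heart of the theorem and the main obstacle. Following \cite{erbar2015}, the plan is to regularize the endpoints $\mu_0,\mu_1$ by the heat flow, so that $t\mapsto\Ent(\DP_t\mu_i)$ and the dual Hopf--Lax semigroup $Q_s\phi$ along the optimal $W_2$-geodesic are smooth enough; then apply BE$_2(k,N)$ to $Q_s\phi$ tested against the density of $\mu_r$, and integrate the resulting pointwise inequality over $(r,s)\in(0,1)^2$. The outcome is the distributional differential inequality
$$
\frac{d^2}{dr^2}\Ent(\mu_r)\ \ge\ \int_{\Geo(X)} k(\gamma_r)|\dot\gamma|^2\,d\boldsymbol\pi(\gamma)+\frac1N\Bigl[\frac{d}{dr}\Ent(\mu_r)\Bigr]^2,
$$
which by Lemma~\ref{threeCDs} is equivalent to CD$(k,N)$. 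The genuine difficulty is that the variable-$k$ bookkeeping of \cite{braun2019} and the finite-$N$ Cauchy--Schwarz refinement of \cite{erbar2015} must be run simultaneously: the $k$-contribution has to be tracked pointwise along the interpolating curve, with no constant factorization available, while at the same time the full dimensional remainder $\tfrac1N(\Delta f)^2$ must be extracted from the entropy-production $\int u\,\Gamma(\log u)\,d\mm$ by a sharp Cauchy--Schwarz against $(\Delta u)^2/u$. Structurally the two refinements are compatible, since $k$ enters only linearly, but making the double limit (in the heat-regularization parameter and in the Hopf--Lax parameter $s$) rigorous under merely the a priori $\RCD(K,\infty)$-regularity is the delicate technical point to be carried out.
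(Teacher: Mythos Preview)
Your cycle $(\text{i})\Rightarrow(\text{ii})\Rightarrow(\text{iii})\Rightarrow(\text{iv})\Rightarrow(\text{v})\Rightarrow(\text{vi})\Rightarrow(\text{i})$ differs substantially from the paper's architecture, and two of your links are problematic.

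First, the step $(\text{iv})\Rightarrow(\text{v})$ is a genuine gap. You invoke a ``Kuwada-type duality enhanced with a variable Feynman--Kac weight'' to pass from the transport estimate $\TE_2(k,N)$ to the gradient estimate $\GE_2(k,N)$. No such duality with \emph{both} the variable-$k$ Feynman--Kac weight \emph{and} the finite-$N$ dimensional correction is available in the literature, and the paper does not prove one. Note that the factor $e^{-2K_1 t}$ in the paper's $\GE_2(k,N)$ arises from a crude bound in the derivation $(\text{vi})\Rightarrow(\text{v})$ (replacing $P_s^{2k}$ by $e^{-2K_1 s}$ times a square via Jensen), which already signals that $\GE_2(k,N)$ as stated is not the sharp dual of $\TE_2(k,N)$. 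The paper avoids this entirely: it only proves the reverse, localized direction $(\text{v})\Rightarrow(\text{iii})_{\mathrm{loc}}$ following \cite{braun2019}, and never passes from (iv) to (v).

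Second, and more importantly, you misidentify the hard step. In the paper the substantial new work is $(\text{i})\Rightarrow(\text{vi})$, the Lagrangian-to-Eulerian direction. This is done via a $\lambda$-parametrized differential transport estimate for $W_2(\DP_t\mu,\DP_{\lambda t}\nu)$, then passing to the coupled-Brownian quantity $W_{2,k,\lambda}$, then a localization to small balls where $k$ is approximately a constant $K_z$ to extract a $W_p$-bound ($p<2$), then a Kuwada-type step giving a gradient estimate with $(P_t|\nabla f|^q)^{2/q}$ ($q>2$), and finally differentiating at $t=0$ and sending $q\downarrow 2$, $\varepsilon\downarrow 0$ to obtain $\BE_2(k,N)$. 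Your plan routes $(\text{i})$ to $(\text{vi})$ through the missing link $(\text{iv})\Rightarrow(\text{v})$ and so never confronts this.

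For the Eulerian-to-Lagrangian direction, you propose $(\text{vi})\Rightarrow(\text{i})$ directly via the Hopf--Lax/action-estimate machinery of \cite{erbar2015}. That is a reasonable alternative strategy, and the variable-$k$ bookkeeping (identifying $\int k|\nabla\varphi_r|^2\rho_r\,d\mm$ with $\int k(\gamma_r)|\dot\gamma|^2\,d\boldsymbol\pi$) is plausible in principle. But it is not what the paper does: the paper closes this direction via $(\text{vi})\Leftrightarrow(\text{v})\Rightarrow(\text{iii})_{\mathrm{loc}}\Rightarrow(\text{i})$, where the last arrow is the ``trapezoidal argument'' of \cite{kopfer2018}. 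Also, your $(\text{ii})\Rightarrow(\text{iii})$ ``by symmetrization'' is too quick: adding the two $\EVI$ inequalities gives a bound on $\tfrac{d}{dt}[W_2(\DP_t\mu_0,\mu_1)^2+W_2(\mu_0,\DP_t\mu_1)^2]$, not on $\tfrac{d}{dt}W_2(\DP_t\mu_0,\DP_t\mu_1)^2$; the paper instead goes $(\text{ii})\Rightarrow(\text{i})$ (applying $\EVI$ to the flow started at an interior point $\mu_r$) and then $(\text{i})\Rightarrow(\text{iii})$ separately.
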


\begin{proof}
{\bf (i) $\Rightarrow$ (ii):} \ Using the equivalent \CD$(k,N)$ formulation from the previous Lemma \ref{threeCDs}(iii) and passing there to the limit $r\to0$, one easily sees that 
 (i) implies 
\begin{eqnarray*}\frac{d^+}{dr}\Big|_{r=0}\,\Ent(\mu_r)&\le& \Ent(\mu_1)-\Ent(\mu_0)\\
&&- \int_0^1\int   (1-r)\,k(\gamma_r)\vert\dot{\gamma}\vert^2_r\,d\boldsymbol{\pi}(\gamma)\,d r+\frac1N
   \int_0^1 (1-r)\, \Big[\frac d{dr}\Ent(\mu_r)\Big]^2
  dr.
\end{eqnarray*}
Thus the claim  (ii) is an immediate consequence of the fact  that
$$\frac{d^+}{dr}\Big|_{r=0}\,\Ent(\mu_r)\ge \frac12\frac{d^+}{dt}\Big|_{t=0}W_2(\DP_t\mu_0,\mu_1)^2,$$
\cite{ambrosiogigli2015}, Thm. 6.3.

\medskip

{\bf (ii) $\Rightarrow$ (i).} \ We follow the standard path of argumentation. Given two probability measures $\mu_0,\mu_1$ of finite entropy, let $(\mu_r)_{r\in[0,1]}$, represented by $\boldsymbol{\pi}$, denote the unique $W_2$ geodesic connecting them and note that the standing \CD$(K,\infty)$-assumption implies that the $\mu_r$'s also have finite entropy. Consider the heat flow starting in $\mu_r$ with observation point $\mu_0$ as well as with observation point $\mu_1$.  Note that $$0 \le\frac1r \frac{d^+}{dt}\Big|_{t=0}W_2(\mu_0,\DP_t\mu_r)^2+\frac1{1-r}\frac{d^+}{dt}\Big|_{t=0}W_2(\DP_t\mu_r,\mu_1)^2$$
since
$W_2(\mu_0,\mu_1)^2=\frac1r W_2(\mu_0,\mu_r)^2+\frac1{1-r}W_2(\mu_r,\mu_{1})^2$
whereas
$W_2(\mu_0,\mu_1)^2\le\frac1r W_2(\mu_0,\DP_t\mu_r)^2+\frac1{1-r}W_2(\DP_t\mu_r,\mu_1)^2$.
Applying the \EVI$(k,N)$ with 
$\mu_r,\mu_0$ as well as with 
$\mu_r,\mu_1$ in the place of $\mu_0,\mu_1$ thus yields
\begin{eqnarray*}
0 &\le&\frac{1-r}2 \, \frac{d^+}{dt}\Big|_{t=0}W_2(\mu_0,\DP_t\mu_r)^2+\frac r2 \, \frac{d^+}{dt}\Big|_{t=0}W_2(\DP_t\mu_r,\mu_1)^2\\
&\le&
(1-r)\Big[\Ent(\mu_r)-\Ent(\mu_0)\\
&&\qquad-
  \int_0^r s\,\Big(\int_{\Geo(X)}
   k(\gamma_s)\vert\dot{\gamma}\vert^2\,d\boldsymbol{\pi}(\gamma)+\frac1N
    \Big[\frac d{ds}\Ent(\mu_s)\Big]^2\Big)
  ds\Big]\\
&+&  r\Big[\Ent(\mu_r)-\Ent(\mu_1)\\
&&\qquad-
  \int_r^1(1-s)\,\Big(\int_{\Geo(X)}
   k(\gamma_s)\vert\dot{\gamma}\vert^2\,d\boldsymbol{\pi}(\gamma)+\frac1N
    \Big[\frac d{ds}\Ent(\mu_s)\Big]^2\Big)
  ds\Big]\\
  &=&
  \Ent(\mu_r)-(1-r)\,\Ent(\mu_0)-r\,\Ent(\mu_1)\\
 && \quad- \int_0^1 g(r,s)\,\Big(\int_{\Geo(X)}k(\gamma_s)\vert\dot{\gamma}\vert^2\,d\boldsymbol{\pi}(\gamma)+\frac1N \,\Big[\frac d{ds}\Ent(\mu_s)\Big]^2\Big)ds.
\end{eqnarray*}
This proves the \CD$(k,N)$-estimate.

\medskip

{\bf (i) $\Rightarrow$ (iii):} \ 
For $t>0$ let $\phi_t,\psi_t$ denote a $W_2$-optimal pair of Kantorovich potentials for the transport from $\DP_t\mu_0=u_t\,\mm$ to $\DP_t\mu_1=v_t\,\mm$. Then following \cite{ambrosiogigli2015}, Thm.~6.3 and Thm.~6.5,  by Kantorovich duality
for a.e.~$t>0$
\begin{eqnarray*}
\frac{d^+}{dt}\,\frac12 W_2(\DP_t\mu_0,\DP_t\mu_1)^2&=&
\lim_{s\to t}\frac1{t-s}\int\big[\phi_t(u_t-u_s)+\psi_t(v_t-v_s)\big]dm\\
&=&-\E(\phi_t,u_t)-\E(\psi_t,v_t).
\end{eqnarray*}
Moreover, 
$$-\E(\phi_t,u_t)\le\frac{d^+}{dr}\Ent(\mu^t_r)\Big|_{r=0}, \quad -\E(\phi_t,u_t)\le-\frac{d^+}{dr}\Ent(\mu^t_r)\Big|_{r=1}.$$
Thus
\begin{eqnarray}\label{W-vs-Ent}
\frac{d^+}{dt}\,\frac12 W_2(\DP_t\mu_0,\DP_t\mu_1)^2\le 
\frac{d^+}{dr}\Ent(\mu^t_r)\Big|_{r=0}-\frac{d^+}{dr}\Ent(\mu^t_r)\Big|_{r=1}
\end{eqnarray}
where $(\mu^t_r)_{r\in[0,1]}$, represented by $\boldsymbol{\pi}^t$,
 denotes the $W_2$-geodesic connecting $\mu_0^t:=\DP_t\mu_0$ and $\mu_1^t:=\DP_t\mu_1$.
Together with (i) this implies
\begin{eqnarray*}
\frac{d^+}{dt}\,\frac12 W_2(\DP_t\mu,\DP_t\nu)^2&\le&
 - \int_0^1\int_{\Geo(X)} k(\gamma_r)\vert\dot{\gamma}\vert^2\,d\boldsymbol{\pi}^t(\gamma)\,d r+\frac1N\Big[\Ent(\DP_t\mu_0)-\Ent(\DP_t\mu_1)\Big]^2
  \end{eqnarray*}
 for a.e.\ $t$ and thus
  \begin{eqnarray*}
\frac1{s} \Big[\frac12W_2(\DP_s\mu,\DP_s\nu)^2-\frac12W_2(\mu_0,\mu_1)^2\Big]&\le&
 -\frac1s\int_0^s \Big(\int_0^1\int_{\Geo(X)} k(\gamma_r)\vert\dot{\gamma}\vert^2\,d\boldsymbol{\pi}^t(\gamma)\,d r\\
 &&\qquad\qquad+\frac1N\Big[\Ent(\DP_t\mu_0)-\Ent(\DP_t\mu_1)\Big]^2\Big)dt.
  \end{eqnarray*}
  Passing to the limit $s\to0$ finally yields the claim (iii) since $\Ent(\DP_t\mu_0)$ as well as $\Ent(\DP_t\mu_0)$ are  continuous in $t$ and since
  $\boldsymbol{\pi}^t$ weakly converges to $\boldsymbol{\pi}$ and $k$ is lower semicontinuous.
\medskip

{\bf (iii)$_{loc}$ $\Rightarrow$ (i).} \
This implication  can be proven with the ``trapezial argument'' from \cite{kopfer2018}. Note that thanks to the local-to-global property of the \CD$(k,N)$-condition, for this implication it suffices that the differential transport inequality holds locally, that is, for each $z\in X$ there exist $\delta>0$ such that
\DTE$(k,N)$
holds true for all $\mu_0,\mu_1$ which are supported in $\B_\delta(z)$.

Given $\mu_0,\mu_1$ of finite entropy and $\epsilon\in(0,\frac12)$ as well as $t>0$,
note that
$$W_2(\mu_0,\mu_1)^2=\frac1\epsilon W_2(\mu_0,\mu_\epsilon)^2+\frac1{1-2\epsilon}W_2(\mu_\epsilon,\mu_{1-\epsilon})^2+\frac1\epsilon W_2(\mu_{1-\epsilon},\mu_1)^2$$
whereas
$$W_2(\mu_0,\mu_1)^2\le\frac1\epsilon W_2(\mu_0,\DP_t\mu_\epsilon)^2+\frac1{1-2\epsilon}W_2(\DP_t\mu_\epsilon,\DP_t\mu_{1-\epsilon})^2+\frac1\epsilon W_2(\DP_t\mu_{1-\epsilon},\mu_1)^2.$$
Thus
$$0 \le\frac1\epsilon \frac{d^+}{dt}W_2(\mu_0,\DP_t\mu_\epsilon)^2+\frac1{1-2\epsilon}\frac{d^+}{dt}W_2(\DP_t\mu_\epsilon,\DP_t\mu_{1-\epsilon})^2+\frac1\epsilon \frac{d^+}{dt}W_2(\DP_t\mu_{1-\epsilon},\mu_1)^2.$$
Estimating the first and third term on the RHS by means of \EVI$(K,\infty)$ (which is true as consequence of our standing a priori assumption) and the second term by means of \DTE$(k,N)$ yields
\begin{eqnarray*}
0&\le&
\frac2\epsilon\Big[\Ent(\mu_\epsilon)-\Ent(\mu_0)-K\,W_2(\mu_0,\mu_\epsilon)^2\Big]\\
&&
-\frac2{1-2\epsilon}\Big[(1-2\epsilon)^2
 \int_\epsilon^{1-\epsilon}\int_{\Geo(X)} k(\gamma_r)\vert\dot{\gamma}\vert^2_r\,d\boldsymbol{\pi}(\gamma)\,d r+\frac1N\Big[\Ent(\mu_\epsilon)-\Ent(\mu_{1-\epsilon})\Big]^2
\Big]\\
&&
+\frac2\epsilon\Big[\Ent(\mu_{1-\epsilon})-\Ent(\mu_1)-K \, W_2(\mu_{1-\epsilon},\mu_1)^2\Big].
\end{eqnarray*}
In the limit $\epsilon\to0$, this gives the \CD$(k,N)$-inequality (i).

\medskip

{\bf (iii) $\Leftrightarrow$ (iv).} \ 
The proof of this equivalence follows the argumentation for proving  Theorem 5.6 and Corollary 5.7 in \cite{braun2019}.
\medskip

{\bf (v) $\Rightarrow$ (iii)$_{loc}$:} \
This follows similar as in the proof of Theorem 5.16 in
 \cite{braun2019} from a localization argument.
 
 \medskip

{\bf (v) $\Leftrightarrow$ (vi):} \ The proof follows  the standard line of argumentation via differentiating the forward-backward evolution. More precisely, for bounded, nonnegative $\phi\in\Dom(\E)$ and fixed $t>0$, put
$a(s):=\int \phi\, P^{2k}_s\Gamma\big(P_{t-s}f\big)\,dm$. This function is absolutely continuous in $s$ with 
\begin{eqnarray*}
a'(s)&=&\int \phi_s\, \big[ (\Delta-2k)\Gamma(f_s)-2\Gamma(f_s,\Delta f_s)\big]\,d\mm
\end{eqnarray*}
for a.e. $s\in[0,t]$  where we have put $\phi_s:=P^{2k}_s\phi$ and $f_s:=P_{t-s}f$.
Assuming (vi)  implies
$$a'(s)\ge
\frac2N\int \phi_s\,(\Delta f_s)^2\,d\mm$$
and thus
\begin{eqnarray*}
\int\phi\big( P^{2k}_t\Gamma(f)-\Gamma(P_tf)\big)\,d\mm
&=&
a(t)-a(0)\\
&\ge& \frac2N\int_0^t\int \phi\,P^{2k}_s(\Delta P_{t-s}f)^2\,d\mm\\
&\ge& \frac{2}Ne^{-2K_1t}\int_0^t\int \phi\,(P_s\Delta P_{t-s}f)^2\,d\mm\\
&=& \frac{2t}Ne^{-2K_1t}\int \phi\,(\Delta P_{t}f)^2\,d\mm.
\end{eqnarray*}
Varying over $\phi$, this yields (v). Conversely, assuming (v) yields
\begin{eqnarray*}
\frac{2}N\int \phi\,(\Delta f)^2\,d\mm&=&
\lim_{t\to0}\frac1t  \Big[\frac{2t}Ne^{-2K_1t}\int \phi\,(\Delta P_{t}f)^2\,d\mm\Big]\\
&\le&
\lim_{t\to0}\frac1t\Big[\int\phi\big( P^{2k}_t\Gamma(f)-\Gamma(P_tf)\big)\,d\mm\Big]\\
&=&
\int\phi\, \big[ (\Delta-2k)\Gamma(f)-2\Gamma(f,\Delta f)\big]\,d\mm
\end{eqnarray*}
for all bounded nonnegative $\phi\in\Dom(\E)$ and all sufficiently regular $f$.

 \medskip
 
{\bf  (i) $\Rightarrow$ (vi).} \ 
We will first derive an estimate of the form (4.2) in \cite{erbar2015} for $W_2(\DP_t\mu,\DP_s\nu)$. 
Given measures  $\mu,\nu\in\Prob_2(X)$ of finite entropy and numbers $\lambda, t>0$ we can estimate similar as in \eqref{W-vs-Ent}
\begin{eqnarray*}
\frac{d^+}{dt}\,\frac12 W_2(\DP_t\mu,\DP_{\lambda t}\nu)^2\le 
\frac{d^+}{dr}\Ent(\mu^t_r)\Big|_{r=0}-\lambda\frac{d^+}{dr}\Ent(\mu^t_r)\Big|_{r=1}.
\end{eqnarray*}
From Lemma \ref{threeCDs} we easily deduce
\begin{eqnarray*}
\lefteqn{
\frac{d^+}{dr}\Ent(\mu^t_r)\Big|_{r=0}-\lambda\frac{d^+}{dr}\Ent(\mu^t_r)\Big|_{r=1}
\le (\lambda-1)\cdot \Big(\Ent(\mu^t_0)-\Ent(\mu^t_1)\Big)}\\
&&-\int_0^1 \big[1-r+\lambda r\big]\cdot 
\Big(\int_{\Geo(X)}k(\gamma_r)\vert\dot{\gamma}\vert^2\,d\boldsymbol{\pi}_t^\lambda(\gamma)+\frac1N \,\Big[\frac d{dr}\Ent(\mu_r^t)\Big]^2\Big)dr
\end{eqnarray*}
where $\boldsymbol{\pi}_t^\lambda$ denotes the measure on  $\Prob(\Geo(X))$ representing the geodesic $(\mu_r^t)_{r\in[0,1]}$ from $\DP_t\mu$ to $\DP_{\lambda t}\nu$. Adding up these inequalities and using Young's inequality we obtain
\begin{eqnarray*}
\frac{d^+}{dt}\,\frac12 W_2(\DP_t\mu,\DP_{\lambda t}\nu)^2&\le&
(\lambda-1)\cdot \Big(\Ent(\mu^t_0)-\Ent(\mu^t_1)\Big)\\
&&-\int_0^1 [1-r+\lambda r]\,
\Big(\int_{\Geo(X)}k(\gamma_r)\vert\dot{\gamma}\vert^2\,d\boldsymbol{\pi}_t^\lambda(\gamma)+\frac1N \,\Big[\frac d{dr}\Ent(\mu_r^t)\Big]^2\Big)dr\\
&\le&-\int_0^1 [1-r+\lambda r]\,
\int_{\Geo(X)}k(\gamma_r)\vert\dot{\gamma}\vert^2\,d\boldsymbol{\pi}_t^\lambda(\gamma)dr\\
&&+\frac N4 (\lambda-1)^2\cdot \int_0^1 \frac1{1-r+\lambda r}dr\\
&=&-\int_0^1 [1-r+\lambda r]\,
\int_{\Geo(X)}k(\gamma_r)\vert\dot{\gamma}\vert^2\,d\boldsymbol{\pi}_t^\lambda(\gamma)dr
+\frac N4 (\lambda-1)\, \log\lambda.
\end{eqnarray*}
Introducing the function 
$$k_\lambda(x,y):=\lim_{R\to0} \, \inf\Big\{
\int_0^1  [1-r+\lambda r]\,k(\gamma_r)dr: \ \gamma\in \Geo(X), \gamma_0\in \B_R(x), \gamma_1\in \B_R(y)
\Big\}$$
and denoting by $q^\lambda_t$ the $W_2$-optimal coupling of $\DP_t\mu$ and $\DP_{\lambda t}\nu$,
the latter estimate can be rephrased as
\begin{eqnarray}\label{DT-lambda1}
\frac{d^+}{dt}\,\frac12 W_2(\DP_t\mu,\DP_{\lambda t}\nu)^2&\le&
-\int_{X\times X} k_\lambda(x,y)\, d^2(x,y)\, dq^\lambda_t(x,y)+\frac N4 (\lambda-1)\, \log\lambda.
\end{eqnarray}
Slightly extending the scope of  \cite{braun2019}, we define
$$W_{2,k,\lambda}(\mu,\nu,t):=\inf_{(B^1,B^2)}\, {\mathbb E}\Big[e^{-2\int_0^tk_\lambda(B^1_{2s}, B^2_{2\lambda s})ds}\cdot d^2(B^1_{2t},B^2_{2\lambda t})\Big]^{1/2}
$$
where the infimum is taken over all coupled pairs of Brownian motions $(B^1_s)_{0\le s\le 2t}$ and $(B^2_s)_{0\le s\le 2\lambda t}$ with initial distributions $\mu$ and $\nu$, resp.
Following the proof of Theorem 4.6 in \cite{braun2019}, from \eqref{DT-lambda1} we conclude
\begin{eqnarray*}
\frac{d^+}{dt}\,\frac12 W_{2,k,\lambda}(\DP_t\mu,\DP_{\lambda t}\nu)^2&\le&
\frac N4 (\lambda-1)\, \log\lambda
\end{eqnarray*}
and thus
\begin{eqnarray}\label{DT-lambda3}
 W_{2,k,\lambda}(\DP_t\mu,\DP_{\lambda t}\nu)^2&\le& W_2(\mu,\nu)^2+
\frac N2 (\lambda-1)\, \log\lambda\cdot t.
\end{eqnarray}

To proceed, we now will make use of a subtle localization argument. Recall from \cite{ambrosio2008} or from \cite{braun2019}, Lemma 2.1, that we may assume without restriction that $k$ is continuous (even Lipschitz continuous). Given $z\in X$ and $\epsilon>0$, choose $\delta>0$ and $K_z$ such that $K_z\le k\le K_z+\epsilon$ in $\B_{2\delta}(z)$.
Then following the proof of Theorem 4.2 in  \cite{sturm2018b}, we conclude that for each $p<2$, there exists $T>0$ such that for all $t,\lambda>0$ with $t(1+\lambda)\le T$ and for all $\mu,\nu$ with support in $\B_\delta(z)$
\begin{eqnarray}\label{DT-lambda4}
W_p(\DP_t\mu,\DP_{\lambda t}\nu)^2\le e^{-(K_z-\epsilon)(\lambda+1)t}\cdot 
 W_{2,k,\lambda}(\DP_t\mu,\DP_{\lambda t}\nu)^2.
\end{eqnarray}
Combining this with the previous estimate \eqref{DT-lambda3} yields
\begin{eqnarray}\label{DT-lambda5}
W_p(\DP_t\mu,\DP_{\lambda t}\nu)^2\le 
 e^{-(K_z-\epsilon)(\lambda+1)t}\cdot \Big[W_2(\mu,\nu)^2+
\frac N2 (\lambda-1)\, \log\lambda\cdot t
 \Big].
 \end{eqnarray}
This is very similar to the estimates (4.1) and (4.2) in  \cite{erbar2015} which are used there as key ingredients for deriving gradient estimates -- the main difference being now that $p<2$ on the LHS of \eqref {DT-lambda5}.
Given a bounded Lipschitz function $f$  on $X$ and putting 
$G_Rf(x)=\sup_{y\in \B_r(x)}\frac{|f(y)-f(x)|}{\d(x,y)}$, following the proof of Theorem 4.3 in \cite{erbar2015}, instead of their estimate (4.7) we now obtain with $\mu=\delta_x, \nu=\delta_y$ and $q>2$ being the dual exponent for $p$
\begin{eqnarray}
\int\big|f(x')-f(y')\big|\, dq^\lambda_t(x',y')&\le&
\int \big(P_{\lambda t}|G_Rf|^q\big)^{1/q}\cdot W_p(P_t\delta_x,P_{\lambda t}\delta_y)\nonumber\\
&&+2\frac{\| f\|_\infty}{R^2}\cdot W_p^2(P_t\delta_x,P_{\lambda t}\delta_y).
\end{eqnarray}
Choosing a sequence $(y_n)_{n\in\N}$ such that $y_n\to x$ and 
$|\nabla P_tf(x)|=\limsup_n \frac{P_tf(x)-P_tf(y_n)}{\d(x,y_n)}$ as in \cite{erbar2015},  and putting $\lambda_n=1+\alpha \, \d(x,y_n)$ leads to
\begin{eqnarray*}
\alpha \frac d{dt}P_tf(x)+|\nabla P_tf|(x)&=&
\lim_{n\to\infty}\frac1{\d(x,y_n)}\big(P_{\lambda_nt}f(x)-P_tf(y)\big)\\
&\le&\big(P_{ t}|G_Rf|^q\big)^{1/q}(x)\cdot  e^{-(K_z-\epsilon)t}
\cdot \sqrt{1+\alpha^2\frac N{2t}}.
\end{eqnarray*}
Optimizing w.r.t.\ $\alpha$ and passing to the limit $R\to0$ then yields
\begin{eqnarray}\label{grad-est-st}
\frac{2t}N \big(\Delta P_tf\big)^2(x)+\big|\nabla P_tf\big|^2(x)
&\le&\big(P_{ t}|\nabla f|^q\big)^{2/q}(x)\cdot  e^{-2(K_z-\epsilon)t}.
\end{eqnarray}

Integrating this estimate w.r.t.\ $\phi(x)\,d\mm(x)$ with a bounded nonnegative $\phi \in \Lip(X)$ supported in $\B_\delta(z)$ and then differentiating it at $t=0$ yields the following perturbed, local form of the Bochner inequality
\begin{eqnarray*}-\int\frac12 \Gamma(\phi, \Gamma(f))+\phi\,\Gamma(f,\Delta f)\,d\mm
&\ge& (K_z-\epsilon) \int \phi \,\Gamma(f)\,d\mm +\frac1N \int \phi (\Delta f)^2\,d\mm\\
&&-(q-2)\int \phi\,\Gamma(\Gamma(f)^{1/2})\,d\mm\\
&\ge&  \int  (k-2\epsilon)\,\phi\,\Gamma(f)\,d\mm +\frac1N \int \phi (\Delta f)^2\,d\mm\\
&&-(q-2)\int \phi\,\Gamma(\Gamma(f)^{1/2})\,d\mm
\end{eqnarray*}
 provided $f\in\Dom(\Delta)\cap \Lip(X)$ with $\Delta f \in \Dom(\E)$.
Covering the whole space by balls $\B_{\delta/2}(z)$ of the above type, we can find a partition of unity consisting of functions $\phi$ of the above type which allows us to deduce the perturbed Bochner inequality on all of $X$, cf. the analogous argumentation formulated as Theorem 3.10 in \cite{braun2019}. Since $\epsilon>0$ and $q>2$ were arbitrary we finally obtain the Bochner inequality in the following form:
\begin{eqnarray*}-\int\frac12 \Gamma(\phi, \Gamma(f))+\phi\,\Gamma(f,\Delta f)\,d\mm
&\ge&  \int  k\,\phi\,\Gamma(f)\,d\mm +\frac1N \int \phi (\Delta f)^2\,d\mm
\end{eqnarray*}
for all $f\in\Dom(\Delta)\cap \Lip(X)$ with $\Delta f \in \Dom(\E)$ and all bounded nonnegative $\phi \in \Lip(X)$. Following the argumentation in the proof  of  Lemma \ref{H1-Bochner-N}, one verifies the equivalence to the Bochner inequality \BE$_2(k,N)$ in its standard form. This proves the claim.
\end{proof}

\section{Time-Change and Localization}

This section is devoted to prove the transformation formula for the curvature-dimension condition under time-change. In contrast to our previous work with  Han \cite{han2019}, we now also will consider weight functions $e^\psi$ where $\psi$ is no longer in  $\Dom_\loc(\Delta)$ but merely in $\Lip_b(X)$.
This will result in $W^{-1,\infty}$-valued Ricci bounds involving the distributional Laplacian $\underline\Delta\psi$.

Moreover, we deal with weight functions  $\frac1\phi=e^{\psi}$ where the local Lipschitz function $\phi$ may  degenerate in the sense that $\phi=0$ is admitted.
Choosing $\phi$ to be an appropriate cut-off function, this allows us to ``localize'' the \RCD-condition:
we can 
restrict a given \RCD-space $(X,\d,\mm)$ to any subset $X':=\{\phi>0\}\subset X$.

\subsection{Curvature-Dimension Condition under Time-Change}

Assume that a metric measure space $(X,\d,\mm)$ is given which satisfies \RCD$(k,N)$ for some  lower bounded Borel function $k$ on $X$ and some finite number $N\in[1,\infty)$. Given $\psi\in \Lip_\loc(X)\cap \Dom_\loc(\Delta)$, we define  a new metric  and a new measure 
on $X$  by $\d':=e^\psi \odot\d$ and $\mm':=e^{2\psi}\mm$, resp. Recall that
$$\Big(e^\psi  \odot\d\Big)(x,y):=\inf\Big\{\int_0^1 e^{\psi(\gamma_s)}\,|\dot\gamma_s|\,ds: \, \gamma\in\mathcal{AC}(X), \, \gamma_0=x, \gamma_1=y\Big\}.$$

\begin{remark} 
Since both metric and measure are transformed in a coordinated manner, the Cheeger energy on the new mm-space  $(X,\d',\mm')$ coincides with the Cheeger energy on the old space:
$$\E'(f)=\int |D'f|^2d\mm'=\int |Df|^2d\mm=\E(f).$$
The point is that this energy now is regarded as a quadratic form on $L^2(X,\mm')$. The new Laplacian thus is given by $\Delta'f=e^{-2\psi}\Delta f$.

Brownian motion $(\PP'_x,B'_t)_{x\in X,t\ge0}$ on the new mm-space $(X,\d',\mm')$ is obtained by ``time change'' from the Brownian motion $(\PP_x,B_t)_{x\in X,t\ge0}$ on $(X,\d,\mm)$:
$$\PP'_x=\PP_x,\quad B'_t=B_{\tau(t)},\quad \zeta'=\sigma(\zeta)$$
and vice versa $B_t=B'_{\sigma(t)}, \zeta=\tau(\zeta')$
with 
$$\sigma(t):=\int_0^t e^{2\psi(B_s)}ds,\quad \tau(t):=\int_0^t e^{-2\psi(B'_s)}ds$$
such that $\tau(\sigma(t)=\sigma(\tau(t)=t$.

Note that in the case of bounded $\psi$, the new Brownian motion  $(\PP_x,B'_t)$ has infinite lifetime $\zeta'$ if and only if 
 $(\PP_x,B_t)$ has infinite lifetime $\zeta$.
\end{remark}

\begin{thm}[\cite{han2019}]\label{time-change} 
 {\bf i)}  For any  number $N'\in(N,\infty]$,  the ``time-changed'' metric measure space $(X,\d',\mm')$ satisfies \BE$_2(k',N')$  with
\begin{equation}\label{psi-ric}k':= e^{-2\psi}\Big[k-\Delta\psi-\frac{(N-2)(N'-2)}{N'-N}\Gamma(\psi)\Big].
\end{equation}

  {\bf ii)} Assume that $k$ is lower semicontinuous, Then  $(X,\d',\mm')$ satisfies \RCD$(k',N')$  for any lower bounded, lower semicontinuous function $k'$ on $X$ and any  number $N'\in(N,\infty]$ such that
\begin{equation*}k'\le e^{-2\psi}\Big[k-\Delta\psi-\frac{(N-2)(N'-2)}{N'-N}\Gamma(\psi)\Big]\qquad\mm'\text{-a.e.~on }X'.
\end{equation*}
\end{thm}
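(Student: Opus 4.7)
The plan is to follow the strategy of the author's earlier joint work \cite{han2019} with Han on the Eulerian side and then bridge to the $\RCD$ conclusion via the new equivalence Theorem~\ref{cd=be}. The structural fact driving everything is that the conformal rescalings of $\d$ and $\mm$ are tuned to compensate on the level of the Dirichlet form: from $\Gamma'(f)=e^{-2\psi}\Gamma(f)$ and $d\mm'=e^{2\psi}\,d\mm$ one reads off $\E'=\E$, hence $\Delta'f=e^{-2\psi}\Delta f$, and infinitesimal Hilbertianity of $(X,\d',\mm')$ comes for free.

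For part (i), I would derive $\BE_2(k',N')$ by computing the primed iterated carré-du-champ $\Gamma_2'(f)$ in terms of the unprimed quantities. Expanding $\Gamma_2'(f)=\tfrac12\Delta'\Gamma'(f)-\Gamma'(f,\Delta'f)$ via the chain rule and the identities above yields a formula of the form $\Gamma_2'(f)=e^{-2\psi}\bigl[\Gamma_2(f)-\Delta\psi\,\Gamma(f)+2\Delta f\cdot\Gamma(f,\psi)+\text{lower order in }\Gamma(\psi)\bigr]$; the hypothesis $\BE_2(k,N)$ then bounds $\Gamma_2(f)$ from below by $k\,\Gamma(f)+\tfrac1N(\Delta f)^2$. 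The main obstacle is the algebraic optimization that converts $\tfrac1N(\Delta f)^2+2\Delta f\cdot\Gamma(f,\psi)$ into the required $\tfrac1{N'}(\Delta'f)^2=\tfrac1{N'}e^{-4\psi}(\Delta f)^2$ up to a penalty in $\Gamma(\psi)\,\Gamma(f)$. Using Cauchy-Schwarz $\Gamma(f,\psi)^2\le\Gamma(f)\,\Gamma(\psi)$ together with a weighted Young inequality, the cross term is split, and the optimal choice of the free weight for any fixed $N'>N$ produces exactly the stated coefficient $\frac{(N-2)(N'-2)}{N'-N}$ in front of $\Gamma(\psi)$; the $-\Delta\psi$ contribution arises directly from the $\Delta'$ acting on the conformal factor inside $\Gamma'(f)$.

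Part (ii) is then an application of Theorem~\ref{cd=be}. As a preliminary step, one checks that $(X,\d',\mm')$ retains some $\RCD(K',\infty)$-structure, needed as a standing assumption of Section~3. This follows from specializing part (i) to $N'=\infty$: local boundedness of $\psi,\Gamma(\psi),\Delta\psi$ together with the $\RCD(k,N)$ assumption on $(X,\d,\mm)$ gives a finite lower bound $K'$, and infinitesimal Hilbertianity has already been observed. Given the $\BE_2(k',N')$ from (i), Theorem~\ref{cd=be} in the direction $\BE_2\Rightarrow\CD$ then yields $\CD(k',N')$ for any bounded lsc $k'$ dominated $\mm'$-a.e.\ by the expression in \eqref{psi-ric}, and combined with the infinitesimal Hilbertianity of $(X,\d',\mm')$ this is the claimed $\RCD(k',N')$ condition.
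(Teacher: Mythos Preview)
The paper does not supply its own proof of this theorem: it is quoted verbatim from \cite{han2019}, and the present paper only uses it as a black box (see the bracketed citation in the theorem header and the discussion in the introduction to Section~4). So there is no in-paper argument to compare against.

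That said, your sketch is the correct outline and matches the strategy of \cite{han2019}. The observation $\E'=\E$, hence $\Delta'f=e^{-2\psi}\Delta f$, is the structural identity that makes the whole computation go through, and the algebraic optimization you describe for the cross term $2\Delta f\cdot\Gamma(f,\psi)$ is exactly how the coefficient $\frac{(N-2)(N'-2)}{N'-N}$ arises. For part (ii), invoking Theorem~\ref{cd=be} in the direction $\BE_2\Rightarrow\CD$ is the intended bridge.

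One point to tighten: you write that ``local boundedness of $\psi,\Gamma(\psi),\Delta\psi$'' yields a constant lower Ricci bound $K'$ for $(X,\d',\mm')$, needed as the standing assumption of Section~3. But the hypothesis is only $\psi\in\Lip_\loc(X)\cap\Dom_\loc(\Delta)$, which gives $\Delta\psi\in L^2_\loc$, not $L^\infty_\loc$; so $e^{-2\psi}[k-\Delta\psi]$ need not be bounded below by a constant. This is not fatal---one can localize via the cut-off construction of Lemma~\ref{cutoff} (which produces $\psi$ with bounded Laplacian on compacta) and use the local-to-global property of $\RCD$, or appeal directly to the constant-$K$ equivalence from \cite{ambrosio2015} on each piece---but the step deserves a sentence rather than the parenthetical you give it.
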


\begin{remark} {\bf i)} Let us re-formulate the previous Theorem in terms of $\phi:=e^{-\psi}$. That is, assume that $\phi\in \Lip_\loc(X)\cap \Dom_\loc(\Delta)$ is given with $\phi>0$ on $X$ and define  a metric  and a measure 
on $X$  by $\d':=\frac1\phi \odot\d$ and $\mm':=\frac1{\phi^2}\mm$, resp.
Observe that for $\psi:=-\log\phi$,
$$ \phi\in \Lip_\loc(X)\cap \Dom_\loc(\Delta), \ \phi>0 \quad\Longleftrightarrow\quad \psi \in \Lip_\loc(X)\cap \Dom_\loc(\Delta)$$
with $\Delta (\phi^2)=e^{-2\psi}(4\Gamma(\psi)-2\Delta \psi)$. 
  Thus the metric measure space $(X,\d',\mm')$ satisfies \RCD$(k',N')$  for any lower bounded, lower semicontinuous functions $k'$ on $X$ and any  number $N'\in(N,\infty]$ such that
\begin{equation}\label{phi-ric}k'\le k\phi^2+\frac12\Delta\phi^2-\Big[2+\frac{(N-2)(N'-2)}{N'-N}\Big]\,\Gamma(\phi)\qquad\mm'\text{-a.e.~on }X'.
\end{equation}

{\bf ii)} Another remarkable way of re-formulating the previous result is in terms of 
$$\rho:=\phi^{-(N^*-2)}=e^{(N^*-2)\psi}$$
with 
$N^*:=2+\frac{(N-2)(N'-2)}{N'-N}$ provided $N^*>2$. Then estimate \eqref{phi-ric} can be re-written as 
\begin{equation}\label{rho-ric}k'\le \rho^{-\frac2{N^*-2}}\,\Big[k-\frac1{N^*-2}\rho^{-1}\,\Delta\rho\Big]\qquad\mm'\text{-a.e.~on }X'.
\end{equation}
Recall that in the case $N^*=2$, estimate \eqref{rho-ric} states
\begin{equation*}k'\le e^{-2\psi}\Big[k-\Delta\psi\Big]\qquad\mm'\text{-a.e.~on }X'.
\end{equation*}
\end{remark}

\subsection{Localization}

We are now going to relax the positivity assumption on $\phi$, admitting $\phi$ also to vanish on subsets of $X$.

\begin{thm} {\bf (i)}
Given  $\phi\in\Lip_\loc(X)$ such that the set $\{\phi>0\}$ is connected. Define a metric measure space $(X',\d',\mm')$ by
$$X':=\{\phi>0\}, \qquad \d'=\frac1\phi \odot\d,\qquad \mm':=\frac1{\phi^2}\mm\big|_{X'}.$$
Then $\d'$ is a complete separable metric on $X'$ and $\mm'$ is a locally finite Borel measure on $(X',\d')$. 
 The metric measure space $(X',\d',\mm')$ is infinitesimally Hilbertian.
 
The sets $\Lip_\loc(X',\d)$ and $\Lip_\loc(X',\d')$ coincide. For $f\in W^{1,2}_\loc(X',\d,\mm)=W^{1,2}_\loc(X',\d',\mm')$, the 
 minimal weak upper gradients $|Df|$ and $|D'f|$ w.r.t.~the mm-spaces 
  $(X,\d,\mm)$ and  $(X',\d',\mm')$, resp., coincide.

{\bf (ii)}
Assume  in addition that $\phi\in\Lip_\loc(X)\cap \Dom_\loc(\Delta)$. 
 Then the metric measure space $(X',\d',\mm')$ satisfies \RCD$(k',N')$ for any number $N'\in(N,\infty]$ and any lower semicontinuous function $k'$ on $X'$ with
$$k'\le k\phi^2+\frac12\Delta\phi^2-N^*\Gamma(\phi)\qquad\mm'\text{-a.e.~on }X'$$
where $N^*:=2+\frac{(N-2)(N'-2)}{N'-N}$.
\end{thm}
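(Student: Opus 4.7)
My plan for part (i) exploits that $\phi$ is locally Lipschitz and strictly positive on the open set $X'$: every compact $K \Subset X'$ satisfies $0 < c_K \leq \phi|_K \leq L_K < \infty$, so the conformally rescaled metric $\d' = \tfrac{1}{\phi} \odot \d$ is bi-Lipschitz equivalent to $\d$ on $K$. This yields separability of $(X',\d')$, local finiteness of $\mm'$, coincidence of the local Lipschitz classes $\Lip_\loc(X',\d) = \Lip_\loc(X',\d')$, and completeness of $(X',\d')$ via the dichotomy that a $\d'$-Cauchy sequence either remains in some compact subset of $X'$ (hence is $\d$-Cauchy and convergent there) or has a subsequence approaching $\{\phi=0\}$, which is ruled out because any connecting rectifiable curve $\gamma$ must traverse regions where $\phi$ is small, forcing $\int \phi(\gamma)^{-1}|\dot\gamma|_\d\,ds$ to diverge. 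The metric-speed identity $|\dot\gamma|_{\d'} = |\dot\gamma|_\d/\phi(\gamma)$ combined with $d\mm' = \phi^{-2}\,d\mm$ gives $|D'f| = \phi|Df|$ and the Cheeger energy identity
\[
\int |D'f|^2\,d\mm' \;=\; \int \phi^2|Df|^2 \cdot \phi^{-2}\,d\mm \;=\; \int |Df|^2\,d\mm
\]
on compactly supported test functions, from which infinitesimal Hilbertianness of $(X',\d',\mm')$ is inherited.

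For part (ii), my plan is to verify the Bochner inequality $\BE_2(k',N')$ directly on $(X',\d',\mm')$ using compactly supported test functions, and then invoke Theorem \ref{cd=be} to conclude $\RCD(k',N')$. The structural identities
\[
\Gamma'(f) \;=\; \phi^2\,\Gamma(f), \qquad \Delta' f \;=\; \phi^2\,\Delta f
\]
hold on $X'$ for any $f$ with compact support in $X'$, by duality from the energy equality in part (i). Substituting these into the Bochner inequality on $(X',\d',\mm')$ and simplifying via the chain rule for $\Gamma$ and $\Delta$, one recovers precisely the Bochner-type computation carried out in the proof of Theorem \ref{time-change} with $\psi = -\log\phi$ --- a computation which is purely local and only involves $\phi$ on the support of the test function, where $\phi$ is bounded below and the logarithm is harmless. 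The resulting lower bound on the right-hand side is exactly $k' = k\phi^2 + \tfrac12 \Delta\phi^2 - N^*\Gamma(\phi)$. As a robustness check, an alternative route is to approximate $\phi$ by $\phi_\eps := \phi + \eps$ (strictly positive, still in $\Lip_\loc(X)\cap\Dom_\loc(\Delta)$), apply Theorem \ref{time-change} to obtain $\RCD(k'_\eps,N')$ for the non-degenerate time-changed spaces, and then pass to the limit $\eps \to 0$ via pointed measured Gromov-Hausdorff convergence based at a point of $X'$.

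The main technical obstacle is the extension from compactly supported test functions to the full domain of $\Delta'$ required for $\BE_2(k',N')$, together with the Sobolev-to-Lipschitz property for $(X',\d',\mm')$ demanded by Theorem \ref{cd=be}. Both issues turn on the same geometric fact: in the metric $\d'$ the set $\{\phi=0\}$ lies at infinity, in that $\d'$-distances from any basepoint to $\{\phi \leq \delta\}$ tend to $\infty$ as $\delta \to 0$; consequently $\d'$-bounded sets are compactly contained in $X'$, so standard cutoff and approximation arguments yield density, and any $f$ with $|D'f|\in L^\infty$ has, on each $\d'$-bounded set $B$, $|Df|$ locally in $L^\infty$ on $B \Subset X'$, hence admits a $\d$-Lipschitz representative there by the Sobolev-to-Lipschitz property of $(X,\d,\mm)$, which is also $\d'$-Lipschitz on $B$ by the bi-Lipschitz equivalence of part (i). Piecing these local Lipschitz representatives together over an exhaustion of $X'$ by $\d'$-balls yields a global $\d'$-Lipschitz representative on $X'$, completing the proof.
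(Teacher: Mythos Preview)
Your treatment of part (i) is essentially the paper's: both hinge on the fact that $\{\phi=0\}$ lies at $\d'$-infinity, which the paper proves via the one-line estimate $\phi(\gamma_s)\le L\,\d(\gamma_s,z)\le L(1-s)|\dot\gamma|$ along a constant-speed curve to a boundary point $z$, giving $\int_0^t\phi(\gamma_s)^{-1}|\dot\gamma_s|\,ds\ge L^{-1}\int_0^t(1-s)^{-1}\,ds\to\infty$.

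For part (ii), however, your route is genuinely different from the paper's and has a circularity gap. You want to verify $\BE_2(k',N')$ on $(X',\d',\mm')$ by the local Bochner computation and then invoke Theorem \ref{cd=be} to pass to $\RCD(k',N')$. But Theorem \ref{cd=be} is stated under the standing hypothesis that the space in question already satisfies $\RCD(K,\infty)$ for some constant $K$ (and that $k$ is bounded); you have not established any a priori synthetic Ricci bound for $(X',\d',\mm')$, so you cannot legitimately feed it into that equivalence theorem. Your Sobolev-to-Lipschitz argument is reasonable but does not fill this gap. Your fallback via $\phi_\eps=\phi+\eps$ and pmGH convergence would require stability of $\RCD(k',N')$ with \emph{variable} $k'_\eps\to k'$ under a convergence in which the underlying set changes (points with $\phi=0$ escape to infinity); this is not available in the paper and is nontrivial.

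The paper instead stays on the Lagrangian side and exploits the local-to-global property of $\CD(k',N')$. For each $z\in X'$ it chooses a ball $B=\B'_r(z)$ and, using the cutoff Lemma \ref{cutoff}, constructs $\phi_B\in\Lip_\loc(X)\cap\Dom_\loc(\Delta)$ with $\phi_B=\phi$ on $\B'_{2r}(z)$, $\phi_B=1$ far away, and $\phi_B>0$ on all of $X$. Now $\psi_B=-\log\phi_B$ is globally regular, so the non-degenerate time-change Theorem \ref{time-change} applies directly to $(X,\tfrac1{\phi_B}\odot\d,\tfrac1{\phi_B^2}\mm)$, yielding $\RCD(k',N')$ there. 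Since $W'_2$-geodesics with endpoints in $\overline B$ stay in $\B'_{2r}(z)$, they coincide with the geodesics for the original weight $\phi$, so the entropy is $(k',N')$-convex along them. Covering $X'$ by such balls and invoking local-to-global finishes the proof. This avoids any appeal to Theorem \ref{cd=be} on the new space and hence the circularity you ran into.
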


\begin{proof} {\bf (i)} The crucial point is the completeness of the metric $\d'$. Since the metrics $\d'$ and $\d$ are obviously locally equivalent on $X'$, this will follow from the fact that
$$\lim_{y\to\partial X'}\d'(x,y)=\infty\qquad (\forall x\in X').$$
To see the latter, let points $x\in X'$ and $z\in \partial X'$ be given and let $(\gamma_t)_{t\in [0,1]}$ be any absolutely continuous curve in $(X,\d)$ with $\gamma_0=x$ and $\gamma_1=z$. Without restriction, we may assume that $\gamma$ has constant speed. 
Let $L=\Lip\phi$. Then
$$\int_0^t \frac1{\phi(\gamma_s)}|\dot\gamma_s|\,ds\ge \frac1L \int_0^t \frac1{1-s}\,ds\to\infty$$
as $t\to 1$.

{\bf (ii)} It is easy to check that the  \RCD$(k',N')$ condition has the local-to-global property, see \cite{sturm2015} for the proof in the case $N'=\infty$. Therefore, it suffices to prove that $X'$ is covered by open sets   $B$ such that the Boltzmann entropy is $(k',N')$-convex along $W'_2$-geodesics with endpoints supported in $\overline B$. We are going to verify this for $B:=\B'_r(z):=\{y\in X': \d'(y,z)<r\}$ with 
$$\B_{r}\big(\B'_{2r}(z)
\big):=\big\{x\in X: \d\big(x, \B'_{2r}(z) \big)<r\big\}
\subset X'.$$

Given such a ball $B=\B'_r(z)$, we choose $\phi_B\in\Lip_\loc(X)\cap \Dom_\loc(\Delta)$ with
$\phi_B=\phi$ in $\B'_{2r}(z)$,  $\phi_B=1$ in $X\setminus \B_{r}\big(\B'_{2r}(z)\big)$, and  $\phi_B>0$ in $X$.
See the subsequent Lemma \ref{cutoff} for the construction of such $\phi_B$'s.
According to the previous Theorem \ref{time-change}, the mm-space $(X,\frac1{\phi_B} \odot\d,\frac1{\phi_B^2}\mm)$ satisfies \RCD$(k',N')$ with $N'$ and $k'$ as claimed. Thus the Boltzmann entropy is $(k',N')$-convex along  2-Kantorovich-Wasserstein geodesics $(\mu_t)_{t\in[0,1]}$ w.r.t.~the metric $\frac1{\phi_B} \odot\d$.
If  the endpoint measures $\mu_0$ and $\mu_1$ are supported in $\overline \B'_r(z)$, however, these are exactly the  2-Kantorovich-Wasserstein geodesics w.r.t.~the metric $\frac1{\phi} \odot\d$. This proves the claim.
\end{proof}

For the reader's convenience, we quote an important result concerning cut-off functions from \cite{ambrosio2016}, Lemma 6.7.

\begin{lma}\label{cutoff} Given a locally compact \RCD$(K,\infty)$-space $(X,\d,\mm)$ and open subsets $D_0$, $D_1\subset X$ with $\overline{D_0}\subset D_1$, there exist $\phi\in \Lip_b(X)\cap\Dom(\Delta)$ with $\Delta\phi\in L^\infty(X)$ and
$\phi=1$ in $D_0$, $\phi=0$ in $X\setminus D_1$, and $\phi\ge0$ in $X$.
\end{lma}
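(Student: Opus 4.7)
The plan is to regularize a Lipschitz cut-off via a time-averaged heat semigroup and then compose with a smooth one-variable truncation to restore the prescribed boundary values \emph{exactly}. First, exploiting local compactness together with normality of metric spaces, I would interpolate two auxiliary open sets $U_1, U_2$ with $\overline{D_0}\subset U_1\subset \overline{U_1}\subset U_2\subset \overline{U_2}\subset D_1$ and strictly positive distance $r>0$ between consecutive layers, and build a bounded Lipschitz function $\chi\colon X\to [0,1]$ with $\chi\equiv 1$ on $U_1$ and $\chi\equiv 0$ outside $U_2$ (e.g.\ take $\chi(x)=\min\{1, r^{-1}\d(x, X\setminus U_2)\}$ and truncate).

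Next, for a small parameter $t>0$ to be tuned, I would smooth $\chi$ via
$$\phi_t:=\frac{1}{t}\int_0^t P_s\chi\,ds.$$
The Bakry--\'Emery gradient bound $|\nabla P_s\chi|^2\le e^{-2Ks}P_s(|\nabla\chi|^2)$, available under the standing $\RCD(K,\infty)$ assumption, gives $\phi_t\in \Lip_b(X)$, and the elementary identity
$$\Delta\phi_t=\tfrac{1}{t}\int_0^t\partial_sP_s\chi\,ds=\tfrac{1}{t}\big(P_t\chi-\chi\big)$$
places $\phi_t\in\Dom(\Delta)$ with $\|\Delta\phi_t\|_\infty\le 2\|\chi\|_\infty/t$. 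To compare $\phi_t$ with $\chi$ uniformly, I would combine the Kantorovich--Rubinstein bound $|P_s\chi(x)-\chi(x)|\le \Lip(\chi)\cdot W_1(\DP_s\delta_x,\delta_x)$ with the standard small-time transport estimate $W_2^2(\DP_s\delta_x,\delta_x)=\mathcal O(s)$ valid in $\RCD(K,\infty)$ spaces, concluding $\|\phi_t-\chi\|_\infty\to 0$ as $t\downarrow 0$.

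Finally, fix $\epsilon\in(0,\tfrac14)$ and choose $t$ small enough that $\|\phi_t-\chi\|_\infty\le\epsilon$; then pick $\eta\in C^2(\R;[0,1])$ with $\eta\equiv 1$ on $[1-\epsilon,\infty)$ and $\eta\equiv 0$ on $(-\infty,\epsilon]$, and set $\phi:=\eta\circ\phi_t$. The $\RCD$ chain rule yields
$$\Delta\phi=\eta'(\phi_t)\,\Delta\phi_t+\eta''(\phi_t)\,|\nabla\phi_t|^2\in L^\infty(X),$$
while $\phi\equiv 1$ on $D_0$, $\phi\equiv 0$ on $X\setminus D_1$, and $\phi\ge 0$ follow at once from $\phi_t\ge 1-\epsilon$ on $\overline{D_0}$ and $\phi_t\le\epsilon$ on $X\setminus U_2$. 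I expect the main obstacle to be this \emph{uniform} small-time estimate $\|P_s\chi-\chi\|_\infty\to 0$: outside the (possibly unbounded) support of $\chi$ the closeness is delicate, and in the absence of local compactness it can actually fail, so some care (e.g.\ an additional large-scale localization using a similar cut-off on a much larger neighbourhood of $\overline{U_2}$) is needed to make the bound genuinely uniform on all of $X$.
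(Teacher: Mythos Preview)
The paper does not supply its own proof of this lemma; it is quoted from \cite{ambrosio2016}, Lemma~6.7, as an external tool. Your sketch reproduces exactly the argument given there: mollify a Lipschitz cut-off $\chi$ by the time-averaged heat flow $\phi_t=\tfrac1t\int_0^tP_s\chi\,ds$, observe $\Delta\phi_t=\tfrac1t(P_t\chi-\chi)\in L^\infty$, and then compose with a one-variable $C^2$ truncation $\eta$ to restore the exact boundary values while keeping $\Delta(\eta\circ\phi_t)=\eta'(\phi_t)\Delta\phi_t+\eta''(\phi_t)\Gamma(\phi_t)\in L^\infty$ via the chain rule. The point you rightly flag---the \emph{uniform} smallness of $\|P_s\chi-\chi\|_\infty$---is the only non-trivial ingredient; in \cite{ambrosio2016} it is handled via the Kantorovich dual bound together with the small-time moment estimate $\int_X\d(x,y)\,p_s(x,dy)\le C_K\sqrt s$, uniformly in $x$, which holds on every $\RCD(K,\infty)$ space (this is established earlier in \cite{ambrosio2016} and in \cite{ambrosio2015}) and does not actually require the extra local-compactness hypothesis.
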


\begin{cor}\label{localRCD}
Assume that a metric measure space $(X,\d,\mm)$ is given which satisfies \RCD$(K,N)$ for some finite numbers $K,N\in\R$. 

Then for any open subsets $D_0,D_1\subset X$ with $\overline{D_0}\subset D_1$, there exists a  metric measure space $(X',\d',\mm')$ satisfying \RCD$(K',N')$ for some finite numbers $K',N'\in\R$ such that
$$\overline{D_0}\subset X'\subset D_1,\qquad \d'=\d \text{ locally on }D_0, \qquad \mm'=\mm \text{ on }D_0.$$
\end{cor}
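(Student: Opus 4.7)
The plan is to reduce the corollary directly to part (ii) of the preceding Theorem, by feeding it a cut-off function supplied by Lemma \ref{cutoff}. First, I would invoke Lemma \ref{cutoff} on the pair $\overline{D_0} \subset D_1$ to obtain $\phi \in \Lip_b(X) \cap \Dom(\Delta)$ with $\Delta\phi \in L^\infty(X)$, $\phi \equiv 1$ on $D_0$, $\phi \equiv 0$ on $X \setminus D_1$, and $\phi \geq 0$ throughout. Continuity of $\phi$ forces $\phi \equiv 1$ on $\overline{D_0}$, so that $\overline{D_0} \subseteq \{\phi > 0\} \subseteq D_1$. In particular $\phi$ satisfies the hypotheses $\phi \in \Lip_\loc(X) \cap \Dom_\loc(\Delta)$ required to invoke the Theorem.

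Setting $X' := \{\phi > 0\}$, $\d' := \frac{1}{\phi} \odot \d$, and $\mm' := \frac{1}{\phi^2}\,\mm|_{X'}$, the Theorem outputs an \RCD$(k', N')$ structure provided $k'$ and $N'$ fit the admissible bound
\[
k' \le K\phi^2 + \tfrac{1}{2}\Delta\phi^2 - N^*\Gamma(\phi) \qquad \mm'\text{-a.e.\ on } X',
\]
with $N^* = 2 + \tfrac{(N-2)(N'-2)}{N'-N}$. The task reduces to checking that finite constants suffice. Since $K$ is a finite constant, $\phi \in \Lip_b(X)$ gives $\Gamma(\phi) \le (\Lip \phi)^2 \in L^\infty(X)$, and the chain rule $\Delta\phi^2 = 2\phi\Delta\phi + 2\Gamma(\phi)$ together with $\Delta\phi \in L^\infty(X)$ puts $\Delta\phi^2$ in $L^\infty(X)$. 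The right-hand side is therefore essentially bounded, so any $N' \in (N, \infty)$ fixes a finite $N^*$, and any constant $K' \in \R$ bounding that right-hand side from below realises the desired \RCD$(K', N')$ condition.

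The prescribed agreement on $D_0$ follows because $\phi \equiv 1$ there: the measure $\mm'$ restricted to $D_0$ equals $\mm|_{D_0}$, and any absolutely continuous curve staying inside $D_0$ has identical $\d$- and $\d'$-lengths, so $\d' = \d$ locally on $D_0$. Together with $\overline{D_0} \subseteq X' \subseteq D_1$, this gives all the conclusions of the corollary.

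The one point demanding care, which I expect to be the main obstacle, is the connectedness hypothesis on $\{\phi > 0\}$ in the preceding Theorem; a generic cut-off need not satisfy it. I would handle this by applying the time-change construction separately on each connected component of $\{\phi > 0\}$ that meets $\overline{D_0}$, invoking the local-to-global principle for \RCD\ that the preceding Theorem's proof itself relies upon, and assembling the outputs as a (countable) disjoint union. Each piece is \RCD$(K', N')$ with the same constants, and the disjoint union inherits the property with unchanged parameters, so the overall $X'$ still lies between $\overline{D_0}$ and $D_1$ with the required metric and measure compatibilities.
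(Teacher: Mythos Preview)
Your proposal is correct and follows exactly the approach the paper takes: invoke Lemma~\ref{cutoff} for the cut-off and feed it into part~(ii) of the preceding Theorem. The paper's own proof is a one-line reference to these two ingredients, so you have in fact supplied more detail than the original---in particular, your observation about the connectedness hypothesis on $\{\phi>0\}$ is a genuine subtlety that the paper's proof does not address; your proposed fix via components and disjoint union is a reasonable way to close that gap.
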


\begin{proof} Previous Theorem, part (ii),  plus existence of cut-off functions with bounded Laplacian according to previous Lemma.
\end{proof}

\subsection{Singular Time Change}
In the previous paragraph we dealt with an extension of Theorem \ref{time-change} where $\psi=-\log\phi$ is allowed to degenerate in the sense that it attains the value $\infty$ on closed subsets of arbitrary seize.
Now we will deal with the extension towards $\psi$ which are no longer in  $\Dom_\loc(\Delta)$ but merely in $\Lip_b(X)$.

Assume that a metric measure space $(X,\d,\mm)$ is given which satisfies \BE$_2(k,N)$ for some  lower bounded function $k$ on $X$ and some finite number $N\in[1,\infty)$. 

\begin{thm}\label{sing-time-change}  Given $\psi\in \Lip_b(X)$, the ``time-changed'' metric measure space $(X,\d',\mm')$ with  $\d':=e^\psi \odot\d$ and $\mm':=e^{2\psi}\,\mm$ satisfies $\BE_1(\kappa,\infty)$  for
\begin{equation}\label{psi-ric}\kappa:= \big[k-(N-2)\Gamma(\psi)\big]\,\mm-\underline\Delta\psi.
\end{equation}
\end{thm}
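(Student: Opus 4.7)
The plan is to approximate $\psi$ by regular functions $\psi_n \in \Lip_b(X)\cap \Dom(\Delta)$, apply Theorem \ref{time-change} to each $\psi_n$ (promoting its output $\BE_2$ to the $\BE_1$ form via the equivalence with $\GE_1$ from Theorem \ref{L1-grad}), and finally let $n\to\infty$, using the stability results for the taming semigroup established at the end of Section 2. Set $\psi_n := P_{1/n}\psi$. The Bakry-type gradient estimate associated with the standing $\BE_2(k,N)$ (in particular with $\BE_2(K_0,\infty)$, $K_0:=\inf k$) gives $\psi_n\in\Lip_b(X)\cap\Dom(\Delta)$ with $\|\psi_n\|_\infty\le\|\psi\|_\infty$, $\Lip\psi_n\le e^{-K_0/n}\Lip\psi$, $\Delta\psi_n\in L^\infty(\mm)$, and strong $W^{1,2}_\loc$-convergence $\psi_n\to\psi$.

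\textbf{Regular case.} Applying Theorem \ref{time-change}(i) with $N'\to\infty$, the time-changed space $(X,\d'_n,\mm'_n) := (X,e^{\psi_n}\odot\d,e^{2\psi_n}\mm)$ satisfies $\BE_2(k'_n,\infty)$ with the bounded function $k'_n = e^{-2\psi_n}[k - \Delta\psi_n - (N-2)\Gamma(\psi_n)]$. Bakry's self-improvement in its variable form (equivalently, verifying the $L^1$-gradient estimate and invoking Theorem \ref{L1-grad} on the regular time-changed space) upgrades this to $\BE_1(\kappa_n,\infty)$ on $(X,\d'_n,\mm'_n)$, where
\[\kappa_n := k'_n \, \mm'_n = \bigl[k - \Delta\psi_n - (N-2)\Gamma(\psi_n)\bigr]\mm \in L^\infty(X,\mm)\subset W^{-1,\infty}(X),\]
the last identity because $e^{-2\psi_n}\mm'_n = \mm$.

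\textbf{Passage to the limit.} As $n\to\infty$: (a) $\psi_n\to\psi$ uniformly on bounded sets, so $\d'_n\to\d'$ pointwise and $\mm'_n\to\mm'$ setwise on bounded sets; (b) the Cheeger energies transform trivially, $\E'_n(f)=\int|Df|^2_{\d'_n}\,d\mm'_n=\int|Df|^2_\d\,d\mm=\E(f)$, so only the $L^2$-weight varies (by the bounded factor $e^{2\psi_n}\to e^{2\psi}$) and the heat semigroups converge $P'^{(n)}_tf\to P'_tf$ in $L^2$; (c) strong $W^{1,2}_\loc$-convergence yields $\Gamma(\psi_n)\to\Gamma(\psi)$ in $L^1_\loc(\mm)$, and $(\Delta\psi_n)\mm\to\underline\Delta\psi$ in $W^{-1,\infty}(X)$ via the identity
\[\langle\phi,(\Delta\psi_n)\mm\rangle_{W^{1,1+},W^{-1,\infty}} = -\int\Gamma(\phi,\psi_n)\,d\mm \longrightarrow -\int\Gamma(\phi,\psi)\,d\mm = \langle\phi,\underline\Delta\psi\rangle\]
for $\phi\in W^{1,1+}(X)$. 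Combining (a)--(c), $\kappa_n\to\kappa$ in $W^{-1,\infty}$. The stability argument used at the end of Section 2 for $P^{\kappa_n}_t\to P^\kappa_t$ (based on Duhamel's formula, form-smallness of $\kappa_n$ uniformly in $n$, and the Feynman--Kac/Girsanov/Doob representation from the Corollary after Proposition \ref{psi-form}) now carried out on the time-changed spaces yields $P'^{\kappa_n,(n)}_t\to P'^\kappa_t$ on test functions in $V^1$. Passing to the limit in the gradient estimate $|\nabla' P'^{(n)}_tf|_{\d'_n}\le P'^{\kappa_n,(n)}_t|\nabla' f|_{\d'_n}$ produced by the regular case gives $\GE_1(\kappa,\infty)$ on $(X,\d',\mm')$, and Theorem \ref{L1-grad} then delivers the claimed $\BE_1(\kappa,\infty)$.

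\textbf{Main obstacle.} The principal difficulty is the simultaneous stability of the taming semigroups $P'^{\kappa_n,(n)}_t\to P'^\kappa_t$ when both the underlying mm-structure and the distributional potential vary with $n$. The Brownian motions on the various time-changed spaces differ only through the bounded factors $e^{\pm 2\psi_n}$ and are thus easily compared, but the convergence of the Fukushima additive functionals $N^{\psi_n}_t\to N^\psi_t$ requires uniform $L^2$-control of the martingale parts via the uniformly bounded quadratic variations $\int_0^t\Gamma(\psi_n)(B_s)\,ds$, together with the fact that the drift perturbation and Doob transformation in the representation \eqref{N-ito}--\eqref{N-repr} depend continuously on $\psi_n$ in $W_*^{1,\infty}$. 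Once this stability is in hand, the Duhamel-based limit argument closes the proof with essentially cosmetic adjustments.
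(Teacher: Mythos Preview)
Your overall strategy---heat-regularize $\psi$, apply the regular time-change result, and pass to the limit in the gradient estimate---is exactly the paper's. The divergence is in how the limit is taken. You propose to show $P'^{\kappa_n,(n)}_t\to P'^\kappa_t$ by transplanting the Duhamel/form-smallness argument of Section~2 to the varying time-changed spaces, but this is not what the paper does and your sketch leaves the hardest part open. First, your claim that $\kappa_n\to\kappa$ ``in $W^{-1,\infty}$'' is too strong: from $\langle\phi,(\Delta\psi_n)\mm\rangle=-\int\Gamma(\phi,\psi_n)\,d\mm$ you only get pointwise (weak-$*$) convergence, since $|D(\psi_n-\psi)|\to 0$ in $L^2_\loc$, not in $L^\infty$; norm convergence in $W^{-1,\infty}$ would require the latter. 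Second, the taming semigroups $P'^{\kappa_n,(n)}_t$ live on genuinely different $L^2$-spaces, and neither the Duhamel identity nor the form-smallness bound in Section~2 is formulated for a moving reference measure---you flag this in your ``Main obstacle'' but do not resolve it.

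The paper sidesteps the moving-space problem by pulling everything back to the \emph{original} Brownian motion $(B_t)$ on $(X,\d,\mm)$ via the explicit time-change $B^n_t=B_{\tau_n(t)}$. Concretely, it writes $P^{k_n}_t g(x)=\EE_x[\exp(-A_n(\tau_n(2t))+N_n(\tau_n(2t)))\,g(B_{\tau_n(2t)})]$ with $A_n,N_n$ given by the Fukushima decomposition of $\psi_n$ along $B$, proves $A_n(\tau_n)\to A_\infty(\tau_\infty)$ and $N_n(\tau_n)\to N_\infty(\tau_\infty)$ almost surely (using $\psi_n\to\psi$ in $\Dom(\E)$ and uniform Lipschitz bounds), and hence $P^{k_n}_t g\to P^\kappa_t g$ pointwise for bounded quasi-continuous $g$. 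The limit in the gradient estimate is then taken not pointwise but through the \emph{test-plan} characterization of weak upper gradients: for any test plan $\Pi$, the inequality $\int|P^n_t f(\gamma_1)-P^n_t f(\gamma_0)|\,d\Pi\le\iint_0^1 P^{k_n}_t|\nabla f|(\gamma_s)|\dot\gamma_s|\,ds\,d\Pi$ passes to the limit by dominated convergence, yielding that $P^\kappa_t|\nabla f|$ is a weak upper gradient for $P^\infty_t f$. This probabilistic/test-plan route is the substantive content that your ``essentially cosmetic adjustments'' remark undersells. A final point: the paper needs a second approximation layer (its step vii) to drop the assumption $\psi\in\Dom(\E)$ and reach general $\psi\in\Lip_b(X)$; your $\psi_n=P_{1/n}\psi$ presupposes $\psi\in L^2$.
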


\begin{proof} {\bf i)} \ Without restriction, assume that $k$ is bounded. Choose $K\in\R_+$ with $k\ge -K$ on $X$. Given $\psi\in \Lip_b(X)\cap \Dom(\E)$, we will approximate it by $\psi_n:=P_{1/n}\psi$. Thanks to the $\BE_2(-K,N)$ assumption, the heat semigroup preserves the class of Lipschitz functions and of course it always maps $L^2$ into $\Dom(\Delta)$. Thus $\psi_n \in \Dom(\Delta)$ with  $\sup_n \Lip\,\psi_n<\infty$  and with $\psi_n\to\psi_\infty:=\psi$ in $\Dom(\E)$ and in $L^\infty$.
To see the latter, observe that by Ito's formula, 
$$1+\frac{K}{2N}\Big(\EE_x\,\d(B_t,x)\Big)^2\le \EE_x\cosh\Big(\sqrt\frac KN\d(B_t,x)\Big)\le e^{Kt/e}$$
since $\Delta \cosh\big(\sqrt{K/N}\, \d(\,.\,,x)\big)\le K\cdot  \cosh\big(\sqrt{K/N}\,\d(\,.\,,x)\big)$  by Laplace comparison.
Thus
$$|P_t\psi(x)-\psi(x)|\le \Lip\,\psi \cdot\EE_x \d(B_{2t},x)\le  \Lip\,\psi \cdot \sqrt{\frac{2N}K\Big(e^{Kt}-1\Big)}.$$

{\bf ii)} \ For $n\in\N\cup\{\infty\}$, consider the mm-space $(X,\d_n,\mm_n)$ with 
$\d_n:=e^{\psi_n}\odot \d$ and
$\mm_n:=e^{2\psi_n}\, \mm$. Let  $(P^{n}_t)_{t\ge0}$ and $(\PP_x,B^n_t)_{x\in X,t\ge0}$ denote the heat semigroup and the Brownian motion, resp., associated with it. 
Note that $B_t^{n}=B_{\tau_n(t)}$ with $\tau_n(t)$ being the inverse to
$$\sigma_n(t):=\int_0^t e^{2\psi_n}(B_s)ds.$$
Also note that due to the \BE$_1(-K,\infty)$-property, the lifetime of the original Brownian motion is infinite and thus also the lifetime of any of the time-changed Brownian motions.
Moreover, 
 as $n\to\infty$, obviously ${\tau_n(t)}\to \tau({t})$ (even uniformly in $\omega$), thus $B^n_{t}\to B^\infty_{t}$ a.s.~and 
\begin{eqnarray}
P^{n}_tf(x)=\EE_x\Big[ f\big(B_{2t}^{n}\big)\Big] \ 
\to \ \EE_x\Big[ f\big(B_{2t}^{\infty}\big)\Big]=P^{\infty}_tf(x)\label{conv1}
\end{eqnarray}
for every bounded continuous function $f$ on $X$ and every $x\in X$.

\medskip
{\bf iii)} \ 
According to  Theorem \ref{time-change},  for finite $n$, the 
the mm-space
 $(X,\d_n,\mm_n)$ satisfies  the \BE$_1(k_n,\infty)$-condition with 
\begin{eqnarray}\label{kn-formel}
k_n&=&e^{2\psi_n}\Big[k-(N-2)|\nabla\psi_n|^2-\Delta\psi_n\Big].
\end{eqnarray}
In particular, the associated heat semigroup $(P^{n}_t)_{t\ge0}$ satisfies
\begin{equation}
\label{grad_n}
\big| \nabla_n P_t^{n}f\big|
\le 
P^{k_n}_t\Big( 
\big| \nabla_n f\big| \Big)
\end{equation}
with the Feynman-Kac semigroup $P^{k_n}_t$ given in terms of the Brownian motion on  $(X,\d_n,\mm_n)$  by
$$P^{k_n}_tg(x)=\EE_x\Big[
e^{-\int_0^{t}k_n(B_{2s}^{n})ds}\, g\big(B_{2t}^{n}\big)\Big].$$
As already observed before, this can be reformulated as 
$$P^{k_n}_{t/2}g(x)=\EE_x\Big[
e^{-\frac12\int_0^{t}k_n(B_{\tau_n(s)})ds}\, g\big(B_{\tau_n(t)}\big)\Big]$$
  in terms of the Brownian motion   on $(X,\d,\mm)$.
Moreover,
\begin{eqnarray*}\frac12\int_0^{t}k_n(B_{\tau_n(s)})ds&=&
\frac12\int_0^{\tau_n(t)}\big[k-(N-2)|\nabla\psi_n|^2-\Delta\psi_n\big](B_s)ds\\
&=&A_n\big(\tau_n(t)\big)-N_n\big(\tau_n(t)\big)
\end{eqnarray*}
with $A_n(t):=\frac12\int_0^t\big[k-(N-2)|\nabla\psi_n|^2\big](B_s)ds$ and
$$N_n(t):=\frac12\int_0^t\Delta\psi_n(B_s)ds=\psi_n(B_t)-\psi_n(B_0)-M_n(t),$$
the additive functional of vanishing quadratic variation in the Fukushima-Ito decomposition of $\psi_n(B_t)$
whereas $M_n$ denotes the martingale additive functional.

\medskip
{\bf iv)} \ Since $\psi_n\to\psi$ in $\Dom(\E)$ as $n\to\infty$, obviously 
$$A_n(t)\to A(t):=\frac12\int_0^t\big[k-(N-2)|\nabla\psi|^2\big](B_s)ds$$
 $\PP_x$-a.s.~for $\mm$-a.e.~$x$.
Moreover, $A_n$ is Lipschitz continuous in $t$, uniformly in $n$, and $\tau_n(t)\to\tau_\infty(t)$ (uniformly in $\omega$).
Thus $\PP_x$-a.s.~for $\mm$-a.e.~$x$
$$A_n\big(\tau_n(t)\big)\to A_\infty\big(\tau_\infty(t)\big)$$
as $n\to\infty$.

To deal with the convergence of $N_n\big(\tau_n(t)\big)$, let $N_\infty$ and $M_\infty$ denote the additive functional of vanishing quadratic variation and the martingale additive functional, resp., 
 in the Ito decomposition $$\psi(B_t)=\psi(B_0)+M_\infty(t)+N_\infty(t),$$
 see \eqref{N-ito}.
As $n\to\infty$, of course,
$\psi_n(B_0)\to\psi(B_0)$ (uniformly in $\omega$) and
$$\psi_n(B_{\tau_n(t)})=\psi_n(B^n_t)\to \psi(B_t)$$
$\PP_x$-a.s.~for every $x$.

Furthermore,
\begin{eqnarray*}
\lefteqn{
\EE_\mm\Big| M_n(\tau_n(t))-M_\infty(\tau_\infty(t))\Big|^2}\\
&\le&
2\EE_\mm\Big| M_n(\tau_n(t))-M_\infty(\tau_n(t))\Big|^2
+
2\EE_\mm\Big| M_\infty(\tau_n(t))-M_\infty(\tau_\infty(t))\Big|^2\\
&=&2\EE_\mm\int_0^{\tau_\infty(t)}\big|\nabla(\psi_n-\psi)\big|^2(B_s)\,ds+2\EE_\mm\int_{\tau_n(t)\wedge\tau_\infty(t)}^{\tau_n(t)\vee\tau_\infty(t)}\big|\nabla\psi\big|^2(B_s)\,ds
\\
&=&C\,t\cdot \E(\psi_n-\psi)+C\,t\cdot \E(\psi)\cdot\|\psi_n-\psi\|_\infty\\
&\to& 0
\end{eqnarray*}
as $n\to\infty$ and  thus
$M_n(\tau_n(t))\to M_\infty(\tau_\infty(t))$ $\PP_x$-a.s.~for $\mm$-a.e.~$x$.

\medskip
{\bf v)} \ 
Define the taming semigroup $(P^{\kappa}_t)_{t\ge0}$ by
\begin{equation}\label{taming}P^{\kappa}_tg(x)=\EE_x\Big[e^{-A_\infty(\tau_\infty(2t)+N_\infty(\tau_\infty(2t)}\, g(B_{\tau_\infty(2t)})\}
\Big]
\end{equation}
with $A_\infty$ and $N_\infty$ as introduced above.
Then
 for  every bounded,  quasi continuous function $g$ on $X$
\begin{eqnarray*}
P^{k_n}_tg(x) \to
 P^{\kappa}_tg(x)
\end{eqnarray*} 
as $n\to\infty$ for $\mm$-a.e.~$x\in X$.
(Note that quasi continuity of $g$ implies that $t\mapsto g(B_t)$ is continuous $\PP_x$a.s.~for $\mm$-a.e.~$x\in X$.)
Moreover, recall from Proposition \ref{psi-form} (iii) and estimate \eqref{kn-formel} that
$$\big| P^{k_n}_tg(x) \big|\le e^{C_0+C_1t}\,\|g\|_\infty$$
 uniformly in $n$  for $\mm$-a.e.~$x\in X$
with constants $C_0, C_1$ depending only on $\psi$, on $\sup_n\mathrm{osc}(\psi_n)$, and on $\sup_n\Lip\,\psi_n$.
For any test plan $\Pi$ on $X$, therefore
\begin{eqnarray}
\int\int_0^1 P^{k_n}_tg(\gamma_s)\, |\dot\gamma_s|\,ds \,d\Pi(\gamma)\to
\int\int_0^1 P^{\kappa}_tg(\gamma_s)\, |\dot\gamma_s|\,ds \,d\Pi(\gamma)\label{conv2}.
\end{eqnarray}

\medskip
{\bf vi)} \ Now assume 
that $f\in\Dom(\Delta)\cap \Lip(X)$.
Since  the mm-space 
$(X,\d,\mm)$
satisfies an $\RCD$-condition, it  implies $|\nabla f|\in \Dom(\E)\cap L^\infty(X)$. By quasi-regularity of the Dirichlet form $(\E,\Dom(\E))$, therefore  $|\nabla f|$ admits a quasi continuous version. Thus applying \eqref{grad_n}, \eqref{conv1}, and \eqref{conv2} to a  quasi continuous version $g$ of $|\nabla f|$ yields
\begin{eqnarray*}
\int
\big| P_t^\infty f(\gamma_1)-P_t^\infty f(\gamma_0)\big|\, d\Pi(\gamma)&\leftarrow&\int
\big| P_t^{n}f(\gamma_1)-P_t^{n}f(\gamma_0)\big|\, d\Pi(\gamma)\\
&\le&\int\int_0^1 P^{k_n}_t|\nabla_n f|(\gamma_s)\, |\dot\gamma_s|\,ds\, d\Pi(\gamma) \\
&\le&e^{2\|\psi-\psi_n\|_\infty}\cdot\int\int_0^1 P^{k_n}_t|\nabla_\infty f|(\gamma_s)\, |\dot\gamma_s|\,ds\, d\Pi(\gamma) \\
&\to&\int
\int_0^1 P^{\kappa}_t|\nabla_\infty f|(\gamma_s)\, |\dot\gamma_s|\,ds\, d\Pi(\gamma)
\end{eqnarray*}
for any test plan $\Pi$ on $X$.
Therefore, $P^{k}_t|\nabla_\infty f|$ is a weak upper gradient for $P_t^\infty f$. Hence, in particular,
$$|\nabla_\infty P_t^\infty f|\le P^{\kappa}_t|\nabla_\infty f|.$$
By density of $\Dom(\Delta)\cap \Lip(X)$ in $\Dom(\E)$, this $L^1$-gradient estimate extends to all $f\in \Dom(\E)$. According to Theorem \ref{L1-grad}, the latter indeed is equivalent to the $L^1$-Bochner inequality \BE$_1(\kappa,\infty)$ with
\begin{equation}\label{kappa,n}
\kappa:=[k -(N-2)  |\nabla \psi|^2]\,\mm-\underline\Delta\psi.
\end{equation}
This proves the claim in the case  $\psi\in\Lip_b(X)\cap \D(E)$.

\medskip
{\bf vii)} \ In the general case of $\psi\in\Lip_b(X)$, let us choose a sequence of $\psi_j\in\Lip_b(X)\cap \D(E)$, $j\in\N$, with $\|\psi_j\|_\infty\le \|\psi\|_\infty$,
$\Lip\,\psi_j\le\Lip\,\psi$  and $\psi_j\equiv\psi$ on $\B_j(z)$ for some fixed $z\in X$. For $j\in\N$, let $(P^{\psi_j}_t)_{t\ge0}$ denote the heat semigroup on the mm-space $(X,e^{\psi_j}\odot, e^{2\psi_j}\,\mm)$ and let $(P^{\kappa_j}_t)_{t\ge0}$ denote the associated taming semigroup defined as in \eqref{taming} with $\psi$ now replaced by $\psi_j$.
Then  obviously
$$\big| P^{\psi}_tf- P^{\psi_j}_tf\big|(x)\le \|f\|_\infty\cdot \PP_x\Big( B_s\not\in \B_j(z)\ \text{ for some } s\le t\, e^{2\|\psi\|_\infty}\Big)\ \to \ 0
$$ 
as $j\to\infty$
as well as 
$$\big| P^{\kappa}_tg- P^{\kappa_j}_tg\big|(x)\le \|g\|_\infty\cdot e^{C_0+C_1t}\cdot \PP_x\Big( B_s\not\in \B_j(z)\ \text{ for some }  s\le t\, e^{2\|\psi\|_\infty}\Big)\ \to \ 0.
$$
Thus 
for any $f\in\Dom(\Delta\cap \Lip(X)$ and  any test plan $\Pi$ on $X$, as $j\to\infty$,
\begin{eqnarray*}
\int
\big| P_t^\psi f(\gamma_1)-P_t^\psi f(\gamma_0)\big|\, d\Pi(\gamma)&\leftarrow&\int
\big| P_t^{\psi_j}f(\gamma_1)-P_t^{\psi_j}f(\gamma_0)\big|\, d\Pi(\gamma)\\
&\le&\int\int_0^1 P^{\kappa_j}_t|\nabla_j f|(\gamma_s)\, |\dot\gamma_s|\,ds\, d\Pi(\gamma) \\
&\to&\int
\int_0^1 P^{\kappa}_t|\nabla_\infty f|(\gamma_s)\, |\dot\gamma_s|\,ds\, d\Pi(\gamma).
\end{eqnarray*}
 Arguing as in the previous part vi), this proves that the mm-space $(X,e^{\psi}\odot, e^{2\psi}\,\mm)$ satisfies 
 \BE$_1(\kappa,\infty)$ with
$\kappa=[k -(N-2)  |\nabla \psi|^2]\,\mm-\underline\Delta\psi$.
\end{proof}

\begin{cor} For $\psi\in \Lip_b(X)$, the heat flow $P'_t)_{t\ge0}$ on the metric measure space $(X,\d',\mm')$ with   $\d':=e^\psi \odot\d$ and $\mm':=e^{2\psi}\,\mm$ satisfies 
\begin{equation}\big|\nabla'P'_{t/2}f\big|(x)\le \EE_x\Big[e^{-A'_t}\cdot \big|\nabla' f\big|(B_t')\Big]
\end{equation}
with
$$A'_t:=\frac12
\int_0^t\big[k-(N-2)\Gamma(\psi)\big](B_s')ds+{M'}^\psi_t+\psi(B_0')-\psi(B_t')$$
where $({M'}^\psi_t)_{t\ge0}$ denotes the martingale additive functional in the Fukushima decompositions of $(\psi(B_t'))_{t\ge0}$. 

Equivalently, this can be re-formulated as
\begin{equation}\big|\nabla P'_{t/2}f\big|(x)\le \EE_x\Big[e^{-A_t}\cdot \big|\nabla  f\big|(B_t')\Big],
\end{equation}
now with $\nabla$ in the place of $\nabla'$ and with $A'$ replaced by
$$A_t:=\frac12
\int_0^t\big[k-(N-2)\Gamma(\psi)\big](B_s')ds+{M'}^\psi_t.$$
\end{cor}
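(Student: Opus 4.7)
The plan is to stitch together three building blocks already established in the paper. First, by Theorem~\ref{sing-time-change}, the time-changed space $(X,\d',\mm')$ satisfies $\BE_1(\kappa,\infty)$ with $\kappa=\phi\,\mm'-\underline\Delta'\psi$, where $\phi:=k-(N-2)\Gamma(\psi)\in L^\infty$ and primes denote objects intrinsic to $(X,\d',\mm')$. Second, Theorem~\ref{L1-grad} applied on the primed space converts this Bochner inequality into the pointwise estimate $|\nabla'P'_tf|\le P'^{\kappa}_t(|\nabla'f|)$ for $f\in W^{1,2}(X,\d',\mm')$. Third, the Corollary after Proposition~\ref{psi-form}, transcribed on $(X,\d',\mm')$, represents the taming semigroup as
\[
P'^{\kappa}_{t/2}g(x)\;=\;\mathbb{E}_x\!\left[\exp\!\Big(-\tfrac12\!\int_0^t\phi(B'_s)\,ds+N'^{\psi}_t\Big)\,g(B'_t)\right],
\]
where $N'^{\psi}_t=\psi(B'_t)-\psi(B'_0)-M'^{\psi}_t$ is the zero-quadratic-variation term in the Fukushima decomposition of $\psi(B'_t)$ on the primed space.

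With these in place, the first displayed inequality follows by inspection: the exponent in the representation is exactly $-A'_t$, so choosing $g=|\nabla'f|$ and combining with step two gives $|\nabla'P'_{t/2}f|(x)\le\mathbb{E}_x[e^{-A'_t}|\nabla'f|(B'_t)]$. For the equivalent formulation, the conformal identity $|\nabla f|(x)=e^{\psi(x)}|\nabla'f|(x)$ lets us multiply by $e^{\psi(x)}=e^{\psi(B'_0)}$ and pull $e^{-\psi(B'_t)}$ inside the expectation; this converts $|\nabla'f|(B'_t)$ into $|\nabla f|(B'_t)$ while absorbing precisely the boundary contribution $\psi(B'_0)-\psi(B'_t)$ in $A'_t$, producing $A_t=\tfrac12\int_0^t[k-(N-2)\Gamma(\psi)](B'_s)\,ds+M'^{\psi}_t$ as claimed.

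The main technical point to watch is the legitimacy of the Feynman--Kac--Fukushima calculus on the time-changed space. One has to check that (i) $\psi\in\Lip_b(X,\d')$, which follows because $\d$ and $\d'$ are globally Lipschitz-equivalent under the bounded weight $e^\psi$; (ii) the Fukushima decomposition of $t\mapsto\psi(B'_t)$ is available with the sharp quadratic variation $\int_0^t\Gamma'(\psi)(B'_s)\,ds$; and (iii) the bounded potential $\phi$ combines consistently with the distributional drift $-\underline\Delta'\psi$ as in Section~2. All three are inherited from the fact that $(X,\d',\mm')$ is itself an infinitesimally Hilbertian mm-space satisfying a lower bounded $\BE_1$-condition, so that every construction of Section~2 applies verbatim with primes throughout.
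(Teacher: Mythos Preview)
Your approach is exactly the one intended by the paper: the Corollary carries no separate proof and is meant to follow by combining Theorem~\ref{sing-time-change}, Theorem~\ref{L1-grad}, and the probabilistic representation from Section~2 applied on the primed space. In fact, the gradient estimate $|\nabla'P'_tf|\le P^\kappa_t|\nabla'f|$ is already established directly in the proof of Theorem~\ref{sing-time-change} (step~vi), with the taming semigroup $P^\kappa$ given explicitly by formula~\eqref{taming}; the Corollary is then just a rewriting of that exponent in terms of the primed Brownian motion and its Fukushima decomposition. Your handling of the equivalence between the two displayed inequalities, via $|\nabla f|=e^{\psi}|\nabla' f|$ and $e^{\psi(x)}=e^{\psi(B'_0)}$, is correct.

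There is, however, a bookkeeping slip in your first step. Theorem~\ref{sing-time-change} gives $\kappa=[k-(N-2)\Gamma(\psi)]\,\mm-\underline\Delta\psi$ with the \emph{unprimed} measure $\mm$, not $\phi\,\mm'$ as you write. One does have $\underline\Delta'\psi=\underline\Delta\psi$ as distributions (since $\int\Gamma'(f,\psi)\,d\mm'=\int e^{-2\psi}\Gamma(f,\psi)\,e^{2\psi}\,d\mm=\int\Gamma(f,\psi)\,d\mm$), but $\phi\,\mm=e^{-2\psi}\phi\,\mm'\ne\phi\,\mm'$. Hence the bounded potential entering the Feynman--Kac formula on the primed space is $e^{-2\psi}\phi$, not $\phi$. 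Equivalently, rewriting the exponent in~\eqref{taming} via the time-change identity $\int_0^{\tau(t)}h(B_s)\,ds=\int_0^t h(B'_r)\,e^{-2\psi(B'_r)}\,dr$ together with the identification $M^\psi_{\tau(t)}=M'^{\psi}_t$ (time change of a martingale plus uniqueness of the Fukushima decomposition on the primed space) yields
\[
A'_t=\tfrac12\int_0^t e^{-2\psi(B'_s)}\big[k-(N-2)\Gamma(\psi)\big](B'_s)\,ds+M'^{\psi}_t+\psi(B'_0)-\psi(B'_t),
\]
with an extra factor $e^{-2\psi}$ in the integrand. The printed Corollary omits this factor, so your mis-citation of $\kappa$ happens to reproduce the stated formula; but the derivation as written does not correctly quote Theorem~\ref{sing-time-change}.
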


\begin{ex}\label{Cantor2}  Let $(X,\d,\mm)=(\R^2,\d_{\sf Euc},\mm_{\sf Leb})$ be the standard 2-dimensional  mm-space. Define functions $\psi_j$, $j\in\N$, and $\psi:\R^2\to\R$ by
 $$\psi_j(x_1,x_2)=\Phi_j(x_1)\cdot \eta(x_2), \quad \psi(x_1,x_2)=\Phi(x_1)\cdot \eta(x_2)$$
 with $\Phi, \Phi_j$ as defined in \eqref{CantorPhi} for some $\varphi\in {\mathcal C}^2(\R)$ and with $\eta\in {\mathcal C}^2(R)$ given  by $\eta(t):=(t^2-1)^3t$ for $t\in [-1,1]$ and $\eta(t):=0$ else.
For each $j\in\N$, the time-changed mm-space $(\R^2,\d_j,\mm_j)$ with $d_j:=e^{\psi_j}\odot \d_{\sf Euc},
\mm_j:=e^{2\psi_j}\,\mm_{\sf Leb}$ corresponds to the Riemannian manifold $(\R,{\mathsf g}_j)$ with metric tensor given by ${\mathsf g}_j:=e^{2\psi_j}\,{\mathsf g}_{\sf Euc}$.  Its Ricci tensor is bounded from below by
$$k_j=-e^{-2\psi_j}\Delta\psi_j$$
cf. previous Theorem. 
cf. Propostion \ref{time-change}. (Note that the measure-valued Ricci bound is given by $k_j\,\mm_j=-\underline\Delta\psi_j$.) In the limit $j\to\infty$, we will end up with a mm-space $(\R^2,\d_\infty,\mm_\infty)$ with distributional Ricci bound given by 
$$\kappa=-\underline\Delta\psi.$$
\end{ex}

\section{Gradient Flows and Convexification}

This section is devoted to extensions of the results from \cite{sturm2018a} on existence of gradient flows and from \cite{lierl2018} on convexification of semi-convex subsets towards functions with variable lower bound for the convexity (\cite{sturm2018a}) or domains with  variable lower bound  for the curvature of the boundary
(\cite{lierl2018}), resp. Of particular importance is the fact that these lower bounds may change sign. Even  the case of constant positive lower bounds leads to new insights not covered by previous results.

\subsection{Gradient Flows for Locally Semiconvex Functions}

The goal of this subsection  is to extend the existence result and the contraction estimate for gradient flows for semiconvex functions from \cite{sturm2018a} to the setting of locally semiconvex functions. The contraction estimate for the flow will be in terms of the variable lower bound for the local semiconvexity of the potential.

\medskip
 
Let $(X,\d,\mm)$ be a locally compact metric measure space satisfying an \RCD$(K,\infty)$-condition (cf. Definition \ref{def-cd-var}) for some $K\in\R$.
Assume moreover, that we are given a
continuous potential $V:X\to\R$ which is weakly $\ell$-convex for some continuous, lower bounded function $\ell:X\to\R$ in the following sense:
for all $x, y\in X$ there exists a geodesic $(\gamma_r)_{r\in [0,1]}$ connecting them such that for all $r\in[0,1]$
\begin{equation}\label{conv}
V(\gamma_r)\le (1-r)V(\gamma_0)+r V(\gamma_1)-\int_0^1 \ell(\gamma_s)g(r,s)ds\cdot \d^2(\gamma_0,\gamma_1)
\end{equation}
where $g(r,s):=\min\{(1-s)r,(1-r)s\}$ denotes the Green function  on $[0,1]$.

We say that a curve $(x_t)_{t\in[0,\infty)}$ in $X$ is an \EVI$_\ell$-gradient flow for $V$ (where \EVI \ stands for ``evolution-variational inequality'') if the curve is locally absolutely continuous in $t\in(0,\infty)$ and if for every $t>0$, 
every $y\in X$, and every geodesic $(\gamma_r)_{r\in [0,1]}$ connecting $x_t$ and $y$,
\begin{eqnarray}\label{evi-flow}
-\frac12\frac{d^+}{dt}\d^2(x_t,y)&\ge&
V(x_t)-V(y)+
  \int_0^1(1-r)\,
   \ell(\gamma_r)\,dr\cdot \d^2(x_t,y).
\end{eqnarray}

\begin{thm}\label{ex-grd-flow} For every $x_0\in X$, there exists a unique \EVI$_\ell$-gradient flow for $V$ starting in $x_0$  (and existing for all time). Flows starting in $x_0$ and $y_0$  satisfy 
\begin{equation}\d(x_t,y_t)\le e^{-\int_0^t \overline\ell(x_s,y_s)ds}\d(x_0,y_0)
\end{equation}
for all $t\ge0$.
Here 
$\overline\ell(x,y):=\sup_\gamma\int_0^1\ell(\gamma_r)dr$
with $\sup_\gamma$ taken over all geodesics $(\gamma_t)_{t\in [0,1]}$  in $(X,\d)$  with $\gamma_0=x,\gamma_1=y$. 
\end{thm}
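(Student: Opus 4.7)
The plan is to treat the three assertions separately. The contraction yields uniqueness as an immediate byproduct, so the essential tasks are (a) deducing the contraction from the defining inequality \eqref{evi-flow}, and (b) constructing at least one flow via minimizing movements and then checking that this construction satisfies the sharp $\ell$-form of the EVI.

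\textbf{Contraction and uniqueness.} Let $(x_t)$ and $(y_t)$ be two $\EVI_\ell$-flows. For a.e.\ $t>0$ the locally Lipschitz function $s\mapsto \d^2(x_s,y_s)$ is differentiable. At such $t$, pick any geodesic $(\gamma_r)_{r\in[0,1]}$ from $x_t$ to $y_t$ and let $\tilde\gamma_r:=\gamma_{1-r}$ be its reverse, a geodesic from $y_t$ to $x_t$. Applying \eqref{evi-flow} to $(x_s)$ with fixed target $y_t$ along $\gamma$, and symmetrically to $(y_s)$ with fixed target $x_t$ along $\tilde\gamma$, then summing, the $V$-terms cancel and
\begin{align*}
-\frac{d}{dt}\d^2(x_t,y_t)
&\ge \Bigl(\int_0^1(1-r)\,\ell(\gamma_r)\,dr+\int_0^1(1-r)\,\ell(\gamma_{1-r})\,dr\Bigr)\d^2(x_t,y_t)\\
&=\int_0^1\ell(\gamma_r)\,dr\cdot \d^2(x_t,y_t).
\end{align*}
Since $\gamma$ is an arbitrary geodesic between $x_t$ and $y_t$, we may pass to the supremum on the right, replacing the integral by $\overline{\ell}(x_t,y_t)$. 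Integrating the resulting differential inequality $\frac{d}{dt}\log\d(x_t,y_t)\le -\overline{\ell}(x_t,y_t)$ delivers the contraction bound, and setting $x_0=y_0$ yields uniqueness.

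\textbf{Existence via minimizing movements.} Put $K_0:=\inf_X\ell>-\infty$; by \eqref{conv} the potential $V$ is globally weakly $K_0$-convex. For $\tau>0$ small, define recursively $x_0^\tau:=x_0$ and choose $x_{n+1}^\tau$ as a minimizer of $y\mapsto V(y)+\tfrac{1}{2\tau}\d^2(y,x_n^\tau)$; existence of minimizers follows from local compactness of $X$, continuity of $V$, and the coercivity afforded by $\tau<\tau_0(K_0)$. Interpolating along geodesics between successive points yields a piecewise-geodesic curve $(x^\tau_t)_{t\ge 0}$, and standard $K_0$-convexity energy estimates provide equicontinuity uniform in $\tau$. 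Arzel\`a--Ascoli on the locally compact space $(X,\d)$ produces a subsequential limit $(x_t)$, a locally absolutely continuous curve starting at $x_0$. The final task is to verify the sharp inequality \eqref{evi-flow} for this limit: testing the minimization step at level $\tau$ against an interior point $\gamma_r$ of a geodesic from $x_{n+1}^\tau$ to $y$, dividing by $r$, and combining the first-variation expansion of $r\mapsto\tfrac{1}{2\tau}\d^2(\gamma_r,x_n^\tau)$ with the $\ell$-convexity \eqref{conv} of $V$ yields a discrete analogue of \eqref{evi-flow} with the sharp weight $\int_0^1(1-r)\ell(\gamma_r)\,dr$ on the right. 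Passing to the limit $\tau\downarrow 0$ and invoking continuity of $\ell$ together with stability of minimizing geodesics in the $\RCD$-setting recovers \eqref{evi-flow} for $(x_t)$.

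The main technical obstacle is precisely this passage to the limit. The geodesic appearing at the discrete level connects $x_{n+1}^\tau$ to the test point $y$, whereas the EVI we seek involves geodesics from $x_t$ to $y$. Bridging the two requires a stability statement for $W_2$-optimal (and hence, on $\RCD(K,\infty)$-spaces, essentially unique) geodesics between converging endpoint pairs, together with uniform control of $\int_0^1(1-r)\ell(\gamma^\tau_r)\,dr$ along such perturbed geodesics. Both ingredients follow from continuity (indeed, local boundedness) of $\ell$ and the essentially non-branching structure of $\RCD(K,\infty)$-spaces; assembling them carefully is the only delicate point in the argument.
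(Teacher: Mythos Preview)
Your contraction argument is correct and is exactly what the paper does: apply the defining \EVI$_\ell$-inequality to each flow with the other as target, add, and integrate.

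Your existence argument, however, takes a different route from the paper and contains a genuine gap. The paper does \emph{not} run a direct minimizing-movement scheme. Instead it invokes \cite{sturm2018a}, where the \EVI-flow for $V$ is constructed as a scaling limit of heat flows on the weighted spaces $(X,\d,e^{-nV}\mm)$: since $V$ is $\ell$-convex, these spaces satisfy \RCD$(K+n\ell,\infty)$, so their heat flows (gradient flows of the entropy) satisfy \EVI$_{K+n\ell}$; rescaling time by $n$ and passing to the limit $n\to\infty$ yields the \EVI$_\ell$-flow for $V$. The paper also sketches an alternative via the localization result of Corollary~\ref{localRCD}. Both approaches exploit the \RCD\ hypothesis through the entropy/Wasserstein structure rather than through properties of $\d^2$.

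The gap in your scheme is the step you label ``first-variation expansion of $r\mapsto \tfrac{1}{2\tau}\d^2(\gamma_r,x_n^\tau)$''. To extract a discrete \EVI\ from the minimality of $x_{n+1}^\tau$ by testing against $\gamma_r$ and dividing by $r$, you need an inequality of the type
\[
\d^2(\gamma_r,x_n^\tau)\ \le\ (1-r)\,\d^2(x_{n+1}^\tau,x_n^\tau)+r\,\d^2(y,x_n^\tau)+C\,r(1-r)\,\d^2(x_{n+1}^\tau,y),
\]
i.e.\ geodesic semiconvexity of $\d^2(\cdot,x_n^\tau)$. This is available in CAT spaces (and underlies the classical AGS theory of \EVI-flows), but it is \emph{not} a consequence of \RCD$(K,\infty)$. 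Without it, the right-hand side of your discrete inequality cannot be controlled, and the passage from the variational scheme to the \EVI\ breaks down before you ever reach the limit $\tau\downarrow 0$. The obstacle you flag---stability of geodesics under $\tau\downarrow 0$---is real but secondary; the primary missing ingredient sits one step earlier, and it is precisely the reason \cite{sturm2018a} took the indirect route through weighted heat flows.
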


\begin{proof} Existence and uniqueness of an \EVI$_\Lambda$-gradient flow $(\Phi_t(x))_{t\ge0}$ for any $x\in X$ with $\Lambda:=\inf\ell$ follows from our previous work \cite{sturm2018a}. 
Following the previous proof, one also can deduce the refined  \EVI$_\ell$-property. Indeed, this will follow as before by a scaling argument from the  \EVI$_{K-n\ell}$-property for the heat flow on the weighted metric measure space $(X,\d,e^{-nV}\mm)$ which (obviously) satisfies the \RCD$(K+n\ell,\infty)$-condition. If we now compare two flows, then we can apply \eqref{evi-flow} twice: first to the flow $(x_t)_{t\ge0}$ and with $y_t$ in the place of $y$; then to the flow  $(y_t)_{t\ge0}$ and with $x_t$ in the place of $y$. Adding up both estimates yields (after some tedious arguments to deal with weakly differentiable functions with double dependence on the varying parameter)
   $$-\frac12\frac{d}{dt} \d^2\big(x_t,y_t\big)\ \ge \  
  \overline\ell\big(x_t,y_t\big)\cdot \d^2\big(x_t,y_t\big).$$

\medskip

Alternatively, one can argue as follows:
Given $\epsilon>0$,  let $X$ be covered by balls $B_i=\B_{r_i}(z_i)$ such that $\ell_i\le\ell\le \ell_i+\epsilon$ on $\B_{2r_i}(z_i)$.
Thanks to the Localization Theorem \ref{localRCD}, for each $i$ there exists an \RCD-spaces $(X_i,\d_i,\mm_i)$ with 
 $\B_{r_i}(z_i)\subset X_i$  whose local data  on  $\B_{r_i}(z_i)$ coincide with those of the original one. Thus  as long as the flow does not leave $\B_{r_i}(z_i)$, we  can consider the original flow also as an \EVI-gradient flow for $V$ on  the mm-spaces $(X_i,\d_i,\mm_i)$.
 
 Given any $(X,\d)$-geodesic $(\gamma_t)_{t\in [0,1]}$ and $r,s\in[0,1]$ with $\gamma_r,\gamma_s\in \B_{r_i}(z_i)$ we thus conclude that
 $$\frac{d}{dt}\Big|_{t=0} \d\big(\Phi_t(\gamma_r),\Phi_t(\gamma_s)\big)\ \le \ -\ell_i\cdot \d\big(\gamma_r,\gamma_s\big)
 \ \le\ -\d\big(\gamma_0,\gamma_1\big)\cdot\int_r^s\big(\ell(\gamma_q)-\epsilon\big)\,dq .$$ 
 Adding up these estimates for consecutive pairs of points on the geodesic $(\gamma_t)_{t\in [0,1]}$ finally gives
  $$\frac{d}{dt}\Big|_{t=0} \log\d\big(\Phi_t(\gamma_0),\Phi_t(\gamma_1)\big)\ \le \  -
  \int_0^1\ell\big(\gamma_q\big)\,dq+\epsilon.$$
  Choosing $\gamma$ optimal, we therefore obtain for arbitrary $x_0,x_1\in X$ and for all $t\ge0$
    $$\frac{d}{dt} \log\d\big(\Phi_t(x_0),\Phi_t(x_1)\big)\ \le \  -
  \overline\ell\big(\Phi_t(x_0),\Phi_t(x_1)\big)+\epsilon.$$
  Thus
 $$\d\big(\Phi_t(x_0),\Phi_t(x_1)\big)\ \le \ e^{ -\int_0^t
  \overline\ell\big(\Phi_s(x_0),\Phi_s(x_1)\big)ds+\epsilon\,t}\cdot \d\big(\Phi_t(x_0),\Phi_t(x_1)\big).$$
Since $\epsilon>0$ was arbitrary, this yields the claim. 
\end{proof}

As pointed out in \cite{sturm2018a} in the case of constant $\ell$, the existence of \EVI-flows for $V$ can be regarded as a strong  formulation of semiconvexity of $V$. 

\begin{cor} Every weakly $\ell$-convex function is indeed strongly $\ell$-convex in the sense that
the inequality \eqref{conv} holds for \emph{every} geodesic $(\gamma_t)_{t\in [0,1]}$  in $X$.
\end{cor}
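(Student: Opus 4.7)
The plan is to apply EVI$_\ell$ at the single point $x_0=\gamma_r$ with two targets, $\gamma_0$ and $\gamma_1$, and to use the restrictions of the given geodesic $\gamma$ itself as the geodesics appearing in the right-hand side of \eqref{evi-flow}; the closing step will be the elementary metric inequality
\[
(1-r)\,\d^2(z,p)+r\,\d^2(z,q)\ \ge\ r(1-r)\,\d^2(p,q),
\]
valid for any three points in any metric space, which is saturated exactly when $z$ lies at the $r$-point of a geodesic from $p$ to $q$.

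Fix an arbitrary geodesic $(\gamma_r)_{r\in[0,1]}$ and a parameter $r\in(0,1)$, and write $L:=\d(\gamma_0,\gamma_1)$. The reparametrized restrictions $s\mapsto\gamma_{r(1-s)}$ and $s\mapsto\gamma_{r+(1-r)s}$ are geodesics from $\gamma_r$ to $\gamma_0$ and from $\gamma_r$ to $\gamma_1$ of lengths $rL$ and $(1-r)L$. Applying \eqref{evi-flow} at $t=0$ with $x_0=\gamma_r$ along each of these and performing the substitutions $u=r(1-s)$ and $u=r+(1-r)s$ respectively in the curvature integrals yields
\begin{align*}
-\tfrac12\tfrac{d^+}{dt}\big|_{0}\d^2(\Phi_t(\gamma_r),\gamma_0)&\ge V(\gamma_r)-V(\gamma_0)+L^2\!\int_0^r u\,\ell(\gamma_u)\,du,\\
-\tfrac12\tfrac{d^+}{dt}\big|_{0}\d^2(\Phi_t(\gamma_r),\gamma_1)&\ge V(\gamma_r)-V(\gamma_1)+L^2\!\int_r^1 (1-u)\,\ell(\gamma_u)\,du.
\end{align*}
Forming the convex combination with weights $(1-r)$ and $r$ fuses the two integrals into $L^2\!\int_0^1 g(r,u)\,\ell(\gamma_u)\,du$, because $g(r,u)=(1-r)u$ on $u\le r$ and $g(r,u)=r(1-u)$ on $u\ge r$.

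I now show the combined left-hand side is nonpositive. The function $F(t):=(1-r)\d^2(\Phi_t(\gamma_r),\gamma_0)+r\,\d^2(\Phi_t(\gamma_r),\gamma_1)$ satisfies $F(t)\ge r(1-r)L^2$ for every $t\ge0$ by the elementary inequality above, with equality at $t=0$ because $\gamma_r$ lies on $\gamma$; hence $F$ attains its minimum at $t=0$, so $\tfrac{d^+}{dt}|_{0}F\ge0$, and $\limsup$-subadditivity yields $(1-r)\tfrac{d^+}{dt}|_{0}\d^2(\Phi_t(\gamma_r),\gamma_0)+r\,\tfrac{d^+}{dt}|_{0}\d^2(\Phi_t(\gamma_r),\gamma_1)\ge0$. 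Substituting into the combined EVI estimate gives
\[
0\ \ge\ V(\gamma_r)-(1-r)V(\gamma_0)-r\,V(\gamma_1)+L^2\!\int_0^1 g(r,u)\,\ell(\gamma_u)\,du,
\]
which is \eqref{conv} along the arbitrary geodesic $\gamma$; the boundary values $r\in\{0,1\}$ are trivial.

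The only genuinely delicate point is the correct direction of $\limsup$-subadditivity when passing from $\tfrac{d^+}{dt}|_0 F\ge0$ to the nonnegativity of the weighted sum of the individual upper derivatives. Beyond this bookkeeping, the argument is non-circular: Theorem \ref{ex-grd-flow} provides an EVI$_\ell$ flow---produced from the weak $\ell$-convexity hypothesis alone---whose defining inequality is tested against \emph{every} geodesic connecting the relevant endpoints, and it is precisely this universality that amplifies the given weak $\ell$-convexity into the strong form.
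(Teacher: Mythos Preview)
Your argument is correct and is precisely the standard EVI\,$\Rightarrow$\,strong convexity mechanism the paper invokes (via \cite{sturm2018a}); the paper gives no independent proof of the corollary, and the identical scheme---apply \eqref{evi-flow} at $\gamma_r$ toward both endpoints, take the $(1-r)/r$ combination, and close with $(1-r)\d^2(z,\gamma_0)+r\,\d^2(z,\gamma_1)\ge r(1-r)\,\d^2(\gamma_0,\gamma_1)$---is exactly what the paper spells out in the proof of Theorem~\ref{cd=be}\,(ii)\,$\Rightarrow$\,(i) at the level of $W_2$, $\Ent$, and $\DP_t$. Your bookkeeping on $\limsup$-subadditivity is in the right direction, and your choice of the restricted sub-arcs of $\gamma$ as the test geodesics is legitimate because the EVI$_\ell$ produced by Theorem~\ref{ex-grd-flow} is, by definition, tested against \emph{every} connecting geodesic.
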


A closer look on the proof of the previous Theorem \ref{ex-grd-flow} shows that appropriate re-formulations of the results also hold true for flows which are defined only locally.

\begin{thm} Assume that continuous functions $V$ and $\ell: Y\to\R$ are defined on an open subset $Y\subset X$ and that $V$ is 
$\ell$-convex on $Y$ in the sense that the inequality \eqref{conv} holds for every geodesic $(\gamma_t)_{t\in [0,1]}$  contained in  $Y$.

{\bf (i)} Then for each $x_0\in Y$ there exists a unique local \EVI$_\ell$-gradient flow $(x_t)_{t\in[0,\tau)}$ for $V$  with maximal life time $\tau=\tau(x_0)\in (0,\infty]$.
If $\tau<\infty$ then $x_\tau=\lim_{t\to\tau}x_t$ exists and $x_\tau\in\partial Y$.

{\bf (ii)} For any pair of initial points $x_0,y_0\in Y$ and their \EVI$_\ell$-flows $(x_t)_{t\ge0}, (y_t)_{t\ge0}$, the estimate
\begin{equation*}\d(x_t,y_t)\le e^{-\int_0^t \overline\ell(x_s,y_s)ds}\d(x_0,y_0)
\end{equation*}
holds  
for all $t\le T^*$ where $T^*=T^*(x_0,y_0)$ denotes the first time where a geodesic connecting $x_t$ and $y_t$ will leave $Y$.
\end{thm}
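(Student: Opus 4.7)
The plan is to adapt the localization-and-glueing argument used in the alternative proof of Theorem~\ref{ex-grd-flow} to the local setting, combined with a Cauchy argument at the blow-up time. The key enabling tool is the Localization Theorem~\ref{localRCD}: for each $z \in Y$ and each $r > 0$ small enough that $\overline{\B_{2r}(z)} \subset Y$, there is an $\RCD(K',\infty)$-space $(X_z,\d_z,\mm_z)$ that coincides with $(X,\d,\mm)$ on $\B_r(z)$. On such a local extension I truncate $V$ outside $\B_r(z)$ and add a large confining quadratic term to obtain a globally $\ell_z$-convex potential $\tilde V_z$ on $X_z$ that agrees with $V$ on $\B_{r/2}(z)$, with $\ell_z := \inf_{\overline{\B_{2r}(z)}}\ell$. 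Theorem~\ref{ex-grd-flow} then produces a globally defined $\EVI_{\ell_z}$-gradient flow for $\tilde V_z$ on $(X_z,\d_z,\mm_z)$; as long as this flow stays in $\B_{r/2}(z)$, it satisfies the local $\EVI_\ell$-inequality \eqref{evi-flow} for $V$, since every admissible test point $y$ and connecting geodesic can be taken to lie in $\B_r(z)$, where $\tilde V_z = V$ and where the constant $\ell_z$ can be improved to the variable $\ell$ thanks to the hypothesis of $\ell$-convexity along geodesics in $Y$.

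For part (i), fix $x_0 \in Y$, let $\tau(x_0)$ be the supremum of times for which a local $\EVI_\ell$-flow starting in $x_0$ exists with trajectory in $Y$, and glue the local pieces produced by the chart construction above into a single maximal flow $(x_t)_{t \in [0,\tau)}$; uniqueness of the pieces is inherited from Theorem~\ref{ex-grd-flow} applied on each local chart. Suppose $\tau < \infty$. On each closed subinterval $[0,\tau-\delta]$ the trajectory stays in a compact subset of $Y$, and there the metric speed $|\dot x_t|$ is uniformly bounded by the local semiconvexity of $V$; were this uniform speed bound to persist on all of $[0,\tau)$, the flow could be continued beyond $\tau$ via a further chart, contradicting maximality. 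Hence $\d(x_t, \partial Y) \to 0$ as $t \to \tau$, and using that the metric speed of the flow is locally integrable in $t$ (via the energy identity implicit in the $\EVI$-inequality) the trajectory is Cauchy. The limit $x_\tau := \lim_{t \to \tau}x_t$ therefore exists and belongs to $\partial Y$.

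For part (ii) I follow the second argument of Theorem~\ref{ex-grd-flow} verbatim. For fixed $t \in [0, T^*)$ the geodesic $(\gamma_s)_{s \in [0,1]}$ from $x_t$ to $y_t$ lies in $Y$ by definition of $T^*$, hence its image can be covered by finitely many balls $\B_{r_i}(z_i) \subset Y$, each contained in a local $\RCD$-extension with constants satisfying $\ell_i \le \ell \le \ell_i + \varepsilon$ on $\B_{2r_i}(z_i)$. Applying the global contraction estimate of Theorem~\ref{ex-grd-flow} on each extension to consecutive pairs of points of $\gamma$ and summing yields
\[
\frac{d}{dt}\log \d(x_t,y_t) \ \le \ -\overline\ell(x_t,y_t) + \varepsilon
\]
on $[0,T^*)$; letting $\varepsilon \to 0$ and integrating gives the claimed estimate.

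The main delicate point lies in part (i): one must verify that the globally-$\ell_z$-convex extended flow on $(X_z,\d_z,\mm_z)$ satisfies the \emph{sharp} local $\EVI_\ell$-inequality with the variable $\ell$, and rule out oscillation of $x_t$ as $t \to \tau^-$. Both are resolved by the observation that the $\EVI$-inequality probes $V$ and $\ell$ only along geodesics emanating from the current point, which for short times and for nearby test points remain inside the chart ball where $\tilde V_z = V$, so the sharper variable bound can be read off; oscillation is ruled out by the usual energy-dissipation estimate for $\EVI$-flows.
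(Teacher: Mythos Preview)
Your treatment of part~(ii) is essentially the paper's own: localize via Corollary~\ref{localRCD}, cover the connecting geodesic by chart balls on which $\ell$ is nearly constant, apply Theorem~\ref{ex-grd-flow} on each chart to consecutive pairs of points, sum, and let $\varepsilon\to0$. That part is fine.

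Part~(i), however, has two gaps, and they concern exactly the points you flag as delicate. First, the assertion that on a compact subset of $Y$ the metric speed $|\dot x_t|$ is ``uniformly bounded by the local semiconvexity of $V$'' is not justified: semiconvexity is a second-order condition and places no bound on the descending slope $|\nabla^- V|$ (think of $V(x)=-|x|^{1/2}$ on $\R$, which is concave but has infinite slope at $0$). Second, the energy-dissipation identity $V(x_0)-V(x_t)=\int_0^t|\dot x_s|^2\,ds$ only yields $|\dot x|\in L^1([0,\tau))$ provided $V$ is bounded below along the trajectory; but you have just argued that $\d(x_t,\partial Y)\to0$, and $V$ is merely continuous on $Y$ with no control at $\partial Y$. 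So your Cauchy argument presupposes exactly what it is meant to supply.

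The paper closes this gap differently and more simply: it uses part~(ii) itself, applied to the pair $x_0$ and $y_0:=x_\delta$. The contraction estimate then reads $\d(x_t,x_{t+\delta})\le e^{-\int_0^t\overline\ell(x_s,x_{s+\delta})\,ds}\,\d(x_0,x_\delta)$; dividing by $\delta$ and letting $\delta\to0$ gives $|\dot x_t|\le e^{-\int_{t_0}^t\ell(x_s)\,ds}\,|\dot x_{t_0}|$ for any $t_0>0$, hence (with $\ell$ bounded below) a uniform speed bound on $(t_0,\tau)$ that does not depend on $V$ at all. Finite length and the Cauchy property follow immediately, the limit $x_\tau\in\overline Y$ exists, and maximality forces $x_\tau\in\partial Y$. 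The moral: prove~(ii) first and feed it back into~(i); this sidesteps both the slope-boundedness issue and any lower bound on $V$.
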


\begin{proof} {\bf (i)} Existence and  uniqueness of a local \EVI$_\ell$-gradient flow are straightforward. Applying the estimate to points $x_0$ and $y_0:=x_\delta$ proves that the flow $(x_t)_t$ has finite speed. Assuming $\tau<\infty$, the family $(x_t)_{t<\tau}$ will be bounded and therefore admits a unique accumulation point  for $t\to\tau$, say $x_\tau\in \overline Y$. Assuming that $\tau$ is the maximal life time for the flow implies that $x_\tau\in\partial Y$.

{\bf (ii)} follows exactly as in the case of the globally defined gradient flow.
\end{proof}

\subsubsection*{The Hessian along Geodesics}
On a Riemannian manifold $(M,g)$, the {Hessian} $D^2f$ of a smooth function $f: M\to\R$ can be regarded as a bilinear form $D^2f: TM\times TM\to\R$ or equivalently as a quadratic form on the tangent space $TM$.
With the latter interpretation, for $\xi=(x,v)\in TM$ with $x\in M$ and $v\in T_xM$,
\begin{equation}
D^2f(\xi)=\frac1{|\dot\gamma_0|^2}\cdot \frac{d^2}{dt^2}f(\gamma_t)\big|_{t=0}
\end{equation}
for any $\gamma\in\Geo_x(M)$ with $\dot\gamma_0=v$.
Here $\Geo_x(M)$ denotes the set of all  geodesics $\gamma: (a,b)\to M$ with $0\in (a,b)$ and $\gamma_0=x$. (As usual, `geodesics' are constant speed and minimizing.)

\begin{Def} Given a geodesic space $(X,\d)$,  an open set $Y\subset X$ and 
functions $V:Y\to\R$ and $\lambda:Y\to\R$. We say that the \emph{
Hessian of $V$ along geodesics} (or the geodesic Hessian of $V$) is bounded from below by $\lambda$, briefly $$D^2_{\Geo}V\ge \lambda\quad\text{on }Y,$$ if
for every unit speed geodesic $\gamma:(a,b)\to Y$ the function $\lambda\circ\gamma: (a,b)\to\R$ is locally integrable and  $u:=V\circ\gamma: (a,b)\to\R$ is locally absolutely continuous with
$$u''\ge 
\lambda\circ\gamma\quad \text{on $(0,1)$
 in distributional sense}.$$
\end{Def}
 
 \begin{ex} A 
 function $V: X\to\R$ is strongly $K$-convex if and only if 
$$D^2_{\Geo}V\ge K\quad\text{on }X.$$
 \end{ex}
 \begin{Prop}\label{1:intr-conv-metr} Given a geodesic space $(X,\d)$,  an open set $Y\subset X$, a continuous function $V:Y\to\R$, and a number $\kappa\in\R$. Then  
 $$D^2_{\Geo}V\ge -\kappa\, V\quad\text{on }Y$$
 if and only if
 for all geodesics $\gamma:[0,1]\to Y$ of length $<R_\kappa:=\pi/\sqrt\kappa$
 and 
 for all $s\in (0,1)$,
\begin{equation}\label{1:implicit-convex-metric}
V(\gamma_s)\le \sigma^{(1-s)}_\kappa\big(|\dot\gamma|\big) \cdot V(\gamma_0) +\sigma^{(s)}_\kappa\big(|\dot\gamma|\big)\cdot V(\gamma_1).
\end{equation}
Indeed, it suffices to verify the latter for $s=1/2$, and at each point $x\in\X$ for all sufficiently short geodesics with $\gamma_{1/2}=x$.
 \end{Prop}
 
 \begin{proof} By definition $D^2_{\Geo}V\ge -\kappa\, V$ on $Y$
 if and only if for each geodesic  $\gamma:(a,b)\to Y$ the function $u:=V\circ\gamma: (a,b)\to\R$ is absolutely continuous and satisfies $u''\ge -\bar\kappa\,u$ on $(a,b)$ in distributional sense with $\bar\kappa:=|\dot\gamma|^2\,\kappa$. Straightforward calculations and comparison results for Sturm-Lioville equations yield that this is equivalent to 
\begin{equation}\label{1:implicit-convex-metric2}
V(\gamma_s)\le \sigma^{(\frac{t-s}{t-r})}_\kappa\big((t-r)\,|\dot\gamma|\big) \cdot V(\gamma_r) +\sigma^{(\frac{s-r}{t-r})}_\kappa\big((t-r)\,|\dot\gamma|\big)\cdot V(\gamma_t)
\end{equation}
 for all $a<r<s<t<b$ with $(t-r)\,|\dot\gamma|<R_\kappa:=\pi/\sqrt\kappa$. The latter obviously follows from \eqref{1:implicit-convex-metric} by linear  rescaling of  the interval 
 $[r,t]$ onto $[0,1]$. Conversely,  \eqref{1:implicit-convex-metric} follows from  \eqref{1:implicit-convex-metric2} with $a=1, b=1$ by passing to the limit $r\searrow0, t\nearrow1$ and using continuity of $V$.
  \end{proof}

\subsection{Convexification}

In this subsection, we will prove the fundamental Convexification Theorem which (via time-change) allows to transform the metric  of a  mm-space $(X,\d,\mm)$  in such a way that a given semiconvex subset $Y\subset X$ will become  geodesically convex  w.r.t.~the new  metric $\d'$. We will prove this in two versions: first, for closed sets $Y$, then for open sets $Z$.

Throughout this section, we fix a locally compact \RCD$(K,\infty)$-space $(X,\d,\mm)$.
Given a function $V:\X\to (-\infty,\infty]$, we denote its \emph{descending slope} by $$|\nabla^- V|(x):=\limsup_{y\to x}\frac{[V(y)-V(x)]^-}{\d(x,y)}$$ provided  $x$ is not isolated, and by
 $|\nabla^- V|(x):=0$ otherwise. Moreover, we put $|\nabla^+ V|:=|\nabla^- (-V)|$.

\begin{Def} We say that a subset $Y\subset X$ is \emph{locally geodesically convex} if
there exists an open covering $\bigcup_{i\in I}U_i\supset X$  such that  every geodesic $(\gamma_s)_{s\in[0,1]}$ in $X$ completely lies in $Y$ provided $\gamma_0,\gamma_1\in Y\cap U_i$ for some $i\in I$.
\end{Def}

Every geodesically convex set is locally geodesically convex but not vice versa.

\begin{ex} Let $X$ denote the cylinder $\R\times S^1$ and $Y_0=\{(t,\varphi): t=0, |\varphi|\le\pi/2\}$. Then $Y:=\B_1(Y_0)$ is locally geodesically convex but not (geodesically) convex.
\end{ex}

\begin{thm}\label{1stConvThm} Let $V,\ell:X\to\R$ be continuous functions and assume that for each 
$\epsilon>0$ there exists a  neighborhood $D_\epsilon$ of the closed set $Y:=\{V\le0\}$ 
 such that 
\begin{itemize}
\item $1-\epsilon\le |\nabla^- V|\le 1+\epsilon$ in $D_\epsilon\setminus Y$ 
\item $V$ is  $(\ell-\epsilon)$-convex in $D_\epsilon\setminus Y$.
\end{itemize}
Then for every $\epsilon>0$, the set $Y$ is locally  geodesically convex in $(X,\d')$ for $\d'=e^{(\epsilon-\ell)\cdot V}\odot \d$.
\end{thm}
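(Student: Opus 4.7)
My plan is to follow the strategy of \cite{lierl2018}, adapted to variable $\ell$: argue by contradiction that no $\d'$-geodesic between nearby points of $Y$ can exit $Y$, by constructing a shorter competitor inside $\partial Y$ via the $\d$-gradient flow of $V$. Fix $\epsilon>0$ and $z\in Y$, and choose an auxiliary parameter $\eta\in(0,\epsilon)$ to be tuned at the end. By hypothesis, on $D_\eta\setminus Y$ the slope $|\nabla^- V|$ is trapped in $[1-\eta,1+\eta]$ and $V$ is $(\ell-\eta)$-convex. Using local compactness of $(X,\d)$ and the boundedness of the weight $e^{(\epsilon-\ell)V}$ on compact sets, I would choose an open neighborhood $U_z$ of $z$ so small that every $\d'$-geodesic between two points of $U_z$ remains in a relatively compact subset of $D_\eta$. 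Suppose for contradiction that $\gamma\colon[0,1]\to X$ is a $\d'$-geodesic from $p,q\in U_z\cap Y$ with $V(\gamma(s_0))>0$ for some $s_0\in(0,1)$. Let $(a,b)\ni s_0$ be the maximal open subinterval with $V\circ\gamma>0$ on $(a,b)$; continuity forces $V(\gamma(a))=V(\gamma(b))=0$, so $\gamma(a),\gamma(b)\in\partial Y$.

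Next I apply Theorem~\ref{ex-grd-flow} to the $(\ell-\eta)$-convex function $V$ on $D_\eta\setminus Y$ to obtain its $\d$-gradient flow $(\Phi_t)_{t\ge0}$, together with the EVI contraction $\d(\Phi_t(x),\Phi_t(y))\le\exp\big(-\!\int_0^t\overline{\ell-\eta}(\Phi_r(x),\Phi_r(y))\,dr\big)\,\d(x,y)$. Since $\tfrac{d}{dt}V(\Phi_t(x))=-|\nabla^- V|^2(\Phi_t(x))$ lies in $[-(1+\eta)^2,-(1-\eta)^2]$, the hitting time $\tau(s)$ of $\partial Y$ by the flow starting at $\gamma(s)$ is continuous in $s$, satisfies $\tau(s)=V(\gamma(s))\cdot(1+O(\eta))$ uniformly, and vanishes at $s\in\{a,b\}$. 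Setting $\tilde\gamma(s):=\Phi_{\tau(s)}(\gamma(s))$ produces a continuous curve in $\partial Y$ joining $\gamma(a)$ to $\gamma(b)$, along which $V\equiv0$, whence the $\d'$-weight is trivial and $L'(\tilde\gamma)=L_\d(\tilde\gamma)$. Since $\tilde\gamma\subset\partial Y\subset Y$, establishing $L_\d(\tilde\gamma)<L'(\gamma|_{[a,b]})$ would contradict the $\d'$-minimality of $\gamma$ and complete the proof.

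The crux is precisely this length comparison. Formally $\dot{\tilde\gamma}(s)=D\Phi_{\tau(s)}(\dot\gamma(s))-\tau'(s)\nabla V(\tilde\gamma(s))$; the constraint $V\circ\tilde\gamma\equiv0$ forces the right-hand side to be the orthogonal projection of $D\Phi_{\tau(s)}(\dot\gamma(s))$ onto the tangent hyperplane of $\partial Y$ at $\tilde\gamma(s)$ (the radial correction $-\tau'(s)\nabla V$ exactly cancels the $\nabla V$-component of $D\Phi_{\tau(s)}(\dot\gamma(s))$), yielding the key bound
\[
|\dot{\tilde\gamma}(s)|_\d\;\le\;|D\Phi_{\tau(s)}(\dot\gamma(s))|_\d\;\le\;\exp\!\Big(\!-\!\int_0^{\tau(s)}\!\big(\ell(\Phi_r(\gamma(s)))-\eta\big)\,dr\Big)\,|\dot\gamma(s)|_\d,
\]
where the second inequality is the infinitesimal form of the EVI contraction in Theorem~\ref{ex-grd-flow}. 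Combining with $\tau(s)=V(\gamma(s))\cdot(1+O(\eta))$ and the continuity of $\ell$, the ratio of integrands of $L'(\gamma|_{[a,b]})$ and $L_\d(\tilde\gamma)$ is bounded below by $\exp\bigl(V(\gamma(s))\cdot[(\epsilon-\eta)+O(\eta\cdot\|\ell\|_\infty)]\bigr)$, which strictly exceeds $1$ on $(a,b)$ once $\eta$ is chosen small enough relative to $\epsilon$ and $\sup|\ell|$ on the compact region. The main obstacle is rendering the orthogonal projection and the infinitesimal contraction rigorous in the metric-measure framework: here the locally semiconvex gradient flow theory of Section~5.1 is decisive, implemented via a careful triangle-inequality argument matching the two flow times $\tau(s_1),\tau(s_2)$ together with the first-variation identity $\tau'(s)|\nabla V(\tilde\gamma(s))|^2=\langle\nabla V(\tilde\gamma(s)),D\Phi_{\tau(s)}\dot\gamma(s)\rangle$ obtained by differentiating $V(\tilde\gamma(s))\equiv0$. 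Once these estimates are assembled, the strict inequality $L'(\tilde\gamma)<L'(\gamma|_{[a,b]})$ contradicts the $\d'$-minimality of $\gamma$, so $\gamma\subset Y$ and $Y$ is locally geodesically convex in $(X,\d')$.
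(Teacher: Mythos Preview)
Your overall architecture---contradiction via curve shortening under the gradient flow of $V$---matches the paper, but there is a genuine gap at the step you yourself flag as ``the main obstacle.'' The first inequality in your key bound,
\[
|\dot{\tilde\gamma}(s)|_{\d}\;\le\;|D\Phi_{\tau(s)}(\dot\gamma(s))|_{\d},
\]
is Riemannian intuition (orthogonal projection shortens vectors) that does not follow from the tools you have in hand. In the metric setting you only have the triangle inequality: writing $\d(\tilde\gamma(s_1),\tilde\gamma(s_2))\le \d(\Phi_{\tau(s_1)}(\gamma(s_1)),\Phi_{\tau(s_1)}(\gamma(s_2)))+\d(\Phi_{\tau(s_1)}(\gamma(s_2)),\Phi_{\tau(s_2)}(\gamma(s_2)))$ yields in the limit $|\dot{\tilde\gamma}(s)|\le e^{-\int_0^{\tau(s)}(\ell-\eta)}|\dot\gamma(s)|+|\tau'(s)|\,|\nabla^- V|(\tilde\gamma(s))$, a \emph{sum}, not a Pythagorean bound. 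The extra radial term $|\tau'(s)|\,|\nabla^- V|$ is of the same order as $|\dot\gamma(s)|$ and cannot be absorbed; the first-variation identity you propose only says that the two contributions are ``orthogonal,'' which the triangle inequality does not exploit.

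The paper avoids this difficulty by reparametrizing the flow by \emph{height} rather than time: setting $\hat\Phi_r(x):=\Phi_{T_r(x)}(x)$ with $T_r(x)$ the first time $V$ drops to level $r$, both endpoints land on $\partial Y$ at the \emph{same} height parameter $r=0$, so no time-mismatch arises. The needed contraction $\d(\hat\Phi_0(x),\hat\Phi_0(y))\le e^{(\epsilon-\ell(z))(T_0(x)+T_0(y))/2}\,\d(x,y)$ is then quoted from \cite{lierl2018}, Lemma~2.13 (estimate~(10)); its proof uses the full $\EVI$ inequality with a \emph{fixed} observation point---where the potential term $V(\Phi_t(x))-V(y)$ provides exactly the additional contraction that compensates for the height difference---rather than merely the equal-time contraction of Theorem~\ref{ex-grd-flow}. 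The paper also localizes so that $\ell\approx\ell(z)$ is effectively constant on the small ball, which is what lets a constant-$\ell$ lemma from \cite{lierl2018} do the work. If you want to salvage your argument, you should replace the orthogonal-projection step by this height-parametrized $\EVI$ estimate.
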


\begin{remark} (i) The above Theorem provides a far reaching extension of our previous result in \cite{lierl2018} which covers the case of constant negative $\ell$. Now we also admit variable $\ell$ and $\ell$'s of arbitrary signs.

(ii) Note that in the case of positive $\ell$, the set $Y$ will already be convex in the old metric space $(X,\d)$ and it will be ``less convex'' in the new space $(X,\d')$.

(iii)   In the above Theorem, without restriction, we may put $V\equiv0$ in $Y$. Moreover, for both functions $V$ and $\ell$ it suffices that they exist as continuous functions on $D\setminus Y^0$ for some neighborhood $D$ of $Y$.
\end{remark}

\begin{proof} 
Let $\epsilon'>0$ be given and put $\d'=e^{(\epsilon'-\ell)V}\odot \d$.

(i)
In order to prove the local convexity of $Y$ in $(X,\d')$, let $z\in\partial Y$  be given and choose  $\epsilon>0$ sufficiently small (to be determined later). In any case, assume that $(\frac{1+\epsilon}{1-\epsilon})^2<2$. Choose $\delta>0$  such that
\begin{itemize}
\item $1-\epsilon\le |\nabla^- V|\le 1+\epsilon$ in $\B_\delta(z)\setminus Y$
\item $V$ is geodesically $(\ell(z)-\epsilon)$-convex in $\B_\delta(z)\setminus Y$
\item $|\ell(x)-\ell(z)|\le\epsilon$ for all $x\in \B_\delta(z)\setminus Y$.
\end{itemize}
Our proof of the local convexity of $Y$ will be based on a curve shortening argument under the gradient flow for $V$:
Assume that $(\gamma_a)_{a\in[0,1]}$ was a $\d'$-geodesic in $\B_{\delta/3}(z)$ with endpoints $\gamma_0,\gamma_1\in Y$ and $\gamma_a\not\in Y$ for some $a\in (0,1)$. Then we will construct a new curve $(\gamma^0_a)_{a\in[0,1]}$ with the same endpoints but which is shorter (w.r.t.~$\d'$) than the previous one --  which obviously contradicts the assumption.
For each $a\in[0,1]$, we consider the gradient flow curve $\big(\Phi_t(\gamma_a)\big)_{t\ge0}$ for $V$ starting in $\Phi_0(\gamma_a)=\gamma_a$ and we stop it as soon as the flow enters the set $Y$. Then we put
$\gamma^0_a:=\Phi_{T_0}(\gamma_a)$ where $T_0=\inf\{t\ge0: \Phi_t(\gamma_a)\in Y\}$.

\medskip

(ii) To get started, let us first summarize some key facts for the gradient flow for $V$, that is, for the solution to  $\dot x_t=-\nabla V(x_t)$ in the sense of \EVI-flows.
For $x\in \B_\delta(z)\setminus Y$, let $\big(\Phi_t(x)\big)_{t\in[0,\tau)}$ denote the  \EVI-gradient flow for $V$ starting in $x$ with maximal life time $\tau=\tau(x)$ in $\B_\delta(z)\setminus Y$.
Note that $V(x)\le(1+\epsilon)\delta$ for $x\in \B_\delta(z)\setminus Y$ and for a.e.~$t$,
$$\frac d{dt}\Phi(t,x)=-|\nabla^- V|^2(\Phi(t,x))\ge -(1+\epsilon)^2\quad\text{and}\quad \le -(1-\epsilon)^2.$$ 
We easily conclude that $\tau(x)<\infty$ for $x\in \B_{\delta/3}(z)$ and, since  $(\frac{1+\epsilon}{1-\epsilon})^2<2$, 
$$\Phi_{\tau(x)}(x)\in\partial Y\cap \B_\delta(z).$$
In particular, $\tau(x)=T_0(x)=\lim_{r\downarrow 0}T_r(x)$ with
$T_r(x):=\inf\{t\ge0: V(\Phi_t(x))\le r\}$ for $r\ge0$.

\medskip

(iii) To proceed, it is more convenient to parametrize the flow not by time (as we did before) but by ``height'', measured by the value of $V$.
That is, for $r\ge0$ we put $\hat\Phi_r(x)=\Phi_{T_r(x)}(x)$
with $T_r(x)$ as above. Moreover, for $x\in Y$ we put $T_r(x):=0$ and $\hat\Phi_r(x):=x$ for all $r\ge0$.

 The $(\ell(z)-\epsilon)$-convexity of $V$ implies 
 \begin{eqnarray}\d(\hat\Phi_r(x),\hat\Phi_r(y))\le e^{(\epsilon-\ell(x))(T_r(x)+T_r(y))/2}\cdot \d(x,y),
 \end{eqnarray}
 see 
 \cite{lierl2018}, 
 Lemma 2.13 (or, more precisely, estimate (10) in the proof of it).
\medskip

(iv) 
 Given any rectifiable curve $(\gamma_a)_{a\in[0,1]}$ in $\B_{\delta/3}(x)$ let
 $(\gamma_a^0)_{a\in[0,1]}$ be the curve in $\B_\delta(x)\setminus Y^0$ defined by $\gamma_a^0=\hat\Phi_0(\gamma_a)$. Then
\begin{eqnarray*} |\dot\gamma_a^0|&\le& e^{(\epsilon-\ell(z))T_0(\gamma_a)}\cdot |\dot\gamma_a|\\
&\le& e^{(2\epsilon-\ell(\gamma_a))(1\pm\epsilon)^2V(\gamma_a)}\cdot |\dot\gamma_a|\\
&\le& e^{(\epsilon'/2-\ell(\gamma_a))V(\gamma_a)}\cdot |\dot\gamma_a|.
\end{eqnarray*}
Indeed, for every $\epsilon'>0$ one can choose $\epsilon>0$ such that 
$(\epsilon'/2-\ell(x)) \ge (2\epsilon-\ell(x))(1\pm\epsilon)^2$
for all $x\in \B_\delta(x)\setminus Y$. Here and above, the sign in the expression $(1\pm\epsilon)^2$ has to be chosen according to the sign of $(2\epsilon-\ell(x))$.

  Measuring the speed of the curves now in the metric $d'=e^{(\epsilon'-\ell)V}\odot d$, the previous estimate yields
 \begin{eqnarray*} |\dot\gamma_a^0|'&\le& |\dot\gamma_a|'
\end{eqnarray*}
and, 
moreover, 
$ |\dot\gamma_a^0|'< |\dot\gamma_a|'$ whenever $\gamma_a\not\in Y$ and $|\dot\gamma_a|'\not=0$.
This proves the claim.
\end{proof}

In the previous Theorem, we used the gradient flow w.r.t.~a function $V$ (which shares basic properties with the distance function $\d(\,.\,,\partial Y)$)
as a path-shortening flow on the exterior of $Y$ in order to prove that the \emph{closed set} $Y$ is locally geodesically convex w.r.t.~the new metric $\d'$.

To make a given \emph{open set} $Z\subset X$ locally geodesically convex w.r.t.~a new metric $\d'$, we will proceed in a complementary way: 
we will use the gradient flow w.r.t.~a function $V$ which shares basic properties with the negative distance function $-\d(\,.\,,\partial Z)$
as a path-shortening flow in the interior of $Z$.
This is the content of the 
 Second Convexification Theorem.

\begin{thm}\label{2ndConvThm} Let $V,\ell:X\to\R$ be continuous functions and assume that for each 
$\epsilon>0$ there exists $\delta>0$
 such that 
\begin{itemize}
\item $1-\epsilon\le |\nabla^- V|\le 1+\epsilon$ on the set $\{-\delta<V<0\}$ 
\item $V$ is  $(\ell-\epsilon)$-convex on the set $\{-\delta<V<0\}$.
\end{itemize}
Then for every $\epsilon>0$, the open set $Z:=\{V<0\}$ is locally  geodesically convex in $(X,\d')$ for $\d'=e^{(\epsilon-\ell)\cdot V}\odot \d$.
\end{thm}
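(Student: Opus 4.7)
The plan is to mirror the proof of Theorem~\ref{1stConvThm}, now using the downward $\EVI$-gradient flow of $V$ on the \emph{inner} collar $\{-\delta<V<0\}\subset Z$ as the path-shortening device (in place of the flow on the outer collar used there). The underlying geometric fact is that the conformal factor $e^{(\epsilon'-\ell)V}$ is $\le 1$ strictly inside $Z$ and $\ge 1$ on $\{V\ge 0\}$ in the nontrivial regime $\ell<\epsilon'$, so pushing a curve deeper into $Z$ is a strict $\d'$-saving, while any excursion of $\gamma$ out of $Z$ is strictly penalised.

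As in Theorem~\ref{1stConvThm}, one reduces to a boundary point $z\in\partial Z=\{V=0\}$. Given the parameter $\epsilon'>0$ in the conclusion, I would choose an auxiliary $\epsilon\in(0,\epsilon'/4)$ and take $\delta=\delta(\epsilon)$ from the hypothesis, shrunk further so that $|\ell-\ell(z)|\le\epsilon$ on $\B_\delta(z)$ and $((1+\epsilon)/(1-\epsilon))^2<2$. Assume, toward contradiction, that $(\gamma_a)_{a\in[0,1]}$ is a $\d'$-geodesic in $\B'_{\delta/3}(z)$ with $\gamma_0,\gamma_1\in Z$ and some $\gamma_{a^*}\notin Z$. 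Theorem~\ref{ex-grd-flow} supplies the $\EVI_{\ell-\epsilon}$-flow $(\Phi_t)$ of $V$ on the open collar $\{-\delta<V<0\}$, which decreases $V$ at a rate in $[(1-\epsilon)^2,(1+\epsilon)^2]$ and enjoys the contraction
\begin{equation*}
\d(\Phi_t x,\Phi_t y)\le e^{(\epsilon-\ell(z))t}\,\d(x,y);
\end{equation*}
by this Lipschitz bound $\Phi_t$ extends continuously to the closure, in particular to $\partial Z$. For small $\eta>0$, let $\hat\Phi_{-\eta}(x)$ denote the flow stopped when $V$ first reaches $-\eta$ (and the identity when $V(x)\le-\eta$).

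Following steps (iii)--(iv) of the proof of Theorem~\ref{1stConvThm}, for each $a$ with $V(\gamma_a)\in(-\eta,0]$ I set $\tilde\gamma_a:=\hat\Phi_{-\eta}(\gamma_a)$; the contraction together with the flow-time bound $T(\gamma_a)\le(V(\gamma_a)+\eta)/(1-\epsilon)^2$ yields $|\dot{\tilde\gamma}_a|\le e^{(\epsilon-\ell(z))T(\gamma_a)}|\dot\gamma_a|$, and a routine substitution--identical to that in Theorem~\ref{1stConvThm}, using $V(\tilde\gamma_a)=-\eta$ and $|\ell-\ell(z)|\le\epsilon$--produces the dual key ratio
\begin{equation*}
\frac{|\dot{\tilde\gamma}_a|'}{|\dot\gamma_a|'}\le e^{-(\epsilon'/2)\,(V(\gamma_a)+\eta)},
\end{equation*}
which is strictly $<1$ whenever $V(\gamma_a)>-\eta$. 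For points with $V(\gamma_a)\le-\eta$ the flow is the identity and the $\d'$-speeds match.

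The main obstacle--the only substantively new point compared to Theorem~\ref{1stConvThm}--is handling the portion of $\gamma$ \emph{outside} $Z$, where $V\ge 0$ and the hypothesis supplies no flow. I would isolate each maximal sub-interval $[a_0,a_1]\subset[0,1]$ on which $V(\gamma_a)\ge 0$ (necessarily with $V(\gamma_{a_0})=V(\gamma_{a_1})=0$), and replace $\gamma|_{[a_0,a_1]}$ by the concatenation of (a) the downward flow line from $\gamma_{a_0}$ to $\hat\Phi_{-\eta}(\gamma_{a_0})$, (b) a $\d$-geodesic in $X$ joining $\hat\Phi_{-\eta}(\gamma_{a_0})$ and $\hat\Phi_{-\eta}(\gamma_{a_1})$, and (c) the reverse flow line back up to $\gamma_{a_1}$. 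The $\d$-length of (b) is bounded by $e^{(\epsilon-\ell(z))\eta}\d(\gamma_{a_0},\gamma_{a_1})$ via the flow contraction, and its $\d'$-length is damped by the conformal factor $e^{(\epsilon'-\ell)(-\eta)}\approx e^{-(\epsilon'-\ell(z))\eta}<1$ at depth $-\eta$; meanwhile $L_{\d'}(\gamma|_{[a_0,a_1]})\ge L_{\d}(\gamma|_{[a_0,a_1]})\ge \d(\gamma_{a_0},\gamma_{a_1})$ since the factor is $\ge 1$ on $\{V\ge 0\}$. The delicate part is the length bookkeeping: one must choose $\eta$ small and adapted to the excursion (for instance proportional to $\max_{[a_0,a_1]}V(\gamma_a)$) so that the $O(\eta)$ detour cost of the inserted vertical segments (a) and (c) is dominated by the conformal savings from both the new arc (b) and the old exterior arc. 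Combining the strict pointwise ratio on the interior portion with this exterior gain on each sub-interval $[a_0,a_1]$ yields $L_{\d'}(\tilde\gamma)<L_{\d'}(\gamma)$, contradicting the minimality of $\gamma$.
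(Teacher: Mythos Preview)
Your handling of the exterior excursions has a genuine gap, and you are missing a much simpler reduction.

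The paper does not rerun the curve-shortening argument at points of $\partial Z$. Instead, it applies the \emph{proof} of Theorem~\ref{1stConvThm} to each closed sublevel set $Z_r:=\{V\le -r\}$ for $r\in(0,\delta]$, with the role of the exterior collar $D_\epsilon\setminus Y$ now played by $\{-r<V<0\}\subset\{-\delta<V<0\}$. For $z\in\partial Z_r$ one has $V(z)=-r<0$, so a small enough ball around $z$ lies entirely in $\{-\delta<V<0\}$; the hypotheses of the present theorem then supply everything the curve-shortening argument needs, and the flow never leaves $Z$. Inspecting that proof shows that the size of the covering balls depends only on the local oscillation of $\ell$ and on the fixed $\epsilon$, not on $r$; hence one open covering $\{U_i\}$ of $X$ witnesses the local geodesic convexity of every $Z_r$ simultaneously in $(X,\d')$. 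Given any $\d'$-geodesic with endpoints in $Z\cap U_i$, those endpoints lie in $Z_r\cap U_i$ for some small $r>0$, so the geodesic stays in $Z_r\subset Z$.

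Your argument, by contrast, is anchored at $z\in\partial Z$ and must therefore treat the portion of the geodesic in $\{V\ge 0\}$, where the hypotheses give no gradient bound, no convexity, and no flow. Two concrete failures:
\begin{itemize}
\item The inequality $e^{(\epsilon'-\ell)V}\ge 1$ on $\{V\ge 0\}$, which you use to get $L_{\d'}(\gamma|_{[a_0,a_1]})\ge L_\d(\gamma|_{[a_0,a_1]})$, requires $\ell\le\epsilon'$ there. Nothing forbids $\ell(z)>\epsilon'$ (indeed this is forced when $\epsilon'$ is small and $\ell(z)>0$), and then the exterior excursion is \emph{rewarded}, not penalised, by the conformal factor.
\item The replacement arc~(b), a $\d$-geodesic between the two flowed endpoints at depth $-\eta$, has no reason to stay at depth $-\eta$ or even inside $Z$: the assumed convexity of $V$ lives only on the thin collar $\{-\delta<V<0\}$, not on all of $Z$ or on $X$. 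Without this, the damping factor $e^{(\epsilon'-\ell)(-\eta)}$ you attach to~(b) is unjustified, and the bookkeeping cannot be closed.
\end{itemize}
The paper's device --- shifting attention from $\partial Z$ to $\partial Z_r$ --- eliminates both problems at once, because the shortening flow then operates entirely within $\{-\delta<V<0\}$.
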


\begin{proof} Given $\epsilon>0$, we choose $\delta>0$ as above. Then for each $r\in(0,\delta]$, we can apply the First Convexification Theorem \ref{1stConvThm}  with $V$ as above, with  the closed set $Z_r:=\{V\le-r\}$ in the place of $Y$, and with  $D_\epsilon:=Z\setminus Z_r$. This yields that the set $Z_r$ is locally geodesically convex w.r.t.~the metric $\d':=e^{(\epsilon-\ell)V}\odot \d$. Having a closer look on the proof of Theorem  \ref{1stConvThm}, we see that we can choose an open covering 
$\bigcup_{i\in I}U_i\supset X$, independently of $r$, 
such that  every $\d'$-geodesic $(\gamma_s)_{s\in[0,1]}$ in $X$ completely lies in $Z_r$ -- and thus in particular in $Z$ -- provided $\gamma_0,\gamma_1\in Z_r\cap U_i$ for some $i\in I$. This proves the claim: every $\d'$-geodesic $(\gamma_s)_{s\in[0,1]}$ completely lies in $Z$  provided $\gamma_0,\gamma_1\in  Z\cap U_i$ for  some $i\in I$.
\end{proof}

 \subsection{Bounds for the Curvature of the Boundary}

The canonical choice for the function $V$ in both of the previous Convexification Theorems is  the signed distance function 
 $V=\d(\,.\,,Y)-\d(\,.\,,X\setminus Y)$ (or suitable truncated and/or smoothened modifications of it). In the Riemannian setting, a lower bound for the Hessian of this function has a fundamental geometric meaning: it is a lower bound of the fundamental form of the boundary. In the abstract setting, this observation will  provide a synthetic definition for the variable lower bound curvature of the boundary.

\begin{Def} \label{def-curv}Let a closed set $Y\subset X$  and a continuous function $\ell:X\to\R$ be given, and put $V(x):=\d(x,Y)-\d(x,X\setminus Y)$. 
We say that $Y$ is \emph{locally $\ell$-convex} or that \emph{$\ell$ is a
 lower bound for the curvature  of $\partial Y$} if for every $\epsilon>0$ there exist an open covering $\bigcup_{i\in I}U_i\supset \partial Y$ and  continuous functions $V_i:U_i\to\R$ for $i\in I$ such that 
 \begin{itemize}
 \item 
  $(1-\epsilon)\, V\le V_i\le (1+\epsilon)\,V$ on $U_i$ 
\item $1-\epsilon\le |\nabla^- V_i|\le 1+\epsilon$ on $U_i$ 
\item $V_i$ is  $(\ell-\epsilon)$-convex on $U_i$.
\end{itemize}
We say that  $Y$ is \emph{locally $\ell$-convex from outside} (or that  $Y^0$ is \emph{locally $\ell$-convex from inside)} if the three latter properties are 
merely requested on $U_i\setminus Y$ (or on $U_i\cap Y^0$, resp.) instead of being requested on $U_i$.
\end{Def}

%
%
%
%
%
%
%

\begin{remark}
The previous approach does not only allow us to define \emph{lower bounds} for the curvature of the boundary (interpreted as lower bounds for the ``second fundamental form of the boundary'') but also to define the \emph{second fundamental form} $\text{I\!I}_{\partial Y}$ itself as well as the \emph{mean curvature} $\rho_{\partial Y}$: the former as the Hessian (restricted to vectors orthogonal to $\nabla V$) and the latter as the Laplacian of the signed distance function $V=\d(\,.\,,Y)-\d(\,.\,,X\setminus Y)$. That is,
$$\text{I\!I}_{\partial Y}(f,f):=H_V(f,f)=\Gamma(f,\Gamma(V,f))-\frac12\Gamma(v,\Gamma(f^2))$$
 provided  $V\in\Dom_\loc(\Delta)$ and $\Gamma(f,V)=0$, and
$$\rho_{\partial Y}:=\Delta V.$$
\end{remark}

This concept of curvature bounds for the boundary has been introduced in 
\cite{lierl2018}, restricted there to the case of constant, nonpositive $\ell$. As already observed there, the two most important classes of examples are Riemannian manifolds and Alexandrov spaces. In these cases, if not explicitly specified otherwise, $\mm$ always will denote the corresponding $n$-dimensional Riemannian volume measure ${\mathcal L}^n$ or the $n$-dimensional Hausdorff measure ${\mathcal H}^n$. The proof of the next result is literally as in \cite{lierl2018}.

\begin{Prop} Let $X$ be a Riemannian manifold, $Y$ a closed subset of $X$ with smooth boundary. Then $\ell$ is a lower bound (or interior lower bound or exterior lower bound, resp.) for the curvature  of  $\partial Y$
if and only if the real-valued second fundamental form of $\partial Y$ satisfies $$\text{I\!I}_{\partial Y}\ge 
\ell.$$
\end{Prop}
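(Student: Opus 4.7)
The plan is to realise the candidate functions $V_i$ of Definition~\ref{def-curv} by the signed distance function $V=\d(\,\cdot\,,Y)-\d(\,\cdot\,,X\setminus Y)$ itself and to exploit two classical facts from Riemannian geometry. Since $\partial Y$ is smooth, $V$ is smooth on some tubular neighbourhood $U$ of $\partial Y$ and satisfies $|\nabla V|\equiv 1$ on $U$, so the descending-slope requirement in Definition~\ref{def-curv} holds with equality. Moreover, at any $x_0\in\partial Y$ the gradient $\nabla V(x_0)$ is the outward unit normal, and one has the Hessian identification $\mathrm{Hess}\,V(x_0)\big|_{T_{x_0}\partial Y\times T_{x_0}\partial Y}=\text{I\!I}_{\partial Y}(x_0)$, while $\mathrm{Hess}\,V(\nabla V,\cdot)\equiv 0$ (by differentiating $|\nabla V|^2\equiv 1$).

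For the forward implication, assume $\text{I\!I}_{\partial Y}\ge\ell$. Along any normal geodesic $t\mapsto\exp_{x_0}(tn)$ emanating from $x_0\in\partial Y$, the shape operator $S(t)$ of the parallel hypersurface $V^{-1}(t)$ obeys a Riccati equation $S'(t)=-S(t)^{2}-R(t)$ with a bounded sectional-curvature term $R(t)$. Combined with continuity of $\ell$, this ensures that for every $\epsilon>0$ one has the bilinear-form inequality $\mathrm{Hess}\,V\ge(\ell-\epsilon)\,g$ on a sufficiently thin tubular neighbourhood $U_{\epsilon}$ of $\partial Y$. For smooth functions the bilinear bound is equivalent to $(\ell-\epsilon)$-convexity along geodesics in $X$, so covering $\partial Y$ by such neighbourhoods and taking $V_i:=V$ witnesses Definition~\ref{def-curv}.

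For the converse, fix $x_0\in\partial Y$ and a unit vector $v\in T_{x_0}\partial Y$, and let $\gamma$ be the unit-speed $X$-geodesic with $\gamma(0)=x_0$, $\dot\gamma(0)=v$. The Hessian identification yields $V(\gamma(s))=\tfrac12\,\text{I\!I}(v,v)\,s^{2}+O(s^{3})$. Given $\epsilon>0$, pick a corresponding $U_i\ni x_0$ and $V_i$; from $V(x_0)=0$ the pinching forces $V_i(x_0)=0$. Applying the $(\ell-\epsilon)$-convexity of $V_i$ at the midpoint $r=1/2$ to the segment $\gamma|_{[-s_{0},s_{0}]}$ of length $2s_{0}$, using $\int_0^1 g(1/2,s)\,ds=1/8$ and continuity of $\ell$ near $x_{0}$, gives
\[
(\ell(x_{0})-2\epsilon)\,s_{0}^{2}\le V_i(\gamma(-s_{0}))+V_i(\gamma(s_{0}))+o(s_{0}^{2}).
\]
The pinching $(1-\epsilon)V\le V_i\le(1+\epsilon)V$ forces $V_i$ and $V$ to have the same sign with $|V_i|\le(1+\epsilon)|V|$, so the right-hand side is majorised by $(1+\epsilon)\,\text{I\!I}(v,v)\,s_{0}^{2}+O(s_{0}^{3})$ when $\text{I\!I}(v,v)\ge 0$, and by $(1-\epsilon)\,\text{I\!I}(v,v)\,s_{0}^{2}+O(s_{0}^{3})$ when $\text{I\!I}(v,v)\le 0$. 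Dividing by $s_{0}^{2}$, letting $s_{0}\to 0$ and then $\epsilon\to 0$ produces $\ell(x_{0})\le\text{I\!I}(v,v)$, which is the desired bound.

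The ``from inside'' and ``from outside'' variants follow the same scheme, now working with a portion of $\gamma$ that lies in $Y^{0}$ or in $X\setminus Y$ respectively (for instance by choosing $v$ with $\text{I\!I}(v,v)$ of the appropriate sign, or by shifting $\gamma$ slightly in the normal direction and passing to a limit); since $V$ has a definite sign there, the pinching step simplifies. The main obstacle is the converse direction: one must extract an infinitesimal, two-sided second-order bound on $V$ from the merely continuous and weakly $(\ell-\epsilon)$-convex $V_i$, using only the pinching and the slope normalisation, while tracking signs carefully when $\text{I\!I}(v,v)$ changes sign across directions in $T_{x_0}\partial Y$.
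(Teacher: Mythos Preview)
The paper itself does not give a proof but refers to \cite{lierl2018}, so the comparison is with the approach there (which, in the constant nonpositive case treated in \cite{lierl2018}, simply takes $V_i=V$). Your converse direction is essentially correct: the midpoint argument together with the pinching $|V_i-V|\le\epsilon|V|$ and the second-order expansion $V(\gamma(s))=\tfrac12\,\text{I\!I}(v,v)s^2+O(s^3)$ indeed yields $\ell(x_0)\le\text{I\!I}(v,v)$ after sending $s_0\to 0$ and then $\epsilon\to 0$.

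The forward direction, however, has a genuine gap. You claim that $\text{I\!I}_{\partial Y}\ge\ell$ together with the Riccati equation yields $\mathrm{Hess}\,V\ge(\ell-\epsilon)\,g$ on a thin tube around $\partial Y$. This is false whenever $\ell>0$: differentiating $|\nabla V|^2\equiv 1$ gives $\mathrm{Hess}\,V(\nabla V,\cdot)\equiv 0$, so along any normal geodesic $V$ is affine and its Hessian in the normal direction vanishes identically, violating $\mathrm{Hess}\,V\ge(\ell-\epsilon)g$ as soon as $\ell(x)>\epsilon$. The Riccati equation only controls $\mathrm{Hess}\,V$ on the tangential directions. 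Hence $V_i=V$ cannot witness Definition~\ref{def-curv} in the positive-curvature regime (this is exactly why Lemmas \ref{5.12}--5.13 do \emph{not} use the signed distance but rather functions of the type $\pm\tfrac{1}{2r}(r^2-\d^2(\cdot,z))$). The fix is simple: on a small $U_i$ take $V_i:=V+\tfrac{c_i}{2}V^2$ with a local constant $c_i>\sup_{U_i}\ell$. Then at $x_0\in\partial Y$ one has $\mathrm{Hess}\,V_i=\mathrm{Hess}\,V+c_i\,\nabla V\otimes\nabla V$, whence for $w=a\,\nabla V+u$ with $u\in T_{x_0}\partial Y$,
\[
\mathrm{Hess}\,V_i(w,w)=\text{I\!I}(u,u)+c_i\,a^2\ \ge\ \ell(x_0)|u|^2+c_i\,a^2\ \ge\ \ell(x_0)\,|w|^2,
\]
and by continuity $\mathrm{Hess}\,V_i\ge(\ell-\epsilon)g$ on a sufficiently thin $U_i$. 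The pinching and slope conditions follow from $V_i/V=1+\tfrac{c_i}{2}V$ and $|\nabla V_i|=|1+c_iV|$, both of which are $\epsilon$-close to $1$ for $|V|$ small. With this modification your argument goes through.
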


\begin{lma}\label{5.12} Let $(X,\d)$ be an Alexandrov space with generalized sectional curvature $\ge K$.
Put $\rho_K=\frac\pi{2\sqrt{K}}$ if $K>0$ and $\rho_K=\infty$ else. Moreover, given $z\in\X$ put
$\rho(z):=\sup\{r\ge0: |\nabla^+\d(.,z)|=1 \text{ in }\B_r(z)\}$. Then for each $r\in (0,\rho_K\wedge\rho(z))$, the curvature of the boundary of 
$Y:=X\setminus \B_r(z)$
 is  bounded from below by
\begin{equation}\label{ell-unten}\ell=-\cot_K(r):=\left\{
\begin{array}{ll}
-\frac1{r}, &\mbox{if }K = 0\\
-\sqrt K\,\cot(\sqrt{K} r), &\mbox{if }K>0\\
-\sqrt{-K}\,\coth(\sqrt{-K} r), &\mbox{if }K<0.\\
\end{array}\right.
\end{equation}
\end{lma}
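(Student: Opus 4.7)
The plan is to apply Definition~\ref{def-curv} with the ansatz $V_i \equiv V$ on each chart, where $V(x) := r - \d(x, z)$ is the natural candidate for the signed distance to the sphere $\partial \B_r(z) = \partial Y$. The hypothesis $r < \rho(z)$ guarantees that radial geodesics from $z$ realize distances out to radius $\rho(z)$, so this $V$ coincides with the signed distance function $\d(\cdot, Y) - \d(\cdot, X\setminus Y)$ on a full neighborhood of $\partial \B_r(z)$; consequently the comparison $(1-\epsilon) V \le V_i \le (1+\epsilon) V$ is automatic. For the slope condition, the identity $V(y) - V(x) = \d(x,z) - \d(y,z)$ gives $|\nabla^- V|(x) = |\nabla^+ \d(\cdot,z)|(x)$, which equals $1$ throughout $\B_{\rho(z)}(z)$ by the very definition of $\rho(z)$, so the two slope inequalities hold even without the $\epsilon$ margin.

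The heart of the proof is the convexity bound. I would invoke the Alexandrov--Toponogov hinge comparison: in an Alexandrov space of generalized sectional curvature $\ge K$, for every unit-speed geodesic $\gamma$ staying in $\{0 < \d(\cdot,z) < \rho_K\}$, the function $f(t) := \d(\gamma_t, z)$ is a barrier-sense supersolution of the model ODE, namely
$$f''(t) \le \cot_K\!\bigl(f(t)\bigr) \cdot \bigl(1 - f'(t)^2\bigr),$$
equivalently $(\cos_K\!\circ\! f)'' + K\,(\cos_K\!\circ\! f) \ge 0$ in the distributional sense. Since $\cot_K > 0$ on $(0,\rho_K)$ for each admissible $K$ and $1 - f'(t)^2 \in [0,1]$, this yields $V''(t) = -f''(t) \ge -\cot_K(f(t))$ along unit-speed geodesics.

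To promote this pointwise Hessian bound to the $(\ell-\epsilon)$-convexity on a chart $U_i$ with $\ell = -\cot_K(r)$, I would argue by continuity. Given $\epsilon > 0$, choose $\delta > 0$ so small that $\cot_K(s) \le \cot_K(r) + \epsilon$ for $|s-r| < \delta$ (possible since $\cot_K$ is continuous and strictly decreasing on $(0,\rho_K)$), and cover $\partial \B_r(z)$ by open balls $U_i$ small enough that every geodesic with both endpoints in $U_i$ remains inside the annulus $\{r-\delta < \d(\cdot,z) < r+\delta\}$. On each such $U_i$ the previous step gives $V''(t) \ge -\cot_K(r) - \epsilon = \ell - \epsilon$ along unit-speed geodesics, which is precisely the $(\ell-\epsilon)$-convexity demanded by Definition~\ref{def-curv}. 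Combining the three ingredients $(V_i = V)$, $|\nabla^- V_i| = 1$, and $V_i$ is $(\ell-\epsilon)$-convex on $U_i$ verifies that $\ell = -\cot_K(r)$ is a lower bound for the curvature of $\partial Y$.

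The main obstacle is the rigorous barrier-sense formulation of the Toponogov inequality $f'' \le \cot_K(f)(1 - (f')^2)$ in the abstract Alexandrov setting, since $\gamma$ need not be $C^2$ and $f$ is only semiconcave. This is classical (going back to Burago--Gromov--Perelman): at each time $t$ one constructs a smooth upper barrier by running the model hinge triangle with one side along $\gamma$ and vertex $z$, then controls $f(t+h) + f(t-h) - 2 f(t)$ via the law of cosines in the model and lets $h \to 0$. The restriction $r < \rho_K$ is exactly what guarantees that the comparison hinge triangles in the model space of curvature $K$ actually exist.
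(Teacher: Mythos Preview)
Your argument is correct and rests on the same Alexandrov Hessian/Toponogov comparison as the paper, but you package it differently. The paper does \emph{not} use the raw signed distance $r-\d(\cdot,z)$ as its $V_i$. Instead it takes the ``rectified'' test function
\[
V_{r,z}(x)=\frac{1}{\sin_K r}\bigl(\cos_K r-\cos_K\d(x,z)\bigr)
\]
(with the obvious interpretation for $K=0$). The point of this choice is that the standard comparison inequalities $\nabla^2\d^2\le 2$, $\nabla^2\cos(\sqrt{K}\d)\ge -K\cos(\sqrt{K}\d)$, $\nabla^2\cosh(\sqrt{-K}\d)\le(-K)\cosh(\sqrt{-K}\d)$ translate \emph{directly} into $\nabla^2 V_{r,z}\ge -\cot_K(\d)$, so one reads off the bound at $\d=r$ without ever writing the nonlinear ODE $f''\le\cot_K(f)(1-(f')^2)$. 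The price is that $V_{r,z}$ only \emph{approximately} equals the signed distance near $\partial Y$, so the first item of Definition~\ref{def-curv} needs the $\epsilon$-margin; your choice $V_i=r-\d(\cdot,z)$ makes that item exact but forces you to carry the nonlinear comparison and then discard the factor $1-(f')^2$ by hand. Both routes end with the same continuity step $\cot_K(f)\le\cot_K(r)+\epsilon$ on a thin annulus. Your version is arguably more elementary; the paper's version has the advantage that the same functions $V_{r,z}$ are reused verbatim in the subsequent ``exterior ball condition'' proposition, where one takes a supremum over centers $z$ and needs a uniform Hessian lower bound rather than a bound that degenerates with $(f')^2$.
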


\begin{proof}
Given  $z\in X$ and $r\in (0,\rho_K)$, put 
\begin{equation}\label{Vrz}V(x):=V_{r,z}(x):=\left\{
\begin{array}{ll}
\frac1{2 r}\Big(r^2-\d^2(x,z)\Big), &\mbox{if }K = 0\\
\frac1{\sqrt{K}\,\sin\big(\sqrt{K} r\big)}\Big(
\cos\big(\sqrt{K} \d(x,z)\big)-\cos\big(\sqrt{K} r\big)
, &\mbox{if }K>0\\
\frac1{\sqrt{-K}\,\sinh\big(\sqrt{-K} r\big)}\Big(
\cosh\big(\sqrt{-K} r-\cosh\big(\sqrt{-K} \d(x,z)\big)\big)
, &\mbox{if }K<0.
\end{array}\right.
\end{equation}
Then obviously $\{V\le0\}=Y$ and $|\nabla^- V|=1$ on $\partial Y$ (and close to 1 in a neighborhood of $\partial Y$). Moreover, by comparison results for Hessians of distance functions in Alexandrov spaces
$$
\begin{array}{ll}D^2_\Geo\,\d^2(x,z)\le 2
, &\mbox{if }K = 0\\
D^2_\Geo \cos\big(\sqrt{K} \d(x,z)\big)
\ge
-K\,\cos\big(\sqrt{K} \d(x,z)\big)
, &\mbox{if }K>0\\
D^2_\Geo \cosh\big(\sqrt{-K} \d(x,z)\big)
\le
(-K)\,\cosh\big(\sqrt{-K} \d(x,z)\big)
, &\mbox{if }K<0.
\end{array}
$$
Thus
$D^2_\Geo V\ge -\cot_K(r)$ on  $\partial Y$  (and $\ge -\cot_K(r)-\epsilon$ in a neighborhood of $\partial Y$). 
This proves the claim.
\end{proof}

\begin{lma} Let $(X,\d)$ be an CAT space with generalized sectional curvature $\le L$.
Put $\rho_L=\frac\pi{2\sqrt{L}}$ if $L>0$ and $\rho_L=\infty$ else. Then for each $r\in (0,\rho_L)$, the curvature of the boundary of $Y:= \B_r(z)$ 
  is bounded from below by
\begin{equation}\label{ell-oben}\ell=\cot_L(r):=\left\{
\begin{array}{ll}
\frac1{r}, &\mbox{if }L = 0\\
\sqrt L\,\cot(\sqrt{L} r), &\mbox{if }L>0\\
\sqrt{-L}\,\coth(\sqrt{-L} r), &\mbox{if }L<0.\\
\end{array}\right.
\end{equation}
\end{lma}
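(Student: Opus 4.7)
The plan is to parallel the proof of Lemma \ref{5.12}, exchanging the Alexandrov Hessian comparison for its CAT analog and flipping signs so that the roles of $B_r(z)$ and its complement are swapped. To verify Definition \ref{def-curv} for $Y = B_r(z)$, on a neighborhood $U_i$ of a given boundary point I would use the comparison function
\begin{equation*}
V_i(x) := \begin{cases}
\dfrac{1}{2r}\bigl(\d^2(x,z) - r^2\bigr), & L = 0,\\[4pt]
\dfrac{1}{\sqrt{L}\,\sin(\sqrt{L}\,r)}\bigl(\cos(\sqrt{L}\,r) - \cos(\sqrt{L}\,\d(x,z))\bigr), & L > 0,\\[4pt]
\dfrac{1}{\sqrt{-L}\,\sinh(\sqrt{-L}\,r)}\bigl(\cosh(\sqrt{-L}\,\d(x,z)) - \cosh(\sqrt{-L}\,r)\bigr), & L < 0,
\end{cases}
\end{equation*}
which is exactly $-V_{r,z}$ from \eqref{Vrz}. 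In each case $V_i$ vanishes on $\partial Y$ and has the same sign as $\d(\cdot,z) - r$; a first-order Taylor expansion at $\d(x,z) = r$ yields $V_i(x) = \bigl(\d(x,z)-r\bigr)\bigl(1 + O(\d(x,z)-r)\bigr)$, so shrinking $U_i$ secures the pinching $(1-\epsilon)V \le V_i \le (1+\epsilon)V$ against the signed distance $V = \d(\cdot,z) - r$.

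The slope condition is a direct computation: $\nabla V_i = c_L\bigl(\d(\cdot,z)\bigr)\,\nabla \d(\cdot,z)$ with the outer factor satisfying $c_L(r) = 1$ by construction. Since geodesics emanating from $z$ are locally unique in a CAT space, $|\nabla^{\pm}\d(\cdot,z)| = 1$ away from $z$, and continuity of $c_L$ forces $|\nabla^- V_i| \in [1-\epsilon, 1+\epsilon]$ on $U_i$. The core step is the convexity, which I would derive from the CAT($L$) Hessian comparison for the radial function $\d(\cdot, z)$; in the three regimes this yields
\begin{equation*}
\nabla^2 \d^2(\cdot,z) \ge 2, \quad \nabla^2 \cos(\sqrt{L}\,\d(\cdot,z)) \le -L\cos(\sqrt{L}\,\d(\cdot,z)), \quad \nabla^2 \cosh(\sqrt{-L}\,\d(\cdot,z)) \ge -L\cosh(\sqrt{-L}\,\d(\cdot,z)),
\end{equation*}
i.e.\ precisely the inequalities displayed in the proof of Lemma \ref{5.12} with their signs reversed. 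Dividing by the positive normalizing constants in the definition of $V_i$ (the extra sign flip $V_i = -V_{r,z}$ compensating for the reversed inequality in the $L > 0$ case) produces $\nabla^2 V_i \ge \cot_L(r)$ on $\partial Y$, so that $V_i$ is $(\cot_L(r) - \epsilon)$-convex on $U_i$ after possibly shrinking once more.

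With the three items of Definition \ref{def-curv} thus verified, $\ell = \cot_L(r)$ is a lower bound for the curvature of $\partial Y$. The main subtlety is phrasing the Hessian comparison abstractly, without a smooth Riemannian structure, but this is standard in CAT geometry: one recasts ``$\nabla^2 f \ge c\, g$'' as geodesic $c$-convexity of $f$ along every minimizing geodesic, and applies Reshetnyak majorization by triangles in the model space $M^2_L$ to transfer the known radial-comparison inequalities from there to the given CAT($L$) space.
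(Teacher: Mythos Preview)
Your proof is correct and follows essentially the same approach as the paper: the comparison function $V_i = -V_{r,z}$ you use is exactly the paper's choice of $V$, and your CAT Hessian inequalities are the same three displayed there. If anything, you are more thorough than the paper in explicitly checking the pinching $(1-\epsilon)V \le V_i \le (1+\epsilon)V$ and the slope bound from Definition~\ref{def-curv}, and in commenting on how the Hessian comparison is to be interpreted synthetically; the paper simply records $\{V\le0\}=Y$, $|\nabla V|=1$ on $\partial Y$, and the three comparison inequalities without further elaboration.
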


\begin{proof}
Given  $z\in X$ and $r\in (0,\rho_L)$, put 
\begin{equation*}V(x):=\left\{
\begin{array}{ll}
-\frac1{2 r}\Big(r^2-\d^2(x,z)\Big), &\mbox{if }L = 0\\
-\frac1{\sqrt{L}\,\sin\big(\sqrt{L} r\big)}\Big(
\cos\big(\sqrt{L} \d(x,z)\big)-\cos\big(\sqrt{L} r\big)
, &\mbox{if }L>0\\
-\frac1{\sqrt{-L}\,\sinh\big(\sqrt{-L} r\big)}\Big(
\cosh\big(\sqrt{-L} r-\cosh\big(\sqrt{-L} \d(x,z)\big)\big)
, &\mbox{if }L<0.
\end{array}\right.
\end{equation*}
Then obviously $\{V\le0\}=Y$ and $|\nabla^- V|=1$ on $\partial Y$ (and close to 1 in a neighborhood of $\partial Y$). Moreover, by comparison results for Hessians of distance functions in CAT spaces
$$
\begin{array}{ll}D_\Geo^2\,\d^2(x,z)\ge 2
, &\mbox{if }L = 0\\
D_\Geo^2 \cos\big(\sqrt{L} \d(x,z)\big)
\le
-L\,\cos\big(\sqrt{L} \d(x,z)\big)
, &\mbox{if }L>0\\
D_\Geo^2 \cosh\big(\sqrt{-L} \d(x,z)\big)
\ge
(-L)\,\cosh\big(\sqrt{-L} \d(x,z)\big)
, &\mbox{if }L<0.
\end{array}
$$
Thus
$D_\Geo^2 V\ge \cot_L(r)$ on  $\partial Y$(and $\ge \cot_L(r)-\epsilon$ in a neighborhood of $\partial Y$) which proves the claim.
\end{proof}

\begin{Prop}
Let $(X,\d)$ be an Alexandrov space with generalized sectional curvature $\ge K$.
Put $\rho_K=\frac\pi{2\sqrt{K}}$ if $K>0$ and $\rho_K=\infty$ else.
Assume that a closed set $Y\subset \X$ satisfies the ``exterior ball condition'' with radius $r<\rho_K$. That is,
$$X\setminus Y= \bigcup_{z\in Y_r} \B_r(z)$$
with $Y_r := \{z\in X: \d(z,Y)=r,\,  \rho(z)>r\}$.
Then $Y$ is locally
$\ell$-convex  with $\ell=-\cot_K r$.
%
%
\end{Prop}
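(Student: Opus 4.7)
The natural plan is to turn every $x\in\partial Y$ into a local instance of Lemma~\ref{5.12} by selecting a supporting exterior ball centered at some $z(x)\in Y_r$ with $\d(x,z(x))=r$ and using the corresponding comparison function $V_{r,z(x)}$ from formula~\eqref{Vrz} as the required local test function $V_i$ in Definition~\ref{def-curv}. First I would construct the selection map $x\mapsto z(x)$: picking a sequence $y_n\in X\setminus Y$ with $y_n\to x$ and, by the exterior ball condition, $z_n\in Y_r$ with $y_n\in\B_r(z_n)$, the bound $\d(z_n,x)\le r+\d(y_n,x)$ together with local compactness of an Alexandrov space of curvature $\ge K$ produces a subsequential limit $z(x)$ satisfying $\d(z(x),Y)\ge r$ and $\d(z(x),x)\le r$; since $x\in Y$, both become equalities, giving $\d(x,z(x))=r=\d(z(x),Y)$, and a small perturbation (or openness of $\{\rho>r\}$) yields $z(x)\in Y_r$.

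With $V_x:=V_{r,z(x)}$ in hand, the Hessian comparison used in the proof of Lemma~\ref{5.12} directly gives, for every $\epsilon>0$, a neighborhood $U_x\ni x$ on which $|\nabla^- V_x|\in[1-\epsilon,1+\epsilon]$ and $V_x$ is $(-\cot_K r-\epsilon)$-convex. Thus the last two bullets of Definition~\ref{def-curv} are satisfied by $V_x$ on $U_x$ without further work; the only remaining requirement is the first bullet $(1-\epsilon)V\le V_x\le(1+\epsilon)V$, which is where I expect the real difficulty to lie.

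Unpacking that comparison: on $\B_r(z(x))\cap U_x$, the inclusion $\B_r(z(x))\subset X\setminus Y$ yields $V(y)=\d(y,Y)\ge r-\d(y,z(x))$, while an explicit inspection of~\eqref{Vrz} shows $V_x(y)\le r-\d(y,z(x))$ with ratio tending to $1$ as $y\to\partial\B_r(z(x))$; the matching bound $V(y)\le(1+\epsilon)(r-\d(y,z(x)))$ follows from $V(y)\le\d(y,x)$ together with a Toponogov-type comparison applied to the triangle $(y,z(x),x)$ whose sides are close to $0,r,r$. A symmetric argument handles points in the interior of $Y\cap U_x$ via the exterior ball through the nearest foot of $y$ on $\partial Y$. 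The genuinely delicate case, and the main obstacle, is $y\in(X\setminus Y)\cap U_x$ lying \emph{outside} $\B_r(z(x))$: there $V_x(y)\le 0<V(y)$, so the pointwise ratio has the wrong sign and no shrinking of $\epsilon$ can save it. The remedy is to shrink $U_x$ so as to exclude this case; this demands a continuity argument for the foot-point assignment $y\mapsto z^\ast(y)$. Concretely, if $y_n\to x$ with $y_n\in\B_r(z_n)\setminus\B_r(z(x))$, compactness extracts a limit $z_\infty$ that is again a supporting center at $x$, and the conditions $\rho(z)>r$ plus the lower sectional curvature bound must be leveraged to force $z_\infty=z(x)$, whence $y_n\in\B_r(z(x))$ for large $n$, a contradiction that licenses the shrinkage of $U_x$.

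Once such $U_x$ are produced, the family $\{U_x\}_{x\in\partial Y}$ covers $\partial Y$; extracting a countable subcover by separability yields the data required by Definition~\ref{def-curv} and therefore the local $(-\cot_K r)$-convexity of $Y$.
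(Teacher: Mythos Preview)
Your approach diverges from the paper's at the decisive step, and that divergence is exactly where the gap lies. The paper does \emph{not} select a single supporting center $z(x)$ and work with $V_{r,z(x)}$ locally. Instead it introduces one global function
\[
V(x):=\sup_{z\in Y_r} V_{r,z}(x)
\]
and checks the requirements of Definition~\ref{def-curv} for this $V$. The supremum buys two things at once: first, each $V_{r,z}$ satisfies $\nabla^2 V_{r,z}\ge -\cot_K r-\epsilon$ near $\partial Y$ by the Hessian comparison, and semiconvexity is stable under pointwise suprema, so the same bound passes to $V$; second, $\{V\le 0\}=Y$ exactly, so the sign mismatch you worry about never occurs.

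Your single-center construction, on the other hand, requires the supporting center at $x$ to be unique (this is what your final continuity/contradiction argument is supposed to deliver), and that uniqueness is false in general. Take $X=\mathbb{R}^2$, $K=0$, $r=1$, and $Y=\mathbb{R}^2\setminus\bigl(\B_1(0,0)\cup\B_1(1,0)\bigr)$. The exterior ball condition holds with $Y_1\supset\{(0,0),(1,0)\}$, and at the boundary point $p=(\tfrac12,\tfrac{\sqrt3}{2})$ both centers support. Every neighborhood of $p$ contains points of $\B_1(1,0)\setminus\B_1(0,0)$ and of $\B_1(0,0)\setminus\B_1(1,0)$; so whichever center you declare to be $z(p)$, the problematic region $(X\setminus Y)\setminus\B_r(z(p))$ meets every $U_p$, and the first bullet of Definition~\ref{def-curv} fails there with the wrong sign exactly as you feared. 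Neither the condition $\rho(z)>r$ (vacuous in $\mathbb{R}^2$) nor the lower curvature bound forces $z_\infty=z(x)$. The remedy is precisely the paper's global supremum, which at such corner points automatically picks the ``correct'' ball for each nearby exterior point.
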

%
%
%
%

\begin{proof} For $x\in X$, put $$V(x):=\sup_{z\in Y_r}V_{r,z}(x)$$ 
with $V_{r,z}$ as introduced in Lemma \ref{5.12}.
Then obviously $Y=\{V\le 0\}$ and 
$$|\nabla^-V|(x)=\frac{\sin_K \d(x,Y_r)}{\sin_K r}
\cdot\big|\nabla^+\d(.,Y_r)\big|(x)
$$
for all $x\in\X$. 
Hence,
$1-\epsilon\le 
|\nabla^- V|(x)\le1$
for all $x\in U\setminus Y$ for a suitable neighborhood $U$ of $\partial Y$.
Moreover,
for each $z \in Y_r$, by comparison results for Hessians of distance functions in Alexandrov spaces,
$$D_\Geo^2 V_{r,z}(x)\ge -\frac{\cos_K d(x,z)}{\sin_K r},$$ 
and therefore,
$$D_\Geo^2 V(x)\ge 
 -\frac{\cos_K r}{\sin_K r}-\epsilon$$
for all $x$ in a suitable neighborhood $U$ of $\partial Y$.
This  proves the claim.
\end{proof}

Analogously, we conclude

\begin{Prop}
Let $(X,\d)$ be a CAT space with generalized sectional curvature $\le L$.
Put $\rho_L=\frac\pi{2\sqrt{L}}$ if $L>0$ and $\rho_L=\infty$ else.
Assume that a closed set $Y\subset \X$ satisfies the ``reverse exterior ball condition'' with radius $r<\rho_L$. That is,
$$Y= \bigcap_{z\in Z} \overline\B_r(z)$$
for some compact set $Z\subset\X$.
Then $Y$ is locally $\ell$-convex with
$\ell:=\cot_L r$.
%
%
\end{Prop}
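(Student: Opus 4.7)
The plan is to mirror the proof of the preceding proposition, but using the CAT comparison functions in place of the Alexandrov ones. Define
$$V(x) := \sup_{z \in Z} V_{r,z}(x)$$
with $V_{r,z}$ as introduced in the second lemma above, so that $\{V_{r,z}\le 0\}=\overline\B_r(z)$ and $|\nabla V_{r,z}|=1$ on $\partial\B_r(z)$. The identity $Y = \bigcap_{z\in Z}\overline\B_r(z) = \{V\le 0\}$ is immediate, and $V$ is continuous because $Z$ is compact. Moreover, since each $V_{r,z}$ is a strictly increasing smooth function of $\d(x,z)$, the sup collapses to $V(x) = F(D(x))$, where $D(x):=\sup_{z\in Z}\d(x,z)$ is the farthest-point distance from $Z$ and $F$ is a strictly increasing $\mathcal C^2$-function with $F(r)=0$ and $F'(r)=1$.

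First I would verify the gradient normalization $1-\epsilon\le |\nabla^- V|\le 1+\epsilon$ on a neighborhood of $\partial Y$. The upper bound follows from the $1$-Lipschitz property of $D$ together with $F'(r)=1$. For the lower bound, by compactness of $Z$ each $x\in\partial Y$ admits a center $z^*(x)\in Z$ realizing $D(x)=\d(x,z^*(x))=r$; sliding $x$ toward $z^*(x)$ along the (unique) CAT geodesic gives points $y\in Y$ with $V(y)\le V_{r,z^*(x)}(y)=-(1-\epsilon)\,\d(x,y)+o(\d(x,y))$ after a suitable shrinking of the neighborhood. This is the analogue of the one-sided slope computation in the preceding proposition.

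Next, I would establish $(\cot_L r-\epsilon)$-convexity of $V$ on a neighborhood of $\partial Y$. By the CAT Hessian comparison recorded inside the second lemma above, each $V_{r,z}$ satisfies $\nabla^2 V_{r,z}\ge \cot_L r$ on its natural domain, and by continuity together with compactness of $Z$ this upgrades to $\nabla^2 V_{r,z}\ge \cot_L r-\epsilon$ on a common neighborhood $U$ of $\partial Y$, uniformly in $z\in Z$. For any geodesic $\gamma\colon[0,1]\to U$,
$$V_{r,z}(\gamma_t)\le (1-t)\,V_{r,z}(\gamma_0)+t\,V_{r,z}(\gamma_1)-\tfrac{(\cot_L r-\epsilon)\,t(1-t)}{2}\,\d(\gamma_0,\gamma_1)^2$$
for every $z\in Z$. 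Taking $\sup_{z\in Z}$ on both sides and using sublinearity of $\sup$ yields the same inequality with $V$ in place of $V_{r,z}$, proving the desired semiconvexity.

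Together, these three facts verify the three bullet points of Definition~\ref{def-curv} for $Y$ with $\ell:=\cot_L r$ by covering $\partial Y$ by finitely many such neighborhoods $U_i$ and setting $V_i:=V|_{U_i}$. The main obstacle is the gradient lower bound $|\nabla^- V|\ge 1-\epsilon$, since suprema generally destroy lower bounds on slope; compactness of $Z$, together with the strict monotonicity $F'(r)>0$ and the way $V$ interacts with the CAT geodesic from $x$ to $z^*(x)$, are what close the argument, exactly as in the Alexandrov counterpart just above.
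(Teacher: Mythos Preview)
Your approach coincides with the paper's: you take the same test function $V=\sup_{z\in Z}(-V_{r,z})$ (your phrasing ``$V_{r,z}$ as in the second lemma'' amounts to the same thing), you get $Y=\{V\le 0\}$, and you deduce $(\cot_L r-\epsilon)$-convexity from the CAT Hessian comparison together with the stability of lower semiconvexity bounds under suprema. The paper records the Hessian bound and the gradient bounds more tersely, but the skeleton is identical.

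The one place where you add substance beyond the paper is precisely where the argument breaks. For the lower bound on $|\nabla^-V|$ you write that sliding $x$ toward the maximizing center $z^*(x)$ gives $V(y)\le V_{r,z^*(x)}(y)$. This inequality is the wrong way round: by definition $V=\sup_z V_{r,z}\ge V_{r,z^*(x)}$ pointwise. Moving toward $z^*(x)$ does force $V_{r,z^*(x)}(y)$ to decrease at rate close to $1$, but that tells you nothing about $V(y)$ from above---another center $z'$ may take over as the farthest point, keeping $D(y)=\sup_z\d(y,z)$ (and hence $V(y)$) from dropping at full rate. You yourself flag exactly this difficulty in your closing paragraph (``suprema generally destroy lower bounds on slope''), but the mechanism you invoke to overcome it does not work. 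The paper's proof, for its part, simply asserts the two-sided bound $\frac{\sin_L(r-\epsilon)}{\sin_L r}\le|\nabla^-V|\le\frac{\sin_L(r+\epsilon)}{\sin_L r}$ without further justification, so your proposal is no worse than the paper on this point---but the argument you supply for it is not correct as written.
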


%
\begin{proof} Similar as in the proof of the previous Proposition, put $$V(x):=\sup_{z\in Z}\big(-V_{r,z}\big)(x)
$$
with 
$V_{r,z}$ defined as before, but now with $L$ in the place of $K$.
Then it is easily seen that $V\le 0$ on $Y$ and $V>0$ on $X\setminus Y$. Moreover, by comparison results for the Hessian of distance functions under upper curvature bounds, 
$$D_\Geo^2\, V(x)\ge\frac{\cos_L \big(\sup_{z\in Z}\d(x,z)\big)}{\sin_L r}\ge \cot_L (r+\epsilon)$$ for all $x\in \B_\epsilon(Y)$.
%
Furthermore, 
$$\frac{\sin_L (r-\epsilon)}{\sin_L r}\le |\nabla^-V|(x)\le \frac{\sin_L (r+\epsilon)}{\sin_L r}$$
 for all $x\in \B_\epsilon(\partial Y)$.
\end{proof}

\medskip

The Convexification Theorems \ref{1stConvThm} and \ref{2ndConvThm} from the previous subsection immediately yield

\begin{thm}\label{ConvThm2}
{\bf i)}
Assume that $\ell\in{\mathcal C}(X)$ is an exterior  lower bound for the curvature  of $\partial Y$. 
 Then for every $\epsilon>0$, the set $Y$ is locally  geodesically convex in $(X,\d')$ for $\d'=e^{(\epsilon-\ell)\cdot V}\odot \d$
 where $V=\d(\,.\,,Y)$.
 
 {\bf ii)}
Assume that $\ell\in{\mathcal C}(X)$ is an interior  lower bound for the curvature  of $\partial Y$. 
 Then for every $\epsilon>0$, the set $Y^0$ is locally  geodesically convex in $(X,\d')$ for $\d'=e^{(\epsilon-\ell)\cdot V}\odot \d$
 where $V=-\d(\,.\,,X\setminus Y)$.
\end{thm}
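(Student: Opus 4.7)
The plan is to reduce Theorem \ref{ConvThm2} to the Convexification Theorems of the previous subsection. The obstacle to a direct application is that those theorems require a single global function $V$ satisfying the hypotheses in a full neighborhood of $Y$, whereas the definition of $\ell$ being an (exterior or interior) lower bound for the curvature of $\partial Y$ only supplies \emph{local} comparison functions $V_i$ on the members $U_i$ of an open cover of $\partial Y$. Thus the argument must localize the proofs of Theorems \ref{1stConvThm} and \ref{2ndConvThm}.

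For part (i), I set $V := \d(\,.\,, Y)$, which coincides on $X \setminus Y$ with the signed distance function appearing in Definition \ref{def-curv}. Given $\epsilon > 0$, I apply the exterior $\ell$-convexity hypothesis with an auxiliary parameter $\tilde\epsilon \in (0, \epsilon)$ to get an open cover $\{U_i\}$ of $\partial Y$ and continuous $V_i : U_i \to \R$ satisfying on $U_i \setminus Y$ the comparability $(1 - \tilde\epsilon) V \le V_i \le (1 + \tilde\epsilon) V$, the slope bound $1 - \tilde\epsilon \le |\nabla^- V_i| \le 1 + \tilde\epsilon$, and $(\ell - \tilde\epsilon)$-convexity. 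At each $z \in \partial Y$ I would repeat the curve-shortening argument from the proof of Theorem \ref{1stConvThm} driven by the local $V_i$ rather than by a global $V$: the flow $\hat\Phi$ of $-\nabla V_i$ sends exterior points to $\partial Y$ in time $T_0 = V_i \cdot (1 + O(\tilde\epsilon)) = V \cdot (1 + O(\tilde\epsilon))$, and the Lipschitz comparison from step (iii) there reads $\d(\hat\Phi_r(x), \hat\Phi_r(y)) \le e^{(\tilde\epsilon - \ell(z))(T_r(x) + T_r(y))/2}\,\d(x, y)$. The key length estimate in step (iv) becomes
\[
|\dot\gamma_a^0|' \;\le\; \exp\!\Big\{(\tilde\epsilon - \ell(z))\,T_0(\gamma_a) - (\epsilon - \ell(\gamma_a))\,V(\gamma_a)\Big\}\cdot|\dot\gamma_a|',
\]
where the prime denotes speed in $\d' = e^{(\epsilon - \ell)V} \odot \d$; note that $V_i$ drives the flow while $V$ appears in the conformal weight. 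Combining the comparability and the continuity of $\ell$, the exponent equals $V(\gamma_a)\big[(\tilde\epsilon - \ell(z))(1 + O(\tilde\epsilon)) - (\epsilon - \ell(\gamma_a))\big]$, which is strictly negative on $\{V > 0\}$ provided $\tilde\epsilon$ is chosen small enough depending on $\epsilon$ and the oscillation of $\ell$ on a sufficiently small ball around $z$. This strict shortening rules out $\d'$-geodesics between nearby points of $Y$ leaving $Y$, yielding local geodesic convexity of $Y$ in $(X,\d')$.

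Part (ii) is handled in parallel fashion with $V := -\d(\,.\,, X \setminus Y)$ (so that $Y^0 = \{V < 0\}$), invoking Theorem \ref{2ndConvThm} in place of Theorem \ref{1stConvThm}. The interior hypothesis yields local $V_i$'s satisfying the three properties on $U_i \cap Y^0$; the $V_i$-gradient flow now shifts interior points away from $\partial Y$ deeper into $Y^0$, and the same length comparison proves local geodesic convexity of $Y^0$ in $(X, \d')$.

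The main obstacle is the careful tracking of constants when the local flow (driven by $V_i$) is not matched to the global weight (involving $V = \pm \d(\,.\,, \partial Y)$) appearing in the metric. This mismatch is absorbed entirely by the comparability $V_i = (1 + O(\tilde\epsilon))V$ together with the continuity of $\ell$ and a suitably small choice of $\tilde\epsilon$ relative to $\epsilon$, so the only delicate point is the bookkeeping of error terms in a single scalar exponent.
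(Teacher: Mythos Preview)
Your proposal is correct and follows the approach the paper intends: since the proof of Theorem \ref{1stConvThm} is already fully local (working in a small ball about a fixed boundary point), one simply replaces the global $V$ there by the local comparison function $V_i$ supplied by Definition \ref{def-curv}, and the comparability $V_i = (1 + O(\tilde\epsilon))\,V$ from that definition absorbs the mismatch between the flow-driving function $V_i$ and the distance function $V$ appearing in the conformal weight. You have made explicit the one bookkeeping step that the paper's ``immediately yield'' glosses over.
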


\begin{remark} The Convexification Theorems  \ref{1stConvThm} and \ref{2ndConvThm} provide a method 
to make a given set convex
by local changes of the  geometry. By construction of this transformed geometry, the given set will be ``as little convex as possible''. Indeed,  in regions where the set already was convex, the set will be transformed into a less convex set.
\end{remark}

\begin{figure}[h]
    \centering
   \includegraphics[scale=0.75]{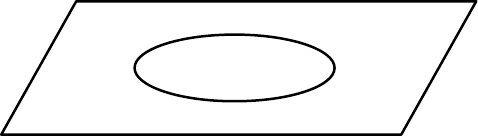}
    \caption{Euclidean plane: convex disc, nonconvex complement}
    \label{fig:convexify_1}
\end{figure}
\begin{figure}[h]
    \centering
   \includegraphics[scale=0.75]{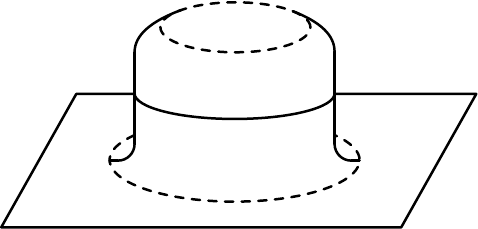}
    \caption{Conformally changed plane: totally geodesic circle}
    \label{fig:convexify_3}
\end{figure}
\begin{figure}[H]
    \centering
   \includegraphics[scale=0.75]{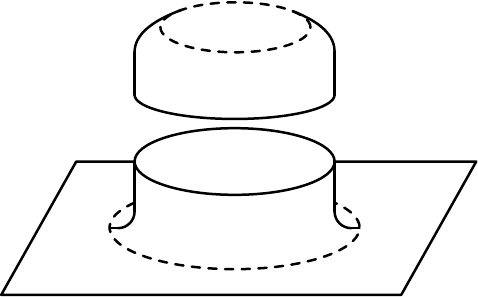}
    \caption{Conformally changed plane: decomposition into two convex subsets}
    \label{fig:convexify_5}
\end{figure}

\begin{ex} Let $X=\R^n$ for $n\ge 2$, equipped with the Euclidean distance and the Lebesgue measure. If we apply the previous results to the complement of a ball,  say $Y=\R^n\setminus \B_r(z)$, then we see that $Y$ (as well as $Y^0$) will be locally geodesically convex in $(\R^n,e^{(1+\epsilon)\psi}\odot \d)$ for any $\epsilon>0$
where
$$\psi=-\ell\cdot V=\frac12\Big(1-\frac{|x-z|^2}{r^2}\Big).$$
On the other hand, applying the previous results to a  ball, say $Z= \B_r(z)$, then we see that $Z$ (as well as $\overline Z$) will be locally geodesically convex in $(\R^n,e^{(1-\epsilon)\psi}\odot \d)$ for any $\epsilon>0$ with the same $\psi$ as before.
The same ``convexification effect'' will be achieved by choosing
$$\psi(x)=-\log\frac{|x-z|}{r}$$
in a neighourhood of $\partial \B_r(z)$. 
In the case $n=2$, with this choice of $\psi$, in a neighorhood of $\partial \B_r(z)$
the space $(\R^n,e^{\psi}\odot \d)$ will be a flat torus. In particular, $\partial \B_r(z)$ will be a totally geodesic subset. This provides a simple explanation why both, $B_r(z)$ and its complement, are convex.
\end{ex}



\section{Ricci Bounds for Neumann Laplacians}
\label{sec6}

\subsection{Neumann Laplacian and Time Change}

Let  a metric measure space $(X,\d,\mm)$ be given; assume that it is geodesic, locally compact, and infinitesimally Hilbertian. Observe that due to the local compactness, 
$W^{1,2}(X)=\big\{u\in W^{1,2}_\loc(X): \int_{X} [\Gamma(u)+u^2]\, d\mm<\infty\big\}$.

By \emph{restriction} to a  closed set $Y\subset X$,
we define the mm-space $(Y,\d_Y,\mm_Y)$. Here $\d_Y$ denotes the length metric on $Y$ induced by $\d$ and  $\mm_Y$ denotes the measure $\mm$ restricted to $Y$. 
 The Cheeger energy associated with  the  \emph{restricted mm-space} $(Y,\d_Y,\mm_Y)$ will  be denoted by $\E^Y$ and its  domain by $\F^Y=\Dom(\E^Y)=W^{1,2}(Y)$.
To avoid pathologies, throughout the sequel, we assume that  $Y=\overline{Y^0}$, $\mm(Y)>0$,   $\mm(\partial Y)=0$, and that $\d_Y<\infty$ on $Y\times Y$.

The  minimal weak upper gradients (and thus also the $\Gamma$-operators) w.r.t. $(X,\d,\mm)$ and w.r.t. $(Y,\d_Y,\mm_Y)$ will coincide a.e.\ on $Y^0$, i.e. 
$\F^Y_\loc(Y^0)=\F_\loc(Y^0)$ and $\Gamma^Y(u)=\Gamma(u)$ a.e.\ on $Y^0$ for all $u\in \F^Y_\loc(Y^0)$. Moreover, 
\begin{equation}\label{w,w0}
W^{1,2}(X)\big|_Y\subset 
W^{1,2}(Y)\subset W^{1,2}(Y^0)
\end{equation}
where $W^{1,2}(Y^0):=
\big\{u\in \F_\loc(Y^0): \int_{Y^0} [\Gamma(u)+u^2]\, d\mm<\infty\big\}$
and 
\begin{equation}\label{E0}\E^Y(u)=\int_{Y^0} \Gamma(u)\, d\mm
\end{equation}
for all $u\in W^{1,2}(Y)$. 
In particular, the restricted mm-space $(Y,\d_Y,\mm_Y)$ is also  infinitesimally Hilbertian.

The  heat  semigroup associated with the restricted mm-space $(Y,\d_Y,\mm_Y)$ will be called \emph{Neumann heat semigroup} and denoted by $(P^Y_t)_{t\ge0}$.
The associated Brownian motion 
will be called \emph{reflected Brownian motion} and denoted by $(\mathbb P^Y_x, B^Y_t)$.

\begin{remark} {\bf i)} In literature on Dirichlet forms and Markov processes (in particular, in \cite{chen2012}), Chapter 7, ``reflected Brownian motion'' on the closure of an open set $Y^0\subset X$ is by definition (and by construction)  the reversible Markov process associated with the Dirichlet form $\E^Y$ given by \eqref{E0}
with domain $W^{1,2}(Y^0)\subset L^2(Y,\mm_Y)$.

 {\bf ii)} In general, the sets $W^{1,2}(Y)$ and  $W^{1,2}(Y^0)$ do not coincide, see subsequent Example. In  \cite{lierl2018}, Section 4.2, equality of $W^{1,2}(Y)$ and  $W^{1,2}(Y^0)$ was erroneously stated as a general fact. Instead, it should be added there as an extra assumption.  
 Equality holds if $Y^0$ is regularly locally  semiconvex, see Proposition \ref{coincide} below, and
 of course also  if $Y^0$ has the extension property $W^{1,2}(Y^0)=W^{1,2}(X)\big|_{Y_0}$.
\end{remark}

\begin{ex}[Based on private communication with  T.~Rajala] Given $X=[-1,1]^2$ with Euclidean distance $\d$ and 2-dimensional Lebesgue measure $\mm$, put
$$Y=X\setminus \bigcup_{n\in\N} \B_{3^{-n}}(x_n,0)$$
where $\{x_n\}_{n\in\N}$ denotes a countable dense subset of $[-1,1]$.
Then $W^{1,2}(Y)\not=W^{1,2}(Y^0)$.
For instance, the function $u(x,y)={\mathrm{sign}}(y)$ belongs to $W^{1,2}(Y^0)$ but not to
 $W^{1,2}(Y)$.
 
 Indeed,  functions in $W^{1,2}(Y^0)$ can have arbitrary jumps at the $x$-axis since $Y^0$ has two disconnected components, one being a subset of the open upper half plane, the other one being a subset of the open lower half plane. On the other hand, functions in $W^{1,2}(Y)$ will be continuous along almost every vertical line which does not hit one of the small balls $\overline \B_{2^{-n}}(x_n,0)$, $n\in\N$, (which is the case for more than half of the vertical lines).
\end{ex}

\begin{Def}
 An open subset $Z\subset X$ is called \emph{regularly locally  semiconvex} if there exists an open 
neighborhood  $D$ of $\partial Z$ 
and functions $V,\ell\in\D_\loc^\cont(\Delta)$ such that $V$ is $\ell$-convex and $V=-\d(\,.\,,\partial Z)$  in $D\cap Z$.
\end{Def}
Here and in the sequel, we put  $\D_\loc^\cont(\Delta):=\{f\in \D_\loc(\Delta)$ with $f, \Gamma(f), \Delta f\in {\mathcal C}(X)\}$ and
$\D_\infty^\cont(\Delta):=\{f\in \D_\loc(\Delta)$ with $f, \Gamma(f), \Delta f\in {\mathcal C}(X)\cap L^\infty(X)\}$.
 
 Note that $\D_\infty^\cont(\Delta)\subset \Lip_b(X)$ provided the Sobolev-to-Lipschitz property holds.

\begin{Prop}\label{coincide} Assume that $(X,\d,\mm)$ satisfies \RCD$(K,N)$ for some  $K,N\in\R$ and that $Y^0$ is regularly locally semiconvex.
Then $$W^{1,2}(Y)=W^{1,2}(Y^0)$$
and $|D_Yu|=|Du|$ $\mm$-a.e.~on $Y$ for every $u\in W^{1,2}(Y^0)$.
\end{Prop}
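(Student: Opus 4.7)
The inclusion $W^{1,2}(Y) \subseteq W^{1,2}(Y^0)$ together with $|D_Y u| = |Du|$ $\mm$-a.e.\ on $Y^0$ is already contained in \eqref{w,w0} and the accompanying discussion (and the boundary contribution is immaterial since $\mm(\partial Y) = 0$). The remaining task is to prove the reverse inclusion. My plan is to perform a time change that makes $Y^0$ locally geodesically convex in the ambient space, and then to use this convexity to identify the intrinsic and ambient Sobolev structures.

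By the regular local semiconvexity of $Y^0$, there exist $V, \ell \in \D_\loc^\cont(\Delta)$ such that $V$ is $\ell$-convex and $V = -\d(\,\cdot\,, \partial Y)$ in some open neighborhood $D \cap Y^0$ of $\partial Y$ inside $Y^0$. Fix $\epsilon > 0$ and put $\psi := (\epsilon - \ell)\,V$. The product rule for $\D_\loc^\cont(\Delta)$ then yields $\psi \in \D_\loc(\Delta) \cap \Lip_\loc(X)$. By the Second Convexification Theorem \ref{2ndConvThm}, the set $Y^0$ is locally geodesically convex in $(X, \d')$ with $\d' := e^{\psi} \odot \d$; by Theorem \ref{time-change}, the time-changed space $(X, \d', \mm')$ with $\mm' := e^{2\psi} \mm$ satisfies an $\RCD(k', N')$-condition for suitable $k', N'$.

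The time change is conformal, so minimal weak upper gradients transform by $|D' u|(x) = e^{-\psi(x)} |D u|(x)$ and hence $\int |D' u|^2 \, d\mm' = \int |D u|^2 \, d\mm$. Crucially, length metric and conformal rescaling commute, so $\d'_Y = e^{\psi} \odot \d_Y$ and $\mm'_Y = e^{2\psi}\mm_Y$, and the same identity holds for the restricted data. Consequently
$$W^{1,2}(Y, \d_Y, \mm_Y) = W^{1,2}(Y, \d'_Y, \mm'_Y), \qquad W^{1,2}(Y^0, \d, \mm) = W^{1,2}(Y^0, \d', \mm')$$
as function spaces, with matching (up to the factor $e^{-\psi}$) minimal weak upper gradients. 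It thus suffices to prove the proposition for the time-changed data, which I henceforth assume.

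The local geodesic convexity of $Y^0$ in $(X, \d')$ means that each $y \in Y$ admits an open neighborhood $U_y \subset X$ such that every $\d'$-geodesic joining two points of $Y \cap U_y$ stays in $Y$; equivalently, $\d'_Y$ and $\d'$ coincide on $(Y \cap U_y) \times (Y \cap U_y)$, and the local notions of absolutely continuous curves, metric speed, and test plans all agree. By local compactness of $X$, cover $Y$ by countably many such neighborhoods $U_{y_i}$ and fix a subordinate Lipschitz partition of unity $\{\chi_i\}$. Given $u \in W^{1,2}(Y^0, \d', \mm')$, decompose $u = \sum_i \chi_i u$; each summand is supported in some $U_{y_i}$, where the local identity of metrics forces the two notions of minimal weak upper gradient to coincide, so $\chi_i u \in W^{1,2}(Y, \d'_Y, \mm'_Y)$ with the same minimal weak upper gradient as in $W^{1,2}(Y^0, \d', \mm')$. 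Summing and invoking the locality of the minimal weak upper gradient (together with the Leibniz rule) yields $u \in W^{1,2}(Y, \d'_Y, \mm'_Y)$ and $|D'_Y u| = |D' u|$ $\mm'$-a.e.\ on $Y$. The principal obstacle is precisely this final step—turning the local coincidence of the two metrics into the global identification of the two Sobolev spaces. It relies on the $\RCD$-structure of $(X, \d', \mm')$, which ensures density of (locally) Lipschitz functions in $W^{1,2}$ and legitimizes the partition-of-unity patching argument.
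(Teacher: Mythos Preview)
Your reduction to the time-changed data is correct and matches the paper. The gap is in the final ``patching'' step. Local geodesic convexity of $Y^0$ in $(X,\d')$ gives you that $\d'$-geodesics with endpoints in $Y^0\cap U_{y_i}$ stay in $Y^0$; by continuity this yields $\d'_Y=\d'$ on $(Y\cap U_{y_i})^2$. But equality of the \emph{metrics} does not hand you equality of the \emph{Sobolev spaces} on $Y\cap U_{y_i}$. Your function $\chi_i u$ is only known to lie in $W^{1,2}_\loc(Y^0)$ with finite energy; to get $\chi_i u\in W^{1,2}(Y,\d'_Y,\mm'_Y)$ you must verify the upper-gradient inequality along every test plan in $(Y,\d'_Y,\mm'_Y)$, and such test plans may involve curves that graze $\partial Y$. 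On those curves you have no control of $u$ --- membership in $W^{1,2}_\loc(Y^0)$ says nothing about boundary behaviour, and there is no extension of $u$ to $W^{1,2}(X,\d',\mm')$ available (that is precisely the extension property you are not assuming). So the sentence ``the local identity of metrics forces the two notions of minimal weak upper gradient to coincide'' begs the question, and your closing appeal to density of Lipschitz functions does not help: Lipschitz approximants of $u$ in the $W^{1,2}(Y^0)$-norm need not be better behaved at $\partial Y$ than $u$ itself.

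The paper's proof avoids this by never localising in space. It fixes an arbitrary test plan $\Pi$ in $(Y,\d_Y,\mm_Y)$ and replaces it by its piecewise $\d'$-geodesic interpolants $\Pi_n$ between the time-marginals $(e_{i/n})_*\Pi$. The point of the convexification is then used \emph{here}: since $(e_t)_*\Pi\ll\mm$ and $\mm(\partial Y)=0$, the interpolation nodes lie in $Y^0$ $\Pi$-a.s., and local geodesic convexity forces the entire interpolating curves to stay in $Y^0$. On compact $Y_\epsilon\subset Y^0$ the function $u$ does extend to some $u_\epsilon\in W^{1,2}(X,\d',\mm')$ (this is the local statement available from $u\in W^{1,2}_\loc(Y^0)$), so the upper-gradient inequality holds for $\Pi_n$ restricted to curves in $Y_\epsilon$. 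One then lets $\epsilon\to0$ and finally $n\to\infty$, the latter via a separate lemma showing $\iint g(\gamma_t)|\dot\gamma_t|\,dt\,d\Pi_n\to\iint g(\gamma_t)|\dot\gamma_t|\,dt\,d\Pi$ for $g\in L^2$. The crucial idea your argument is missing is this replacement of $\Pi$ by geodesic interpolants that are forced into the interior.
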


\begin{proof}{\bf  i)} To simplify the subsequent presentation, we assume that 
the defining  functions $\ell, V$ for the regular semiconvexity of $Y^0$ can be chosen to be in $\D_\infty^\cont(\Delta)$ and not just in $\D_\loc^\cont(\Delta)$. Under this simplifying assumption, for any $\epsilon>0$
also  $\psi:=(\epsilon-\ell)\,V\in \D_\infty^\cont(\Delta)$   and thus
the time-changed mm-space $(X,\d',\mm')$ with $\d'=e^\psi\odot \d$ and $\mm'=e^{2\psi}\,\mm$ will satisfy \RCD$(K',\infty)$ with some  $K'\in\R$.
The general case can be treated by a localization and covering argument.

{\bf ii)} Recall that a function $u\in L^2(Y,\mm_Y)$ is in $W^{1,2}(Y)$ with weak upper gradient $g\in L^2(Y,\mm_Y)$ if and only if for each test plan $\Pi$ in $(Y,\d_Y,\mm_Y)$
$$\int_{\mathcal C}\big|u(\gamma_1)-u(\gamma_0)\big|\,d\Pi(\gamma)\le \int_{\mathcal C}\int_0^1 g(\gamma_t)\, |\dot\gamma_t|\,dt\,d\Pi(\gamma).$$
where ${\mathcal C}:={\mathcal C}([0,1], Y)$.
Now observe that any  test plan $\Pi$ in $(Y,\d_Y,\mm_Y)$ can also be regarded as a test plan in $(X,\d,\mm)$. (Indeed,  for each  curve $\gamma\in{\mathcal C}([0,1], Y)\subset {\mathcal C}([0,1], X)$, the speed w.r.t.~$(X,\d)$ will be bounded by the speed w.r.t.~$(Y,\d_Y)$.) And $\Pi$ is a test plan in $(X,\d,\mm)$ if and only if it is a test plan in $(X,\d',\mm')$.
Moreover, $g$ is a weak upper gradient w.r.t.~$(X,\d,\mm)$ implies that $g'=e^{-\psi}g$ is a weak upper gradient w.r.t.~$(X,\d',\mm')$ and vice versa since 
$$|\dot\gamma_t|'=e^{\psi(\gamma_t)}|\dot\gamma_t|.$$

{\bf  iii)} Now let us fix a  test plan $\Pi$ in $(Y,\d_Y,\mm_Y)$. For $n\in\N$, define  $\Pi'_n$ to be the ``piecewise geodesic test plan'' in $(X,\d',\mm')$
such that $\big((e_t)_*\Pi_n\big)_{t\in[0,1]}$
is the $W'_2$-geodesic which interpolates between the measures $(e_{i/n})_*\Pi$ for $i=0,1,\ldots,n$. Thanks to the \RCD-property of $(X,\d',\mm')$, such a piecewise geodesic interpolation indeed is a test plan.

 For each $t\in[0,1]$, we know that $\gamma_t\in Y^0$ for $\Pi$-a.e.~$\gamma$ since $(e_t)_*\Pi\le C\,\mm$ and $\mm(\partial Y)=0$. Geodesic convexity of $Y^0$ w.r.t.~$\d'$ thus implies
$$\gamma_t\in Y^0\qquad(\forall t\in[0,1])$$ for $\Pi_n$-a.e.~$\gamma$. In particular, thus for each $n\in\N$ and each $\epsilon>0$, there exists a compact set $Y_\epsilon\subset Y^0$ such that 
$\Pi_n({\mathcal C}_\epsilon)\ge 1-\epsilon$.
where ${\mathcal C}_\epsilon:={\mathcal C}([0,1], Y_\epsilon)$. Put
$$\Pi_n^\epsilon(\,.\,):=\frac1{\Pi_n({\mathcal C}_\epsilon)}\, \Pi_n(\,.\, \cap {\mathcal C}_\epsilon).$$

{\bf iv)}  Given the compact set $Y_\epsilon\subset Y^0$, there exists $u_\epsilon\in W^{1,2}(X,\d',\mm')$ such that
$$u=u_\epsilon, \ |D'u|=|D'u_\epsilon| \ \mm\text{-a.e.~on a neighborhood of }Y_\epsilon.$$
Since $\Pi_n^\epsilon$ is  a test plan in $(X,\d',\mm')$, we obtain for each $n\in\N$ and each $\epsilon>0$
\begin{eqnarray*}
\int_{\mathcal C}\int_0^1|Du|(\gamma_t)\,|\dot\gamma_t|\,dt\,d\Pi_n(\gamma)&=&
\int_{\mathcal C}\int_0^1|D'u|(\gamma_t)\,|\dot\gamma_t|'\,dt\,d\Pi_n(\gamma)\\
&\ge& Z_\epsilon
\int_{{\mathcal C}_\epsilon}\int_0^1|D'u_\epsilon|(\gamma_t)\,|\dot\gamma_t|'\,dt\,d\Pi_n^\epsilon(\gamma)\\
&\ge& Z_\epsilon
\int_{{\mathcal C}_\epsilon}\big|u_\epsilon(\gamma_1)-u_\epsilon(\gamma_0)\big|\,d\Pi_n^\epsilon(\gamma)\\
&=& 
\int_{{\mathcal C}_\epsilon}\big|u(\gamma_1)-u(\gamma_0)\big|\,d\Pi_n(\gamma).
\end{eqnarray*}
In the limit $\epsilon\to0$ this yields
\begin{eqnarray*}
\int_{\mathcal C}\int_0^1|Du|(\gamma_t)\,|\dot\gamma_t|\,dt\,d\Pi_n(\gamma)\ge
\int_{{\mathcal C}}\big|u(\gamma_1)-u(\gamma_0)\big|\,d\Pi_n(\gamma)=
\int_{{\mathcal C}}\big|u(\gamma_1)-u(\gamma_0)\big|\,d\Pi(\gamma).
\end{eqnarray*}
Since by assumption $|Du|\in L^2(Y,\mm_Y)$, according to the subsequent Lemma we may pass to the limit $n\to\infty$ and finally obtain
\begin{eqnarray*}
\int_{\mathcal C}\int_0^1|Du|(\gamma_t)\,|\dot\gamma_t|\,dt\,d\Pi(\gamma)\ge
\int_{{\mathcal C}}\big|u(\gamma_1)-u(\gamma_0)\big|\,d\Pi(\gamma).
\end{eqnarray*}
This proves the claim: $u\in W^{1,2}(Y)$ with minimal weak upper gradient $\mm$-a.e.~dominated by $|Du|$.
\end{proof}

\begin{lma}[Private communication by N. Gigli]
Assume that $(X,\d,\mm)$  satisfies ${\sf RCD}(K,N)$ for some $K,N\in\R$. Then for every test plan $\Pi$ in $X$ and every $g\in L^2(X,\mm)$
\begin{equation}
\label{piecew}
\lim_{n\to\infty}
\iint_0^1 g(\gamma_t)\, |\dot\gamma_t|\,dt\,d\Pi_n(\gamma)=\iint_0^1 g(\gamma_t)\, |\dot\gamma_t|\,dt\,d\Pi(\gamma)
\end{equation}
where $\Pi_n$ denotes the ``piecewise geodesic test plan'' such that $\big((e_t)_*\Pi_n\big)_{t\in[0,1]}$
is the $W_2$-geodesic which interpolates between the measures $(e_{i/n})_*\Pi$ for $i=0,1,\ldots,n$.
\end{lma}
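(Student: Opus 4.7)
The plan is to prove the convergence in three stages: first establish uniform test-plan bounds on $\Pi_n$, then verify the convergence on a dense subclass of regular $g$, and finally extend to all $g\in L^2(X,\mm)$ by a Cauchy--Schwarz density argument.

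\textbf{Step 1 (uniform test plan bounds).} The crucial point is that each $\Pi_n$ is a test plan with constants uniform in $n$. For the compression,
$$(e_t)_\ast\Pi_n\le C'\,\mm\qquad \text{uniformly in }t\in[0,1],\ n\in\N,$$
follows from the $\RCD(K,N)$ hypothesis: along a $W_2$-geodesic joining two absolutely continuous measures whose densities are bounded by the compression constant $C$ of $\Pi$, the density of the interpolating measure is dominated by $C$ times a distortion factor depending only on $K$, $N$, and the $W_2$-distance of the endpoints; since consecutive marginals $\mu_{i/n}:=(e_{i/n})_\ast\Pi$ and $\mu_{(i+1)/n}$ become arbitrarily close in $W_2$ as $n\to\infty$, this distortion factor stays uniformly controlled. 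For the kinetic energy, Cauchy--Schwarz applied to the Riemann sums of the metric derivative of $t\mapsto(e_t)_\ast\Pi$ yields
$$\iint_0^1|\dot\gamma_t|^2\,dt\,d\Pi_n \;=\; \sum_{i=0}^{n-1} n\,W_2^2\bigl(\mu_{i/n},\mu_{(i+1)/n}\bigr) \;\le\; \int_0^1\bigl|\dot\mu_t\bigr|_{W_2}^2\,dt \;\le\; \iint_0^1|\dot\gamma_t|^2\,dt\,d\Pi.$$

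\textbf{Step 2 (smooth test functions).} For $g\in \Lip_b(X)$, exploiting the fact that $\Pi_n$-curves are constant-speed $W_2$-geodesics on each sub-interval of length $1/n$, one has
$$\iint_0^1 g(\gamma_t)\,|\dot\gamma_t|\,dt\,d\Pi_n \;=\; \sum_{i=0}^{n-1} n\int \d\bigl(\gamma_{i/n},\gamma_{(i+1)/n}\bigr)\int_{i/n}^{(i+1)/n}g(\gamma_t)\,dt\,d\Pi_n(\gamma).$$
Replacing $\int_{i/n}^{(i+1)/n}g(\gamma_t)\,dt$ by $(1/n)\,g(\gamma_{i/n})$, with error uniformly controlled by $\Lip(g)/n$ times the piecewise geodesic length, and using that the marginals of $\Pi$ and $\Pi_n$ coincide at the nodes $i/n$, reduces the above to a Riemann sum which converges to $\iint_0^1 g(\gamma_t)|\dot\gamma_t|\,dt\,d\Pi$. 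Indeed, for $\Pi$-a.e.\ absolutely continuous curve $\gamma$ the chord-distance sums along refining uniform partitions tend to the length $\int_0^1|\dot\gamma_t|\,dt$, and the classical Riemann-sum limit recovers $\int_0^1 g(\gamma_t)|\dot\gamma_t|\,dt$. Dominated convergence, justified by the uniform bounds of Step 1, transports these pointwise identities to the integrals against $\Pi$.

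\textbf{Step 3 (density).} By Cauchy--Schwarz and the uniform bounds in Step 1, for every $g,g'\in L^2(X,\mm)$ and every $\tilde\Pi\in\{\Pi,\Pi_n\}$,
$$\Bigl|\iint_0^1 (g-g')(\gamma_t)\,|\dot\gamma_t|\,dt\,d\tilde\Pi\Bigr| \;\le\; \sqrt{C'}\, \Bigl(\iint_0^1|\dot\gamma_t|^2\,dt\,d\Pi\Bigr)^{1/2}\,\|g-g'\|_{L^2(\mm)}.$$
Since $\Lip_b(X)$ is dense in $L^2(X,\mm)$, combining this estimate with the convergence from Step 2 yields the claim for arbitrary $g\in L^2(\mm)$.

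\textbf{Main obstacle.} The principal technical difficulty lies in Step 1, specifically in the uniform $L^\infty$-bound on the densities $(e_t)_\ast\Pi_n$: this is exactly where the finite dimension $N$ and the associated distortion coefficients $\tau^{(t)}_{K,N}$ of $\RCD(K,N)$ are essential, and controlling them requires that the $W_2$-distances between consecutive marginals remain small, so that the distortion factor does not blow up as $n\to\infty$. Once this uniform compression bound is secured, the Riemann-sum analysis of Step 2 and the Cauchy--Schwarz bootstrap of Step 3 are essentially routine.
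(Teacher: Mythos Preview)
Your Steps 1 and 3 are fine and parallel the paper's own reductions, but Step 2 contains a genuine gap. After the Lipschitz replacement you arrive at
\[
\sum_{i=0}^{n-1}\int g(\gamma_{i/n})\,\d(\gamma_{i/n},\gamma_{(i+1)/n})\,d\Pi_n(\gamma),
\]
and you then invoke ``the marginals of $\Pi$ and $\Pi_n$ coincide at the nodes'' to pass to a Riemann sum against $\Pi$ and conclude by a curve-wise limit. But the integrand depends on the \emph{pair} $(\gamma_{i/n},\gamma_{(i+1)/n})$, so what you actually need is $(e_{i/n},e_{(i+1)/n})_\ast\Pi_n=(e_{i/n},e_{(i+1)/n})_\ast\Pi$, and this is false in general: by the very definition of $\Pi_n$ its one-time marginals trace the $W_2$-geodesic between consecutive nodes, hence $(e_{i/n},e_{(i+1)/n})_\ast\Pi_n$ is the $W_2$-\emph{optimal} coupling of $\mu_{i/n}$ and $\mu_{(i+1)/n}$, which typically differs from the coupling $(e_{i/n},e_{(i+1)/n})_\ast\Pi$ induced by the original plan. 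Agreement of the one-time marginals alone does not control a functional of endpoint pairs. (Your argument would be valid for the curve-by-curve piecewise-geodesic interpolation of $\Pi$, but that object need not have $W_2$-geodesic marginals and is not what the statement refers to.)

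The paper avoids this obstacle by a different mechanism: instead of comparing two-point couplings, it proves weak convergence of the measures $|\dot\gamma_t|\,dt\,d\Pi_n$ on $[0,1]\times\mathcal C([0,1],X)$ via a compactness argument (using properness of the $\RCD(K,N)$ space and the uniform action bound), and then uses lower semicontinuity of the metric speed together with an energy-matching contradiction to identify the limit as $|\dot\gamma_t|\,dt\,d\Pi$.
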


\begin{proof} First of all, observe that it obviously suffices to prove the claim for test plans supported on bounded subsets of $X$. Secondly observe, that it suffices to consider bounded continuous functions $g$. Indeed, given any $g\in L^2(X,\mm)$ and $\varepsilon>0$, there exists $g_\varepsilon\in{\mathcal C}_b(X)$ with $\|g-g_\varepsilon\|_{L^2}\le\varepsilon$. Since $\Pi_n$ is a test plan, this implies 
\begin{eqnarray*}
\Big|\iint_0^1 [g(\gamma_t)-g_\eps(\gamma_t)]\, |\dot\gamma_t| dt d\Pi_n(\gamma)\Big|^2&\le&
{\iint_0^1|g(\gamma_t)-g_\eps(\gamma_t) |\, dt d\Pi_n(\gamma)}  \cdot {\iint_0^1 |\dot\gamma_t|^2 dt d\Pi_n(\gamma)}\\
&\le&
\eps \cdot \sup_n C_n\cdot \sup_nA_n
\end{eqnarray*}
 for each $n\in\N\cup\{\infty\}$ with $\Pi_\infty:=\Pi$ 
where $C_n$ is the compression of the test plan $\Pi_n$ and 
$$A_n:=\iint_0^1 |\dot\gamma_t|^2 dt d\Pi_n(\gamma)\le \iint_0^1 |\dot\gamma_t|^2 dt d\Pi(\gamma)<\infty.$$
Due to the \RCD$(K,\infty)$-assumption, the compression of $\Pi_n$ is bounded by the compression of $\Pi$ times a constant depending on $K$ and the diameter of the supporting set of $\Pi$. Thus $$\iint_0^1 [g(\gamma_t)-g_\eps(\gamma_t)]\, |\dot\gamma_t| dt d\Pi_n(\gamma)\to0$$ uniformly in $n\in\N\cup\{\infty\}$ as $\eps\to0$.

It remains to prove \eqref{piecew} for bounded continuous $g$. This will be an immediate consequence of the weak convergence 
\begin{equation}\label{weco}d{\boldsymbol \pi}_n(\gamma):= |\dot\gamma_t| dt d\Pi_n(\gamma)
\ \to \ |\dot\gamma_t| dt d\Pi(\gamma)=:d{\boldsymbol \pi}(\gamma)
\end{equation}
as measures on the space ${\boldsymbol X}:=[0,1]\times {\mathcal C}([0,1]\to X)$. To prove the latter, 
we first observe  that the total mass of the measures ${\boldsymbol \pi}_n$ is uniformly bounded on ${\boldsymbol X}$ since
\begin{equation}\label{action}
\Big(\int d{\boldsymbol \pi}_n\Big)^2\le \iint |\dot\gamma_t|^2 dt d\Pi_n(\gamma)\le \iint |\dot\gamma_t|^2 dt d\Pi(\gamma)<\infty.
\end{equation}
 Properness of $X$ (due to the \RCD$(K,N)$-assumption) and uniform boundedness of the supporting sets of $\Pi_n$ then guarantees the existence of a subsequential limit ${\boldsymbol \pi}_\infty$. Lower semicontinuity of the map $\gamma\mapsto |\dot\gamma_t|$ implies that ${\boldsymbol \pi}_\infty\le {\boldsymbol \pi}$. Now assume that ${\boldsymbol \pi}_\infty\not= {\boldsymbol \pi}$. Then in particular ${\boldsymbol \pi}_\infty\not= {\boldsymbol \pi}$ on the set $\{(t,\gamma): |\dot\gamma_t|\not=0\}$. Once again using the lower semicontinuity of $\gamma\mapsto |\dot\gamma_t|$ this will imply
 $$\liminf_{k\to\infty}\iint |\dot\gamma_t|^2 dt d\Pi_{n_k}(\gamma)=\int |\dot\gamma_\cdot|\,d{\boldsymbol \pi}_\infty>
 \int |\dot\gamma_\cdot|\,d{\boldsymbol \pi}=\iint |\dot\gamma_t|^2 dt d\Pi(\gamma)$$
 which is a contraction to \eqref{action} from above. Thus ${\boldsymbol \pi}_\infty= {\boldsymbol \pi}$ and hence \eqref{weco} follows and so does in turn \eqref{piecew}.
\end{proof}

\bigskip

Given a bounded continuous $\psi\in W^{1,2}(X)$, let us consider the mm-space $(Y,\d'_Y,\mm'_Y)$ with $\mm'_Y=e^{2\psi}\, (\mm|_Y)=(e^{2\psi}\mm)|_Y
$ and $\d'_Y=(\d_Y)'=(\d')_Y$. On the level of the mm-spaces, it is clear that \emph{restriction and time-change commute}. Hence, the time change of the reflected Brownian motion is equivalent to the reflected motion of the time changed process; and
 the time change of the Neumann heat flow is the Neumann version of the time changed heat flow.
 
 \bigskip
 
 Now let us have a closer look on the transformation of the curvature-dimension condition under time change and restriction.

 \begin{Prop}
 Assume that $(X,\d,\mm)$ satisfies the \RCD$(k,N)$-condition for some finite number $N\ge2$ and some lower bounded,  continuous function $k$. Moreover, assume that 
 $Y$ is locally geodesically convex in $(X,\d')$ where $\d'=e^{\psi}\odot \d$ for some  $\psi\in\Lip_b(X)\cap\Dom^\cont_{loc}(\Delta)$.
Then  for any (extended) number $N'>N$, the mm-space $(Y,\d_Y',\mm_Y')$ satisfies the \RCD$(k',N')$-condition and the \BE$_2(k',N')$-condition with
  $$k':=
e^{-2\psi}[k-\Delta\psi -\frac{(N-2)(N'-2)}{N'-N}  |\nabla \psi|^2].$$
 \end{Prop}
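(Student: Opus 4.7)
The plan is to combine the time-change theorem for the ambient space with the local-to-global property of \RCD along the locally geodesically convex subset $Y$, and finally invoke the equivalence theorem from Section 3 to pass from the Lagrangian to the Eulerian formulation. First I would apply Theorem \ref{time-change}(ii) to the whole space. Since $\psi\in \Lip_b(X)\cap \Dom_\loc^\cont(\Delta)\subset \Lip_\loc(X)\cap \Dom_\loc(\Delta)$, both $\Gamma(\psi)=|\nabla\psi|^2$ and $\Delta\psi$ are locally bounded continuous functions on $X$, whence
$$k'=e^{-2\psi}\Big[k-\Delta\psi-\frac{(N-2)(N'-2)}{N'-N}\,\Gamma(\psi)\Big]$$
is lower bounded and lower semicontinuous on $X$. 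Theorem \ref{time-change}(ii) then yields that the time-changed mm-space $(X,\d',\mm')$ satisfies $\RCD(k',N')$.

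Next I would transfer this condition from $(X,\d',\mm')$ to its restriction $(Y,\d_Y',\mm_Y')$. The key observation is that for two probability measures $\mu_0,\mu_1\in\cP_2(Y,\d_Y')$ of finite entropy, the unique $W_2'$-geodesic $(\mu_r)_{r\in[0,1]}$ connecting them in $\cP_2(X,\d')$ is represented by a measure $\boldsymbol\pi'\in\cP(\Geo(X,\d'))$ concentrated on curves which stay inside $Y$. This follows from the local geodesic convexity of $Y$ in $(X,\d')$ combined with the local-to-global property of $\RCD$ as in Section 4.2 (the same argument as in the proof of the Localization Theorem): one covers $X$ by a family $\{U_i\}$ witnessing local convexity, and the $(k',N')$-entropy convexity along $W_2'$-geodesics with endpoints supported in a small ball is inherited by the restricted space because such geodesics coincide for $(X,\d')$ and $(Y,\d_Y')$. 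Together with the fact (Proposition \ref{coincide}, applied to $(X,\d',\mm')$ which is automatically regularly locally semi-convex thanks to the signed distance function $\pm\d'(\,.\,,\partial Y)$ and the defining assumptions) that $W^{1,2}(Y,\d_Y',\mm_Y')$ is compatible with the ambient Sobolev space and $|D'_Y u|=|D'u|$ on $Y$, infinitesimal Hilbertianness is preserved, so $(Y,\d_Y',\mm_Y')$ is an $\RCD(k',N')$-space.

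Finally I would appeal to Theorem \ref{cd=be}, the equivalence of the Lagrangian curvature-dimension condition $\CD(k',N')$ (which is part of $\RCD(k',N')$) and the Eulerian Bakry-\'Emery condition $\BE_2(k',N')$, applied on $(Y,\d_Y',\mm_Y')$. This yields the claimed $\BE_2(k',N')$ inequality for the Neumann Laplacian on the restricted space, completing the proof.

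I expect the main obstacle to lie in step two: justifying carefully that local geodesic convexity suffices for the restricted mm-space to inherit the \RCD-condition, and in particular verifying that the minimal weak upper gradients and energies of $(Y,\d_Y',\mm_Y')$ agree with those of $(X,\d',\mm')|_Y$ so that the local-to-global mechanism applies without losing the Hilbert structure. Modulo this, Theorem \ref{time-change}(ii) and Theorem \ref{cd=be} do the heavy lifting.
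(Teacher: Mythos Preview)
Your proposal is correct and follows essentially the same three-step scheme as the paper: time-change on the ambient space, restriction to the locally geodesically convex subset, and then the Lagrangian-to-Eulerian equivalence (Theorem~\ref{cd=be}). Your direct appeal to Theorem~\ref{time-change}(ii) for $\RCD(k',N')$ on $(X,\d',\mm')$ is a harmless shortcut over the paper's route, which goes $\CD(k,N)\Rightarrow\BE_2(k,N)\Rightarrow\BE_2(k',N')\Rightarrow\CD(k',N')$ via Theorem~\ref{cd=be} twice and the $\BE$-version of the time-change formula.

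One remark on your step two: the detour through Proposition~\ref{coincide} is unnecessary and slightly confused (regular local semiconvexity is a hypothesis on the open set $Y^0\subset X$, not something automatic for the ambient space, and the proposition concerns $W^{1,2}(Y)=W^{1,2}(Y^0)$ rather than infinitesimal Hilbertianness per se). The paper handles the restriction step more directly: in the Lagrangian formulation, the $\CD(k',N')$-condition is visibly preserved under restriction to a locally geodesically convex subset, since $W_2'$-geodesics between measures supported in $Y$ stay in $Y$; infinitesimal Hilbertianness of $(Y,\d_Y',\mm_Y')$ is already recorded in the preliminary discussion of Section~6.1 via \eqref{E0}. Once you drop that detour, your argument and the paper's coincide.
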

 
 \begin{proof} {\bf i)} To get started, we first employ the equivalence of the Lagrangian and Eulerian formulation of curvature-dimension conditions as formulated in Theorem \ref{cd=be} to conclude that $(X,\d,\mm)$ satisfies the \BE$_2(k,N)$-condition.
 
 {\bf ii)} Next we apply our result on time change, Proposition \ref{time-change} or \cite{han2019}, Theorem 1.1, to conclude that $(X,\d',\mm')$ satisfies the \BE$_2(k',N')$-condition with the given 
 $N'$ and $k'$.
 
 {\bf iii)} Once again referring to Theorem \ref{cd=be}  for the equivalence of the Lagrangian and Eulerian formulation, we conlucde that $(X,\d',\mm')$ satisfies the \RCD$(k',N')$-condition.
 
  {\bf iv)} In the Lagrangian formulation, it is obvious that a curvature-dimension condition is preserved under restriction to locally geodesically convex subsets. Since by assumption $Y$ is locally geodesically convex in $(X,\d')$, it 
  follows that $(Y,\d_Y',\mm_Y')$ satisfies the \RCD$(k',N')$-condition.
  
   {\bf v)} In a final step, we once again employ Theorem \ref{cd=be} to conclude  \BE$_2(k',N')$, the Eulerian version of the curvature-dimension condition.
 \end{proof}

\subsection{Time Re-Change}

We are now going to make a ``time re-change'': we transform the mm-space $(Y,d_Y',m_Y')$ into the mm-space $(X,d_Y,m_Y)$ by time change, now with $-\psi$  in the place of $\psi$  and with $(k',N')$ in the place of $(k,N)$.

The main challenge will arise from the two conflicting requirements:
\begin{itemize}
\item
$|\nabla\psi|\not=0$ on
$\partial Y$ in order to make use of  the Convexification Theorem 
\item  $\psi\in\Dom(\Delta^Y)$ (which essentially requires $|\nabla\psi|=0$ on
$\partial Y$) in order to control the Ricci curvature under the ``time re-change''.
\end{itemize}

To overcome this conflict, we have developed the concept of $W^{-1,\infty}$-valued Ricci bounds  which will allow us to work with the distribution $\underline\Delta^Y\psi$. More precisely, the crucial ingredient in our estimate will be the distribution
$\underline\Delta^Y\psi\big|_{\partial Y}:=\underline\Delta^Y\psi-\Delta\psi\,\mm|_Y\in W^{-1,\infty}$ defined as
\begin{equation}\langle \underline\Delta^Y\psi\big|_{\partial Y},f\rangle
:=-\int_Y[\Gamma(\psi,f)+\Delta\psi\,f]\,d\mm\qquad(\forall f\in W^{1,1+}(Y)).
\end{equation}
Note that this distribution indeed is supported on the boundary of $Y$ in the sense that $\psi=\psi'$ on a neighborhood of $\partial Y$  implies
\begin{equation}\label{locality of k}\underline\Delta^Y\psi\big|_{\partial Y}=\underline\Delta^Y\psi'\big|_{\partial Y}.
\end{equation}

\begin{thm}\label{k-for-rechange} Assume that $(X,\d,\mm)$ satisfies the \RCD$(k,N)$-condition for some finite number $N\ge2$ and some lower bounded, continuous function $k$. Moreover, assume that 
 $Y$ is locally geodesically convex in $(X,\d')$ where $\d'=e^{\psi}\odot \d$ for some  $\psi\in\Dom_\infty^\cont(\Delta)$ with $\psi=0$ on $\partial Y$. Then the mm-space $(Y,\d_Y,\mm_Y)$ satisfies the \BE$_1(\kappa,\infty)$-condition with 
\begin{equation}\kappa=k \,\mm_Y
+\underline\Delta^Y\psi\big|_{\partial Y}.
\end{equation}
\end{thm}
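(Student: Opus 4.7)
The plan is to execute the four-step strategy sketched in the introduction: time-change $(X,\d,\mm)$ to a space in which $Y$ is convex, restrict to $Y$, time re-change back, and read off the resulting Ricci distribution. First, by Theorem~\ref{time-change} applied to $\RCD(k,N)$ with weight $e^\psi$ (and $\psi\in\Dom_\infty^\cont(\Delta)\subset\Lip_b(X)\cap\Dom_\loc(\Delta)$ thanks to the Sobolev-to-Lipschitz property on $\RCD$-spaces), the conformally changed space $(X,\d',\mm')$ satisfies $\RCD(k',N')$ for any $N'>N$, with
$$k'=e^{-2\psi}\Bigl[k-\Delta\psi-\tfrac{(N-2)(N'-2)}{N'-N}\,\Gamma(\psi)\Bigr].$$
Local geodesic convexity of $Y$ in $(X,\d')$ then allows me to restrict, yielding $\RCD(k',N')$ on $(Y,\d'_Y,\mm'_Y)$; Proposition~\ref{coincide} ensures $W^{1,2}(Y)=W^{1,2}(Y^0)$ so the restriction is intrinsic, and Theorem~\ref{cd=be} promotes this to $\BE_2(k',N')$.

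Next I would perform the ``time re-change'' by $e^{-\psi}$. Since $-\psi\in\Lip_b(Y)$ and $N'$ is finite, Theorem~\ref{sing-time-change} applied to $(Y,\d'_Y,\mm'_Y)$ delivers $\BE_1(\hat\kappa,\infty)$ on the re-changed mm-space $(Y,e^{-\psi}\odot\d'_Y,e^{-2\psi}\mm'_Y)=(Y,\d_Y,\mm_Y)$, with
$$\hat\kappa=\bigl[k'-(N'-2)\,\Gamma'(\psi)\bigr]\mm'_Y+\underline\Delta'\psi,$$
where $\Gamma'$ and $\underline\Delta'$ refer to $(Y,\d'_Y,\mm'_Y)$. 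Two conformal identities valid on $Y^0$ now collapse the expression onto the original data: $\Gamma'(\psi)\,d\mm'_Y=\Gamma(\psi)\,d\mm_Y$ (the $e^{-2\psi}$ in $\Gamma'$ cancels the $e^{2\psi}$ in $\mm'_Y$), and correspondingly $\underline\Delta'\psi=\underline\Delta^Y\psi$ in $W^{-1,\infty}(Y)$. Substituting the explicit formula for $k'\mm'_Y$ and invoking the defining decomposition $\underline\Delta^Y\psi=\Delta\psi\,\mm_Y+\underline\Delta^Y\psi|_{\partial Y}$ — which is meaningful precisely because $\psi\in\Dom_\infty^\cont(\Delta)$ — the two $\Delta\psi\,\mm_Y$ contributions cancel exactly, and the bulk reduces to $k\,\mm_Y$ plus a residual bulk $\Gamma(\psi)$-correction with coefficient $-\tfrac{(N'-2)^2}{N'-N}$.

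The hard part will be disposing of this residual correction, since it is strictly non-zero for any finite $N'>N$; my plan is to absorb it by exploiting that $\BE_1$ is the dimension-free weakening of $\BE_2$ (the dimensional $\tfrac1{N'}(\Delta f)^2$ term has already been sacrificed), either by optimising in $N'$ or by passing to the limit $N'\to\infty$ at the level of the $\BE_1$-inequality rather than of the $k'$-formula. A second subtlety is the locality~\eqref{locality of k}: it guarantees that $\underline\Delta^Y\psi|_{\partial Y}$ depends only on the germ of $\psi$ near $\partial Y$, so the $\psi$ built in Section~5 to convexify $Y$ simultaneously encodes the correct boundary Ricci contribution here, and the bookkeeping is internally consistent. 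Assembling these ingredients yields the asserted $\BE_1\bigl(k\,\mm_Y+\underline\Delta^Y\psi|_{\partial Y},\infty\bigr)$-condition on $(Y,\d_Y,\mm_Y)$.
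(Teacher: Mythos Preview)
Your first three steps are correct, and your residual coefficient $-\frac{(N'-2)^2}{N'-N}$ for the bulk $\Gamma(\psi)$-term is computed correctly. The gap is in ``the hard part'': neither of the two mechanisms you propose can remove this term. Sending $N'\to\infty$ gives $\frac{(N'-2)^2}{N'-N}\sim N'\to\infty$, making the bound worse, not better. Optimising in $N'>N$ is better but still insufficient: writing $t=N'-N$ one has $\frac{(t+N-2)^2}{t}=t+2(N-2)+\frac{(N-2)^2}{t}\ge 4(N-2)$, with equality at $N'=2(N-1)$; so the best you can reach this way is $\kappa=[k-4(N-2)\Gamma(\psi)]\mm_Y+\underline\Delta^Y\psi|_{\partial Y}$, which is exactly what the paper obtains at the end of its step~(i). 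For $N>2$ the residual is strictly negative and does not vanish. Your remark that the $\tfrac1{N'}(\Delta f)^2$ term has been sacrificed is true but irrelevant: that sacrifice happened in passing from $\BE_2$ to $\BE_1$ in Theorem~\ref{sing-time-change}, and buys nothing further for the $\Gamma(\psi)$-coefficient.

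The paper's actual device is the one you relegated to a ``second subtlety''. Locality~\eqref{locality of k} is not bookkeeping --- it is the key lemma. Having fixed $N'=2(N-1)$ and obtained the $\BE_1$-bound with residual $-4(N-2)\Gamma(\psi)$, one replaces $\psi$ by a sequence $\psi_n\in\Dom_\infty^\cont(\Delta)$ with $\psi_n=\psi$ on $\B_{1/n}(\partial Y)$, with $|\nabla\psi_n|$ uniformly bounded, and with $|\nabla\psi_n|\to0$ $\mm$-a.e.\ on $Y$ (built via the cut-off functions of Lemma~\ref{cutoff}). Since $\psi_n$ agrees with $\psi$ near $\partial Y$, the set $Y$ is still locally geodesically convex in $(X,e^{\psi_n}\odot\d)$, and by~\eqref{locality of k} one has $\underline\Delta^Y\psi_n|_{\partial Y}=\underline\Delta^Y\psi|_{\partial Y}$. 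Applying the whole argument with $\psi_n$ in place of $\psi$ yields $\BE_1(\kappa_n,\infty)$ for $(Y,\d_Y,\mm_Y)$ with $\kappa_n=[k-4(N-2)\Gamma(\psi_n)]\mm_Y+\underline\Delta^Y\psi|_{\partial Y}$. The crucial point is that the target space $(Y,\d_Y,\mm_Y)$ is independent of $n$, so one may let $n\to\infty$ and conclude $\BE_1(k\,\mm_Y+\underline\Delta^Y\psi|_{\partial Y},\infty)$.
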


\begin{proof} {\bf i)} \ 
Let  $\psi\in\Dom^\cont_\infty(\Delta)$, put 
 $\d'=e^{\psi}\odot \d$ and $N'=2(N-1)$. Then according to Theorem \ref{time-change},
the mm-space
 $(X,\d',\mm')$ satisfies the \BE$_2(k',N')$-condition with 
 $$k'=
e^{-2\psi}[k-\Delta\psi -2(N-2)  |\nabla \psi|^2].$$ 
 Since by assumption $Y$ is locally geodesically convex, according to the previous Proposition, the mm-space
 $(Y,\d_Y',\mm_Y')$ also satisfies the \BE$_2(k',N')$-condition with the same $k'$.
 
 On the space $Y$, let us now perform a time change with the weight function $-\psi$ (``time re-change'') to get back
 $$\d_Y=e^{-\psi}\odot \d'_Y, \ \mm_Y=e^{-2\psi}\odot \mm'_Y.$$
 According to Theorem \ref{sing-time-change}, the mm-space $(X,\d_Y,\mm_Y)$ will satisfy $\BE_1(\kappa,\infty)$ with
 \begin{eqnarray}\label{kn-formel}
\kappa&=&\Big[k'-(N'-2)|\nabla'\psi|^2\Big]\mm_Y'+\underline\Delta'_Y\psi\nonumber\\
&=&\Big[k-\Delta\psi-4(N-2)|\nabla\psi|^2\Big]\mm_Y+\underline\Delta_Y\psi\nonumber\\
&=&\Big[k-4(N-2)|\nabla\psi|^2\Big]\mm_Y+\underline\Delta_Y\psi\big|_{\partial Y}.
\label{k_n}
\end{eqnarray}

\medskip
{\bf ii)} \ 
In a final approximation step, we now want to get rid of the term $-4(N-2)  |\nabla \psi|^2\,\mm_Y$ in the  previous distributional Ricci bound $\kappa$.

Given $\psi$ as  above, we define a sequence of functions $\psi_n$ with the same properties as $\psi$, with $\psi_n=\psi$ on $\B_{1/n}(\partial Y)$, with $|\nabla\psi_n|$ being  bounded, uniformly in $n$, and with
$$|\nabla\psi_n|\to0\quad\mm\text{-a.e. on }Y$$ as $n\to\infty$. 
This can easily be achieved by means of the truncation functions from Lemma 4.4.

Then according to \eqref{kappa,n} in the previous part of this proof, for each $n\in \N$ the mm-space $(Y,\d_Y,\mm_Y)$ satisfies the \BE$_1(\kappa_n,\infty)$-condition with 
\begin{eqnarray*}\kappa_n&=&
[k -4(N-2)  |\nabla \psi_n|^2]\,\mm_Y+\underline\Delta^Y\psi_n\big|_{\partial Y}\\
&=&
[k -4(N-2)  |\nabla \psi_n|^2]\,\mm_Y+\underline\Delta^Y\psi\big|_{\partial Y}
\end{eqnarray*}
where the last equality is due to \eqref{locality of k}.
Since the mm-space under consideration does not depend on $n$, this obviously implies the 
 \BE$_1(\kappa,\infty)$-condition with 
$\kappa=
k \,\mm_Y+\underline\Delta^Y\psi\big|_{\partial Y}$.
\end{proof}

\paragraph{Summary.}
Let us illustrate the strategy of the argumentation for the proof of the previous Theorem \ref{k-for-rechange} in a diagram.

\bigskip

$$
\begin{array}{ccccc}
{\color{blue}(X,\d,\mm)}&\
&\CD(k,N) & \quad \Rightarrow \quad& \BE_2(k,N)\\

\color{red}\downarrow&&&&\\
 \mbox{\color{red} time change, convexification}&&&& \Downarrow \quad\\
\color{red}\downarrow&&&&\\

{\color{blue}(X,\d',\mm')}&&
\CD(k',N') &\quad \Leftarrow \quad& \BE_2(k',N')\\

\color{red}\downarrow&&&&\\
\mbox{\color{red} restriction to convex subset}&& \Downarrow&&  \\
\color{red}\downarrow&&&&\\

{\color{blue}(Y,\d'_Y,\mm'_Y)}&&\CD(k',N') & \quad \Rightarrow \quad& \BE_2(k',N')\\

\color{red}\downarrow&&&&\\
\mbox{\color{red} time re-change}&&&& \Downarrow\\
\color{red}\downarrow&&&&\\
{\color{blue}(Y,\d_Y,\mm_Y)}&&& & \BE_1(\kappa,\infty) \\
\end{array}
$$

\bigskip

where
$m'=e^{2\psi} m,\quad
\d'(x,y)=(e^\psi\odot \d)(x,y):=
\inf_{\gamma_0=x,\gamma_1=y}\int_0^1e^{\psi(\gamma_s)}|\dot\gamma_s|ds$. 

\bigskip

\begin{cor}
Under the assumptions of the previous Theorem, the (``Neumann'') heat semigroup on $(Y,\d_Y,\mm_Y)$ satisfies a gradient estimate of the  type:
\begin{equation}
\big|\nabla P_t^Yf\big|(x)\le \EE_x\Big[e^{-\frac12\int_0^{2t} k(B^Y_s)ds+N^{\partial Y,\psi}_{2t}}\cdot\big|\nabla f(B^Y_{2t})\big|\,\Big]
\end{equation}
with
\begin{eqnarray}\label{partial N}N^{\partial Y,\psi}_{t}&:=&N^{Y,\psi}_{t}-\frac12\int_0^t\Delta\psi(B^Y_s)ds\nonumber\\
&=&\psi(B^Y_t)-\psi(B^Y_0)-M^{Y,\psi}_{t}-\frac12\int_0^t\Delta\psi(B^Y_s)ds
\end{eqnarray}
where $M^{Y,\psi}$ and  $N^{Y,\psi}$ denote the local martingale and local additive functional of vanishing quadratic variation w.r.t.~$(\PP^Y_x,B^Y_t)$ in the Fukushima decomposition of $\psi(B^Y_t)$.
\end{cor}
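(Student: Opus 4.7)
The plan is to combine the $\BE_1(\kappa,\infty)$ condition from Theorem~\ref{k-for-rechange} with the equivalence between $\BE_1$ and the $L^1$-gradient estimate (Theorem~\ref{L1-grad}), and then to identify the resulting taming semigroup via the probabilistic representation developed in Section~2. Theorem~\ref{k-for-rechange} provides that $(Y,\d_Y,\mm_Y)$ satisfies $\BE_1(\kappa,\infty)$ with $\kappa = k\,\mm_Y + \underline\Delta^Y\psi\big|_{\partial Y}$, and Theorem~\ref{L1-grad} then yields
\[
\bigl|\nabla P_t^Y f\bigr|\le P_t^{\kappa,Y}\bigl(|\nabla f|\bigr),
\]
where $(P_t^{\kappa,Y})_{t\ge 0}$ denotes the taming semigroup on $(Y,\d_Y,\mm_Y)$ associated with $\kappa$.

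The remaining task is to express $P_t^{\kappa,Y}$ as a Feynman--Kac expectation with respect to the reflected Brownian motion $(\PP_x^Y,B_t^Y)$. I would split
\[
\kappa = (k-\Delta\psi)\,\mm_Y + \underline\Delta^Y\psi,
\]
which is the canonical form $\phi\,\mm_Y - \underline\Delta^Y(-\psi)$ appearing in the double-potential Corollary following Proposition~\ref{psi-form}, with bounded potential $\phi = k-\Delta\psi \in L^\infty(Y)$ (well-defined since $\psi\in\D_\infty^\cont(\Delta)$). Applying that Corollary on $(Y,\d_Y,\mm_Y)$ with $-\psi$ in place of $\psi$ expresses $P_t^{\kappa,Y}g$ as an expectation involving the exponential of $-\tfrac12\int_0^{2t}\phi(B_s^Y)\,ds + N_{2t}^{Y,-\psi}$. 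The linearity of the Fukushima decomposition gives $M^{Y,-\psi}=-M^{Y,\psi}$ and hence $N^{Y,-\psi}=-N^{Y,\psi}$, so that after regrouping the $\Delta\psi$-integral with $N^{Y,\psi}_{2t}$ one recognises the combined quantity $N_{2t}^{\partial Y,\psi}$ defined in~\eqref{partial N}. Specialising to $g = |\nabla f|$ yields the claimed bound, and the equivalent explicit form is then immediate from the Fukushima decomposition $\psi(B_t^Y)=\psi(B_0^Y)+M_t^{Y,\psi}+N_t^{Y,\psi}$.

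The principal technical point is to justify this Feynman--Kac representation for the \emph{distributional} Ricci lower bound $\kappa$ on the \emph{restricted} space $(Y,\d_Y,\mm_Y)$, where in general $\psi\notin\D_\loc(\Delta^Y)$ because of the boundary $\partial Y$. The locality property~\eqref{locality of k} of $\underline\Delta^Y\psi\big|_{\partial Y}$ is what guarantees that the relevant contribution depends only on $\psi$ in a neighbourhood of $\partial Y$ and is absorbed into the additive functional $N^{\partial Y,\psi}$; a sanity check via the smooth Riemannian case, in which a Tanaka-type formula for reflected Brownian motion identifies $N_t^{\partial Y,\psi}$ with $-\tfrac12\int_0^t\partial_n\psi(B_s^Y)\,dL_s^{\partial Y}$, reproduces exactly the gradient estimate~\eqref{initial-grad} from the Introduction. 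The bookkeeping of the $\tfrac12$-factors arising from the convention $P_{t/2}f(x)=\EE_x[f(B_t)]$ (cf.~\eqref{BM}) is routine once the above structural identification is in place.
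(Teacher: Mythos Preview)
Your proposal is correct and follows precisely the route the paper intends: the Corollary has no explicit proof in the paper because it is meant to follow immediately from Theorem~\ref{k-for-rechange} combined with Theorem~\ref{L1-grad} and the probabilistic representation of the taming semigroup from Section~2 (specifically the double-potential Corollary after Proposition~\ref{psi-form}). Your decomposition $\kappa=(k-\Delta\psi)\,\mm_Y-\underline\Delta^Y(-\psi)$, the application of that Corollary on $(Y,\d_Y,\mm_Y)$ with $-\psi$ in the r\^ole of $\psi$, and the regrouping via $N^{Y,-\psi}=-N^{Y,\psi}$ are exactly the intended steps.
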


\begin{ex} Consider a time-change of the standard 2-dimensional  metric measure space $(\R^2,\d_{\sf Euc},\mm_{\sf Leb})$ induced by  a function  $\psi:\R^2\to\R$ where $\psi(x_1,x_2)=\varphi(x_1)\cdot \eta(x_2)$
 for some  $\varphi, \eta\in {\mathcal C}^2(\R)$  with $\eta(0)=0$ and $\eta'(0)=1$.
Recall that the time-changed mm-space $(\R^2,\d',\mm')$ with $d':=e^{\psi}\odot \d_{\sf Euc},
\mm':=e^{2\psi}\,\mm_{\sf Leb}$ satisfies $\BE_1(k,\infty)$ with 
$$k=-e^{-2\psi}\Delta\psi=-e^{-2\varphi\,\eta}\big(\varphi''\,\eta+\varphi\, \eta''\big).$$
 Now consider the restriction to the upper halfplane $Y=\R\times\R_+$ which is convex w.r.t.~$\d_{\sf Euc}$ but higly non-convex  w.r.t.~$\d'$. According to Theorem \ref{k-for-rechange}  (applied to  $\d'$ and  $\d_{\sf Euc}$ in the place of  of $\d$ and $\d'$, resp., and with $\psi$ replaced by $-\psi$), the boundary effect amounts in an additional contribution in the Ricci bound given by $$-\underline\Delta_\infty^Y\psi\big|_{\partial Y}=-\underline\Delta^Y\psi\big|_{\partial Y}
 =-(\varphi\,{\mathcal L}^1)\otimes\delta_0.$$
 Indeed, the distribution in turn can be identified with the signed measure since for sufficiently smooth $f:\R^2\to\R$,
 \begin{eqnarray*}
 \langle -\underline\Delta^Y\psi\big|_{\partial Y},f\rangle&=&
 \int_Y\big[\nabla\psi+f\,\Delta\psi\Big]d\mm\\
 &=&\int_\R \Big[\varphi\int_{\R_+}\big[\eta'\,f'+f\,\eta''\big]dx_1\Big]\,dx_2
 =-\int_\R \Big[\varphi(x_1)\eta'(0)f(x_1,0)\Big]\,dx_2.
 \end{eqnarray*}
\end{ex}

\subsection{Boundary Measure and Boundary Local Time}

Let $V:X\to\R$ denote the \emph{signed distance function} from $\partial Y$ (being positive outside $Y$ and negative in the interior of $Y$), i.e.,
$$V:=\d(.,Y)-\d(.,X\setminus Y).$$
Then $V^+:=\d(.,Y)$ and $V^-:=\d(.,X\setminus Y)$. 

We say that $Y$ has the \emph{$W^{1,1+}$-extension property} if $W^{1,1+}(Y)=W^{1,1+}(X)\big|_Y$, that is, if every function $u\in W^{1,1+}(Y)$ can be extended to a function $u'\in  W^{1,1+}(X)$ such that $u'|_Y=u$.
We say that $Y$ has \emph{regular boundary} if it has the $W^{1,1+}$-extension property and  if $V\in \D_\infty^\cont(\Delta)$.

\begin{lma}\label{bdy-measure} Assume that $Y$ has regular boundary. 

{\bf i)} Then the distribution $-\underline\Delta^Y V\big|_{\partial Y}$ is given by a nonnegative measure $\sigma$ supported on $\partial Y$,
denoted henceforth by $\sigma_{\partial Y}$ and 
called \emph{surface measure of $\partial Y$}.

More precisely, there exists a  nonnegative Borel measure $\sigma$ on $X$ which is supported on $\partial Y$ and which does not charge sets of vanishing capacity   such that for all bounded, quasi-continuous $f\in\Dom(\E)$
\begin{eqnarray*}\int_{\partial Y}  f\,d\sigma&=&-\big\langle \underline\Delta^Y V\big|_{\partial Y}, f\big\rangle:=
\int_Y\big[\Gamma(V,f)+\Delta V\,f\big]\,d\mm\\
&=&-
\int_{X\setminus Y}\big[\Gamma(V,f)+\Delta V\,f\big]\,d\mm.
\end{eqnarray*}

{\bf ii)}  The local additive functional of vanishing quadratic variation $N^{\partial Y,-V}$ as defined in \eqref{partial N} (with $-V$ in the place of $\psi$) coincides with the PCAF (= ``positive continuous additive functional'') associated to $\sigma_{\partial Y}$ via Revuz correspondence  (w.r.t.~the Brownian motion $(\PP^Y_x,B^Y_t)$ on $Y$) which henceforth  will be denoted by $L^{\partial Y}=(L^{\partial Y}_t)_{t\ge0}$
and called \emph{local time of $\partial Y$}.
In other words,
$$L^{\partial Y}_t=V(B^Y_0)-V(B^Y_t)+\frac12\int_0^t\Delta V(B^Y_s)ds+\text{local martingale}.$$
\end{lma}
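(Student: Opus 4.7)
For part (i), my plan is to realize $-\underline\Delta^Y V\big|_{\partial Y}$ as the weak limit of absolutely continuous measures concentrated in an ever thinner outer layer of $Y$. Pick $\eta_\epsilon\in{\mathcal C}^\infty(\R)$ nondecreasing with $\eta_\epsilon=0$ on $(-\infty,0]$ and $\eta_\epsilon=1$ on $[\epsilon,\infty)$, and set $g_\epsilon:=1-\eta_\epsilon\circ V\in W^{1,2}(X)$, so that $g_\epsilon=1$ on $Y$ and $g_\epsilon=0$ on $\{V\ge\epsilon\}$. For bounded $f\in W^{1,2}(X)$, combining the global integration by parts $\int_X\Gamma(g_\epsilon f,V)\,d\mm=-\int_X g_\epsilon f\,\Delta V\,d\mm$ (legal since $V\in\D_\infty^\cont(\Delta)$) with the Leibniz rule $\Gamma(g_\epsilon f,V)=g_\epsilon\Gamma(f,V)-f\,\eta'_\epsilon(V)\,\Gamma(V)$ produces the key identity
\[
\int_X g_\epsilon\big[\Gamma(V,f)+f\,\Delta V\big]\,d\mm=\int_X f\,\eta'_\epsilon(V)\,\Gamma(V)\,d\mm.
\]
As $\epsilon\downarrow 0$ the LHS tends by dominated convergence to $T(f):=\int_Y[\Gamma(V,f)+f\,\Delta V]\,d\mm$, while for $f\ge 0$ the RHS is nonnegative and has mass asymptotically concentrated on $\{V=0\}=\partial Y$. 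Hence $T$ is a nonnegative linear form, and Riesz-Markov provides a Radon measure $\sigma$ on $\partial Y$ representing it. Vanishing of $T$ on $f$ supported in $Y^0$ is immediate from plain integration by parts and confirms the support statement, and the equality of the two integral expressions in the lemma is the global identity $\int_X[\Gamma(V,f)+f\,\Delta V]\,d\mm=0$ split into the pieces over $Y$ and $X\setminus Y$. Extension to all bounded quasi-continuous $f\in\Dom(\E^Y)$ invokes the $W^{1,1+}$-extension property to pull $f$ back to an element of $W^{1,1+}(X)$.

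For part (ii) I apply the Fukushima decomposition to $\psi=-V\in\Lip_b(Y)\cap W^{1,2}(Y)$ for the reflected Brownian motion on $Y$, obtaining $-V(B^Y_t)+V(B^Y_0)=M^{Y,-V}_t+N^{Y,-V}_t$ with $M^{Y,-V}$ a local martingale additive functional and $N^{Y,-V}$ a CAF of vanishing energy. Part (i) computes the distributional Neumann Laplacian $\underline\Delta^Y(-V)=-\Delta V\,\mm_Y+\sigma_{\partial Y}$ as a smooth signed measure. The generalized Revuz-Fukushima correspondence (see Chapter 5 of \cite{fukushima2011}) then splits $N^{Y,-V}$ accordingly into its absolutely continuous and singular parts,
\[
N^{Y,-V}_t=-\tfrac12\int_0^t\Delta V(B^Y_s)\,ds+L^{\partial Y}_t,
\]
where $L^{\partial Y}$ denotes, in the normalization used here, the PCAF with Revuz measure $\sigma_{\partial Y}$. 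Substituting into the definition \eqref{partial N} of $N^{\partial Y,-V}$ the absolutely continuous contributions cancel and we obtain $N^{\partial Y,-V}=L^{\partial Y}$; rearranging the Fukushima decomposition and renaming $-M^{Y,-V}$ as the local martingale term delivers the explicit formula stated in the lemma.

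The main obstacle is to legitimise the extended Fukushima-Revuz correspondence with a \emph{singular boundary} Revuz measure in the abstract mm-space setting, where $\partial Y$ need not possess any classical smooth structure. This is precisely what the ``regular boundary'' hypothesis is designed to deliver: $V\in\D_\infty^\cont(\Delta)$ makes the approximation of part (i) rigorous and controls all error terms uniformly, while the $W^{1,1+}$-extension property ensures that $\sigma_{\partial Y}$ is a smooth measure in the sense of quasi-regular Dirichlet-form theory, so that the general machinery of \cite{fukushima2011,chen2012} applies without modification. That $\sigma_{\partial Y}$ does not charge polar sets is then an automatic by-product of its identification as a Revuz measure.
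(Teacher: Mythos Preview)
Your argument is essentially correct and, for part (ii), coincides with the paper's. For part (i) the two proofs diverge in how nonnegativity of the functional $T(f)=\int_Y[\Gamma(V,f)+f\,\Delta V]\,d\mm$ on nonnegative test functions is obtained. The paper argues on the \emph{complementary} region: writing $-\int_{X\setminus Y}\Gamma(V,f)\,d\mm=-\int_{\{V>0\}}\Gamma(V^+,f)\,d\mm$ by locality, it compares $P_tV^+\ge P_tV$ through the heat semigroup to deduce $-\int_{X\setminus Y}\Gamma(V,f)\,d\mm\ge\int_{X\setminus Y}\Delta V\,f\,d\mm$, which combined with the global identity yields $T(f)\ge0$. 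Your cutoff computation $\int_X g_\epsilon[\Gamma(V,f)+f\Delta V]\,d\mm=\int_X f\,\eta'_\epsilon(V)\,\Gamma(V)\,d\mm$ reaches the same conclusion more transparently, since the right side is visibly nonnegative; it also makes the support claim (concentration on $\{V=0\}$) immediate. Both routes then invoke Riesz--Markov--Kakutani.

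There is one logical slip in your write-up. You say that $\sigma_{\partial Y}$ not charging polar sets is ``an automatic by-product of its identification as a Revuz measure'', but this is circular: the Revuz correspondence in (ii) \emph{requires} $\sigma$ to be smooth (of finite energy), so smoothness must be established already in (i). The paper does this by the direct estimate
\[
\Big|\int f\,d\sigma\Big|=\big|T(f)\big|\le C\cdot\mm(X')^{1/2}\cdot\E(f)^{1/2}
\]
for $f$ supported in a set $X'$ of finite volume, using only $\Gamma(V),\Delta V\in L^\infty$; this gives finite energy and hence (\cite{fukushima2011}, Lemma~2.2.3) no charge on polar sets. Your framework yields the same bound immediately from $|T(f)|\le\|\nabla V\|_\infty\|\nabla f\|_{L^1(Y)}+\|\Delta V\|_\infty\|f\|_{L^1(Y)}$, so you should insert this step before moving to (ii). A second minor point: $V\in\D_\infty^{\mathrm{cont}}(\Delta)$ is only $\D_{\mathrm{loc}}(\Delta)$, so your ``global integration by parts'' should be read through the extended pairing of Lemma~\ref{lem-extend-e} (or localized by a second cutoff), which is harmless since $g_\epsilon f\in W^{1,1+}(X)$ and $V\in W^{1,\infty}(X)$.
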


\begin{proof} {\bf i)}
The equality $\int_Y\big[\Gamma(V,f)+\Delta V\,f\big]\,d\mm
=-
\int_{X\setminus Y}\big[\Gamma(V,f)+\Delta V\,f\big]\,d\mm$ obviously holds for all $V\in\D(\Delta)$ and $f\in \D(\E)$. 
On the open set $X\setminus Y$, locality of $\Delta$ implies
\begin{eqnarray*}
-\int_{X\setminus Y}\Gamma(V,f)\,d\mm
&=&
-\int_{\{V>0\}}\Gamma( V^+,f)\,d\mm\\
&=&\lim_{t\to0}\frac1t\int_{\{V>0\}}\big(P_tV^+-V^+\big)f\,d\mm\\ 
&\ge&\lim_{t\to0}\frac1t\int_{\{V>0\}}\big(P_tV-V\big)f\,d\mm\\ 
&=&\int_{\{V>0\}}\Delta V\, f\,d\mm
\end{eqnarray*}
for nonnegative $f\in\Dom(\E)$. In other words,
$\int_Y\big[\Gamma(V,f)+\Delta V\,f\big]\,d\mm\ge0$.
This extends to all nonnegative $f\in W^{1,1+}(X)$ if $\Gamma(V), \Delta V\in L^\infty(X)$. Moreover, due to the extension property which we assumed, it extends to all nonnegative $f\in W^{1,1+}(Y)$.
Thus 
$$-\big\langle \underline\Delta^Y V\big|_{\partial Y}, f\big\rangle\ge0$$
for all nonnegative $f\in W^{1,1+}(Y)$. According to the Riesz-Markov-Kakutani Representation theorem, the distribution $-\underline\Delta^Y V\big|_{\partial Y}$ therefore is given by a Borel measure on $X$, say $\sigma$. Obviously, this measure is supported by $\partial Y$.

Moreover, on each set $X'\subset X$ of finite volume, this measure $\sigma$ has finite energy:
$$\Big|\int f\,d\sigma\Big|=\Big|\big\langle \underline\Delta^Y V\big|_{\partial Y}, f\big\rangle\Big|\le
C\cdot\|f\|_{W^{1,1+}}\le C\cdot \mm(X')^{1/2}\cdot \E(f)^{1/2}$$
for all $f\in\Dom(\E)$ which are supported in $X'$. Thus $\sigma$ does not charge sets of vanishing capacity, \cite{fukushima2011} , Lemma 2.2.3.

{\bf ii)} The fact that $-\underline\Delta^Y V\big|_{\partial Y}$ is a nonnegative measure (of finite energy) implies that $\underline\Delta^Y V$ is a signed measure (of finite energy). Hence, $\underline\Delta^Y V$ and $N^{Y,V}$ are related to each other via Revuz correspondence. And of course the signed measure $\Delta V\, \mm_Y$ corresponds to the additive functional
$(\int_0^t \Delta V(B_s^Y)ds)_{t\ge0}$.
\end{proof}

\begin{lma}\label{IbP} 
Assume that the
``integration-by-parts formula'' holds true for $Y$ with some measure $\sigma$ on $\partial Y$ (charging no sets of vanishing capacity): $\forall f\in\Dom(\Delta), g\in\Dom(\E)$
\begin{equation}
\int_Y \Gamma(f,g)\,d\mm+\int_Y \Delta f\,g\,d\mm=\int_{\partial Y}\tilde\Gamma(f,V)\tilde g\,d\sigma\end{equation}
with $\tilde g$ and $\tilde\Gamma(f,V)$ denoting the quasi continuous versions of $g$ and $\Gamma(f,V)$, resp.
Then $\sigma=\sigma_{\partial Y}.$ 
\end{lma}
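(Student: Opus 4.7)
The plan is to test the hypothesized integration-by-parts formula against the specific choice $f = V$, the signed distance function, which lies in $\Dom(\Delta)$ by the regular-boundary assumption ($V \in \D_\infty^\cont(\Delta)$), and then match the resulting boundary integral with the defining identity for $\sigma_{\partial Y}$ from Lemma \ref{bdy-measure}.

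The first ingredient is to verify that $\tilde\Gamma(V, V) = 1$ quasi-everywhere on $\partial Y$. The signed distance function is 1-Lipschitz, so $\Gamma(V) \le 1$ $\mm$-a.e.; on the other hand, in a small one-sided tubular neighborhood of $\partial Y$, $V$ agrees (up to sign) with the metric distance to $\partial Y$, whose minimal weak upper gradient equals 1 $\mm$-a.e.\ because outward unit-speed geodesics realize the Lipschitz constant. Since the regular-boundary hypothesis guarantees $\Gamma(V) \in \mathcal{C}(X)$, continuity forces $\Gamma(V) \equiv 1$ on a neighborhood of $\partial Y$, hence in particular $\tilde\Gamma(V, V) = 1$ on $\partial Y$.

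With this in hand, substituting $f = V$ in the assumed integration-by-parts formula gives, for every bounded quasi-continuous $g \in \Dom(\E)$,
\begin{equation*}
\int_{\partial Y} \tilde g \, d\sigma \;=\; \int_Y \Gamma(V,g)\,d\mm + \int_Y \Delta V \, g \,d\mm \;=\; \big\langle -\underline\Delta^Y V\big|_{\partial Y},\, g\big\rangle \;=\; \int_{\partial Y}\tilde g\,d\sigma_{\partial Y},
\end{equation*}
where the last equality is precisely the definition of $\sigma_{\partial Y}$ in Lemma \ref{bdy-measure}(i). Since both $\sigma$ and $\sigma_{\partial Y}$ are assumed (resp.\ proven) not to charge sets of vanishing capacity, and since the bounded quasi-continuous elements of $\Dom(\E)$ form a separating class for such measures on $\partial Y$ (one may, e.g., produce quasi-continuous approximations of indicators of relatively quasi-open subsets of $\partial Y$ via cutoff functions in $\Dom(\E)$), the identity $\int \tilde g\,d\sigma = \int \tilde g\,d\sigma_{\partial Y}$ propagates to all Borel sets, yielding $\sigma = \sigma_{\partial Y}$.

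The only delicate step is the identification $\tilde\Gamma(V,V) = 1$ on $\partial Y$: if one only had $V \in W^{1,\infty}(X)$, this equality would hold only $\mm$-a.e.\ and would not survive restriction to the $\mm$-null set $\partial Y$. It is precisely the continuity of $\Gamma(V)$ furnished by $V \in \D_\infty^\cont(\Delta)$ that allows us to evaluate $\Gamma(V,V)$ pointwise on $\partial Y$ and so reduce the lemma to the definition of the surface measure.
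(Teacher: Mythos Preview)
Your proof is correct and follows essentially the same approach as the paper: substitute $f=V$ into the assumed integration-by-parts formula and identify the result with the defining relation for $\sigma_{\partial Y}$ from Lemma~\ref{bdy-measure}. The paper's own proof is a one-liner that leaves implicit both the identity $\tilde\Gamma(V,V)=1$ on $\partial Y$ and the passage from equality of integrals to equality of measures; you spell out both, and your remark that the continuity of $\Gamma(V)$ (from $V\in\D_\infty^\cont(\Delta)$) is what makes the pointwise evaluation on $\partial Y$ legitimate is exactly the right observation.
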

Note that for $ f\in\Dom(\Delta^Y)$, the above formula -- with vanishing RHS -- is trivial.

\begin{proof}
Applying the Integration-by-Parts formula to $f=V$ yields
\begin{equation*}
\int_Y \Gamma(V,g)\,d\mm+\int_Y \Delta V\,g\,d\mm=\int_{\partial Y}\tilde g\,d\sigma\end{equation*}
which proves that the distribution $-\underline\Delta^Y V\big|_{\partial Y}$
is represented by the measure $\sigma$
and thus $\sigma=\sigma_{\partial Y}$
\end{proof}

\begin{ex} Let $X$ be a $n$-dimensional Riemannian manifold, $\d$ be the Riemannian distance, $\mm$ be the $n$-dimensional Riemannian volume measure, and $Y$ be a bounded  subset with ${\mathcal C}^1$-smooth boundary. Then $\sigma_{\partial Y}$  is the $(n-1)$-dimensional  surface measure of $\partial Y$.
\end{ex}

\begin{lma} Assume that $\psi=\ell\, V$ with $V$ as in Lemma \ref{bdy-measure} above and $\ell\in \Dom^\cont_\infty(\Delta)$.

{\bf i)} Then
$$-\underline\Delta^Y \psi\big|_{\partial Y}=\ell\,\sigma_{\partial Y}.$$

{\bf ii)} Moreover, with   $(N^{\partial Y,\psi}_t)_{t\ge0}$ and  $(N^{\partial V,\psi}_t)_{t\ge0}$ defined as in \eqref{partial N},
$$N^{\partial Y,\psi}_t=\int_0^t\ell(B_s^Y)\,dN^{\partial Y,V}_s=-\int_0^t\ell(B_s^Y)\,dL^{\partial Y}_s
.$$
\end{lma}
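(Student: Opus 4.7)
For part (i), I would test the claimed distributional identity against an arbitrary bounded quasi-continuous $f \in W^{1,1+}(Y)$, starting from
\[
\langle -\underline\Delta^Y\psi|_{\partial Y}, f\rangle = \int_Y [\Gamma(\psi, f) + f\,\Delta\psi]\,d\mm.
\]
Since $\ell, V \in \Dom^\cont_\infty(\Delta)$, the Leibniz rules $\Gamma(\ell V, f) = \ell\Gamma(V,f) + V\Gamma(\ell,f)$ and $\Delta(\ell V) = \ell\Delta V + V\Delta\ell + 2\Gamma(\ell, V)$ apply. A short symbolic manipulation, exploiting the symmetry $\Gamma(V,\ell) = \Gamma(\ell, V)$ to redistribute the cross term $2f\,\Gamma(\ell, V)$, regroups the integrand into two symmetric pieces
\[
\Gamma(\psi, f) + f\,\Delta\psi = \bigl[\Gamma(V, \ell f) + (\ell f)\,\Delta V\bigr] + \bigl[\Gamma(\ell, V f) + (Vf)\,\Delta\ell\bigr].
\]
Integrating and invoking Lemma \ref{bdy-measure}(i) on the first bracket yields
\[
\langle -\underline\Delta^Y\psi|_{\partial Y}, f\rangle = \int_{\partial Y}\ell\,f\,d\sigma_{\partial Y} + \langle -\underline\Delta^Y\ell|_{\partial Y}, Vf\rangle,
\]
so the claim reduces to showing that the last pairing vanishes.

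Morally, $\underline\Delta^Y\ell|_{\partial Y}$ is the normal flux of $\ell$ across $\partial Y$, while $Vf$ vanishes quasi-everywhere on $\partial Y$. To make this rigorous, I would approximate $Vf$ by compactly supported functions in $Y^0$: set $V_\epsilon := (V + \epsilon) \wedge 0$, which vanishes on the neighborhood $\{V > -\epsilon\}$ of $\partial Y$ and coincides with $V+\epsilon$ on $\{V \leq -\epsilon\} \subset Y^0$, and combine with the truncations $f_{[n]}$ to get $V_\epsilon f_{[n]} \to Vf$ in $W^{1,1+}(Y)$ as $\epsilon \to 0$ and then $n \to \infty$. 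Each $V_\epsilon f_{[n]}$ extends by zero to a function in $W^{1,2}(X)$, and since $\ell \in \Dom(\Delta)$ on $X$ with bounded Laplacian, the identity $\int_X[\Gamma(\ell, h) + h\,\Delta\ell]\,d\mm = 0$ holds for every $h \in W^{1,2}(X)$; applied to the zero-extension of $V_\epsilon f_{[n]}$, it reduces to $\int_Y[\Gamma(\ell, V_\epsilon f_{[n]}) + V_\epsilon f_{[n]}\,\Delta\ell]\,d\mm = 0$. Passing to the limit settles (i).

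For part (ii), the plan is to argue via the Revuz correspondence for the reflected Brownian motion on $Y$. Under the normalization fixed in Lemma \ref{bdy-measure}(ii), which identifies $L^{\partial Y}$ as the PCAF of $\sigma_{\partial Y}$, the CAF-of-bounded-variation $N^{Y, \phi}$ in the Fukushima decomposition of $\phi(B^Y_t)$ corresponds to the smooth signed measure $\underline\Delta^Y\phi$, and subtracting the bulk contribution shows that $N^{\partial Y, \phi}$ defined in \eqref{partial N} corresponds to the boundary distribution $\underline\Delta^Y\phi|_{\partial Y}$. For $\phi = \psi$, part (i) identifies this Revuz measure as $-\ell\,\sigma_{\partial Y}$. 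The same measure arises as the Revuz measure of $-\int_0^t \ell(B^Y_s)\,dL^{\partial Y}_s$ by the standard rule that premultiplication of a CAF by a bounded quasi-continuous function premultiplies the associated Revuz measure. Uniqueness of the Revuz correspondence then forces the two CAFs to coincide, yielding the second equality in (ii). The first equality follows from the identity $L^{\partial Y} = -N^{\partial Y, V}$, obtained by combining Lemma \ref{bdy-measure}(ii) with the linearity $N^{\partial Y, -V} = -N^{\partial Y, V}$.

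The main obstacle is the approximation step in (i) establishing $\langle \underline\Delta^Y\ell|_{\partial Y}, Vf\rangle = 0$: one must verify carefully that the truncated product $V_\epsilon f_{[n]}$, extended by zero beyond $\{V \leq -\epsilon\}$, genuinely belongs to $W^{1,2}(X)$ (using boundedness of $\ell$ and separation of the support from $\partial Y$) and that the convergences allow passage to the limit in both the $\Gamma(\ell, \cdot)$ and the $\cdot\,\Delta\ell$ contributions. Everything else is Leibniz bookkeeping for (i) and a direct appeal to uniqueness in the Revuz correspondence for (ii).
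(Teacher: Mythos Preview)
Your proposal is correct. For part (i) you follow essentially the paper's route: both of you expand $\Gamma(\ell V,f)+f\,\Delta(\ell V)$ via Leibniz and regroup into the two symmetric pieces $[\Gamma(V,\ell f)+(\ell f)\Delta V]+[\Gamma(\ell,Vf)+(Vf)\Delta\ell]$. The only difference is in killing the second piece. You approximate $Vf$ by $V_\epsilon f_{[n]}$ supported away from $\partial Y$; the paper instead observes that on $Y$ one has $V=-V^-$, and since $V^-$ vanishes identically on $X\setminus Y$, the product $V^-f$ (with $f$ extended to $X$) already lies in a global Sobolev class without any cutoff, so $\int_Y[\Gamma(\ell,Vf)+(Vf)\Delta\ell]\,d\mm=-\int_X[\Gamma(\ell,V^-f)+(V^-f)\Delta\ell]\,d\mm=0$ directly. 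Your approximation works but is heavier; the paper's one-line trick with $V^-$ is worth noting.

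For part (ii) you take a genuinely different and arguably cleaner route. The paper computes $d\psi(B^Y_t)$ two ways---once via the Fukushima decomposition of $\psi$ and once via the stochastic product rule applied to $\ell\cdot V$---and matches terms, using that $V=0$ on $\partial Y$ to drop the boundary contribution from $\ell$. You instead stay at the level of Revuz measures: part (i) identifies the Revuz measure of $N^{\partial Y,\psi}$ as $-\ell\,\sigma_{\partial Y}$ (this is exactly the principle the paper invokes for $V$ in Lemma~\ref{bdy-measure}(ii), now applied to $\psi$), and the same measure is manifestly the Revuz measure of $-\int_0^t\ell(B^Y_s)\,dL^{\partial Y}_s$, so uniqueness finishes. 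Your approach avoids the stochastic calculus entirely and makes transparent that (ii) is really the additive-functional shadow of (i); the paper's approach is more self-contained but requires tracking several cross terms.
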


\begin{proof} {\bf i)} For each quasi continuous $f\in W^{1,1+}(X)\cap W^{1,2}(X)$
\begin{eqnarray*}
-\big\langle \underline\Delta^Y\psi\big|_{\partial Y},f\big\rangle
&=&\int_Y\big[\Gamma(\ell V, f)+\Delta(\ell V)\, f\big]d\mm\\
&=&\int_Y\big[\Gamma(V, \ell f)+\Delta V\, \ell f\big]d\mm+
\int_Y\big[\Gamma(\ell,Vf)+\Delta\ell \, Vf\big]\,d\mm\\
&=&-\big\langle \underline\Delta^Y V\big|_{\partial Y},\ell f\big\rangle
-
\int_X\big[\Gamma(\ell,V^-f)+\Delta\ell \, V^-f\big]\,d\mm\\
&=&\int_{\partial Y}\ell f\,d\sigma_{\partial Y}+0
\end{eqnarray*}
where for the last step we also have chosen $\ell$ to be quasi continuous.

{\bf ii)} Fukushima decomposition w.r.t.~Brownian motion on $Y$ and Leibniz rule for stochastic integrals applied to   $\psi=\ell\, V$ yield
\begin{eqnarray*}
d\psi(B^Y_t)&=&dN^{\partial Y,\psi}_t+\frac12\Delta\psi(B^Y_t)dt+\text{loc.~mart.}\\
&=&dN^{\partial Y,\psi}_t+\frac12(\ell\Delta V)(B^Y_t)dt+\frac12(V\Delta \ell)(B^Y_t)dt
+\Gamma(\ell,V)(B^Y_t)dt
+\text{loc.~mart.}
\end{eqnarray*}
as well as
\begin{eqnarray*}
d\psi(B^Y_t)&=&\ell(B^Y_t) dV(B^Y_t)+V(B^Y_t) d\ell(B^Y_t)+\Gamma(\ell,V)(B^Y_t)dt+\text{loc.~mart.}
\end{eqnarray*}
Taking into account that 
$$\frac12(V\Delta \ell)(B^Y_t)dt=V d\ell(B^Y_t)+\text{loc.~mart.}$$
since $V=0$ on $\partial Y$, we end up with
$$dN^{\partial Y,\psi}_t=\ell(B^Y_t) dV(B^Y_t)-\frac12(\ell\Delta V)(B^Y_t)dt=-\ell(B^Y_t) dL_t^{\partial Y}.$$
This is the claim.
\end{proof}

\bigskip

Recall from Definition \ref{def-curv}
 that a function
$\ell: X\to\R$ is a   lower bound for the curvature  of $\partial Y$ if for each $\epsilon>0$ there exists an exterior neighborhood $D$ of $\partial Y$ such that 
 $V$ is  $(\ell-\epsilon)$-convex in $D$.

\begin{thm}
Assume that $(X,\d,\mm)$ satisfies the \RCD$(k,N)$-condition for some finite number $N\ge2$ and some lower bounded, continuous function $k$. Moreover, assume that 
$Y$ has a regular boundary and that
  $\ell\in\D_\infty^\cont(\Delta)$  is a   lower bound for the curvature  of $\partial Y$. Then the mm-space $(Y,\d_Y,\mm_Y)$ satisfies the \BE$_1(\kappa,\infty)$-condition with 
$$\kappa=k \,\mm_Y+\ell\,\sigma_{\partial Y}.$$
\end{thm}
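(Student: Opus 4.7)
The plan is to stitch together the First Convexification Theorem \ref{1stConvThm}, the singular time re-change Theorem \ref{k-for-rechange}, and the Fukushima-type identification of the boundary distribution from the Lemmas immediately preceding this theorem, and then to let an auxiliary parameter $\epsilon$ tend to zero.

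First I would let $V := \d(\cdot, Y) - \d(\cdot, X \setminus Y)$ denote the signed distance function from $\partial Y$ and, for each $\epsilon > 0$, set $\psi_\epsilon := (\epsilon - \ell)\,V$. The regular boundary hypothesis gives $V \in \D_\infty^\cont(\Delta)$, and by assumption $\ell \in \D_\infty^\cont(\Delta)$; the Leibniz rule
$$\Delta(\ell V) = \ell\,\Delta V + V\,\Delta\ell + 2\,\Gamma(\ell, V)$$
then yields $\psi_\epsilon \in \D_\infty^\cont(\Delta)$, and evidently $\psi_\epsilon \equiv 0$ on $\partial Y$ since $V = 0$ there. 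Because $\ell$ is a lower bound for the curvature of $\partial Y$ in the sense of Definition \ref{def-curv}, the First Convexification Theorem \ref{1stConvThm} applies with this $V$ and $\ell$ and shows that $Y$ is locally geodesically convex in $(X, \d')$ with $\d' := e^{\psi_\epsilon} \odot \d$.

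Now I would invoke Theorem \ref{k-for-rechange}: its hypotheses are met, and it yields that $(Y, \d_Y, \mm_Y)$ satisfies $\BE_1(\kappa_\epsilon, \infty)$ with $\kappa_\epsilon = k\,\mm_Y + \underline\Delta^Y \psi_\epsilon\big|_{\partial Y}$. Applying the last Lemma preceding the theorem (which identifies $\underline\Delta^Y(\ell'\, V)\big|_{\partial Y}$ with $-\ell'\,\sigma_{\partial Y}$) to $\ell' := \epsilon - \ell$ gives
$$\kappa_\epsilon = k\,\mm_Y + (\ell - \epsilon)\,\sigma_{\partial Y}.$$
Letting $\epsilon \downarrow 0$, the left-hand side of the defining $\BE_1$-inequality is independent of $\epsilon$, while its right-hand side $\langle \Gamma(f)^{1/2}\phi, \kappa_\epsilon\rangle$ converges to $\langle \Gamma(f)^{1/2}\phi, \kappa\rangle$ by dominated convergence against the locally finite measure $\sigma_{\partial Y}$ from Lemma \ref{bdy-measure}. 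This gives $\BE_1(\kappa, \infty)$ with $\kappa = k\,\mm_Y + \ell\,\sigma_{\partial Y}$.

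The main obstacle is the \emph{double use} of the signed distance function $V$: its behaviour \emph{outside} $Y$ is what drives the Convexification Theorem (which only probes $V$ on $D_\epsilon \setminus Y$), whereas its behaviour \emph{inside} $Y$ is what generates, through the formula $\langle \underline\Delta^Y \psi_\epsilon, f\rangle = -\int_Y [\Gamma(\psi_\epsilon, f) + \Delta\psi_\epsilon \cdot f]\,d\mm$, the surface-measure contribution. The common vanishing $V\big|_{\partial Y} = 0$ is exactly what allows Theorem \ref{k-for-rechange} to glue these two viewpoints, and the closure of $\D_\infty^\cont(\Delta)$ under products (via the Leibniz rule above) is what upgrades this matching to the required functional regularity.
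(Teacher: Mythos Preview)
Your argument is correct and follows essentially the same route as the paper: choose $\psi_\epsilon=(\epsilon-\ell)V$, convexify, apply Theorem~\ref{k-for-rechange}, identify the boundary distribution via the preceding Lemma, and let $\epsilon\downarrow 0$. One small point: the hypothesis that $\ell$ is a lower bound for the curvature of $\partial Y$ (Definition~\ref{def-curv}) only gives you approximating functions $V_i$, not convexity of the signed distance $V$ itself, so the clean citation is Theorem~\ref{ConvThm2} rather than Theorem~\ref{1stConvThm} directly; the paper invokes \ref{ConvThm2} for exactly this reason.
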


\begin{proof} According to Theorem \ref{ConvThm2}, for each $\epsilon>0$, we can apply the previous Theorem \ref{k-for-rechange} with $\psi:=(\epsilon-\ell)\, V$.
In this case, obviously $\psi\in\D_\infty^\cont(\Delta)$.
Thus $(Y,\d_Y,\mm_Y)$ satisfies the \BE$_1(\kappa,\infty)$-condition with 
$$\kappa=k \,\mm_Y
+\underline\Delta^Y\big((\epsilon-\ell)\, V\big)\big|_{\partial Y}.$$
Since this holds for every $\epsilon>0$, it follows that 
$(Y,\d_Y,\mm_Y)$ satisfies the \BE$_1(\kappa,\infty)$-condition with 
$$\kappa=k \,\mm_Y
-\underline\Delta^Y(\ell\, V)\big|_{\partial Y}.$$
According to the previous Lemma, the distribution $-\underline\Delta^Y(\ell\, V)\big|_{\partial Y}$ is given by the weighted measure $\ell\,\sigma_{\partial Y}$. This proves the claim. 
\end{proof}

\begin{cor} Under the assumptions of the previous Theorem, the heat semigroup on $(Y,\d_Y,\mm_Y)$ satisfies the following gradient estimate:
\begin{eqnarray}\label{final-grad}
\big| \nabla P_{t/2}^Yf\big|(x)&\le
&\mathbb E^Y_{x}\Big[ e^{-\frac12\int_0^{t}  k(B^Y_{s})ds-\frac12\int_0^{t} \ell(B^Y_s)dL^{\partial Y}_s}
\cdot \big|\nabla f(B^Y_{t})\big|\Big].
\end{eqnarray}
Recall that $(L^{\partial Y}_t)_{t\ge0}$, the local time of $\partial Y$, is defined via Revuz correspondence  w.r.t.~the (``reflected'') Brownian motion $(\PP_x^Y, B_t^Y)_{t,x}$ on $Y$ as the PCAF associated with the surface measue $\sigma_{\partial Y}$.
\end{cor}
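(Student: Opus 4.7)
The plan is to combine Theorem~\ref{L1-grad} with an explicit Feynman--Kac representation of the taming semigroup $(P^\kappa_t)_{t\ge0}$. By the preceding Theorem, $(Y,\d_Y,\mm_Y)$ satisfies $\BE_1(\kappa,\infty)$ with $\kappa=k\,\mm_Y+\ell\,\sigma_{\partial Y}$, so Theorem~\ref{L1-grad} will immediately yield
$$\big|\nabla P^Y_{t/2}f\big|(x)\le P^\kappa_{t/2}\bigl(|\nabla f|\bigr)(x)\qquad\mm\text{-a.e.},$$
reducing the corollary to rewriting the right-hand side as an expectation under the reflected Brownian motion $(\PP^Y_x,B^Y_t)$.

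First I would decompose $\kappa$ in the ``Schr\"odinger--plus--distribution'' form $\kappa=\phi\,\mm_Y-\underline\Delta^Y\psi$ by choosing $\psi:=\ell V\in\Dom^\cont_\infty(\Delta)$ and $\phi:=k+\Delta(\ell V)\in L^\infty(Y)$; the preceding Lemma on $\psi=\ell V$ tells me that the boundary part of $-\underline\Delta^Y\psi$ equals $\ell\,\sigma_{\partial Y}$, while the bulk pieces reassemble to $k\,\mm_Y$ by construction. I would then invoke the Feynman--Kac representation from the Corollary after Proposition~\ref{psi-form}, now with the Neumann semigroup $(P^Y_t)$ and the reflected Brownian motion in place of $(P_t)$ and $(B_t)$, to get
$$P^\kappa_{t/2}g(x)=\mathbb E^Y_x\!\left[\exp\!\Bigl(-\tfrac12\!\int_0^t\phi(B^Y_s)\,ds+N^{Y,\psi}_t\Bigr)\,g(B^Y_t)\right]$$
for bounded quasi-continuous $g$, the relevant version of $|\nabla f|$ being quasi-continuous thanks to the underlying $\RCD$-structure.

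Next I would substitute $N^{Y,\psi}_t=N^{\partial Y,\psi}_t+\tfrac12\int_0^t\Delta\psi(B^Y_s)\,ds$ from~\eqref{partial N}: the bulk $\Delta(\ell V)$-contribution hidden in $\phi$ will cancel exactly against the corresponding term in $N^{Y,\psi}_t$, leaving only $-\tfrac12\int_0^t k(B^Y_s)\,ds+N^{\partial Y,\psi}_t$ in the exponent. The preceding Lemma on $\psi=\ell V$ then identifies $N^{\partial Y,\psi}_t$ with $-\int_0^t\ell(B^Y_s)\,dL^{\partial Y}_s$ in terms of the Revuz-PCAF $L^{\partial Y}$ of $\sigma_{\partial Y}$; inserting this (and keeping careful track of the $\tfrac12$-factors forced by $P_{t/2}f(x)=\mathbb E_x[f(B_t)]$ of~\eqref{BM}) produces~\eqref{final-grad} for $g=|\nabla f|$.

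The argument is essentially assembly: all substantive work has already been spent in the preceding Theorem, in the Girsanov--Feynman--Kac--Doob construction of Proposition~\ref{psi-form}, and in the Fukushima identification $N^{\partial Y,-V}=L^{\partial Y}$ from Lemma~\ref{bdy-measure}(ii). The one genuinely delicate conceptual point that will need care is that $\psi=\ell V$ does \emph{not} belong to $\Dom(\Delta^Y)$---it fails to satisfy Neumann boundary conditions---so the ``classical'' Schr\"odinger potential $\tfrac12\Delta^Y\psi$ is unavailable; it is precisely the $W^{-1,\infty}$-valued framework of Section~2, which replaces $\Delta^Y\psi$ by the distribution $\underline\Delta^Y\psi$ and encodes its singular boundary part as a local-time integral against $L^{\partial Y}$, that makes the formal manipulations rigorous.
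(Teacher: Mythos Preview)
Your proposal is correct and takes essentially the same approach as the paper. The Corollary is not given an explicit proof there; it is meant to follow by exactly the assembly you describe---applying Theorem~\ref{L1-grad} to the $\BE_1(\kappa,\infty)$-condition supplied by the preceding Theorem, decomposing $\kappa=k\,\mm_Y+\ell\,\sigma_{\partial Y}$ as $\phi\,\mm_Y-\underline\Delta^Y(\ell V)$, invoking the Feynman--Kac representation for $P^\kappa_{t/2}$, and then using the identification $N^{\partial Y,\ell V}_t=-\int_0^t\ell(B^Y_s)\,dL^{\partial Y}_s$ from the Lemma immediately preceding the Theorem (which in turn rests on Lemma~\ref{bdy-measure}(ii)).
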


\begin{remark} {\bf i)} The first estimate of the above type \eqref{final-grad} has been derived in the setting of smooth Riemannian manifolds by 
E.~P.~Hsu \cite{hsu2002} in terms of Brownian motions and their local times. For more recent results of this type in the setting of (weighted) Riemannian manifolds, see \cite{wang2014}, e.g.~Thm.~3.3.1.

{\bf ii)}
In the setting of smooth Riemannian manifolds, inspired by the integration by parts formula,
B.~Han \cite{han2018} was the first to propose the definition of a measure-valued Ricci tensor 
which involves
 the second fundamental form integrated with respect to the boundary measure.
 
 {\bf iii)} For the sake of clarity of presentation we have restricted ourselves in this subsection to the choice $V=\pm(\,.\,,\partial Y)$. However, instead of that, one may choose any sufficiently regular function $V$ which coincides with the signed distance function in a neighborhood of the boundary.
 More generally, the Convexification Theorem allows us to choose any function $V$ with 
 $|\nabla V|(x)\to1$ 
 for $x\to\partial Y$.
 
 {\bf iv)} Note that the previous Theorem and Corollary require that the underlying space $(X,\d,\mm)$ satisfies the \RCD$(k,N)$-condition for some finite $N$ and that our proof strongly depends on finiteness of $N$. However, the value of $N$ does not enter the final estimates.
\end{remark}

Let us finally illustrate our results in the two prime examples, the ball and the complement of the ball. To simplify the presentation, we will formulate the results in the setting of $\RCD(0,N)$ spaces for $N\in\N$ with the CAT(1)-property (or  $\RCD(-1,N)$ spaces with the CAT(0)-property). The extension to $\RCD(K,N)$ spaces with sectional curvature bounded from above by $K'$ is straightforward. 

\begin{ex} \label{ball}

{\bf i)} Consider $Y=\overline\B_r(z)$ for some $z\in X$ and $r\in (0,\pi/4)$
where  $(X,\d,\mm)$ is an $N$-dimensional Alexandrov space with nonnegative Ricci curvature and sectional curvature bounded from above by $1$ (in particular, $\mm={\cal H}^N$). Then
\begin{eqnarray*}
\big| \nabla P_{t/2}^Yf\big|(x)&\le
&\mathbb E^Y_{x}\Big[ e^{-\frac{\cot r}{2} \, L^{\partial Y}_t}
\cdot \big|\nabla f(B^Y_{t})\big|\Big].
\end{eqnarray*}
In particular, $\Lip( P_{t/2}^Yf)\,/\, \Lip(f)\le \sup_x\mathbb E^Y_{x}\big[ e^{-\frac{\cot r}{2} \, L^{\partial Y}_t}
\big]$ and 
\begin{eqnarray}\label{ball-grad}
\frac{\big| \nabla P_{t/2}^Yf\big|^2(x)}{P_{t/2}^Y\big| \nabla f\big|^2(x)}\le  \mathbb E^Y_{x}\big[ e^{-\cot r\cdot L^{\partial Y}_t}
\big]\le e^{-t\frac{N-1}{2}\cot^2r+1}.
\end{eqnarray}

{\bf ii)} Consider  $Y=X\setminus \B_r(z)$ for some $z\in X$ and $r\in (0,\infty)$ where
$(X,\d,\mm)$ is a $N$-dimensional Alexandrov space  with $N\ge 3$, with  Ricci curvature bounded from below by $-1$, and with nonpositive sectional curvature (in particular, $\mm={\cal H}^N$).
Then
\begin{eqnarray*}
\big| \nabla P_{t/2}^Yf\big|(x)&\le
&\mathbb E^Y_{x}\Big[ e^{t/2+\frac1{2r}L^{\partial Y}_t}
\cdot \big|\nabla f(B^Y_{t})\big|\Big].
\end{eqnarray*}
In particular, $\Lip( P_{t/2}^Yf)\,/\, \Lip(f)\le \sup_x\mathbb E^Y_{x}\big[ e^{t/2+\frac1{2r}L^{\partial Y}_t}
\big]$ and 
\begin{eqnarray}\label{c-ball-grad}
\frac{\big| \nabla P_{t/2}^Yf\big|^2(x)}{P_{t/2}^Y\big| \nabla f\big|^2(x)}\le  \mathbb E^Y_{x}\big[ e^{t+\frac1{r}L^{\partial Y}_t}
\big]\le e^{Ct+C'\sqrt{t}}.
\end{eqnarray}
\end{ex}
Let us emphasize that in the latter setting, no estimate of the form 
\begin{eqnarray*}
\frac{\big| \nabla P_{t/2}^Yf\big|^2(x)}{P_{t/2}^Y\big| \nabla f\big|^2(x)}\le e^{Ct}.
\end{eqnarray*}
 can exist.

\begin{proof} {\bf i)} It remains to prove the second inequality in \eqref{ball-grad}. Put
$$V(x)=\frac1{\sin r}\big(\cos r-\cos\d(x,z)\big).$$
Then $V=0$ and $|\nabla V|=1$ on $\partial Y$. Thus
$$V(B^Y_t)=V(B^Y_0)+M^{Y,V}_t+\frac12\int_0^t\Delta V(B^Y_s)ds-L^{\partial Y}_t$$
where $M^{Y,V}$ is  a martingale with quadratic variation $\langle M^{Y,V}\rangle_t=\int_0^t\Gamma(V)(B^Y_s)ds\le t$. Note that
$|V(B^Y_t)-V(B^Y_0)|\le \frac1{\sin r}(1-\cos r)\le r$ and, by Laplace comparison, $\frac12\int_0^t\Delta V(B^Y_s)ds\ge \frac{Nt}2 \,\cot r$.
Therefore
$$e^{-\cot r\, L^{\partial Y}_t}\le e^{\cot r\, M^{Y,V}_t-\frac{Nt}{2r}\cot^2 r+1}$$
and hence
$$\EE_xe^{-\cot r\, L^{\partial Y}_t}\le e^{\frac{t}{2}\cot^2 r-\frac{Nt}{2}\cot^2 r+1}\cdot \EE_xe^{-\cot r\, M^{Y,V}_t-\frac{\cot^2r}{2}\langle M^{Y,V}\rangle_t}\le
 e^{-\frac{(N-1)t}{2}\cot^2 r+1}.$$
 
{\bf ii)} 
Put
$$V(x)=\frac{r^{N-1}}{N-2}\big(\d^{2-N}(x,z)-r^{2-N}\big).$$
Then $V=0$ and $|\nabla V|=1$ on $\partial Y$. Moreover,
$|V(B^Y_t)-V(B^Y_0)|\le \frac r{N-2}$ and $\Delta V\le0$ by Laplace comparison. Thus
$$L^{\partial Y}_t\le V(B^Y_0)-V(B^Y_t)+M^{Y,V}_t$$
with $\langle M^{Y,V}\rangle_t\le t$ and, therefore,
\begin{eqnarray*}\EE_xe^{\frac1rL^{\partial Y}_t}&\le&e^{\frac1{r^2}t}\cdot
\Big[\EE_xe^{\frac2r M^{Y,V}_t-\frac2{r^2}\langle M^{Y,V}\rangle_t}\Big]^{1/2}
\cdot \Big[\EE_xe^{\frac2r(V(B^Y_0)-V(B^Y_t))}\Big]^{1/2}\\
 &\le& e^{\frac1{r^2}t}\cdot e^{\frac er\EE_x\big[V(B^Y_0)-V(B^Y_t)\big]}.
\end{eqnarray*}
where the last inequality follows from the fact that $|\frac2r(V(B^Y_0)-V(B^Y_t))|\le 1$.
To estimate $\EE_x\big[V(B^Y_0)-V(B^Y_t)\big]$, 
we apply the Laplace comparison to the function 
$$V^2(y)=\frac{r^{2N-2}}{(N-2)^2}\cdot \Big(\d^{2-N}(y,z)-r^{2-N}\Big)^2$$
which yields
$$\Delta V^2(y)\le 2+\frac{N}{N-2}\, \d(y,z)\, \coth \d(y,z)\le 5.$$
Therefore, taking into account that  $V(x)\le0$,
\begin{eqnarray*}
\EE_x\Big[V(B^Y_0)-V(B^Y_t)\Big]&\le&
 V(x)+\EE_x\Big[V^2(B^Y_t)\Big]^{1/2}\\
 &=&
 V(x)+\EE_x\Big[V^2(x)+\frac12\int_0^t\Delta V^2(B^Y_s)ds\Big]^{1/2}\\
 &\le&
 \EE_x\Big[\frac12\int_0^t\big(\Delta V^2\big)_+(B^Y_s)ds\Big]^{1/2}\ \le \ \sqrt{\frac52 \, t}.
\end{eqnarray*}
%
Thus
$\EE_xe^{\frac1rL^{\partial Y}_t}\le e^{C t +C'\sqrt{t}}.$
\end{proof}

\begin{cor} In the setting of the previous Example \ref{ball}  i), the effect of the boundary curvature results in a lower bound for the spectral gap:
$$\lambda_1\ge \frac{N-1}{2}\cot^2r.$$

\end{cor}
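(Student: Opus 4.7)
The plan is to insert a Neumann Laplacian eigenfunction into the pointwise gradient estimate \eqref{ball-grad} and let $t\to\infty$. Since $Y=\overline\B_r(z)$ is bounded and (being a subset of an $\RCD(0,N)$-space) inherits the doubling property together with a Poincar\'e inequality, the Neumann Laplacian $-\Delta^Y$ has discrete spectrum on $L^2(Y,\mm_Y)$; its smallest positive eigenvalue $\lambda_1$ therefore admits an eigenfunction $f\in\Dom(\Delta^Y)$ with $\Delta^Y f=-\lambda_1 f$. For this $f$ one has $P_{t/2}^Yf = e^{-\lambda_1 t/2}f$ and hence $|\nabla P_{t/2}^Yf|^2 = e^{-\lambda_1 t}|\nabla f|^2$ $\mm_Y$-a.e.

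Next I would integrate the pointwise bound \eqref{ball-grad} against $\mm_Y$ and invoke mass conservation of the Neumann semigroup, i.e.\ $\int_Y P_{t/2}^Yg\,d\mm = \int_Y g\,d\mm$ for $g\in L^1(Y,\mm_Y)$ (which holds by $\mm_Y$-symmetry of $(P_t^Y)$ together with $P_t^Y 1=1$). Applied with $g=|\nabla f|^2\in L^1(Y,\mm_Y)$ this produces
\begin{equation*}
e^{-\lambda_1 t}\int_Y|\nabla f|^2\,d\mm \;\le\; e^{-t\,\frac{N-1}{2}\cot^2 r + 1}\,\int_Y|\nabla f|^2\,d\mm.
\end{equation*}

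Since $f$ is non-constant, $\int_Y|\nabla f|^2\,d\mm = \lambda_1\int_Y f^2\,d\mm > 0$, so one may cancel this positive factor and take logarithms to obtain $\lambda_1 \ge \frac{N-1}{2}\cot^2 r - \frac{1}{t}$. Sending $t\to\infty$ then yields the asserted spectral gap. There is no genuine obstacle in this final step; the entire strength of the estimate is already packed into the exponential rate of \eqref{ball-grad}, which itself rests on the distributional Ricci bound $\BE_1(\kappa,\infty)$ with $\kappa = k\,\mm_Y + \ell\,\sigma_{\partial Y}$ for the negative boundary curvature bound $\ell=-\cot r$. The corollary thus serves primarily to advertise that incorporating the boundary curvature into the Ricci bound produces a quantitative spectral gap that would be entirely invisible to gradient estimates ignoring $\partial Y$.
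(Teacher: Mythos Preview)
Your proof is correct and follows essentially the same route as the paper: insert the first Neumann eigenfunction into the gradient estimate \eqref{ball-grad}, exploit that the left-hand side decays like $e^{-\lambda_1 t}$, and compare exponential rates as $t\to\infty$. The only cosmetic difference is that you integrate over $Y$ and invoke mass conservation of $P_t^Y$, whereas the paper argues pointwise and uses ergodicity $P_t^Y|\nabla f|^2(x)\to\frac{1}{\mm(Y)}\int_Y|\nabla f|^2\,d\mm$; both lead to the same comparison of rates.

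One small slip in your closing commentary: in the setting of Example~\ref{ball}\,i) the lower bound on the boundary curvature of the ball is $\ell=\cot r>0$, not $-\cot r$; the positive sign is precisely what makes the term $e^{-\frac{\cot r}{2}L_t^{\partial Y}}$ a \emph{damping} factor and drives the exponential decay. This does not affect your argument, only the explanatory sentence.
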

Let us emphasize that without taking into account the curvature of the boundary, no positive lower bound for $\lambda_1$ will be available.
\begin{proof} In the gradient estimate for the heat flow on the ball $Y=\overline\B_r(z)$, the boundary curvature causes an exponential decay:
\begin{eqnarray*}
{\big| \nabla P_{t}^Yf\big|^2}(x)\le e^{-t(N-1)\cot^2r+1}\, P_{t}^Y\big| \nabla f\big|^2(x)
\end{eqnarray*}
for each $f$ and $x\in Y$, and $P_{t}^Y\big| \nabla f\big|^2(x)\to\frac1{\mm(Y)}\, \int_Y\big| \nabla f\big|^2\,\mm$ as $t\to\infty$. 
On the other hand, by spectral calculus
$$\big| \nabla P_{t}^Yf_1\big|^2(x)=e^{-2\lambda_1}\, \big| \nabla f_1\big|^2(x).$$
for the eigenfunction $f_1$ corresponding to the first non-zero eigenvalue $\lambda_1$.
\end{proof}


\begin{thebibliography}{00000000}
\bibitem[ADM14]{ambrosio2014}{\textsc{L. Ambrosio and S. Di Marino.} \textit{Equivalent definitions of $BV$ space and of total variation on metric measure spaces.} J. Funct. Anal. \textbf{266} (2014), no. 7, 4150--4188.}
\bibitem[AG$^+$15]{ambrosiogigli2015}{\textsc{L. Ambrosio, N. Gigli, A. Mondino and T. Rajala.} \textit{Riemannian Ricci curvature lower bounds in metric measure spaces with $\sigma$-finite measure.} Trans. Amer. Math. Soc. \textbf{367} (2015), no. 7, 4661--4701.}
\bibitem[AGS08]{ambrosio2008}{\textsc{L. Ambrosio, N. Gigli and G. Savaré.} \textit{Gradient flows in metric spaces and in the space of probability measures.} Second edition. Lectures in Mathematics ETH Zürich. Birkhäuser Verlag, Basel, 2008.}
\bibitem[AGS14a]{ambrosio2014a}{\textsc{L. Ambrosio, N. Gigli and G. Savaré.} \textit{Calculus and heat flow in metric measure spaces and applications to spaces with Ricci bounds from below.}  
Invent. Math. \textbf{195} (2014), no. 2, 289--391.}
\bibitem[AGS14b]{ambrosio2014b}{\textsc{L. Ambrosio, N. Gigli and G. Savaré.} \textit{Metric measure spaces with Riemannian Ricci curvature bounded from below.} Duke Math. J. \textbf{163} (2014), no. 7, 1405--1490.}
\bibitem[AGS15]{ambrosio2015}{\textsc{L. Ambrosio, N. Gigli and G. Savaré.} \textit{Bakry--Émery curvature-dimension condition and Riemannian Ricci curvature bounds.} Ann. Probab. \textbf{43} (2015), no. 1, 339--404.}
\bibitem[AMS16]{ambrosio2016}{\textsc{L. Ambrosio, A. Mondino and G. Savaré.} \textit{On the Bakry-Émery condition, the gradient estimates and the local-to-global property of $\RCD^*(K,N)$ metric measure spaces.} J. Geom. Anal. \textbf{26} (2016), no. 1, 24--56.}
\bibitem[BHS19]{braun2019}{{\textsc{M. Braun, K. Habermann and K.-T. Sturm.} \textit{Optimal transport, gradient estimates, and pathwise Brownian coupling on spaces with variable Ricci bounds.} Preprint, arXiv:1906.09186, 2019.}}
\bibitem[BS18]{brue2018}{\textsc{E. Bruè and D. Semola.} 
\textit{Constancy of the dimension for RCD(K,N) spaces via regularity of Lagrangian flows.}
To appear in Comm. Pure Appl. Math. Preprint, arXiv:1804.07128, 2018}
\bibitem[CM16]{cavalletti2016}{\textsc{F. Cavalletti and A. Mondino.} \textit{Measure rigidity of Ricci curvature lower bounds.} Adv. Math. \textbf{286} (2016), 430--480.}
\bibitem[CF12]{chen2012}{\textsc{Z.-Q. Chen and M. Fukushima.} \textit{Symmetric Markov processes, time change, and boundary theory.} London Mathematical Society Monographs Series, 35. Princeton University Press, Princeton, NJ, 2012.}
\bibitem[CZ02]{chen2002}{\textsc{Z.-Q. Chen and T.-S. Zhang.} \textit{Girsanov and Feynman-Kac type transformations for symmetric Markov processes.} Ann. Inst. H. Poincaré Probab. Statist. \textbf{38} (2002), no. 4, 475--505.}
\bibitem[Dav89]{davies1990}{\textsc{E. B. Davies.} \textit{Heat kernels and spectral theory.} Cambridge Tracts in Mathematics, 92. Cambridge University Press, Cambridge, 1990.}
\bibitem[EKS15]{erbar2015}{\textsc{M. Erbar, K. Kuwada and K.-T. Sturm.} \textit{On the equivalence of the entropic curvature-dimension condition and Bochner's inequality on metric measure spaces.} Invent. Math. \textbf{201} (2015), no. 3, 993--1071.}
\bibitem[ES17]{erbar2017}{\textsc{M. Erbar and K.-T. Sturm.} \textit{Rigidity of cones with bounded Ricci curvature.} To appear in J. Eur. Math. Soc. Preprint, arXiv:1712.08093, 2017.}
\bibitem[FOT11]{fukushima2011}{\textsc{M. Fukushima, Y. Oshima and M. Takeda.} \textit{Dirichlet forms and symmetric Markov processes.} Second revised and extended edition. De Gruyter Studies in Mathematics, 19. Walter de Gruyter \& Co., Berlin, 2011.}
\bibitem[Gig13]{gigli2013}{\textsc{N. Gigli.} \textit{The splitting theorem in non-smooth context.} Preprint, arXiv:1302.5555, 2013.}
\bibitem[Gig18]{gigli2018}{\textsc{N. Gigli.} \textit{Nonsmooth differential geometry---an approach tailored for spaces with Ricci curvature bounded from below.} Mem. Amer. Math. Soc. \textbf{251} (2018), no. 1196, v+161 pp.}
\bibitem[GH16]{gigli2016}{\textsc{N. Gigli and B.-X. Han.} \textit{Independence on $p$ of weak upper gradients on 
$\RCD$ spaces.} J. Funct. Anal. \textbf{271} (2016), no. 1, 1--11. }
\bibitem[GRS16]{giglirajala2016}{\textsc{N. Gigli, T. Rajala and K.-T. Sturm.} \textit{Optimal maps and exponentiation on finite-dimensional spaces with Ricci curvature bounded from below.} J. Geom. Anal. \textbf{26} (2016), no. 4, 2914--2929.}
\bibitem[Han2018]{han2018}{\textsc{B.-X. Han.} \textit{Ricci tensor on $\RCD^*(K,N)$ spaces.} J. Geom. Anal. \textbf{28} (2018), no. 2, 1295--1314.}
\bibitem[HS19]{han2019}{\textsc{B.-X. Han and K.-T. Sturm.} \textit{Curvature-dimension conditions for diffusions under time change.} Preprint, arXiv:1907.05761, 2019.}
\bibitem[Hsu02]{hsu2002}{\textsc{E. P. Hsu.} \textit{Multiplicative functional for the heat equation on manifolds with boundary}. Michigan Math. J. \textbf{50} (2002), no. 2, 351--367.}
\bibitem[Ket15a]{ketterer2015a}{\textsc{C. Ketterer.} \textit{Cones over metric measure spaces and the maximal diameter theorem.} J. Math. Pures Appl. (9) \textbf{103} (2015), no. 5, 1228--1275.}
\bibitem[Ket15b]{ketterer2015b}{\textsc{C. Ketterer.} \textit{Obata's rigidity theorem for metric measure spaces.} Anal. Geom. Metr. Spaces \textbf{3} (2015), no. 1, 278--295.}
\bibitem[KS18]{kopfer2018}{\textsc{E. Kopfer and K.-T. Sturm.} \textit{Heat flow on time-dependent metric measure spaces and super-Ricci flows.} Comm. Pure Appl. Math. \textbf{71} (2018), no. 12, 2500--2608.}
\bibitem[KS19]{kopfer2019}{\textsc{E. Kopfer and K.-T. Sturm.} \textit{Functional inequalities for the heat flow on time-dependent metric measure spaces.} Preprint, arXiv:1907.06184, 2019.}
\bibitem[LS18]{lierl2018}{\textsc{J. Lierl and K.-T. Sturm.} \textit{Neumann heat flow and gradient flow for the entropy on non-convex domains.} Calc. Var. Partial Differential Equations \textbf{57} (2018), no. 1, Art. 25, 22 pp.}
\bibitem[LV09]{lott2009}{\textsc{J. Lott and C. Villani.} \textit{Ricci curvature for metric-measure spaces via optimal transport.} Ann. of Math. (2) \textbf{169} (2009), no. 3, 903--991.}
\bibitem[MN19]{mondino2019}{\textsc{A. Mondino and A. Naber.} \textit{Structure theory of metric measure spaces with lower Ricci curvature bounds.} J. Eur. Math. Soc. (JEMS) \textbf{21} (2019), no. 6, 1809--1854.}
\bibitem[Sav14]{savare2014}{{\textsc{G. Savaré.} \textit{Self-improvement of the Bakry--Émery condition and Wasserstein contraction of the heat flow in $\RCD(K,\infty)$ metric measure spaces.} Discrete Contin. Dyn. Syst. \textbf{34} (2014), no. 4, 1641--1661.}}
\bibitem[Stu06a]{sturm2006a}{\textsc{K.-T. Sturm.} \textit{On the geometry of metric measure spaces. I.} Acta Math. \textbf{196} (2006), no. 1, 65--131.}
\bibitem[Stu06b]{sturm2006b}{\textsc{K.-T. Sturm.} \textit{On the geometry of metric measure spaces. II.} Acta Math. \textbf{196} (2006), no. 1, 133--177.}
\bibitem[Stu15]{sturm2015}{\textsc{K.-T. Sturm.} \textit{Metric measure spaces with variable Ricci bounds and couplings of Brownian motions.} Festschrift Masatoshi Fukushima, 553--575, 
Interdiscip. Math. Sci., 17, World Sci. Publ., Hackensack, NJ, 2015.}
\bibitem[Stu18a]{sturm2018a}{\textsc{K.-T. Sturm.} \textit{Gradient flows for semiconvex functions on metric measure spaces---existence, uniqueness, and Lipschitz continuity.} Proc. Amer. Math. Soc. \textbf{146} (2018), no. 9, 3985–3994.}
\bibitem[Stu18b]{sturm2018b}{\textsc{K.-T. Sturm.} \textit{Super-Ricci flows for metric measure spaces.} J. Funct. Anal. \textbf{275} (2018), no. 12, 3504--3569.}
\bibitem[Wan14]{wang2014}{\textsc{F.-Y. Wang.} \textit{Analysis for diffusion processes on Riemannian manifolds.} Advanced Series on Statistical Science \& Applied Probability, 18. World Scientific Publishing Co. Pte. Ltd., Hackensack, NJ, 2014.}
\end{thebibliography}
\end{document}